\colorlet{myblue}{black}
\colorlet{lightred}{red!20!white}
\colorlet{lightblue}{blue!30!white}
\colorlet{darkgreen}{green!70!black}
\colorlet{lightgreen}{green!50!white}
\colorlet{gold}{yellow!90!black!70!red}
\newcommand{\myfixwrapfig}{~\vspace*{-\baselineskip}\vspace*{0pt}}
\def\co{\colon\thinspace\relax}
\newtheorem{theorem}{Theorem}[section]
\newtheorem*{theorem*}{Theorem}
\newtheorem{lemma}[theorem]{Lemma}
\newtheorem{question}[theorem]{Question}
\newtheorem{proposition}[theorem]{Proposition}
\theoremstyle{definition}
\newtheorem{definition}[theorem]{Definition}
\newtheorem{example}[theorem]{Example}
\newtheorem{observation}[theorem]{Observation}
\newtheorem{Remark}[theorem]{Remark}
\def\co{\colon\thinspace}
\newcommand{\pp}[2]{(\textcolor{blue}{ p_{#1}}\vert\textcolor{red}{p_{#2}})}
\newcommand{\qq}[2]{(\textcolor{blue}{ q_{#1}}\vert\textcolor{red}{q_{#2}})}
\newcommand{\pn}[1]{(\textcolor{blue}{ p_{#1}}\vert\textcolor{red}{1})}
\newcommand{\qn}[1]{(\textcolor{blue}{ q_{#1}}\vert\textcolor{red}{1})}
\newcommand{\np}[1]{(\textcolor{blue}{ -}\vert\textcolor{red}{p_{#1}})}
\newcommand{\nq}[1]{(\textcolor{blue}{ -}\vert\textcolor{red}{q_{#1}})}
\newcommand{\Alex}[4]{\prescript{#1}{#2}{t}^{#4}_{#3}}
\DeclareMathOperator{\AlexGr}{\mathfrak{A}}
\newcommand{\ob}{\operatorname{ob}}
\DeclareMathOperator{\Mor}{Mor}
\DeclareMathOperator{\Mod}{Mod}
\DeclareMathOperator{\pqMod}{pqMod}
\DeclareMathOperator{\CFTd}{CFT^\partial}
\DeclareMathOperator{\HFT}{HFT}
\DeclareMathOperator{\HFTdelta}{HFT^\delta}
\newcommand{\CFTminus}{\operatorname{CFT}^-}
\newcommand{\Rat}{\mathfrak{r}}
\newcommand{\Irr}{\mathfrak{i}}
\DeclareMathOperator{\rr}{r}
\DeclareMathOperator{\mut}{mut}
\DeclareMathOperator{\mr}{mr}
\DeclareMathOperator{\HFL}{\widehat{HFL}}
\DeclareMathOperator{\BSD}{\widehat{BSD}}
\DeclareMathOperator{\BSAD}{\widehat{BSAD}}
\newcommand{\typeA}[2]{\prescript{}{#1}{#2}}
\DeclareMathOperator{\LagrangianFH}{HF}
\DeclareMathOperator{\Spinc}{Spin^c}
\DeclareMathOperator{\ConjBimod}{\typeA{31}{CB^{13}}}
\DeclareMathOperator{\Homology}{H_\ast}
\newcommand{\Ad}{\operatorname{\mathcal{A}}^\partial}
\newcommand{\Apre}{\operatorname{\mathcal{A}}^\text{\normalfont pre}}
\newcommand{\Rpre}{\operatorname{R}^\text{pre}}
\newcommand{\R}{\operatorname{R}}
\newcommand{\Aminus}{\operatorname{\mathcal{A}}^-}
\newcommand{\Id}{\operatorname{\mathcal{I}}^\partial}
\newcommand{\Braidgroup}{B_3}
\newcommand{\field}{\mathbb{F}_2}
\DeclareMathOperator{\id}{id}
\DeclareMathOperator{\GL}{GL}
\DeclareMathOperator{\PSL}{PSL}
\DeclareMathOperator{\SL}{SL}
\DeclareMathOperator{\im}{im}
\newcommand{\FourPuncturedSphere}{S^2\smallsetminus\textnormal{4}}
\newcommand{\FourPuncturedTorus}{T^2\smallsetminus\textnormal{4}}
\newcommand{\PuncturedPlane}{\mathbb{R}^2\smallsetminus \mathbb{Z}^2}
\newcommand{\Plane}{\mathbb{R}^2}
\newcommand{\Lattice}{\mathbb{Z}^2}
\newcommand{\Torus}{T^2}
\newcommand{\Sphere}{S^2}
\newcommand{\QPI}{\operatorname{\mathbb{Q}P}^1}
\newcommand{\slope}{\tfrac{p}{q}}
\newcommand{\slopeZero}{\tfrac{0}{1}}
\newcommand{\delV}{\delta_{\!\rotatebox[origin=c]{90}{$-$}\!}}
\newcommand{\delH}{\delta_{-}}
\newcommand{\medsmallpic}[1]{\!\!\raisebox{-14.5pt}{#1}\!\!}%
\newcommand{\largepic}[1]{\!\!\raisebox{-48pt}{#1}\!\!}%
\newcommand{\MutationTangle}{\medsmallpic{
	\includegraphics[page=1]{}
}}
\newcommand{\MutationTangleX}{\medsmallpic{
	\includegraphics[page=2]{HFTSymmetries-pics-pics.pdf}
}}
\newcommand{\MutationTangleY}{\medsmallpic{
	\includegraphics[page=3]{HFTSymmetries-pics-pics.pdf}
}}
\newcommand{\MutationTangleZ}{\medsmallpic{
	\includegraphics[page=4]{HFTSymmetries-pics-pics.pdf}
}}
\newcommand{\pretzeltangle}{\largepic{
	\includegraphics[page=5]{HFTSymmetries-pics-pics.pdf}
}}
\newcommand{\DehnTwistArcs}{\largepic{
	\includegraphics[page=6]{HFTSymmetries-pics-pics.pdf}
}}
\newcommand{\MCGActionOnInterestingSegmentsA}{\largepic{
		\includegraphics[page=7]{HFTSymmetries-pics-pics.pdf}
	}}
\newcommand{\MCGActionOnInterestingSegmentsB}{\largepic{
		\includegraphics[page=8]{HFTSymmetries-pics-pics.pdf}
	}}
\newcommand{\MCGActionOnInterestingSegmentsC}{\largepic{
		\includegraphics[page=9]{HFTSymmetries-pics-pics.pdf}
	}}
\newcommand{\MCGActionOnInterestingSegmentsD}{\largepic{
		\includegraphics[page=10]{HFTSymmetries-pics-pics.pdf}
	}}
\newcommand{\MCGActionOnInterestingSegmentsE}{\largepic{
		\includegraphics[page=11]{HFTSymmetries-pics-pics.pdf}
	}}
\newcommand{\MCGActionOnInterestingSegmentsF}{\largepic{
		\includegraphics[page=12]{HFTSymmetries-pics-pics.pdf}
	}}
\newcommand{\MCGActionOnBoringSegmentsA}{\largepic{
		\includegraphics[page=13]{HFTSymmetries-pics-pics.pdf}
	}}
\newcommand{\MCGActionOnBoringSegmentsB}{\largepic{
		\includegraphics[page=14]{HFTSymmetries-pics-pics.pdf}
	}}
\newcommand{\MCGActionOnBoringSegmentsC}{\largepic{
		\includegraphics[page=15]{HFTSymmetries-pics-pics.pdf}
	}}
\newcommand{\MCGActionOnBoringSegmentsD}{\largepic{
		\includegraphics[page=16]{HFTSymmetries-pics-pics.pdf}
	}}
\newcommand{\MCGActionOnBoringSegmentsE}{\largepic{
		\includegraphics[page=17]{HFTSymmetries-pics-pics.pdf}
	}}
\newcommand{\MCGActionOnBoringSegmentsF}{\largepic{
		\includegraphics[page=18]{HFTSymmetries-pics-pics.pdf}
	}}
\newcommand{\AddingASingleCrossing}{\largepic{
		\includegraphics[page=19]{HFTSymmetries-pics-pics.pdf}
	}}
\newcommand{\GlueingTangleT}{\largepic{
		\includegraphics[page=20]{HFTSymmetries-pics-pics.pdf}
	}}
\newcommand{\GlueingTangleTP}{\largepic{
		\includegraphics[page=21]{HFTSymmetries-pics-pics.pdf}
	}}
\newcommand{\DehnTwistAD}{\largepic{
		\includegraphics[page=22]{HFTSymmetries-pics-pics.pdf}
	}}
\newcommand{\DehnTwistHDs}{\largepic{
		\includegraphics[page=23]{HFTSymmetries-pics-pics.pdf}
	}}
\newcommand{\wrappingi}{\largepic{
		\includegraphics[page=24]{HFTSymmetries-pics-pics.pdf}
	}}
\newcommand{\stabilizationfingerBefore}{\largepic{
		\includegraphics[page=28]{HFTSymmetries-pics-pics.pdf}
	}}
\newcommand{\stabilizationfinger}{\largepic{
		\includegraphics[page=29]{HFTSymmetries-pics-pics.pdf}
	}}
\newcommand{\pretzeltangleDUAL}{\largepic{
		\includegraphics[page=30]{HFTSymmetries-pics-pics.pdf}
	}}
\newcommand{\HalfIdHD}{\largepic{
		\includegraphics[page=31]{HFTSymmetries-pics-pics.pdf}
	}}
\newcommand{\HalfIdbefore}{\largepic{
		\includegraphics[page=32]{HFTSymmetries-pics-pics.pdf}
	}}
\newcommand{\HalfIdWhile}{\largepic{
		\includegraphics[page=33]{HFTSymmetries-pics-pics.pdf}
	}}
\newcommand{\HalfIdAfter}{\largepic{
		\includegraphics[page=34]{HFTSymmetries-pics-pics.pdf}
	}}
\newcommand{\twobasiccurves}{\largepic{
		\includegraphics[page=35]{HFTSymmetries-pics-pics.pdf}
	}}
\newcommand{\ConjugationHD}{\largepic{
		\includegraphics[page=36]{HFTSymmetries-pics-pics.pdf}
	}}
\newcommand{\tanglepairingI}{\largepic{
		\includegraphics[page=50]{HFTSymmetries-pics-pics.pdf}
	}}
\newcommand{\nutshell}{\largepic{
		\includegraphics[page=52]{HFTSymmetries-pics-pics.pdf}
	}}
\newcommand{\ptcurve}{\largepic{
		\includegraphics[page=54]{HFTSymmetries-pics-pics.pdf}
	}}
\newcommand{\pretzeltangleINTRO}{\largepic{
		\includegraphics[page=55]{HFTSymmetries-pics-pics.pdf}
	}}
\newcommand{\rigidcurve}{
	\includegraphics[page=73]{HFTSymmetries-pics-pics.pdf}
}
\newcommand{\LinearityOverview}{
	\includegraphics[page=74]{HFTSymmetries-pics-pics.pdf}
}
\newcommand{\LinearityCloseup}{
	\includegraphics[page=75]{HFTSymmetries-pics-pics.pdf}
}
\newcommand{\TwistPSLBottom}{
	\includegraphics[page=76]{HFTSymmetries-pics-pics.pdf}
}
\newcommand{\TwistPSLRight}{
	\includegraphics[page=77]{HFTSymmetries-pics-pics.pdf}
}
\newcommand{\rigidcurveIntroB}{
	\includegraphics[page=78]{HFTSymmetries-pics-pics.pdf}
}
\newcommand{\loosecurveIntro}{
	\includegraphics[page=79]{HFTSymmetries-pics-pics.pdf}
}
\newcommand{\rigidcurveIntroD}{
	\includegraphics[page=80]{HFTSymmetries-pics-pics.pdf}
}
\newcommand{\LiftOfParametrizationINTRO}{
	\includegraphics[page=81]{HFTSymmetries-pics-pics.pdf}
}
\newcommand{\mutationA}{
	\includegraphics[page=82]{HFTSymmetries-pics-pics.pdf}
}
\newcommand{\mutationB}{
	\includegraphics[page=83]{HFTSymmetries-pics-pics.pdf}
}
\begin{document}
% TITLE FOR CITATIONS:
% On symmetries of peculiar modules; or,\(\delta\)-graded link Floer homology is mutation invariant
% following the Chicago Manual of Style
% for reference, see https://english.stackexchange.com/questions/343177/punctuation-of-a-subtitle-starting-with-or

\title[On symmetries of peculiar modules]{On symmetries of peculiar modules\\
\(\text{\textnormal{or}}\)\\
\(\delta\)-graded link Floer homology is mutation invariant}
\author{Claudius Bodo Zibrowius}
\address{Department of Mathematics,
	The University of British Columbia,
	1984 Mathematics Road,
	Vancouver, BC,
	Canada, V6T 1Z2}
\email{claudius.zibrowius@posteo.net}
\urladdr{\url{https://cbz20.raspberryip.com/}}

\begin{abstract}
	We investigate symmetry properties of peculiar modules, a Heegaard Floer invariant of 4-ended tangles which the author introduced in~\cite{PQMod}. 
	In particular, we give an almost complete answer to the geography problem for components of peculiar modules of tangles. 
	As a main application, we show that Conway mutation preserves the hat flavour of the relatively $\delta$-graded Heegaard Floer theory of links.
\end{abstract}
\maketitle

\renewcommand{\contentsname}{Table of contents}
\setcounter{tocdepth}{1}
\setcounter{section}{-1}
\tableofcontents

% Intro.tex
\section{Introduction}\label{sec:Intro}

\begin{figure}
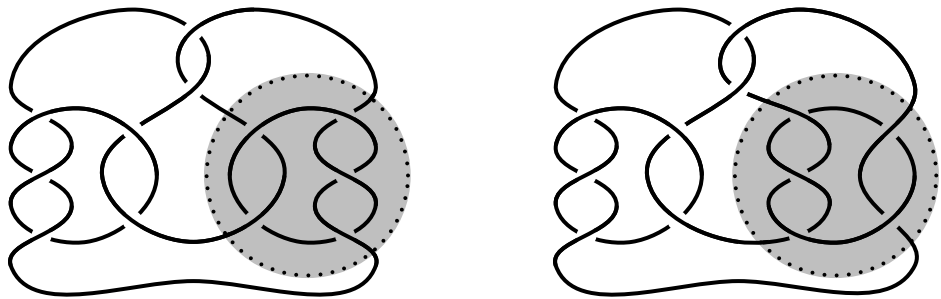

	\centering
	\centering
	$\mutationA$
	\qquad\qquad
	$\mutationB$
	\caption{The Kinoshita-Terasaka knot (left) and its Conway mutant (right). These two knot diagrams agree outside the grey disc bounded by the dotted circle; the tangle diagrams within this disc agree up to a rotation by $\pi$. }\label{fig:KTCcounterexample}
\end{figure}

Conway mutation is an operation which splits a given knot or link into two 4-ended tangles and glues them back together in a particular way. The most famous example of a mutant knot pair is the Kinoshita-Terasaka and the Conway knot, which is shown in Figure~\ref{fig:KTCcounterexample}. 
The effect of Conway mutation on link invariants has been studied for many decades. Interestingly, there is a large number of invariants that fail to distinguish mutant knots and links. Examples include the Seifert matrix up to S-equivalence~\cite{Cooper}, the HOMFLY polynomial~\cite{mutation_HOMFLY}, volumes of hyperbolic knots~\cite{mutation_volume}, Khovanov and Bar-Natan homology over \(\mathbb{Z}/2\)~\cite{Wehrli,Bloom,KWZ} and Rasmussen's $s$-invariant~\cite{KWZ}. 
Link Floer homology \(\HFL(L)\) is, in general, not invariant under mutation. This was first observed by Ozsváth and Szabó for the mutant pair from Figure~\ref{fig:KTCcounterexample}~\cite{OSmutation}; it follows more generally from the fact that \(\HFL(L)\) detects the Seifert genus~\cite{OSHFLThurston}, which is different for these two knots~\cite{Gabai_genus_mutation}.
However, based on computations from~\cite{BGcomps}, Baldwin and Levine conjectured that \(\HFL(L)\) is mutation invariant if the bigrading is collapsed to a single grading known as the $\delta$-grading~\cite[Conjecture~1.5]{BaldwinLevine}.  
This conjecture was confirmed for a number of families of mutant links by Lambert-Cole~\cite{LambertCole1,LambertCole2} and the author~\cite{PQMod}. 
In this paper, we prove the conjecture in general:

\begin{theorem}\label{thm:MutInvHFL}
	Let \(L\) be a link in a \(\mathbb{Z}\)-homology 3-sphere. Suppose \(L'\) is obtained from \(L\) by Conway mutation. Then \(\HFL(L)\) and \(\HFL(L')\) agree as relatively \(\delta\)-graded invariants. 
\end{theorem}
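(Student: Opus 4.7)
The plan is to reduce Theorem~\ref{thm:MutInvHFL} to a symmetry statement for the peculiar module \(\CFTd\) of a 4-ended tangle and then verify that statement component by component, using the geography classification referred to in the abstract. Decomposing \(L\) along a Conway sphere as \(L = T_1\cup T_2\), the pairing (glueing) theorem for peculiar modules from~\cite{PQMod} identifies \(\HFL(L)\) with the homology of a pairing \(\CFTd(T_1)\boxtimes\CFTd(T_2)\), together with its Alexander and homological gradings. Conway mutation replaces one tangle, say \(T_2\), by its image \(T_2^{\mut}\) under a \(\pi\)-rotation about one of three coordinate axes through the tangle ball. After a compatibility check showing that the pairing intertwines such a \(\pi\)-rotation with its algebraic image on the module side, the theorem reduces to producing, for every 4-ended tangle \(T\), a relatively \(\delta\)-graded quasi-isomorphism \(\CFTd(T)\simeq_\delta \CFTd(T^{\mut})\).

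I would then invoke the classification of indecomposable summands of \(\CFTd(T)\), which splits them into a rational family \(\ThinRat\) (indexed by combinatorial data such as a slope in \(\QPI\) and a length, and realised by closed immersed curves on the 4-punctured sphere) and an irrational, or \emph{loose}, family \(\ThinIrr\). For the rational summands the action of the three mutation involutions on combinatorial data is explicit: they act on \(\QPI\) through a Klein four group fixing the slopes \(\slopeZero\), \(\infty\), and \(1\) setwise, and the accompanying shift in the Alexander bigrading projects to zero in the \(\delta\)-grading. A direct matching of generators therefore yields the desired quasi-isomorphism on the rational part.

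The main obstacle is the irrational family \(\ThinIrr\), whose members lack the rigid combinatorial invariants that make the rational case tractable. My plan here is to interpret each irrational component geometrically as an immersed multicurve on the 4-punctured sphere equipped with a local system, and to exploit the fact that the \(\pi\)-rotation implementing mutation is, as a self-homeomorphism of the parametrised sphere, isotopic to a map that permutes the four punctures but acts trivially on the free homotopy classes of essential multicurves. A short grading calculation then shows that the Alexander shifts introduced by this puncture relabelling collapse under the projection to \(\delta\). Combining the rational and irrational cases with the pairing identification yields the relatively \(\delta\)-graded isomorphism \(\HFL(L)\simeq_\delta\HFL(L')\) asserted in the theorem. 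The delicate point throughout is to check that the failure of bigraded invariance—which is genuine, since \(\HFL\) itself is not mutation invariant—always occurs in directions that vanish under \(\delta\).
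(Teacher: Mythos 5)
Your reduction is the same as the paper's: decompose along the Conway sphere, use the Glueing Theorem (Theorem~\ref{thm:GlueingTheorem:HFT}) to reduce to showing that \(\HFTdelta(T)\) is invariant under relabelling the tangle ends (Theorem~\ref{thm:MutInvHFT}), and then argue component by component using the rational/irrational classification. The gap is in the step you flag as the ``main obstacle'' and then dispose of too quickly: it is \emph{not} true that the \(\pi\)-rotation implementing mutation acts trivially on the free homotopy classes of the relevant multicurves. That statement holds for embedded (hence rational) curves, since the hyperelliptic involutions of \(\FourPuncturedSphere\) fix isotopy classes of simple closed curves, but the irrational components \(\Irr_n(\slope;i_1,i_2)\) are immersed curves that are distinguished precisely by which pair of punctures they pass through; the mutation involution exchanging those punctures with the complementary pair sends \(\Irr_n(\slope;i_1,i_2)\) to \(\Irr_n(\slope;i_3,i_4)\), which lies in a genuinely different free homotopy class. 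Likewise, on rational components mutation can invert the local system, \(\Rat_X(\slope)\mapsto\Rat_{X^{-1}}(\slope)\). So no ``short grading calculation'' about Alexander shifts collapsing under \(\delta\) can finish the argument: a priori \(\HFT(T)\) could contain \(\Irr_n(\slope;i_1,i_2)\) in some \(\delta\)-grading without containing \(\Irr_n(\slope;i_3,i_4)\) there, and then \(\HFTdelta\) (and, after pairing, \(\HFL\) up to \(\delta\)-grading) would change under mutation.

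Closing this gap is exactly the content of the paper's main technical result, the \(\delta\)-graded conjugation symmetry (Theorem~\ref{thm:Conjugation:delta}): for every slope, the curves \(\Irr_n(\slope;i_1,i_2)\) and \(\Irr_n(\slope;i_3,i_4)\) occur in \(\HFT(T)\) with the same \(\delta\)-graded multiplicities, and likewise \(\Rat_X(\slope)\) and \(\Rat_{X^{-1}}(\slope)\). Its proof is not formal; it goes through the half-identity bimodule identification (Theorem~\ref{thm:HalfIdBimodAction}), the mapping class group action (Theorem~\ref{thm:MCGaction}), the linearity and rigidity results constraining which curves can occur (Theorems~\ref{thm:linearCurves} and~\ref{thm:rigid_curves}), and an explicit computation with the 32-generator conjugation bimodule. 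Your proposal would need an argument of comparable strength at this point; without it the irrational case (and the local-system issue in the rational case) is unproved. A minor further correction: the geography is the opposite of what you assume---irrational components always carry trivial local systems (Theorem~\ref{thm:rigid_curves}), whereas it is the rational components whose local systems are not controlled and for which nontriviality remains an open question (Question~\ref{que:local_systems_for_rationals}).
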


This result is an immediate consequence of certain symmetries of peculiar modules \(\CFTd(T)\), a tangle invariant defined in~\cite{PQMod}. Leading up to the proof of these symmetries, we establish a number of additional remarkable properties of \(\CFTd(T)\), which might be of independent interest. 

\begin{figure}[b]
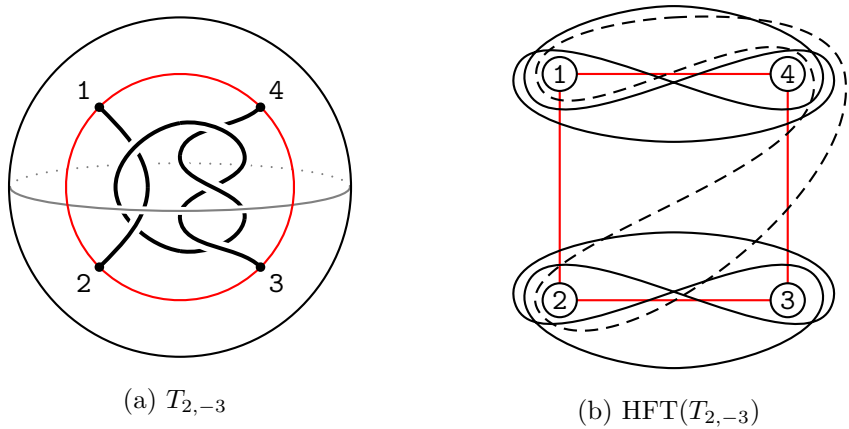

	\centering
	\begin{subfigure}{0.4\textwidth}
		\centering
		\pretzeltangleINTRO
		\caption{$T_{2,-3}$}\label{fig:intro:2m3pt:tangle}
	\end{subfigure}
	\begin{subfigure}{0.4\textwidth}
		\centering
		\ptcurve
		\caption{$\HFT(T_{2,-3})$}\label{fig:intro:2m3pt:curve}
	\end{subfigure}
	\caption{The $(2,-3)$-pretzel tangle (a) and its immersed curve invariant (b). The latter consists of three components, an embedded one (dashed) and two immersed components, each of which carries a (trivial) 1-dimensional local system.}\label{fig:intro:2m3pt}
\end{figure}

\subsection*{Immersed curve invariants of unparametrized tangles} 
The invariant
$$T\mapsto\CFTd(T)$$
associates with any oriented 4-ended tangle $T$ in a $\mathbb{Z}$-homology 3-ball $M$ the homotopy class of a relatively bigraded curved type D structure $\CFTd(T)$ over an algebra $\Ad$.  We call $\CFTd(T)$ the peculiar module of $T$ and $\Ad$ the peculiar algebra. The homotopy class of $\CFTd(T)$ depends on a parametrization of $\partial M=S^2$ in the form of an embedded oriented circle $\textcolor{red}{S^1}\hookrightarrow \partial M$ connecting the tangle ends, which are ordered compatibly. In~\cite{PQMod}, we chose to include this data---for convenience---in the definition of tangles, which is illustrated in Figure~\ref{fig:intro:2m3pt:tangle} for the $(2,-3)$-pretzel tangle $T_{2,-3}$ in the 3-ball. One of the central results from~\cite{PQMod} is that the category of peculiar modules is equivalent to the compact part of the Fukaya category of the 4-punctured sphere $\FourPuncturedSphere$. In particular, peculiar modules are classified by multicurves, ie homotopy types of immersed curves with local systems on $\FourPuncturedSphere$ (see Definition~\ref{def:loops} and Theorem~\ref{thm:classificationPecMod}). 
Therefore, we can interpret $\CFTd(T)$ as an invariant
$$T\mapsto \HFT(T)$$
which associates with $T$ such a multicurve.  Figure~\ref{fig:intro:2m3pt:curve} shows this invariant for the $(2,-3)$-pretzel tangle $T_{2,-3}$. Here, the 4-punctured sphere comes with a natural parametrization by four arcs connecting the punctures, which is used to record the relative bigrading of $\HFT(T)$. So one might wonder if it is legitimate to identify this 4-punctured sphere with $\partial M$ minus the four tangle ends. In other words, does twisting tangle ends correspond to twisting immersed curves? The first result that we prove in this paper answers this question in the affirmative:

\begin{theorem}[{\ref{thm:MCGaction}}]
	\(\HFT\) commutes with the action of the mapping class group of~\(\FourPuncturedSphere\).
\end{theorem}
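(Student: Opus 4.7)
The plan is to reduce the statement to a check on a finite generating set of $\mathrm{MCG}(\FourPuncturedSphere)$ and then verify equivariance for each generator by realizing it as a local twist of tangle ends. Both sides of the claimed equality are functorial in $\mathrm{MCG}(\FourPuncturedSphere)$: on the tangle side reparametrizations of $\partial M$ compose, and on the curve side pushforward of multicurves does. So it suffices to prove $\HFT(\phi\cdot T)=\phi\cdot\HFT(T)$ for a collection of $\phi$'s generating the mapping class group, for instance the half-twists around pairs of adjacent punctures (equivalently, Dehn twists along simple closed curves bounding two punctures).

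The first step is to translate each such generator $\phi$ into a tangle-level operation. A half-twist of the boundary parametrization around two adjacent tangle ends is, up to isotopy rel the remaining ends, the same as pre- or post-composing $T$ with an elementary crossing tangle. By the gluing/pairing theorem for peculiar modules established in~\cite{PQMod}, this operation on $T$ translates into tensoring $\CFTd(T)$ with the explicit peculiar bimodule associated with that crossing tangle. Thus the $\mathrm{MCG}(\FourPuncturedSphere)$-action on tangles is realized algebraically on $\CFTd$ by a collection of explicit ``twisting bimodules'' over $\Ad$, one for each generator.

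The core step is then to show that, under the classification of peculiar modules by multicurves on $\FourPuncturedSphere$ (Theorem~\ref{thm:classificationPecMod}), tensoring with the twisting bimodule of a generator $\phi$ implements the geometric pushforward $\phi_*$ on multicurves. Since the classification is an equivalence of $A_\infty$-categories with the compact Fukaya category of $\FourPuncturedSphere$, and the compact Fukaya category is generated (as a triangulated $A_\infty$-category) by a small set of simple objects such as the four elementary arcs together with one or two basic loops, it is enough to verify the twist formula on these generating objects. For each generator $\phi$ and each generating multicurve $\gamma$, I would compute both $\CFTd(\phi\cdot T_\gamma)$ (algebraically, via the twisting bimodule) and $\phi_*\gamma$ (geometrically, as a Dehn-twisted immersed curve), and compare; both are routine once the twisting bimodule is identified explicitly, because a Dehn twist is determined by its action on a spanning system of arcs together with the induced transport of any local system.

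I expect the main obstacle to lie exactly in this last verification: carefully identifying the twisting bimodules and matching them, grading-included, with geometric Dehn twists without losing the local-system information. One way to streamline this is to argue abstractly that any two $A_\infty$-actions of $\mathrm{MCG}(\FourPuncturedSphere)$ on the compact Fukaya category of $\FourPuncturedSphere$ that agree on a generating set of objects and realize the correct geometric isotopy class must coincide, thereby turning the grind into a rigidity statement together with a few explicit sample computations. Combined with the relative bigrading shifts that accompany each half-twist, this yields the required equivariance.
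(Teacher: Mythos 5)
Your broad outline (reduce to half-twist generators, realize a generator as adding a crossing, implement the operation by an explicit bimodule, compare with the geometric twist) matches the paper's strategy, but two of your key steps conceal the actual mathematical content and, as stated, do not go through. First, you assert that ``by the gluing/pairing theorem for peculiar modules established in~\cite{PQMod}'' adding a crossing translates into tensoring $\CFTd(T)$ with an explicit peculiar bimodule. No such bimodule machinery exists in~\cite{PQMod}: the pairing theorem there computes $\HFL$ of a closed-up link via morphism spaces, and peculiar modules are \emph{curved} type D structures, so one cannot even form the relevant box tensor products over $\Ad$ without first killing the curvature. This is exactly why the paper passes to the quotient algebras $\Ad_{43}$ and $\Ad_{34}$ (using Theorem~\ref{thm:BasepointZero} to see that nothing is lost), and why Lemma~\ref{lem:AddingASingleCrossing} requires a genuine bordered sutured computation: an explicit Heegaard diagram for a thickened 4-punctured sphere, Zarev's pairing theorem, the Pairing Adjunction to restrict to the relevant subalgebras, a periodic-domain analysis to pin down the bimodule $\mathcal{D}(\tau)$, and an indirect argument to decide which borderline domains contribute. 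None of this is available ``for free'' from the gluing theorem, so your second step is a gap rather than a reduction.

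Second, your core step --- verifying that the twisting bimodule implements the geometric pushforward only on a finite generating collection of objects (arcs plus a couple of loops) and then invoking a rigidity statement --- is not sound in this setting. The classification (Theorem~\ref{thm:classificationPecMod}) identifies peculiar modules with \emph{compact} multicurves with local systems; arcs are not objects of this category, and compact curves of arbitrary slope with arbitrary local systems are not twisted complexes of a small finite set of objects inside the compact Fukaya category, so there is no generation statement to quote. Moreover, agreement of two actions on a set of objects does not determine them without controlling morphisms and higher $A_\infty$-operations; the ``rigidity'' you hope for is precisely the unproved hard part. The paper avoids this entirely: Lemma~\ref{lem:half DehnTwistBimodule} checks the action of $\mathcal{D}(\tau)$ directly on an \emph{arbitrary} curve by decomposing it into elementary curve segments, computing the box tensor product segment by segment, reassembling, and cancelling identity components, with a separate (easy) adaptation for nontrivial local systems; the Alexander-grading swap $\rr_{34}$ also has to be built in, which your sketch does not address. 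If you want a proof along your lines, you would need to either develop bimodule pairing for (quotients of) peculiar modules and do the Heegaard-diagram computation yourself, or replace the generation/rigidity step by a direct verification on all curves as in the paper.
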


The proof boils down to a computation of a bordered sutured type AD bimodule. 

\begin{figure}[b]
	\centering
	\LiftOfParametrizationINTRO
	\caption{The covering space $\eta\co\PuncturedPlane\longrightarrow\FourPuncturedSphere$}\label{fig:coveringmapINTRO}
\end{figure} 

\subsection*{The geography problem for components of peculiar modules of tangles}

Let us consider the map 
$\mathbb{R}^2\rightarrow \Torus\rightarrow S^2$
which is defined as the composition of the universal cover of a torus $\Torus$ with the double branched cover $\Torus\rightarrow\Sphere$, branched at four marked points of $\Sphere$. If we restrict this map to the preimage of the complement of the marked points, we obtain a covering space
$$
\eta\co\PuncturedPlane\rightarrow\FourPuncturedSphere.
$$
This is illustrated in Figure~\ref{fig:coveringmapINTRO}, where the standard parametrization of $\FourPuncturedSphere$ has been lifted to $\PuncturedPlane$ and the front face and its preimage under $\eta$ are shaded grey. 

It is useful to consider the lifts of the underlying curves of $\HFT(T)$ to $\PuncturedPlane$. Remarkably, there are only two relevant classes of curves that matter to $\HFT$, namely rational and irrational ones:

\begin{definition}\label{def:intro:curves}
	We call a closed curve in $\FourPuncturedSphere$ \textbf{primitive}, if it defines a primitive element of $\pi_1(\FourPuncturedSphere)$. 
	For a given slope $\slope\in\QPI$, let $\Rat(\slope)$ be an embedded, primitive curve in $\FourPuncturedSphere$ which under $\eta$ lifts to a straight line of slope $\slope\in\QPI$. We write  $\Rat_X(\slope)$ for this curve equipped with a local system $X\in\GL_n(\field)$ and call it the \textbf{rational curve of slope $\slope$ with local system $X$}. Usually, we will omit the 1-dimensional (trivial) local system $X$ from the notation. 
	
	The second family of curves is constructed as follows. Suppose $i_1$ and $i_2$ are two distinct tangle ends which lie on a straight line of slope $\slope\in\QPI$. The lattice points divide this line into intervals of equal length. For a fixed integer $n>0$, let us mark every $(2n)^\text{th}$ interval of the line. Then consider a small push-off of this line such that it intersects only the marked intervals and each of them exactly once. Finally, let $\Irr_n(\slope;i_1,i_2)$ be the immersed, primitive curve in $\FourPuncturedSphere$ which under $\eta$ lifts to this push-off. We call $\Irr_n(\slope;i_1,i_2)$ the \textbf{irrational curve of slope $\slope$ through the punctures $i_1$ and $i_2$}. Note that for each slope $\slope$ and fixed $n>0$, there are exactly two choices for the pair of punctures $(i_1,i_2)$. 
\end{definition}

\begin{example}
	For the slope $\slope=\frac{0}{1}$, the lifts of $\Rat(\slope)$ and $\Irr_n(\slope;i_1,i_2)$ for $(i_1,i_2)=(1,4)$ and  $(i_1,i_2)=(2,3)$ are shown in Figure~\ref{fig:intro:curves}. 
	Rational curves are, as the name suggests, the underlying curves of the immersed curve invariants of rational tangles. But they also arise as the underlying curves of components of $\HFT$ of non-rational tangles. For example, the dashed component of Figure~\ref{fig:intro:2m3pt:curve} is equal to $\Rat(\frac{1}{2})$. The underlying curves of the other two components of $\HFT(T_{2,-3})$ are equal to $\Irr_1(\frac{0}{1};4,1)$ and $\Irr_1(\frac{0}{1};2,3)$, respectively. Irrational components for $n>1$ show up in the invariants of 2-stranded pretzel tangles with more twists, see~\cite[Theorem~6.9]{PQMod}. 
\end{example}

\begin{figure}[t]
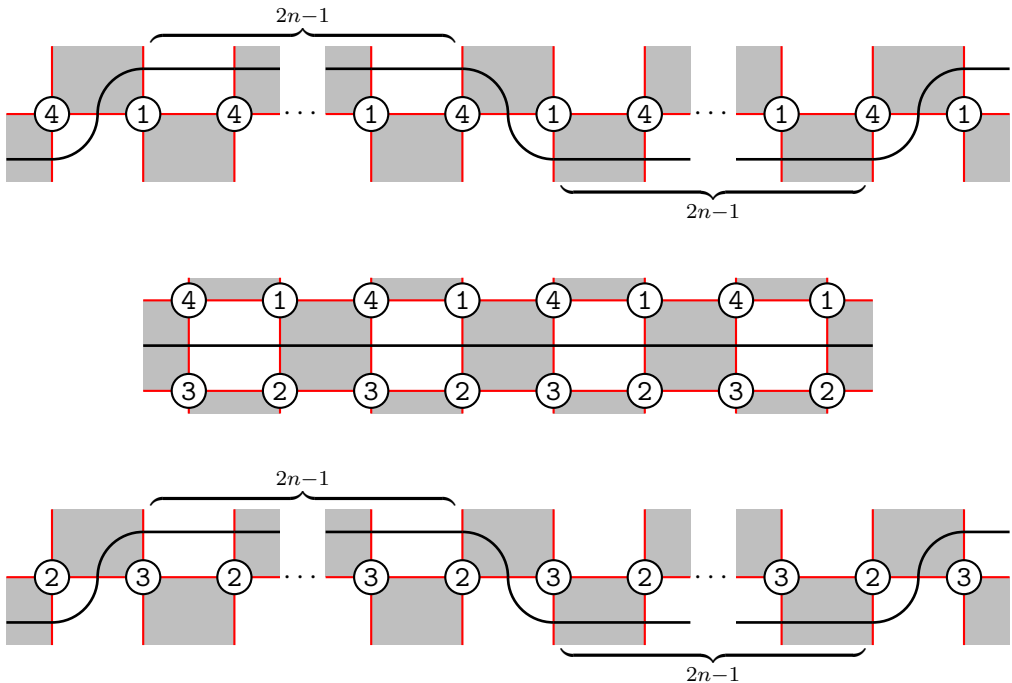

	\centering
		\centering
		\rigidcurveIntroD
		\\
		\loosecurveIntro
		\\
		\rigidcurveIntroB
	\caption{The lifts of the curves $\Irr_n(\frac{0}{1};4,1)$ (top), $\Rat(\frac{0}{1})$ (middle) and $\Irr_n(\frac{0}{1};2,3)$ (bottom), illustrating Definition~\ref{def:intro:curves}.}\label{fig:intro:curves}
\end{figure}

\begin{theorem}[\ref{thm:linearCurves}, \ref{thm:rigid_curves}]
	For a 4-ended tangle \(T\) in a $\mathbb{Z}$-homology 3-ball, the underlying curve of each component of \(\HFT(T)\) is either rational or irrational. Moreover, if it is irrational, its local system is equal to an identity matrix. 
\end{theorem}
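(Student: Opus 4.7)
My plan is to attack the two parts of the statement jointly, using the mapping class group equivariance from Theorem~\ref{thm:MCGaction} to reduce an arbitrary component to a standard form and then invoking the algebraic structure of the peculiar algebra $\Ad$ to rule out pathological curves and pathological local systems.

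For the linearity statement (Theorem~\ref{thm:linearCurves}), fix a component of $\HFT(T)$ with underlying curve $\gamma$ and lift $\gamma$ to a curve $\widetilde{\gamma}$ in $\PuncturedPlane$ via the cover $\eta$. I want to show that $\widetilde{\gamma}$ is freely homotopic rel punctures to either a straight line (rational case) or the push-off of a straight line through two lattice points (irrational case). The strategy is by reduction: I assign to $\gamma$ a geometric complexity measured by intersections with the four parametrizing arcs, and I claim that if $\widetilde{\gamma}$ is not of one of the two linear forms, then some Dehn twist $\phi$ in the mapping class group of $\FourPuncturedSphere$ strictly decreases this complexity. By Theorem~\ref{thm:MCGaction}, $\phi_\ast \gamma$ is a component of $\HFT(\phi(T))$, so the argument iterates. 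The minimal representatives should then be exactly the rational and irrational curves of Definition~\ref{def:intro:curves}, as can be verified directly on $\PuncturedPlane$ using that every primitive free homotopy class in $\pi_1(\FourPuncturedSphere)$ admits a straight line representative on the cover.

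For the rigidity of local systems on irrational components (Theorem~\ref{thm:rigid_curves}), suppose $(\Irr_n(\slope;i_1,i_2), X)$ is a component with $X \in \GL_m(\field)$. I exploit the existence of a Dehn twist $\tau$ preserving the underlying curve $\Irr_n(\slope;i_1,i_2)$ setwise, for instance the Dehn twist about the rational curve of the same slope. By Theorem~\ref{thm:MCGaction}, composing $\CFTd(T)$ with the bimodule associated with $\tau$ yields a homotopy equivalent peculiar module, and its restriction to the irrational component must act as a conjugation $X \mapsto P X P^{-1}$ by an explicit matrix $P$ computable from the bimodule. The plan is to pin $P$ down precisely enough that the conjugation relation, combined with the $\delta$-grading constraint and the fact that we work over $\field = \mathbb{F}_2$, forces the Jordan normal form of $X$ to collapse to a single block; a second independent twist (say, by a twist about a curve of a different slope meeting $\Irr_n$ in a controlled way) should then force that block to have size one, so $X$ is similar to the identity.

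The main obstacle I anticipate is the rigidity step. The linearity claim, though technical, is geometric and can be approached via a controlled complexity-reduction on the cover. The rigidity of local systems, on the other hand, is essentially homological and requires an explicit understanding of the Dehn twist bimodule on a single irrational component, together with the interaction between the $\delta$-grading on $\Ad$ and the cyclic structure of $\Irr_n$. I expect this to parallel known rigidity phenomena for bordered Floer curve invariants, where non-simple curves admit no non-trivial local systems because the self-intersections of the underlying curve are algebraically rigid in the type~D structure.
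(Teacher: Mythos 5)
There is a genuine gap, and it is the same gap in both halves: your argument uses only mapping class group equivariance (Theorem~\ref{thm:MCGaction}), but that alone cannot constrain which curves or local systems occur, because by the classification theorem (Theorem~\ref{thm:classificationPecMod}) \emph{every} multicurve on $\FourPuncturedSphere$ is $\Pi$ of some peculiar module; the restriction to rational/irrational curves with trivial local systems is special to peculiar modules of \emph{tangles}. The paper's source of this restriction is Proposition~\ref{prop:arrow-pushing-for-CFTminus}: $\Pi(\HFT(T))$ extends to a curved complex $\CFTminus(T)$ over $\Aminus$, and the $\partial^2$-relations of that extension (with curvature involving the $U_i$'s) are what rule out bad configurations. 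Concretely, for linearity the key step is Lemma~\ref{lem:nowrapping} (no consecutive $p_i$, $q_i$ arrows), proved by a cancellation argument in $\CFTminus$; the mapping class group enters only afterwards, to place a parametrizing arc through a lattice point of the limit peg-board representative so that Lemma~\ref{lem:nowrapping} applies. Your complexity-reduction scheme cannot replace this: the $\PSL(2,\mathbb{Z})$ action preserves the property of being non-linear, so a non-linear curve (e.g.\ one whose lift passes through three non-collinear lattice points, or wraps around a lattice point) stays non-linear under every twist and never reaches one of your ``standard forms''. Relatedly, your claim that every primitive free homotopy class in $\pi_1(\FourPuncturedSphere)$ admits a straight-line lift is false; the 4-punctured sphere carries many primitive non-linear classes, and excluding them is precisely the non-trivial content of Theorem~\ref{thm:linearCurves}.

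The rigidity step fails for the same structural reason. A Dehn twist $\tau$ fixing the underlying curve $\Irr_n(\slope;i_1,i_2)$ setwise does not act on $\HFT(T)$ itself: Theorem~\ref{thm:MCGaction} only identifies the component $(\Irr_n,X)$ of $\HFT(T)$ with a component of $\HFT(\tau T)$ for the \emph{different} tangle $\tau T$, so you obtain no self-constraint on $X$, and even a genuine conjugation relation $X\sim PXP^{-1}$ over $\field$ would not force $X$ to be the identity. Moreover, a symmetry argument of this kind would be blind to the rational/irrational distinction, whereas the paper points out that rational curves with non-trivial local systems \emph{do} admit bigraded extensions over $\Aminus$ (so no such argument can exist uniformly; cf.\ Question~\ref{que:local_systems_for_rationals}). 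The actual proof of Theorem~\ref{thm:rigid_curves} again works inside $\CFTminus(T)$: after normalizing the slope, one writes down $\Pi$ of the candidate rigid curve with local system $X$, determines which $U_2$-labelled arrows are forced by the $\partial^2$-relations (via the explicit splittings of algebra elements), derives a contradiction in the ``even'' case, and in the ``odd'' case reads off that the curve is $\mathfrak{b}_n$ with local system $X\cdot X^{-1}=\id$. Without the $\CFTminus$-extendibility input, neither half of your proposal can be completed.
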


%\begin{definition}
%	We write $\Slopes(T)\subset\QPI$ for the subset of slopes $\slope$ for which \(\HFT(T)\) contains a component of slope $\slope$. We call $\Slopes(T)$ the slope support of \(T\). 
%\end{definition}

This result completely answers the geography problem for components of peculiar modules, except for local systems on rational components; see Question~\ref{que:local_systems_for_rationals}. 
The proof of this theorem relies on the existence of an extension of peculiar modules of tangles to curved type D structures $\CFTminus$ over a larger algebra together with the fact that this extendibility property is preserved under basic chain homotopies (see Subsection~\ref{subsec:background:GeneralizedPQM}). 

\subsection*{Stabilization}
In~\cite[Proposition 9.2]{SFH}, Juhász gave an interpretation of link Floer homology in terms of sutured Floer homology of the link complement equipped with a pair of oppositely oriented meridional sutures on each link component. Moreover, as a simple application of his surface decomposition formula~\cite[Proposition 8.6]{SurfaceDecomposition}, one can also show that adding additional such pairs of sutures corresponds to tensoring the sutured Floer homology by a two-dimensional vector space, an operation which is known as stabilization. 
$\HFT(T)$ is essentially defined in terms of a multi-pointed Heegaard Floer theory and, as such, it is closely related to sutured Floer homology. Therefore, it is natural to generalize $\HFT$ to stabilized tangles $T(n_1,\dots,n_{|T|})$, that is tangles $T$ with $n_i$ (additional) pairs of meridional sutures on the $i^\text{th}$ component of $T$ for $i=1,\dots,|T|$, where $|T|$ is the number of components of $T$. As one might expect, the relationship between the immersed curve invariants of stabilized and ordinary tangles is as follows:

\begin{theorem}[\ref{thm:stabilization}]\label{thm:intro:stabilization}
	$\HFT$ commutes with stabilization. More explicitly, for any stabilized tangle $T(n_1,\dots,n_{|T|})$, 
	$$\HFT(T(n_1,\dots,n_{|T|}))=\bigotimes_{k=1}^{|T|} V_{t_k}^{\otimes n_k}\otimes\HFT(T),$$
	where \(V_t\) is a 2-dimensional vector space supported in a single relative \(\delta\)-grading and two consecutive relative Alexander gradings \(t^{+1}\) and \(t^{-1}\). 
\end{theorem}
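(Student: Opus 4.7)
The plan is to induct on $\sum_k n_k$, reducing the claim to the case of a single extra suture pair on one fixed component, say the $k$-th; the general formula then follows by iterating. For this base case, I would work directly with a multi-pointed Heegaard diagram $\mathcal{H}$ presenting $\CFTd(T)$ and realize the extra suture pair as a local stabilization of $\mathcal{H}$: on the portion of the Heegaard surface near the $k$-th component, attach a small $1$-handle, add one new $\alpha$-circle $\alpha'$ and one new $\beta$-circle $\beta'$ meeting transversely in exactly two points $\theta^+, \theta^-$, and place one new $z$-type basepoint and one new $w$-type basepoint inside the two small bigons that $\alpha'$ and $\beta'$ cobound, each bigon containing exactly one of them. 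The generators of this stabilized diagram are then in canonical bijection with pairs $(\x, \theta^\pm)$, where $\x$ ranges over the generators of $\mathcal{H}$, so the underlying $\field$-vector space is $V_{t_k} \otimes \CFTd(T)$; the local picture is essentially identical to the one underlying Juhász's stabilization formula for $\SFH$ from~\cite{SurfaceDecomposition}.

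The key step is to verify that the differentials and higher type D structure maps of the stabilized peculiar module split as $\id_{V_{t_k}} \otimes (\text{structure maps of } \CFTd(T))$. By placing the new $1$-handle in a disc disjoint from the four boundary arcs parametrizing $\Ad$ and from all other basepoints, any holomorphic disk entering the stabilization region must cross one of the two new basepoints; the only candidate disks are the two small bigons cobounded by $\alpha'$ and $\beta'$, both of which are forbidden for exactly this reason. Hence $\theta^+$ and $\theta^-$ decouple from the $\x$-component of every structure map, so the stabilized complex is literally $V_{t_k} \otimes \CFTd(T)$ as a curved type D structure over $\Ad$. Passing to the equivalent multicurve invariant $\HFT$ yields the claimed tensor formula.

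For the gradings, the two bigons cobounded by $\alpha'$ and $\beta'$ contribute equal and opposite $\delta$-grading shifts, so $\theta^+$ and $\theta^-$ share a single relative $\delta$-grading. Since one bigon contains the new $z$-type basepoint and the other the new $w$-type basepoint on the $k$-th component, their Alexander gradings in the variable $t_k$ differ by a factor of $t_k^2$; after symmetric normalization this gives the values $t_k^{+1}$ and $t_k^{-1}$ in the statement, and on all other Alexander variables the two generators agree. Iterating this construction $n_k$ times on each component completes the proof.

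The main obstacle is the careful verification that the local stabilization can be performed in a disc disjoint from the boundary parametrization without violating admissibility, and that no boundary-interacting $A_\infty$-type disk contributing to a higher type D operation over $\Ad$ can be forced through the stabilization region. In the single-pointed Heegaard Floer package this is a standard isotopy argument, but in the peculiar-module setting one must confirm that the algebra elements acting from the boundary cannot interact with the two new basepoints; this reduces to choosing $\alpha'$ and $\beta'$ so that the only bigons they cobound are the two prescribed ones, and so that the stabilization disc is disjoint from a neighbourhood of $\textcolor{red}{S^1}\hookrightarrow\partial M$.
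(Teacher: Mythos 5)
Your reduction to one extra suture pair at a time, and the local picture you describe, is essentially the standard ``well-known'' stabilization fact that the paper also invokes — but that fact only applies when the component in question \emph{already} carries a pair of meridional basepoints/sutures next to which the new pair can be placed, i.e. to closed components (and to second and later pairs on open ones). The actual content of the theorem is the \emph{first} extra pair on an open component, and there your construction has a genuine gap: an open component of a 4-ended tangle meets the diagrammatic structure only at the tangle-end sutures, which are collapsed to points adjacent to the special $\alpha$-circle $\textcolor{red}{S^1}$ and marked by $p_i,q_i$; there are no interior $z/w$-type basepoints on that strand. If you attach the new handle, the curves $\alpha',\beta'$ and the two new basepoints in a disc disjoint from a neighbourhood of $\textcolor{red}{S^1}$ and from all other basepoints, the two new collapsed sutures do not link the open strand at all, so the resulting diagram does \emph{not} present $T(\dots,n_k{+}1,\dots)$ — it presents a different sutured manifold (the new sutures bound in the complement). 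The identification of the underlying bordered sutured manifold is precisely the nontrivial step, and your disjointness requirement makes it fail.

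The paper handles this by a global, not local, modification: it chooses a nice diagram, builds a path $\gamma$ that starts in the bad region at one end of the open component, follows the strand across the diagram avoiding $\beta$-curves, and reaches the basepoints $p_{i'},q_{i'}$ at the other tangle end; the new $\beta_0$-circle is then finger-moved along $\gamma$ \emph{across} $p_{i'}$ and $q_{i'}$. It is exactly this crossing that makes the two new sutures meridional around the open strand, while still giving a 2:1 correspondence of generators; the unique bigon joining $x_0$ to $y_0$ then covers both $p_{i'}$ and $q_{i'}$, so it contributes $0$ in $\Ad$ (since $p_{i'}q_{i'}=0$) but records the grading shift, which is where the $\delta$-degree $0$ and Alexander gradings $t_k^{\pm1}$ of $V_{t_k}$ actually come from. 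In your version the grading claim likewise presupposes that the new basepoints are meridional for $t_k$, which again needs the correct (non-local) placement. So the proposal is fine for the closed-component/iteration step, but the open-component case — the heart of the theorem — is not established by it.
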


\subsection*{Relationship between peculiar modules and bordered sutured Heegaard Floer theory}
In~\cite{PQMod}, the author proved a Glueing Theorem which says that if we decompose a link $L$ into two 4-ended tangles $T_1$ and $T_2$, we can compute the link Floer homology $\HFL(L)$ in terms of the Lagrangian intersection Floer homology of (the mirror of) $\HFT(T_1)$ and $\HFT(T_2)$.
The proof of this result relies on Zarev's bordered sutured Heegaard Floer theory \cite{ZarevThesis} and uses a particular bordered sutured structure on the tangle complement $M_T$. The bordered sutured type D structure $\BSD(M_T)$ of $M_T$ is defined over an algebra which is much larger and more complicated than the peculiar algebra $\Ad$. But for the proof of the Glueing Theorem, it is sufficient to identify the image of the peculiar module $\CFTd(T)$ of a tangle $T$ under a certain quotient functor with the image of $\BSD(M_T)$ under another quotient functor. 

In this article, we offer an alternative definition of peculiar modules in terms of a different bordered sutured structure on the tangle complement, which was first suggested to the author by Andy Manion. This bordered sutured structure is in some sense dual to the one chosen previously and has the advantage that the corresponding bordered sutured algebra can be interpreted as a very simple quotient of the peculiar algebra $\Ad$. Moreover, we show in Theorem~\ref{thm:HalfIdBimodAction} that the image of $\CFTd(T)$ under the induced quotient functor agrees with the bordered sutured type D structure of the tangle complement with the new bordered sutured structure. The proof of this identification relies on a certain bimodule, which is sometimes called the half-identity bimodule of a bordered sutured structure. At the risk of being rather imprecise, let us summarize the proof of Theorem~\ref{thm:HalfIdBimodAction} as follows:

\begin{theorem}[{\ref{thm:HalfIdBimodAction}}]
	The half-identity bimodule of the 4-punctured sphere acts like the identity.
\end{theorem}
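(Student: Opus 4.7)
The plan is to compute the half-identity type AD bimodule $H$ directly from an explicit bordered sutured Heegaard diagram for $F\times[0,1]$ with $F=\FourPuncturedSphere$, placing the peculiar parametrization on one boundary component and the new dual parametrization (the one suggested by Manion) on the other. Here ``acts like the identity'' means that pairing with $H$ implements the restriction-of-scalars functor from $\Ad$-modules to modules over the dual bordered sutured algebra, which is by construction a quotient of $\Ad$; the concrete target is to show that for every peculiar module $\CFTd(T)$ the tensor product $H\boxtimes\CFTd(T)$ equals $\BSD(M_T)$ for the new parametrization of $\partial M_T$.

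First I would draw $H$ explicitly. Since $F$ has genus zero with only four punctures, the diagram can be chosen small and essentially one-dimensional: each $\alpha$-arc from the peculiar side meets its parallel $\beta$-arc from the dual side in a single transverse intersection. Enumerating the resulting generators and identifying them with the common idempotents of the two algebras is straightforward and already shows that, at the level of underlying graded vector spaces, $H$ is the identity bimodule up to the algebra quotient.

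Second I would compute the first-order structure maps $\delta^1_{0|1|1}$ by listing all embedded bigons in $H$. Each algebra generator of $\Ad$ that survives in the quotient should be represented by a unique bigon connecting the matching pair of idempotents, while the generators killed by the quotient should admit no such domain at all. Region by region around the four punctures this reduces to a finite combinatorial check dictated by the structure of the peculiar algebra from~\cite{PQMod}, and the match should be forced once the idempotent bijection is in hand.

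The principal obstacle will be ruling out higher $A_\infty$ operations $\delta^1_{0|1|k}$ with $k\ge 2$, since a strict identity-on-the-quotient bimodule requires these to vanish. I would handle this by exploiting the relative bigrading on peculiar modules: candidate domains contributing such higher operations must carry Maslov index $1$ while their boundary records a composable sequence of algebra elements, and on $F=\FourPuncturedSphere$ this constrains the possible winding around the punctures severely enough to exclude all but finitely many cases. Those remaining cases can be eliminated by an area argument after a small isotopy of the Heegaard diagram, following the standard techniques of Zarev's framework. Once the higher operations vanish, $H$ is manifestly the restriction-of-scalars bimodule and the theorem follows.
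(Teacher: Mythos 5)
Your core idea---compute the half-identity bimodule from an explicit bordered sutured Heegaard diagram for the thickened 4-punctured sphere and check that it is the identity---is indeed the central ingredient of the paper's proof of Theorem~\ref{thm:HalfIdBimodAction}. But as a proof of the theorem, your outline has genuine gaps. First, the pairing you set as your ``concrete target'' does not typecheck: $\CFTd(T)$ is a \emph{curved} type D structure over $\Ad$ (curvature $p^4+q^4$), not a module over any bordered sutured algebra, so $H\boxtimes\CFTd(T)$ is not defined. The theorem is about $\mathcal{F}_{ij}(\CFTd(T))$, and one must first identify $\mathcal{F}_{ij}(\CFTd(T))$ with the image of $\BSD(M_T)$---the tangle complement with the original $\alpha$-arc parametrization---under the relevant quotient functor; this step rests on the proof of the Glueing Theorem in~\cite{PQMod} and is absent from your plan. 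Second, you never identify the manifold obtained by gluing the half-identity piece onto $M_T$ with $M_T^{\beta}(p_i,q_j)$; doing so requires pushing the two distinguished sutures along the open tangle strands by handleslides, and this is exactly where a correction appears: when the ends $i$ and $j$ lie on the same open component, an extra pair of meridional sutures is forced, so the correct statement is $\mathcal{F}_{ij}(\CFTd(T))\otimes V_{t_2}\cong\BSD(M_T^{\beta}(p_i,q_j))$, and your proposed identity is false as stated in that case. Third, the algebraic bookkeeping is not optional: the bordered sutured algebras on both sides are much larger than $\Ad_{ij}$, and one needs Zarev's Pairing Theorem together with the Pairing Adjunction, restriction to the subalgebra of idempotents occupying the two extra arcs, and the quotient maps onto $\Ad_{ij}$ before the computed bimodule can even be compared with the identity bimodule over $\Ad_{ij}$.

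A smaller but symptomatic point: the identity type AD bimodule over $\Ad_{ij}$ has one operation $(a\,|\,a)$ for \emph{every} standard basis element $a$, including products such as $p_{214}$ and $q_{234}$, and the corresponding domains in the half-identity diagram are not bigons but larger regions touching the boundary; so your dichotomy ``one bigon per surviving algebra generator, no domain for killed generators'' is not the right picture, although an honest enumeration of domains in the explicit diagram (which is small enough that this is a finite check, with only four generators after restricting to the idempotents occupying the extra arcs) would produce the correct answer. Once that enumeration is done, the absence of higher operations is read off directly; the separate grading-plus-area argument you anticipate is not where the difficulty of this theorem lies.
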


\subsection*{Conjugation symmetry}

Like link Floer homology, peculiar modules come with a relative bigrading. More precisely, if $T$ is an oriented 4-ended tangle in a $\mathbb{Z}$-homology 3-ball, the generators $\CFTd(T)$ carry a homological grading $h$ which is a relative $\mathbb{Z}$-grading as well as a multivariate Alexander grading $A$, which is a relative $\mathbb{Z}^{|T|}$-grading, where $|T|$ is the number of components of~$T$. If we compose $A$ with the map $\mathbb{Z}^{|T|}\rightarrow\mathbb{Z}$ which adds all components together, we obtain the univariate Alexander grading $\overline{A}$. There is also a third grading, the $\delta$-grading, which is a relative $\frac{1}{2}\mathbb{Z}$-grading defined in terms of the other two gradings by
$$\delta=\tfrac{1}{2}\overline{A}-h.$$
In this paper, we will use the $\delta$-grading instead of the homological grading, since the former is, unlike the latter, independent of the orientation of $T$. 
We write $\HFTdelta(T)$ for the image of $\HFT(T)$ under the forgetful functor which drops the homological and Alexander grading. %Similarly, we write $\HFLdelta(L)$ for $\HFL(L)$ without these two gradings.

\begin{definition}
	Let us fix, once and for all, an absolute bigrading of $\Rat_X(\slope)$ and $\Irr_n(\slope;i,j)$ for every slope $\slope\in\QPI$ and any choice of $X$, $n$, $i$ and $j$. (In Section~\ref{sec:ConjugationBimodule}, we will impose some conditions on how we do this, see Theorem~\ref{thm:Conjugation:Horizontals} and Definition~\ref{def:choice_of_delta_grading}.) Then, if $\gamma$ is one of these absolutely bigraded curves with local systems, let us denote by $\delta^mt^A\gamma$ the bigraded immersed curve with local system obtained from $\gamma$ by shifting the $\delta$-grading by $m$ and the Alexander grading by $A$.
	Given an absolutely bigraded multicurve $L$ and  $\gamma\in\{\Rat_X(\slope),\Irr_n(\slope;i,j)\}$, let us define
	$$
	\gamma(L)=\sum n_{m,A}\delta^{m} t^{A},
	$$
	where $n_{m,A}$ is the number of curves $\delta^{m} t^{A}\gamma$ in $L$ over all $m\in\frac{1}{2}\mathbb{Z}$ and Alexander gradings~$A$. We call $\gamma(L)$
	the \textbf{Poincaré polynomial of $L$ at $\gamma$}.
	Let us write $\gamma^\delta(L)\in\mathbb{Z}[\delta^{\pm\frac{1}{2}}]$ for the polynomial $\gamma(L)$ evaluated at $t=1$.  
	Given an oriented 4-ended tangle $T$, let us write $\gamma(T)\coloneqq \gamma(\HFT(T))$
	 and similarly $\gamma^\delta(T)\coloneqq \gamma^\delta(\HFT(T))$. 
	Finally, let $\rr$ be the operation which inverts all Alexander gradings.
\end{definition}

\begin{theorem}[Bigraded conjugation symmetry for horizontal components; \ref{thm:Conjugation:Horizontals}]\label{thm:Conjugation:Horizontals:INTRO}
	Given an oriented 4-ended tangle \(T\), let us fix an absolute lift of the \(\delta\)-grading of \(\HFT(T)\). Suppose, \(\HFT(T)\) contains a component of slope \(\tfrac{0}{1}\). Then, there exists a unique lift of the Alexander grading on \(\HFT(T)\) such that for all local systems \(X\) and positive integers \(n\),
	$$\rr\Big(\Rat_X(\slopeZero)(T)\Big)= \Rat_{X^{-1}}(\slopeZero)(T)$$
	and:
	$$\rr\Big(\Irr_n(\slopeZero;2,3)(T)\Big)=\Irr_n(\slopeZero;4,1)(T)$$
	if the tangle ends \(\texttt{2}\) and \(\texttt{3}\) belong to different open tangle components and
	$$\rr\Big(\Irr_n(\slopeZero;2,3)(T)\Big)\cdot(t_{2,3}+t_{2,3}^{-1})= \Irr_n(\slopeZero;4,1)(T)\cdot(t_{4,1}+t_{4,1}^{-1})$$
	otherwise, where \(t_{2,3}\) and \(t_{4,1}\) are the colours of the tangle components at the tangle ends (\(\texttt{2}\) and \(\texttt{3}\)) and (\(\texttt{4}\) and \(\texttt{1}\)), respectively. 
\end{theorem}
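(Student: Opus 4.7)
The plan is to realize the claimed symmetry as the action of a $\Spinc$-conjugation bimodule $\ConjBimod$ on $\CFTd(T)$. Using the bordered sutured interpretation of $\CFTd(T)$ from Theorem~\ref{thm:HalfIdBimodAction}, one can define $\ConjBimod$ as the (standard) $\Spinc$-conjugation bimodule for the bordered sutured structure on the 4-punctured sphere. The general $\Spinc$-conjugation formalism then yields $\CFTd(T)\otimes\ConjBimod\simeq \rr(\CFTd(T))$, possibly up to an overall Alexander grading shift. Fixing the shift uniquely requires picking an absolute Alexander lift, which is possible precisely because $\HFT(T)$ is assumed to contain a horizontal component anchoring the normalization.

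Second, I would compute the action of $\ConjBimod$ on each of the standard horizontal model curves. Via the multicurve classification (Theorem~\ref{thm:classificationPecMod}) and the Fukaya-categorical interpretation of peculiar modules, this action corresponds to an orientation-reversing involution of $\FourPuncturedSphere$ that preserves horizontals setwise. Orientation reversal of a primitive horizontal curve inverts the monodromy of its local system, sending $\Rat_X(\slopeZero)\mapsto \Rat_{X^{-1}}(\slopeZero)$, and it interchanges the two horizontal irrational families $\Irr_n(\slopeZero;2,3)\leftrightarrow \Irr_n(\slopeZero;4,1)$. Reading these correspondences off the equivalence $\CFTd(T)\otimes\ConjBimod\simeq \rr(\CFTd(T))$ component by component yields the first two equalities of the theorem: the rational identity and the irrational identity in the case when the tangle ends $\texttt{2}$ and $\texttt{3}$ belong to distinct open tangle components.

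The main obstacle is the third case, where $\texttt{2}$ and $\texttt{3}$ (and hence $\texttt{4}$ and $\texttt{1}$) lie on the same tangle component, so that $t_{2,3}$ and $t_{4,1}$ are equal. In this situation the conjugation involution does not identify $\Irr_n(\slopeZero;2,3)$ directly with $\Irr_n(\slopeZero;4,1)$ as components of $\HFT$: the natural identification factors through a meridional stabilization of the shared tangle component, and by Theorem~\ref{thm:intro:stabilization} such a stabilization tensors counts with $V_t=t+t^{-1}$. Multiplying each side of the naive symmetry by the appropriate stabilization factor then produces the weighted identity
\[
\rr\!\Big(\Irr_n(\slopeZero;2,3)(T)\Big)\cdot(t_{2,3}+t_{2,3}^{-1})=\Irr_n(\slopeZero;4,1)(T)\cdot(t_{4,1}+t_{4,1}^{-1}).
\]
The delicate bookkeeping is to match the Alexander-grading contributions of $\ConjBimod$ with those of $V_t$ in this case, which is the principal technical subtlety of the argument. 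Uniqueness of the Alexander lift is automatic once the rational identity is required to hold strictly, since any nontrivial Alexander shift would destroy it.
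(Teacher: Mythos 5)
Your overall architecture matches the paper's (a conjugation bimodule built from the dual, $\beta$-bordered description of Theorem~\ref{thm:HalfIdBimodAction}, its action on $\CFTd(T)$ and on the horizontal model curves, then a component-by-component comparison), but two of your key steps are asserted rather than available, and one mechanism is misattributed. First, there is no ``general $\Spinc$-conjugation formalism'' in the bordered sutured setting that yields $\CFTd(T)\boxtimes\ConjBimod\simeq \rr(\CFTd(T))$ up to an overall Alexander shift: conjugation turns $\alpha$-arcs into $\beta$-arcs, and any bordered sutured piece that turns them back necessarily introduces at least two extra pairs of meridional sutures on the open strands. The correct identity (the paper's Lemma~\ref{lem:ConjBimod}) is
\[
\rr\Big(\mathcal{F}_{31}(\CFTd(T))\Big)\boxtimes\ConjBimod \cong V_{t_1}\otimes V_{t_2}\otimes\mathcal{F}_{13}(\CFTd(T)),
\]
with the stabilization factor present in \emph{all} cases, and establishing it requires an explicit computation of the $32$-generator bimodule $\BSAD(C)$ together with the half-identity theorem; it cannot be quoted from a general principle. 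Consequently your explanation of the weighted identity in the ``same component'' case is wrong in mechanism: the $(t_{2,3}+t_{2,3}^{-1})$ and $(t_{4,1}+t_{4,1}^{-1})$ factors do not arise because the identification ``factors through a stabilization only in that case.'' Rather, the action of $\ConjBimod$ on $\Pi(\mathfrak{b}_n)$ itself produces $V_{t_b}\otimes V_{t'_b}\otimes\Pi(\mathfrak{d}_n)$, where $t_b,t'_b$ are the ends adjacent to the site $b$ (Lemma~\ref{lem:ConjBimod:ActionOnHorizontalComponents}, again a substantial calculation, not an ``orientation-reversing involution of curves''); the weights survive exactly when both of those ends lie on the same strand, so that $V_{t_b}\otimes V_{t'_b}$ fails to cancel against the universal $V_{t_1}\otimes V_{t_2}$.

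Second, ``reading off the correspondences component by component'' is a gap: from the bimodule identity alone you only know what the total multicurve maps to, and other components of $\HFT(T)$ could in principle contribute further copies of $\mathfrak{d}_n$ or $\Rat_{X^{-1}}$. The paper closes this by a two-sided Poincar\'e-polynomial inequality argument: the $\mathfrak{b}_n\!\to\!\mathfrak{d}_n$ and $\mathfrak{d}_n\!\to\!\mathfrak{b}_n$ directions each give a coefficientwise inequality, and the identity $(t_b+t_b^{-1})(t_b'+t_b'^{-1})(t_d+t_d^{-1})(t_d'+t_d'^{-1})=(t_1+t_1^{-1})^2(t_2+t_2^{-1})^2$ forces both to be equalities with trivial $\delta$-shift, which simultaneously pins down the unique Alexander lift. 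Your uniqueness argument (``require the rational identity to hold strictly'') presupposes that a rational slope-$\slopeZero$ component exists, whereas the hypothesis only provides \emph{some} component of slope $\slopeZero$, possibly irrational; the paper's counting argument handles either case uniformly. To repair your proposal you would need to (i) prove the stabilized bimodule identity by the explicit Heegaard-diagram computation, (ii) compute the bimodule action on $\mathfrak{b}_n$, $\mathfrak{d}_n$ and $\Rat_X$ with its $V_{t_b}\otimes V_{t'_b}$ factor and absolute bigradings, and (iii) replace the component-by-component reading with the two-sided counting argument.
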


The author expects a similar statement to hold for all slopes. A proof of this would---amongst other things---require a better understanding of the Alexander grading on rational and irrational curves of \emph{arbitrary} slope, which is beyond the scope of this paper. However, if we content ourselves with the $\delta$-grading, we can easily deduce the following result:

\begin{theorem}[$\delta$-graded conjugation symmetry]\label{thm:Conjugation:delta}
	Given an oriented 4-ended tangle \(T\), let us fix an absolute lift of the \(\delta\)-grading of \(\HFTdelta(T)\). Then for any choice of \(\slope\), \(X\), \(n\) and \((i_1,i_2,i_3,i_4)\),
	$$
	\Rat^\delta_X(\slope)(T)= \Rat^\delta_{X^{-1}}(\slope)(T)
	\quad\text{ and }\quad
	\Irr^\delta_n(\slope;i_1,i_2)(T)=\Irr^\delta_n(\slope;i_3,i_4)(T).
	$$ 
\end{theorem}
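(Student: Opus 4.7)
The plan is to combine the bigraded horizontal statement (Theorem~\ref{thm:Conjugation:Horizontals:INTRO}) with the mapping class group action (Theorem~\ref{thm:MCGaction}). The key observation is that specialising the Alexander variables to $t=1$ collapses the bigraded identities of Theorem~\ref{thm:Conjugation:Horizontals:INTRO} to $\delta$-graded ones: $\rr$ acts trivially after setting $t=1$, and each factor $t_{i,j}+t_{i,j}^{-1}$ becomes $2$.

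First I would handle the case $\slope=\slopeZero$. If $\HFT(T)$ contains a component of slope $\slopeZero$, apply Theorem~\ref{thm:Conjugation:Horizontals:INTRO} and specialise the Alexander variables to $1$: this directly produces $\Rat^\delta_X(\slopeZero)(T)=\Rat^\delta_{X^{-1}}(\slopeZero)(T)$ and $\Irr^\delta_n(\slopeZero;2,3)(T)=\Irr^\delta_n(\slopeZero;4,1)(T)$ in the different-component subcase. In the same-component subcase each factor $t_{i,j}+t_{i,j}^{-1}$ evaluates to $2$, and since $\Irr^\delta_n(\slopeZero;i,j)(T)\in\mathbb{Z}_{\geq 0}[\delta^{\pm 1/2}]$, the common factor cancels. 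If $\HFT(T)$ has no component of slope $\slopeZero$, both sides of each identity vanish, so there is nothing to prove.

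For general $\slope\in\QPI$, pick $\phi\in\mathrm{MCG}(\FourPuncturedSphere)$ with $\phi_*(\slope)=\slopeZero$; such $\phi$ exists since $\PSL(2,\mathbb{Z})$ acts transitively on $\QPI$. In the irrational case, arrange additionally that $\phi$ sends $(i_1,i_2)$ to $(2,3)$; then $\phi$ automatically sends $(i_3,i_4)$ to $(4,1)$, as these are the only two admissible pairs of punctures for slope $\slopeZero$. By Theorem~\ref{thm:MCGaction}, $\phi$ pushes $\HFT(T)$ to $\HFT(\phi(T))$, carrying $\Rat_X(\slope)$ to $\Rat_X(\slopeZero)$ and $\Irr_n(\slope;i_1,i_2)$ to $\Irr_n(\slopeZero;2,3)$ while preserving the relative $\delta$-grading. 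Comparing absolute lifts introduces at most a global shift $\delta^c$ independent of which component is counted, so the same shift appears on both sides of each asserted identity and cancels. Applying the horizontal statement from the previous paragraph to $\phi(T)$ and pushing back via $\phi^{-1}$ then yields the theorem for $T$ and arbitrary $\slope$.

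The main bookkeeping obstacle is to verify that the MCG action on curves with local systems commutes with the involution $X\mapsto X^{-1}$ and that the $\delta$-grading shift induced by $\phi$ is uniform across components. The former holds because inversion is intrinsic to $\GL_n(\field)$ and the MCG acts by pushforward along diffeomorphisms, which is functorial; the latter is a feature of $\delta$ being defined intrinsically as a relative grading on $\HFTdelta$, together with the fact that an MCG element can shift an absolute lift only by a single overall constant.
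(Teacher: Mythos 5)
Your proposal follows the same two-step route as the paper: the slope-$\slopeZero$ case is deduced from the bigraded horizontal symmetry by forgetting the Alexander grading (setting $t=1$ turns $\rr$ into the identity and each factor $t_{i,j}+t_{i,j}^{-1}$ into $2$, which cancels since $\mathbb{Z}[\delta^{\pm\frac{1}{2}}]$ has no $2$-torsion; the case with no horizontal component is vacuous), and the general slope is reduced to the horizontal case via the mapping class group action of Theorem~\ref{thm:MCGaction}. Structurally this is exactly the paper's proof, and your handling of the vacuous case and of the factor of $2$ is fine.

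The one step you under-justify is the claim that the reparametrization introduces ``a global shift $\delta^c$ independent of which component is counted''. The single-overall-constant statement is automatic only for $\HFTdelta(T)$ as one relatively graded object; it does not by itself control what you actually use, namely the comparison of each component against the \emph{reference} curves $\Rat_X(\slope)$, $\Irr_n(\slope;i_1,i_2)$, $\Irr_n(\slope;i_3,i_4)$, whose absolute $\delta$-gradings at slope $\slope$ and at slope $\slopeZero$ are fixed independently of each other (Definition~\ref{def:choice_of_delta_grading}). What is needed is that a half-twist sends same-slope curves that are in the same $\delta$-grading (measured via $\delV$ or $\delH$) to same-slope curves in the same $\delta$-grading; since the intersection points used to measure $\delV$ and $\delH$ are themselves changed by the twist, ``$\delta$ is an intrinsic relative grading'' does not settle this. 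This is precisely Lemma~\ref{lem:MCG_and_delta} in the paper, proved by checking that a twist at the left or right preserves the generators on the vertical sites $a$ and $c$ and shifts their $\delta$-gradings by a constant that is the same for all curves (with slope $\frac{1}{0}$ treated separately). With that lemma supplied your argument closes up; it is also worth noting, as the paper does, that the gradings used in the bigraded horizontal theorem put $\mathfrak{b}_n$, $\mathfrak{d}_n$ and $\Rat_X$ in the same $\delta$-grading, so they constitute a legitimate choice in the sense of Definition~\ref{def:choice_of_delta_grading}, which is what makes your $t=1$ specialisation compatible with the statement being proved.
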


As their names suggest, these two results should be regarded as the analogues of conjugation symmetry of Heegaard Floer theories of closed 3-manifolds or knots and links therein. The classical proof of conjugation symmetry goes along the following lines: first, we fix a Heegaard diagram from which we compute the invariant in question. Then we construct a new Heegaard diagram from the first by reversing the orientation of its underlying surface and simultaneously exchanging the role of $\alpha$- and $\beta$-curves. The algebraic structures computed from the two diagrams are the same, up to reversing the $\Spinc$-grading, which corresponds to the Alexander grading. Moreover, for closed 3-manifolds and knots or links therein, one can argue that the new diagram also represents the same geometric object, so the two algebraic structures need to be the same up to chain homotopy.

For bordered and bordered sutured 3-manifolds, this argument is more complicated, since Heegaard diagrams also involve $\alpha$- and $\beta$-arcs that parametrize the 3-manifold boundaries. So while conjugation leaves the underlying 3-manifolds unchanged, $\alpha$-arcs become $\beta$-arcs and vice versa. One might hope to fix this problem by glueing in a bordered (sutured) manifold such that the diffeomorphism type of the underlying manifold does not change, but $\alpha$- and $\beta$-arcs are interchanged once more. For example, Hanselman, J.\,Rasmussen and Watson use this strategy to show that their immersed curve invariant for 3-manifolds with torus boundary is preserved by the elliptic involution up to reversing the $\Spinc$-grading~\cite{HRW2}. In their case, the bordered Heegaard diagrams have two $\alpha$-arcs and no $\beta$-arcs, so one of the key ingredients is to find a bordered Heegaard diagram of the once-punctured torus which turns a parametrization by $\beta$-arcs into one by $\alpha$-arcs.

In the case of tangles, the situation is even more complicated than in the torus boundary case: here, there is no bordered sutured structure which only interchanges the parametrization by $\alpha$- and $\beta$-arcs. Any such bordered sutured manifold also needs to introduce at least two extra pairs of sutures on the open tangle components. (This can be easily seen by studying how the underlying sutured structure has to change.) In Section~\ref{sec:ConjugationBimodule}, we describe a bordered sutured manifold that interchanges $\alpha$- and $\beta$-arcs at the expense of two additional pairs of meridional sutures. As we have seen in Theorem~\ref{thm:intro:stabilization}, adding meridional sutures is conceptually not a big problem. However, it makes computations much more involved, as the corresponding type AD bimodule with its 32 generators from Figure~\ref{fig:ConjugationBimodule} illustrates. 

\subsection*{Conway mutation}

%In~\cite{PQMod}, the author showed a Glueing Theorem \cite[Theorem~5.9]{PQMod}, which says that if we decompose a link $L$ into two 4-ended tangles $T_1$ and $T_2$, we can compute the link Floer homology $\HFL(L)$ in terms of the Lagrangian intersection Floer homology of $\HFT(T_1)$ and $\HFT(T_2)$. 

\begin{figure}
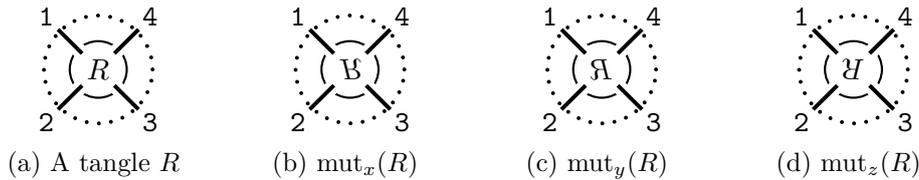

	\centering
	\begin{subfigure}{0.2\textwidth}
		\centering
		\MutationTangle
		\caption{A tangle $R$}
	\end{subfigure}
	\begin{subfigure}{0.2\textwidth}
		\centering
		\MutationTangleX
		\caption{$\mut_x(R)$}
	\end{subfigure}
	\begin{subfigure}{0.2\textwidth}
		\centering
		\MutationTangleY
		\caption{$\mut_y(R)$}
	\end{subfigure}
	\begin{subfigure}{0.2\textwidth}
		\centering
		\MutationTangleZ
		\caption{$\mut_z(R)$}
	\end{subfigure}
	\caption{Conway mutation. The relabelling of the tangle ends is illustrated here in terms of rotations of the tangle.}\label{fig:mutation}
\end{figure}

Given a tangle $R$ in a 3-manifold $M$ with spherical boundary, let $\mut_x(R)$, $\mut_y(R)$ and $\mut_z(R)$ be the tangles obtained from $R$ by relabelling the tangle ends according to the permutations $(12)(34)$, $(14)(23)$ and $(13)(24)$, respectively. 
This is illustrated in Figure~\ref{fig:mutation}. 
We say that  $\mut_x(R)$, $\mut_y(R)$ and $\mut_z(R)$ are obtained from $R$ by mutation. 
If $R$ is oriented, we orient these three tangles such that the orientations agree at the tangle ends with the same labels. 
If this means that we need to reverse the orientation of the two open components of~$R$, then we also reverse the orientation of all other components; otherwise we do not change any orientation. 
We say two links are \textbf{Conway mutants} if they agree outside a 3-manifold with spherical boundary which intersects the links transversely in four points and the tangles in its complement are related by mutation. 

As a consequence of $\delta$-graded conjugation symmetry of $\HFT$, we can now show:

\begin{theorem}\label{thm:MutInvHFT}
	Let \(T\) be a 4-ended tangle in a \(\mathbb{Z}\)-homology 3-ball and \(T'\) obtained from \(T\) by mutation. Then \(\HFTdelta(T)\) and \(\HFTdelta(T')\) agree as relatively \(\delta\)-graded invariants.
\end{theorem}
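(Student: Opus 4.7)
The plan is to combine three ingredients already established in the paper: the MCG-equivariance of $\HFT$ (Theorem~\ref{thm:MCGaction}), the geography theorem (Theorems~\ref{thm:linearCurves} and~\ref{thm:rigid_curves}), and the $\delta$-graded conjugation symmetry (Theorem~\ref{thm:Conjugation:delta}). Each of the three mutations $\mut_x$, $\mut_y$, $\mut_z$ is realized by an order-two rotation of the ambient 3-ball whose restriction to $\partial M=\FourPuncturedSphere$ is one of the three non-trivial hyperelliptic involutions; under the cover $\eta\co\PuncturedPlane\rightarrow\FourPuncturedSphere$, these lift to the three non-trivial half-lattice translations of $\PuncturedPlane$. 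Theorem~\ref{thm:MCGaction} then identifies $\HFT(\mut_\bullet(T))$ with the image of $\HFT(T)$ under the MCG element $\tau_\bullet$ corresponding to $\mut_\bullet$.

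The key step is to determine the action of each $\tau_\bullet$ on the basic curves furnished by the geography theorem. Since half-lattice translations carry a straight line of slope $\slope$ to a parallel line of the same slope, $\tau_\bullet$ preserves the slope of every simple closed curve in $\FourPuncturedSphere$. Hence $\tau_\bullet(\Rat_X(\slope))=\Rat_{X'}(\slope)$ for some local system $X'$, and $\tau_\bullet(\Irr_n(\slope;i_1,i_2))=\Irr_n(\slope;\tau_\bullet(i_1),\tau_\bullet(i_2))$. A direct check of each of the three permutations $(12)(34)$, $(14)(23)$, $(13)(24)$ against the three partitions of $\{1,2,3,4\}$ into unordered pairs shows that $\{\tau_\bullet(i_1),\tau_\bullet(i_2)\}$ equals either $\{i_1,i_2\}$ or its complementary pair $\{i_3,i_4\}$ in the partition associated to $\slope$. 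Because $\tau_\bullet$ has order two and can act on a primitive rational curve only by preserving or reversing its orientation, $X'$ equals $X$ or $X^{-1}$ up to conjugation.

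Combining these observations with Theorem~\ref{thm:Conjugation:delta} applied to $T$ yields
$$\Rat^\delta_X(\slope)(\mut_\bullet(T))=\Rat^\delta_X(\slope)(T)\quad\text{and}\quad\Irr^\delta_n(\slope;i_1,i_2)(\mut_\bullet(T))=\Irr^\delta_n(\slope;i_1,i_2)(T)$$
for every choice of $\slope$, $X$, $n$, and $(i_1,i_2)$. Since the geography theorem guarantees that these basic Poincaré polynomials completely determine $\HFTdelta$, the desired equality follows. The main obstacle I expect is the careful verification that $\tau_\bullet$ acts on the local system of a rational component by at most inversion (and not some more exotic linear transformation); this requires tracking the $S^1$-parametrization of the curve against the involution, but the order-two nature of $\tau_\bullet$, together with the freedom provided by conjugation symmetry $X\mapsto X^{-1}$, should reduce the analysis to manageable cases. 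A minor additional point is that mutation may reverse the orientation of closed tangle components, but this affects only the Alexander grading and is invisible to $\HFTdelta$.
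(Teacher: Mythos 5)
Your proposal is correct and follows essentially the same route as the paper: realize each mutation as the corresponding (hyperelliptic) mapping class acting on $\HFT(T)$, note that slopes are preserved, that each irrational curve is fixed or swapped with its counterpart and local systems on rational curves are at most inverted, and then conclude with Theorem~\ref{thm:Conjugation:delta} together with the geography results. The only cosmetic difference is how slope-preservation is justified---the paper observes that mutation preserves rational tangles and hence the rational curves $\Rat(\slope)$, while you argue via half-lattice translations in the cover $\eta$---but both arguments are fine.
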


Together with the aforementioned Glueing Theorem for $\HFT$, this proves Theorem~\ref{thm:MutInvHFL}.

\subsection*{Acknowledgements}
I would like to thank Jonathan Hanselman, Jake Rasmussen and Liam Watson for many helpful discussions about their immersed curve invariant for 3-manifolds with torus boundary and their encouragement to keep thinking about my tangle invariant. In particular, I am indebted to Liam for suggesting to think about the covering space from Section~\ref{sec:Linearity}, which lead to a breakthrough in this project. I would also like to thank Andy Manion for sharing his views on bordered sutured algebras and suggesting the bordered sutured structure from Section~\ref{sec:HalfId} to me. Finally, I would like to thank Artem Kotelskiy, Jake Rasmussen and Liam Watson for comments on a first draft of this paper.

% Background.tex
\section{Review: peculiar modules}\label{sec:background}

In this section, we review some background on peculiar modules from~\cite{PQMod}.

\begin{wrapfigure}{r}{0.3333\textwidth}
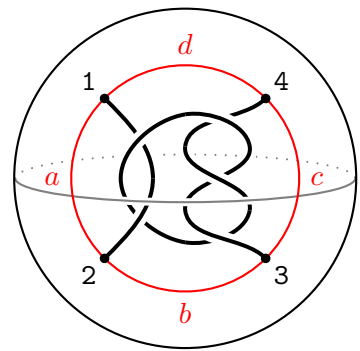

	\centering
	\bigskip
	\pretzeltangle
	\caption{The $(2,-3)$-pretzel tangle in $B^3$}\label{fig:2m3pt}
	\bigskip
\end{wrapfigure}
%\myfixwrapfig
\subsection{Definition of peculiar modules}

\begin{definition}\label{def:tangle}
	A \textbf{4-ended tangle} $T$ in a $\mathbb{Z}$-homology 3-ball~$M$ with spherical boundary is an embedding
	\[T\co\left(I \sqcup I \sqcup \coprod S^1,\partial\right)\hookrightarrow \left(M,\textcolor{red}{S^1}\subset S^2=\partial M\right),\hspace{0.3333\textwidth}\]
	such that the endpoints of the two intervals lie on a fixed oriented circle $\textcolor{red}{S^1}$ on the boundary of~$M$, together with a choice of distinguished tangle end. Starting at this distinguished ($=$first) tangle end and following the orientation of the fixed circle $\textcolor{red}{S^1}$, we number the tangle ends and label the arcs $\textcolor{red}{S^1}\smallsetminus \im(T)$ by $a$, $b$, $c$ and $d$, in that order. Sometimes, we will find it more convenient to use the labels $s_1$ for $a$, $s_2$ for $b$, $s_3$ for $c$ and $s_4$ for $d$ instead. We call a choice of a single arc a \textbf{site} of the tangle~$T$. We call $\partial M$ the \textbf{boundary of the tangle $T$} and those four arcs a \textbf{parametrization} thereof. 

	We consider tangles up to ambient isotopy which fixes the distinguished tangle end and the orientation of $\textcolor{red}{S^1}$ (and thus preserves the labelling of the tangle ends and arcs). The images of the two intervals are called the \textbf{open components}, the images of any circles are called the \textbf{closed components} of the tangle. We label these tangle components by variables $t_1$ and $t_2$ for the open components and $t'_1,t'_2,\dots$ for the closed components. We call those variables the \textbf{colours} of~$T$.
	An orientation of a tangle is a choice of orientation of the two intervals and the circles. 
	\\\indent
	Note that the orientation of $\textcolor{red}{S^1}$ enables us to distinguish between the two components of $\partial M\smallsetminus\textcolor{red}{S^1}$. The \textbf{back} component of $\partial M\smallsetminus\textcolor{red}{S^1}$ is the one whose boundary orientation agrees with the orientation of $\textcolor{red}{S^1}$, using the right-hand rule and a normal vector field pointing into $M$. We call the other one the \textbf{front} component. 
\end{definition}

\begin{wrapfigure}{r}{0.2\textwidth}
	\centering
	{	$
		\begin{tikzcd}[row sep=0.5cm, column sep=0.5cm]
		% 1st row
		&
		\overset{\iota_4}{\bullet}
		\arrow[bend left=10,leftarrow]{dr}[inner sep=1pt]{q_4}
		\arrow[bend right=10,swap]{dr}[inner sep=1pt]{p_4}
		&
		\\
		% 2nd row
		\!\!\!\raisebox{7pt}{$\underset{\iota_1}{~}$}\,\bullet
		\arrow[bend left=10,leftarrow]{ur}[inner sep=1pt]{q_1}
		\arrow[bend right=10,swap]{ur}[inner sep=1pt]{p_1}
		&
		&
		\bullet\,\raisebox{7pt}{$\underset{\iota_3}{~}$}\!\!\!
		\arrow[bend left=10,leftarrow]{dl}[inner sep=1pt]{q_3}
		\arrow[bend right=10,swap]{dl}[inner sep=1pt]{p_3}
		\\
		% 3rd row
		&
		\underset{\iota_2}{\bullet}
		\arrow[bend left=10,leftarrow]{ul}[inner sep=1pt]{q_2}
		\arrow[bend right=10,swap]{ul}[inner sep=1pt]{p_2}
		\end{tikzcd}
		\vspace{-5pt}
		$
	}
	\caption{}\label{fig:quiverForAd}
\end{wrapfigure}
\myfixwrapfig

\begin{definition}\label{def:matching}
	A \textbf{matching} $P$ is a partition $\{\{i_1,o_1\},\{i_2,o_2\}\}$ of $\{1,2,3,4\}$ into pairs. An \textbf{ordered matching} is a matching in which the pairs are ordered. A 4-ended tangle $T$ gives rise to a matching $P_T$ as follows: the first pair consists of the two endpoints of the open component with colour $t_1$, the second consists of the two endpoints of the second open component of $T$, the one labelled $t_2$. Given an orientation of the two open components of $T$, we order each pair of points such that the inward pointing end comes first, the outward pointing end second.
\end{definition}

\begin{definition}\label{def:RecallCFT}
	Let $\Ad$ be the path algebra of the quiver in Figure~\ref{fig:quiverForAd} 
	modulo the relations $$p_iq_i=0=q_ip_i.$$ We call $\Ad$ the \textbf{peculiar algebra}. We write $$\Id\coloneqq \field\langle\iota_1,\iota_2,\iota_3,\iota_4\rangle\hspace*{0.2\textwidth}$$
	for the corresponding ring of idempotents.
	For convenience, we sometimes write the elements $p_ip_{i+1}\dots p_{j-1}p_{j}$ as $p_{i(i+1)\cdots (j-1)j}$ and $q_iq_{i-1}\dots q_{j+1}q_{j}$ as $q_{i(i-1)\cdots (j+1)j}$, where again, we take the indices modulo 4 with an offset of 1. Furthermore, to simplify notation, we set 
	$$
	p=p_1+p_2+p_3+p_4\in\Ad
	\quad\text{ and }\quad
	q=q_1+q_2+q_3+q_4\in\Ad,
	$$
	so we can write for example $p^4=p_{1234}+p_{2341}+p_{3412}+p_{4123}$.
	
	We define a $\tfrac{1}{2}\mathbb{Z}$-grading on $\Ad$, called the \textbf{$\delta$-grading}, by setting $$\delta(p_i)=\delta(q_i)\coloneqq\tfrac{1}{2},$$
	where $i=1,2,3,4$, and then extending linearly to all of $\Ad$. Similarly, given an ordered matching $P=\{\{i_1,o_1\},\{i_2,o_2\}\}$, we define relative $\mathbb{Z}$-gradings $A_{t_k}$ for $k=1,2$, called \textbf{Alexander gradings}, by
	% this is the correct convention, everything else should follow from here!
	$$A_{t_k}(\iota_s)\coloneqq 0,\quad A_{t_k}(p_{i_k})=A_{t_k}(q_{i_k})\coloneqq 1\quad\text{ and }\quad A_{t_k}(p_{o_k})=A_{t_k}(q_{o_k})\coloneqq -1,$$
	and then extend linearly to $\Ad$. We call $(\delta,A)$ the \textbf{bigrading} on $\Ad$. 
	We denote the sum of the two Alexander gradings by $\overline{A}$ and call it the \textbf{reduced Alexander grading}.
	There is also a third grading, the \textbf{homological grading}, which is defined in terms of the other two by 
	$$h=\tfrac{1}{2}\overline{A}-\delta.$$	
\end{definition}

\begin{definition}\label{def:pqMod}
	Given an (ordered) matching $P=\{\{i_1,o_1\},\{i_2,o_2\}\}$, let $\pqMod\coloneqq \pqMod_P$ be the category of $\delta$-graded (and Alexander graded) curved type D structures over \(\Ad\) with curvature 
	\[p^4+q^4.\]
	More explicitly, an object of this category is a pair \((C,\partial)\), where \(C\) is a relatively $\delta$-graded (and Alexander graded) right $\Id$-module and 
	$\partial\co C\rightarrow C\otimes_{\Id} \Ad$ is a right \(\Id\)-module homomorphism which increases the $\delta$-grading by 1, preserves the Alexander grading (if defined), and satisfies
	\[(1_C\otimes \mu)\circ(\partial\otimes 1_{\Ad})\circ\partial=1_C\otimes (p^4+q^4),\]
	where $\mu$ denotes composition in $\Ad$.
	As in~\cite{PQMod}, we call objects of this category \textbf{peculiar modules} and usually consider the underlying $\Id$-modules with a preferred choice of basis.
	
	A morphism between two peculiar modules $(C,\partial)$ and $(C',\partial')$ is an $\Id$-module homomorphism $C\rightarrow C'\otimes_{\Id} \Ad$. The composition of two morphisms $f$ and $g$ is defined as 
	\[(g\circ f)=(1\otimes\mu)\circ(g\otimes 1_{\Ad})\circ f.\]
	We endow the space of morphisms $\Mor((C,\partial),(C',\partial'))$ with a differential~$D$ defined by
	\[D(f)= \partial'\circ f+f\circ\partial.\]
	This turns $\pqMod$ into an enriched category over the category of ordinary chain complexes over $\field$. The underlying ordinary category is obtained by restricting the morphism spaces to degree 0 elements in the kernel of $D$, giving us the usual notions of chain map and chain homotopy. For more details, see~\cite[Section~1.1]{PQMod}.	
\end{definition}

\begin{definition}\label{def:tanglepairing}
	Given two 4-ended tangles $T_1$ and $T_2$, let $L(T_1,T_2)$ be the link obtained by glueing $T_1$ to $T_2$ as shown in Figure~\ref{fig:glueing2tangles1}. 
\end{definition}

\begin{wrapfigure}{r}{0.3333\textwidth}
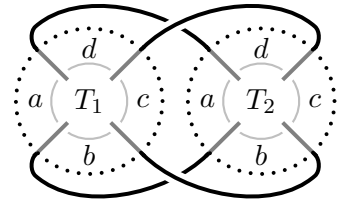

	\centering
	\tanglepairingI
	\caption{The link $L(T_1,T_2)$}\label{fig:glueing2tangles1}\bigskip
\end{wrapfigure}
\myfixwrapfig

\begin{theorem}[{\cite[Theorem~0.1 and Corollary~0.7]{PQMod}}]\label{thm:PecMod}
	For any oriented 4-ended tangle \(T\) in a \(\mathbb{Z}\)-homology 3-ball, one can define a peculiar module \(\CFTd(T)\in\ob(\pqMod_{P_T})\), where \(P_T\) is the ordered matching induced by \(T\).  Furthermore, its bigraded chain homotopy type is an invariant of the tangle. Moreover, let \(L=L(T_1,T_2)\) be the result of glueing two oriented 4-ended tangles \(T_1\) and \(T_2\) together such that their orientations match. Also, let \(V_t\) be as in Theorem~\ref{thm:intro:stabilization} and let \(\mr(T_1)\) be the mirror image of \(T_1\) with the orientation of all components reversed. Then
	\[
	\HFL(L)\otimes V_t
	\cong
	\Homology\left(\Mor\left(\CFTd(\mr(T_1)),\CFTd(T_2)\right)\right)
	\]
	if the four open components of the tangles become identified to the same component of colour $t$ and 
	\[
	\HFL(L)
	\cong
	\Homology\left(\Mor\left(\CFTd(\mr(T_1)),\CFTd(T_2)\right)\right)
	\]
	otherwise. 
\end{theorem}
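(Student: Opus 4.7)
The plan is to construct \(\CFTd(T)\) from a multi-pointed Heegaard diagram adapted to the tangle complement. First, for a 4-ended tangle \(T \subset M\) with boundary parametrization \((a,b,c,d)\), I would set up a Heegaard diagram \((\Sigma,\boldsymbol{\alpha},\boldsymbol{\beta},\mathbf{z},\mathbf{w})\) for \(M \smallsetminus \nu(T)\) in which four of the basepoints lie on \(\partial M\), one in the interior of each of the arcs \(a,b,c,d\). Generators of the underlying \(\Id\)-module are intersection tuples between the \(\alpha\)- and \(\beta\)-tori; each generator is assigned an idempotent \(\iota_i\) according to which pair of arcs its components occupy. The differential \(\partial\) counts index-one pseudo-holomorphic curves, weighting each by a monomial in \(\Ad\): a disk boundary passing the boundary basepoint in arc \(s_i\) from the back component of \(\partial M \smallsetminus \textcolor{red}{S^1}\) contributes a factor \(p_i\), and from the front component a factor \(q_i\). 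The relation \(p_i q_i = 0 = q_i p_i\) reflects that no disk can pass a given boundary basepoint from both sides at once.

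The central technical step is to verify the curved structure equation \((1 \otimes \mu)\circ(\partial \otimes 1)\circ\partial = 1_C \otimes (p^4 + q^4)\). In the cylindrical formulation, the left-hand side counts ends of one-parameter moduli spaces of holomorphic curves. The usual disk-breaking ends cancel in pairs, but the boundary-parametrized setup admits additional boundary degenerations in which a disk pinches off and sweeps once around \(\partial M\); going around in the back direction contributes \(p_{1234} + p_{2341} + p_{3412} + p_{4123} = p^4\), and going around in the front direction gives \(q^4\). Proving that these are the \emph{only} surviving terms, each occurring with multiplicity one, is the main analytic point. Invariance of the bigraded chain homotopy type under the Heegaard moves (isotopy, handleslide, stabilization) would then follow in the standard way: each move induces an explicit chain homotopy equivalence in \(\pqMod_{P_T}\) via triangle, quadrilateral, or continuation counts, with compatibility of the \(\delta\)- and Alexander gradings reducing to a Maslov-index computation tailored to the multi-basepoint weighting.

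For the glueing formula, the plan is to realize \(\CFTd(T)\) as the image, under an explicit quotient functor, of a type D structure produced by Zarev's bordered sutured Heegaard Floer theory applied to \(M \smallsetminus \nu(T)\) equipped with a bordered sutured structure that places a sutured arc along \(\textcolor{red}{S^1}\) and meridional sutures on each closed component. Glueing two tangles corresponds to a box tensor product of the respective bordered sutured type A and type D structures, which by Zarev's pairing theorem computes the sutured Floer homology of the glued-up link complement. By Juh\'asz's interpretation of link Floer homology, this sutured Floer homology agrees with \(\HFL(L)\) tensored with one factor of \(V_t\) for each extra pair of meridional sutures created by the glueing — precisely the \(V_t\) that appears when the four open tangle ends all lie on the same component of \(L\). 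Identifying \(\CFTd(\mr(T_1))\) with the dual of \(\CFTd(T_1)\) is algebraic: mirroring swaps \(\alpha\) and \(\beta\) in any diagram, which interchanges \(p_i \leftrightarrow q_i\) in \(\Ad\) and produces precisely the dual type D structure whose paired homology is \(\Homology(\Mor(\CFTd(\mr(T_1)),\CFTd(T_2)))\).

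The main obstacle will be harvesting \emph{exactly} the curvature \(p^4 + q^4\) from boundary degenerations, with no additional terms, and constructing a clean quotient functor from Zarev's much larger bordered sutured algebra onto \(\Ad\) that intertwines the two glueing operations. Once these two pieces are in place, the remainder is careful bookkeeping of basepoints, orientations, and idempotents; working over \(\field\) conveniently removes any sign issues.
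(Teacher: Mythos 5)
This paper does not actually prove Theorem~\ref{thm:PecMod}: it is imported wholesale from~\cite{PQMod}, so your outline can only be compared with the strategy of that paper as it is summarized here (multi-pointed Heegaard diagrams, curvature $p^4+q^4$ arising from boundary degenerations, Zarev's bordered sutured theory plus Juh\'asz's stabilization for the glueing). Your sketch follows that strategy in broad terms, but it has a genuine gap at the glueing step. You propose to ``realize $\CFTd(T)$ as the image, under an explicit quotient functor, of a type D structure produced by Zarev's theory''. That cannot work as stated: $\BSD(M_T)$ is an honest, uncurved type D structure, and the image of an uncurved type D structure under the functor induced by an algebra homomorphism is again uncurved, whereas $\CFTd(T)$ has non-zero curvature $p^4+q^4$ in $\Ad$. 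The actual comparison runs in the opposite direction: one applies to $\CFTd(T)$ the quotient functor $\mathcal{F}_{ij}$ induced by $\Ad\rightarrow\Ad_{ij}=\Ad/(p_i=0=q_j)$, which kills the curvature, and identifies $\mathcal{F}_{ij}(\CFTd(T))$ with the image of $\BSD(M_T)$ under a quotient of a suitable idempotent-truncated subalgebra of Zarev's algebra onto $\Ad_{ij}$ (this is the pattern visible in the proofs of Lemma~\ref{lem:AddingASingleCrossing} and Theorem~\ref{thm:HalfIdBimodAction}). To then say anything about $\CFTd(T)$ itself and about $\Mor$ complexes over $\Ad$, one needs the separate fact that passing to these quotients loses no information (Theorem~\ref{thm:BasepointZero}, ie \cite[Corollary~4.48 and the proof of Corollary~5.7]{PQMod}), which rests on the classification of peculiar modules by immersed curves; nothing in your outline supplies this ingredient.

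A second, related gap: Zarev's pairing theorem yields a box tensor product of a type A with a type D invariant computing the sutured Floer homology of the glued-up complement, not the complex $\Mor\left(\CFTd(\mr(T_1)),\CFTd(T_2)\right)$. Passing to the $\Mor$ formulation requires an additional duality/pairing-via-morphism-spaces argument identifying the type A invariant of the mirrored, orientation-reversed piece with the appropriate dual of its type D invariant; your remark that mirroring ``swaps $\alpha$ and $\beta$ and produces precisely the dual type D structure'' is the right heuristic, but it is exactly the statement that needs proof, and it must be set up so that the curvatures of the two peculiar modules cancel in the differential on $\Mor$ (this is why $\Mor$ between curved complexes of equal curvature is a genuine chain complex at all). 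The parts you identify as the main analytic work---that the only surviving ends of the one-parameter moduli spaces are boundary degenerations contributing $p^4+q^4$ with multiplicity one, and the $V_t$ bookkeeping coming from extra pairs of meridional sutures---are indeed the right remaining issues and match the construction in~\cite{PQMod}.
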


$\CFTd$ is defined as a multi-pointed Heegaard Floer homology using a suitable notion of Heegaard diagrams for 4-ended tangles in which the parametrization $\textcolor{red}{S^1}$ of the boundary of the tangle plays the role of an $\alpha$-circle. All the results of this paper can be understood without any knowledge of how this construction works. Therefore, we will only give more background on this when it seems necessary and point the interested reader to the relevant sections in~\cite{PQMod} for more details. 

\begin{definition}\label{def:generalizedAlexgrading}
	We sometimes want to use an Alexander grading without specifying a particular ordered matching ie orientation of the tangle. So we define a grading on $\Ad$ and $\CFTd(T)$ which takes values in 
	$$
	\AlexGr\coloneqq\mathbb{Z}^4/\im(\mathbb{Z}\rightarrow\mathbb{Z}^4,n\mapsto(n,n,n,n))
	$$
	such that the grading of $p_i,q_i\in\Ad$ is 1 in the $i^\text{th}$ component and 0 in the other three components. 
	We usually write an element $(a,b,c,d)$ in this group as $\Alex{a}{b}{c}{d}$. Note that this grading extends to $\pqMod$, since the gradings of $p^4$ and $q^4$ are both $\Alex{1}{1}{1}{1}=\Alex{0}{0}{0}{0}$. We call $\AlexGr$ the \textbf{generalized Alexander grading}.
\end{definition}

\begin{Remark}
	Let us also note that each closed component $t'$ of a tangle $T$ gives rise to an additional $\mathbb{Z}$-grading $A_{t'}$ which is preserved along differentials of $\CFTd(T)$ and vanishes on $\Ad$. As already noted in~\cite[Observation~2.31]{PQMod}, this implies that $\CFTd(T)$ decomposes into the direct sum over the Alexander
	gradings of closed tangle components. This is why we will usually suppress these gradings from our notation. 
\end{Remark}

\begin{wrapfigure}{r}{0.36\textwidth}
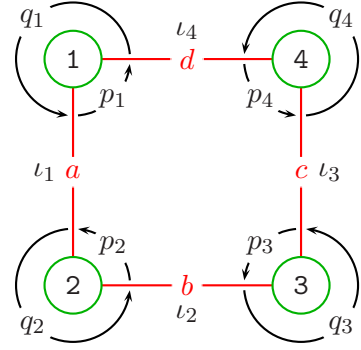

	\centering
	\nutshell
	\caption{A geometric interpretation of the quiver from Figure~\ref{fig:quiverForAd} in terms of the parametrization of the 4-punctured sphere}\label{fig:nutshell}%\vspace*{-20pt}
\end{wrapfigure}
%\myfixwrapfig

\subsection{Classification results for peculiar modules}

In \cite{PQMod}, the author classified peculiar modules in terms of immersed curves on the 4-punctured sphere, using an algorithm from~\cite{HRW} due to Hanselman, J.\,Rasmussen and Watson. 

\begin{definition}\label{def:loops}
	A \textbf{loop} or \textbf{immersed curve with local system} on the 4-punctured sphere $\FourPuncturedSphere$ is a pair $(\gamma, X)$ where $\gamma$ is an immersion of an oriented circle into $\FourPuncturedSphere$ representing a non-trivial primitive element of $\pi_1(\FourPuncturedSphere)$ and $X\in \GL_n(\field)$ for some integer $n$. We consider $\gamma$ up to isotopy, $X$ up to matrix similarity and $(\gamma,X)$ up to simultaneous orientation reversal of $\gamma$ and inversion of the matrix $X$. 
	Given an immersed curve $(\gamma, X)$, we call $\gamma$ the \textbf{underlying curve} of the loop and $X$ its \textbf{local system}. We sometimes write $\gamma_X$ for $(\gamma,X)$.
	
	In~\cite[Definition~4.28]{PQMod}, the author defined a \textbf{collection of immersed curves with local systems} as a set of loops \(\{(\gamma_i,X_i)\}_{i\in I}\) such that the immersed curves $\gamma_i$ are pairwise non-homotopic as unoriented curves.
	Then, two collections of loops \(\{(\gamma_i,X_i)\}_{i\in I}\) and \(\{(\gamma'_j,X'_j)\}_{j\in J}\) were considered equivalent iff there was a bijection \(\iota\co I\rightarrow J\) such that \(\gamma_i\)~was homotopic to~\(\gamma'_{\iota(i)}\) and \(A_i\)~was similar to~\(A'_{\iota(i)}\) for all $i\in I$.
	
	In this paper, we will use a slightly different way to bundle loops together: a \textbf{multicurve} is an unordered set of loops each of whose local system is the companion matrix of a polynomial in $\field[x]$, subject to the condition that each set of polynomials whose corresponding curves are homotopic to each other can be ordered such that one polynomial divides the next. Given a collection $L$ of immersed curves with local systems, we can obtain a multicurve as follows: first, we put each local system $X$ of a loop $\gamma_X$ in $L$ into Frobenius normal form as in \cite[Section~4.7]{PQMod}. Then, we split $\gamma_X$ into the direct summands of curves $\gamma_{X_i}$, where $X_i$ are the diagonal blocks of the Frobenius normal form of $X$.  Since the set of diagonal block matrices in the Frobenius normal form is unique, two collections of loops are considered equivalent iff their corresponding multicurves are actually the same. Conversely, we can go from a multicurve to a collection of immersed curves by taking the direct sum of all local systems whose underlying curves are homotopic as unoriented curves. 
\end{definition}

\begin{definition}[{cp \cite[Definition~4.30]{PQMod}}]\label{def:loopsTOpqMod}
	Given a loop $\gamma_X$ and $s\in\{a,b,c,d\}$, let $C.\iota_{s}$ be the right $\Id$-module over $\field$ generated by the intersection points of $\gamma$ with the arc $s$, counted with multiplicity $\dim X$. Let us assume that each of the elementary curve segments of $\gamma$, ie the immersed intervals obtained by cutting $\gamma$ along the parametrizing arcs of the 4-punctured sphere, connects distinct arcs. Then $C.\iota_{s}$ is an invariant of the homotopy type of $\gamma$. We define the $\Id$-module $C$ as the direct sum of $C.\iota_{s}$ over $s=a,b,c,d$. 
	
	Next, we want to define a differential on $C$. For this, let us first consider an elementary curve segment between two intersection points, say $x$ and $y$. We can orient it in two ways, namely $x\rightarrow y$ and $y\rightarrow x$. The oriented elementary curve segment $x\rightarrow y$ is homotopic to an oriented path on the union of the parametrizing arcs and the boundary paths $p_i$ and $q_i$, $i=1,2,3,4$, in Figure~\ref{fig:nutshell}. Moreover, if we want it to avoid at least one boundary component of the 4-punctured sphere and if we also want its orientation to agree with the orientations of the boundary paths $p_i$ and $q_i$, then this path is unique up to reparametrization. Let $\{a_1,\dots, a_j\}$ be the ordered list of boundary paths $p_i$ and $q_i$ that we meet along this path. (Note that by construction, $1\leq j\leq 3$.) By interpreting the boundary paths $p_i$ and $q_i$ as the generators of $\Ad$, we can define $a(x\rightarrow y)$ as the product $a_j\cdots a_1$. Similarly, we define $a(y\rightarrow x)$. (Note that $a(y\rightarrow x)\cdot a(x\rightarrow y)$ is equal to a product of all $p_i$ or all $q_i$, depending on whether the elementary curve segment sits on the front or back of the parametrized 4-punctured sphere.)
	
	Now we are ready to define the differential. If $\dim X=1$, let $\partial\co C\rightarrow C\otimes_{\Id}\Ad$ be the homomorphism given on generators $x$ by 
	$$\partial(x)\coloneqq y\otimes a(x\rightarrow y)+z\otimes a(x\rightarrow z),$$
	where $x\rightarrow y$ and $x\rightarrow z$ are the two elementary curve segments starting at $x$. If the local system $X$ is the $n\times n$ identity matrix and $v\in\field^n$, we define
	$$\partial(x\otimes v)\coloneqq(y\otimes v) \otimes a(x\rightarrow y)+(z\otimes v)\otimes a(x\rightarrow z).$$
	Finally, if $X$ is a general local system with $\dim X=n$, we define $\partial$ as before, except that we replace the contribution of a single elementary curve segment between two intersection points $x$ and $y$ by
	$$
	(y\otimes X(v)) \otimes a(x\rightarrow y)
	\quad\text{ and }\quad
	(x\otimes X^{-1}(v)) \otimes a(y\rightarrow x),
	$$
	where we are assuming that the orientations of $\gamma$ and $x\rightarrow y$ agree. 
	We define $\Pi(\gamma_X)\coloneqq (C,\partial)$. By construction, $\Pi(\gamma_X)$ is an ungraded peculiar module. One can also define bigradings on loops, see~\cite[Definitions~4.28 and~5.1]{PQMod}, but since they are essentially defined as bigradings on $\Pi(\gamma_X)$, these definitions are rather unilluminating. 
\end{definition}

As noted in~\cite[Definition~4.30]{PQMod}, the chain homotopy type of the image of a (bi)graded loop under $\Pi$ is an invariant of the equivalence class of a loop. The following result is more remarkable: 

\begin{theorem}[{\cite[Theorem~0.4, Definition~4.30]{PQMod}}]\label{thm:classificationPecMod}
	Every peculiar module \((C,\partial)\) is chain homotopic to \(\Pi(L)\) for some multicurve \(L\) which is unique up to equivalence. Thus, we can associate with \((C,\partial)\) a multicurve \(L(C,\partial)=\{(\gamma_i,X_i)\}_{i\in I}\) such that if \((C',\partial')\) is another  peculiar module with \(L(C',\partial')=\{(\gamma'_{i'},X'_{i'})\}_{i'\in I'}\), \((C,\partial)\) and \((C',\partial')\) are homotopic iff the multicurves are the same. Moreover, the homology of the space of morphisms between two peculiar modules is chain homotopic to the Lagrangian intersection Floer theory between their associated multicurves:
	\[\Homology(\Mor((C,\partial),(C',\partial')))\cong\LagrangianFH(L(C,\partial),L(C',\partial')).\]
\end{theorem}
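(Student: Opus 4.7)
The plan is to adapt the Hanselman--Rasmussen--Watson (HRW) strategy to the peculiar algebra $\Ad$: first, reduce an arbitrary peculiar module $(C,\partial)$ to a canonical form by a sequence of Gaussian eliminations, then recognize this canonical form as $\Pi(L)$ for some multicurve $L$, establish uniqueness of $L$, and finally match morphism spaces with Lagrangian Floer theory. The essential combinatorial input is that $\Ad$ is a path algebra with the very rigid relations $p_iq_i = q_ip_i = 0$ and the curvature is concentrated at $p^4+q^4$, so reduced differentials are forced to look like ``train tracks'' on $\FourPuncturedSphere$.

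For existence, I would first apply the standard cancellation lemma to eliminate every idempotent component of $\partial$, obtaining a homotopy equivalent reduced peculiar module $(C,\partial)$ in which every arrow is labelled by a nonzero nonidempotent element of $\Ad$. The curvature identity $\partial^2 = 1_C\otimes (p^4 + q^4)$ together with the relations $p_i q_i = 0 = q_i p_i$ severely constrains which patterns of arrows can appear at each generator: one checks combinatorially that each idempotent summand $C.\iota_s$ must have an equal number of ``incoming'' and ``outgoing'' arrows of types $p_i$ and $q_i$, so that the arrows assemble into a disjoint union of closed cycles. Following HRW, I would then show that arrows labelled by compound algebra elements $p_{i(i+1)\cdots j}$ and $q_{i(i-1)\cdots j}$ can always be realized geometrically by an immersed curve segment winding the appropriate number of times around the corresponding punctures. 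Assembling the cycles into immersed loops in $\FourPuncturedSphere$ produces the underlying curves; maximal subcycles on which the arrow pattern is not a single loop but a full matrix $X$ are recorded as local systems, which can be put in Frobenius normal form and split into the blocks demanded by the multicurve definition. Primitivity of each loop follows from the fact that a non-primitive or contractible loop would either contradict the curvature equation or be cancelled by an additional reduction step.

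For uniqueness, the key step is a direct calculation of the morphism complex $\Mor(\Pi(\gamma_X), \Pi(\gamma'_{X'}))$ on canonical (already reduced) loop modules. Because of the explicit formula for $\Pi$ in Definition~\ref{def:loopsTOpqMod}, the generators of this complex are in bijection with intersection points of $\gamma$ and $\gamma'$ on $\FourPuncturedSphere$ (counted with the appropriate multiplicity for the local systems), and the differential counts bigons exactly as in Lagrangian Floer theory. In particular, $H_*(\Mor(\Pi(\gamma_X),\Pi(\gamma_X)))$ encodes the similarity class of $X$ (via the centralizer of $X$), and the homology vanishes between modules of genuinely different underlying curves. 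This, combined with a Krull--Schmidt decomposition for homotopy classes of reduced peculiar modules, shows that the multicurve associated to $(C,\partial)$ is unique up to equivalence, and that two peculiar modules are homotopic iff their multicurves agree.

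The last statement of the theorem, the identification with Lagrangian intersection Floer homology, then follows from the calculation above by naturality and homotopy invariance of both sides. The hardest step is the combinatorial part of existence: ensuring that the reduction algorithm terminates with a purely loop-like configuration for \emph{every} peculiar module, and that the geometric realization of compound arrows as winding curve segments remains well defined globally, requires a careful case analysis of the local models at each puncture, completely analogous to but a bit more intricate than the torus boundary case of HRW because one now has four punctures and both $p$-type and $q$-type generators pulling in opposite rotational directions. Once this local picture is in place, the rest of the argument is largely formal.
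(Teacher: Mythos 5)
This theorem is not proved in the present paper at all: it is quoted from~\cite[Theorem~0.4, Definition~4.30]{PQMod}, where the classification is obtained by adapting the Hanselman--Rasmussen--Watson algorithm to curved complexes over marked surfaces with arc systems. Your proposal follows the same general route, but the two steps you lean on hardest are asserted rather than proved, and as stated they are false. First, reducedness plus the curvature relation $\partial^2=1_C\otimes(p^4+q^4)$ and the relations $p_iq_i=0=q_ip_i$ do \emph{not} force the arrows of a reduced peculiar module to assemble into disjoint closed cycles: a reduced module can have generators of arbitrarily high valence, and the entire content of the classification is the simplification algorithm (iterated base changes via the Clean-Up Lemma, i.e.\ ``arrow pushing'', organized by a filtration on the algebra labels, together with a termination argument) that brings an arbitrary reduced module into valence-two ``train track'' form before any curve can be read off. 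Gaussian elimination only removes identity components and does not produce this form. Moreover, the curvature is not handled by a local case analysis at the punctures; in \cite{PQMod} it is dealt with structurally, by passing to the quotient algebras $\Ad_{ij}$ via the functors $\mathcal{F}_{ij}$ (cf.\ Theorem~\ref{thm:BasepointZero} here), where the curvature vanishes and the marked-surface machinery applies, and then lifting the conclusion back. None of this appears in your sketch, and it is precisely the part you concede is ``the hardest step''.

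Second, your uniqueness argument rests on the claim that the morphism homology ``vanishes between modules of genuinely different underlying curves''. This is not true: two non-homotopic curves on $\FourPuncturedSphere$ (for instance two rational curves of different slopes) intersect essentially, and $\LagrangianFH$ between them is nonzero; indeed the Glueing Theorem depends on exactly such nonvanishing. So homotopy classes of loop modules are not distinguished by a vanishing criterion. In \cite{PQMod} injectivity of $\Pi$ on equivalence classes is established differently, by analysing how the associated curves with local systems are determined by the module (essentially via the normal form produced by the algorithm and pairings with test objects), and only after that does the identification $\Homology(\Mor(\Pi(L),\Pi(L')))\cong\LagrangianFH(L,L')$ follow, which itself requires putting the curves in admissible position and checking that the differential on the morphism complex counts exactly the Floer bigons, including the contributions of local systems. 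Your extraction of local systems from ``maximal subcycles'' and the appeal to a Krull--Schmidt property are plausible in outline but would need the same normal-form input you have not supplied. In short: same strategy as the cited proof, but the two load-bearing steps (termination/loop form of the algorithm, and uniqueness) are missing or incorrect as stated.
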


\begin{definition}
	Given an oriented 4-ended tangle \(T\) in a \(\mathbb{Z}\)-homology 3-ball, we define $\HFT(T)$ as the multicurve associated with $\CFTd(T)$. 
\end{definition}

\begin{theorem}[{\cite[Theorem~5.9]{PQMod}}]\label{thm:GlueingTheorem:HFT}
	With the same notation as in Theorems~\ref{thm:PecMod} and~\ref{thm:classificationPecMod}, 
	\[
	\HFL(L)\otimes V_t
	\cong
	\LagrangianFH\left(\HFT(\mr(T_1)),\HFT(T_2)\right)
	\]
	if the four open components of the tangles become identified to the same component of colour $t$ and 
	\[
	\HFL(L)
	\cong
 	\LagrangianFH\left(\HFT(\mr(T_1)),\HFT(T_2)\right)
	\]
	otherwise. 
\end{theorem}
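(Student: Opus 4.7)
The plan is to combine Theorems~\ref{thm:PecMod} and~\ref{thm:classificationPecMod} directly, since the statement is essentially a translation of the glueing formula for peculiar modules into the language of multicurves. There is no new geometric input needed.

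First, I would invoke Theorem~\ref{thm:classificationPecMod}, which guarantees that $\CFTd(\mr(T_1))$ is chain homotopic to $\Pi(L_1)$ and $\CFTd(T_2)$ is chain homotopic to $\Pi(L_2)$, where $L_1 = \HFT(\mr(T_1))$ and $L_2 = \HFT(T_2)$ by the definition of $\HFT$ preceding the statement. Since taking morphism spaces and their homology is invariant under chain homotopy equivalence in $\pqMod$ (this is built into the enriched structure recalled in Definition~\ref{def:pqMod}), we obtain
\[
\Homology\bigl(\Mor(\CFTd(\mr(T_1)),\CFTd(T_2))\bigr)
\cong
\Homology\bigl(\Mor(\Pi(L_1),\Pi(L_2))\bigr).
\]

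Next, the second half of Theorem~\ref{thm:classificationPecMod} identifies the right-hand side with the Lagrangian intersection Floer homology of the two multicurves:
\[
\Homology\bigl(\Mor(\Pi(L_1),\Pi(L_2))\bigr)
\cong
\LagrangianFH(L_1,L_2)
=
\LagrangianFH\bigl(\HFT(\mr(T_1)),\HFT(T_2)\bigr).
\]

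Finally, I would substitute this identification into the two cases of Theorem~\ref{thm:PecMod}. When the four open tangle ends of $T_1$ and $T_2$ close up into a single link component of colour $t$, the first case of Theorem~\ref{thm:PecMod} gives $\HFL(L)\otimes V_t$ on the other side; otherwise, the second case gives $\HFL(L)$. Chaining these isomorphisms produces the two desired formulas. Since each step is a direct citation of an established result, there is no real obstacle here; the only thing to check is that the case distinction (whether $V_t$ appears) transfers correctly from Theorem~\ref{thm:PecMod}, which it does because the condition on closing up components is a topological one and does not interact with the passage from peculiar modules to their associated multicurves.
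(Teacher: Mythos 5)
Your derivation is correct: Theorem~\ref{thm:classificationPecMod} states directly that the homology of the morphism space between two peculiar modules is the Lagrangian Floer homology of their associated multicurves, so substituting this into the two cases of Theorem~\ref{thm:PecMod} (and using that $\HFT$ is by definition the multicurve associated with $\CFTd$) gives exactly the statement; note that your intermediate step via $\Pi(L_1)$, $\Pi(L_2)$ and homotopy invariance of $\Mor$ is not even needed. The paper itself gives no proof here -- the theorem is quoted verbatim from \cite[Theorem~5.9]{PQMod} -- and your argument is precisely the intended combination of the two cited results.
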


\begin{question}[geography question for \(\HFT\)]
	Which loops arise as the invariants of 4-ended tangles?
\end{question}

Many results in this paper can be regarded as attempts to answer this question. In~\cite[Observation~6.1]{PQMod}, the author already observed the following ``global'' obstruction.

\begin{observation}\label{obs:AlexGradingOfCFTdLoops}
	For any 4-ended tangle \(T\) in a $\mathbb{Z}$-homology 3-ball \(M\), the underlying curve of each component of \(\HFT(T)\) lies in the kernel of 
	\[\pi_1(\partial M\smallsetminus \partial T)\rightarrow\pi_1(M\smallsetminus \nu(T))\rightarrow H_1(M\smallsetminus \nu(T)),\]
	where the first map is induced by the inclusion and the second is the Abelianization map. In particular, \(\HFT(T)\) does not contain any component whose underlying curve is a meridional curve around a tangle end. 
	A similar obstruction comes from the $\delta$-grading.
\end{observation}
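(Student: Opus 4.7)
My plan is to use that each $A_{t_k}$ is, by construction \cite[Theorem~0.1]{PQMod}, a well-defined relative $\mathbb{Z}$-grading on $\CFTd(T)$ for every colour $t_k$ of $T$ (open and closed). Combined with $\CFTd(T)\simeq\Pi(\HFT(T))$ from Theorem~\ref{thm:classificationPecMod}---under which relative gradings are preserved---this forces each component $\Pi(\gamma_X)$ of $\Pi(\HFT(T))$ to carry a well-defined $A_{t_k}$ for every colour. The strategy is then to read off the claimed homological condition on $\gamma$ directly from this closure property around the loop.

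Concretely, I would orient $\gamma$, enumerate its intersections with the parametrizing arcs in cyclic order as $x_0,x_1,\dots,x_{n-1},x_n=x_0$, and let $a_i\coloneqq a(x_i\to x_{i+1})$ denote the algebra element attached to the $i$-th forward elementary segment in Definition~\ref{def:loopsTOpqMod}. The differential enforces $A_{t_k}(x_{i+1})=A_{t_k}(x_i)-A_{t_k}(a_i)$, so well-definedness of $A_{t_k}$ around the loop amounts to
\[
\sum_{i=0}^{n-1} A_{t_k}(a_i)=0
\qquad\text{for every colour } t_k.
\]
The next step is to identify this algebraic vanishing with the asserted kernel condition. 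Each $a_i$ is a monomial $p_J$ or $q_J$ whose realisation in $\partial M\smallsetminus\partial T$ is a path that winds consecutively once around each tangle end indexed by $J$, and the concatenation of these paths over all $i$ is a loop freely homotopic to $\gamma$. Since $A_{t_k}$ assigns $\pm 1$ to $p_j,q_j$ precisely when $j$ is an end of the $t_k$-component (with sign depending on whether $j$ is the input or the output end) and $0$ otherwise, the displayed sum is exactly the algebraic intersection number of $\gamma$ with a meridional disk for the $t_k$-component, equivalently the coefficient of the meridian $m_{t_k}$ in the image of $[\gamma]$ under
\[
H_1(\partial M\smallsetminus\partial T)\longrightarrow H_1(M\smallsetminus\nu(T)).
\]
As $M$ is a $\mathbb{Z}$-homology 3-ball, the meridians $m_{t_k}$ freely generate the right-hand side, so the simultaneous vanishing of these coefficients over all colours is equivalent to $[\gamma]$ lying in the stated kernel. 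A meridional curve around a single tangle end has non-zero image in $H_1(M\smallsetminus\nu(T))$ and is therefore ruled out.

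The final $\delta$-clause will follow by the analogous computation: the telescoping identity $\delta(x_{i+1})-\delta(x_i)=1-\delta(a_i)$ summed around $\gamma$ yields the independent constraint $n=\sum_i\delta(a_i)$, a second, orientation-free restriction on the winding of $\gamma$. The one step where I expect to have to be careful is the identification of the purely algebraic sum $\sum_i A_{t_k}(a_i)$ with the geometric meridional intersection number---this is a matter of unpacking how the boundary paths $p_j$ and $q_j$ in Figure~\ref{fig:nutshell} encode the windings around the tangle ends in the parametrization of $\FourPuncturedSphere$, but once made precise it is essentially tautological.
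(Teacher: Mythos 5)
Your argument is correct, and in fact there is no in-paper proof to compare it with: the statement is quoted from \cite[Observation~6.1]{PQMod}, and the grading-consistency argument you give (a consistent relative $A_{t_k}$ around each loop forces the total Alexander weight of the forward labels to vanish, which is then identified with the meridian coefficient of $[\gamma]$ in $H_1(M\smallsetminus\nu(T))$, the latter being freely generated by meridians since $M$ is a $\mathbb{Z}$-homology ball) is exactly the intended one, including the parallel telescoping for $\delta$. One small precision for the step you flagged: a full loop around the tangle end $j$ traverses \emph{both} $p_j$ and $q_j$, so the cochain assigning $A_{t_k}(p_j)=A_{t_k}(q_j)$ to the corner paths and $0$ to the arcs is a cocycle (because $\sum_j A_{t_k}(p_j)=0=\sum_j A_{t_k}(q_j)$ kills the boundaries of the two faces) whose pairing with $[\gamma]$ equals $\sum_i A_{t_k}(a_i)$ and equals \emph{twice} the $m_{t_k}$-coefficient of the image of $[\gamma]$, not the coefficient itself --- a factor that is harmless for the vanishing statement, but worth getting right since, e.g., for the horizontal rational curve the forward labels are $p_{41}$ and $q_{14}$ (not $p_{41}$ and $q_{32}$), and using the wrong pair would make the sum vanish for every matching and lose the obstruction.
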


\subsection{Quotient functors for peculiar modules}\label{subsec:BasepointZero}
Theorem~\ref{thm:classificationPecMod} can be generalized to a whole class of categories of curved complexes associated with marked surfaces with arc systems. In fact, this is the perspective taken in~\cite[Section~4]{PQMod}. For the purpose of the present paper, it is not necessary to recall any details about marked surfaces and arc systems; we only need to discuss one of the observations that follow from taking this broader perspective. 

Recall that the differential $\CFTd(T)$ is defined in terms of counting holomorphic curves and recording how often they cover certain basepoints which correspond to the elementary algebra elements $p_i$ and $q_i$. The curvature terms in $\CFTd(T)$ originate from boundary degenerations which cover all the basepoints $p_i$ or all the basepoints $q_i$ exactly once. One way to avoid these curvature terms is to simply ignore those holomorphic curves covering the basepoints $p_i$ and $q_j$ for a certain choice of $i,j\in\{1,2,3,4\}$. In terms of the algebra $\Ad$, this corresponds to setting these two generators equal to 0: 

\begin{definition}
	For \(i,j\in\{1,2,3,4\}\), the quotient homomorphism 
	$$\pi_{ij}\co \Ad\rightarrow \Ad_{ij}\coloneqq \Ad/(p_i=0=q_j)$$
	induces a functor 
	\[\mathcal{F}_{ij}\co\pqMod\rightarrow\pqMod_{ij},\]
	where \(\pqMod_{ij}=\Mod^{\Ad_{ij}}\) is the category of (bi)graded type D structures over $\Ad_{ij}$.
\end{definition}

In terms of marked surfaces with arc systems, the homomorphism $\pi_{ij}$ corresponds to adding a marker to each of the two boundary paths $p_i$ and $q_j$ in Figure~\ref{fig:nutshell}. The classification result for $\pqMod_{ij}$ is analogous to Theorem~\ref{thm:classificationPecMod}, except that we need to enlarge the set of immersed curves to include non-compact ones. Moreover, the functor $\mathcal{F}_{ij}$ is essentially the identity on immersed curves. This implies the following result:

\begin{theorem}[{\cite[Corollary~4.48, see also the proof of Corollary~5.7]{PQMod}}]\label{thm:BasepointZero}
	Let \(i,j\in\{1,2,3,4\}\). Then two peculiar modules are chain homotopic iff the same is true for their images under \(\mathcal{F}_{ij}\).  %Similarly, two morphisms between two peculiar modules are chain homotopic iff the same is true for their images under \(\mathcal{F}_{ij}\)
\end{theorem}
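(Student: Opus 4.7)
The forward implication is immediate because any additive functor between dg-categories preserves chain homotopy equivalences. For the converse, the plan is to reduce to an equality of multicurves, exploiting the fact that $\mathcal{F}_{ij}$ is essentially the identity on the geometric side.

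First, by Theorem~\ref{thm:classificationPecMod}, I would replace the two peculiar modules $(C,\partial)$ and $(C',\partial')$ by $\Pi(L)$ and $\Pi(L')$ for uniquely determined multicurves $L$ and $L'$ on $\FourPuncturedSphere$. Next, I would invoke the classification for $\pqMod_{ij}$ announced in the paragraph preceding the theorem: objects of $\pqMod_{ij}$ are classified up to chain homotopy by an enlarged notion of multicurve on $\FourPuncturedSphere$ that also allows non-compact ends terminating at the two new markers on the boundary paths $p_i$ and $q_j$. Write $\Pi_{ij}$ for the curve-to-module construction in this enlarged setting. The key geometric observation, implicit in the excerpt, is that adding markers to two boundary paths does not change the algebraic invariant of a compact multicurve, so that $\mathcal{F}_{ij}(\Pi(L))$ is chain homotopic to $\Pi_{ij}(L)$ (now viewing $L$ as a multicurve in the enlarged sense), and likewise for $L'$.

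Combining these ingredients, the hypothesis that $\mathcal{F}_{ij}(C,\partial)$ and $\mathcal{F}_{ij}(C',\partial')$ are chain homotopic upgrades to a chain homotopy between $\Pi_{ij}(L)$ and $\Pi_{ij}(L')$, and the uniqueness clause of the classification in $\pqMod_{ij}$ then forces $L$ and $L'$ to be equivalent as (possibly non-compact) multicurves. Since both are in fact compact, they are equivalent as compact multicurves, and Theorem~\ref{thm:classificationPecMod} delivers the desired chain homotopy between $(C,\partial)$ and $(C',\partial')$. The main obstacle is making precise the identification $\mathcal{F}_{ij}(\Pi(L))\simeq\Pi_{ij}(L)$, that is, the statement that $\mathcal{F}_{ij}$ intertwines the two curve-to-module constructions on the two sides; this is built into the marked surfaces with arc systems framework of~\cite[Section~4]{PQMod}, where the algebraic quotient by $p_i$ and $q_j$ corresponds literally to adding two markers at the appropriate points of Figure~\ref{fig:nutshell}, so the proof reduces to citing the functoriality developed in that section.
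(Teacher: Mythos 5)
Your proposal is correct and follows essentially the same route as the paper: the paper does not reprove this statement but cites \cite[Corollary~4.48]{PQMod}, and the discussion preceding the theorem in Subsection~\ref{subsec:BasepointZero} sketches exactly your reduction, namely the classification of objects of $\pqMod_{ij}$ by possibly non-compact multicurves together with the observation that $\mathcal{F}_{ij}$ acts essentially as the identity on curves. Your identification $\mathcal{F}_{ij}(\Pi(L))\simeq\Pi_{ij}(L)$ and the appeal to the uniqueness clause of the enlarged classification is precisely the content delegated to the marked-surfaces-with-arc-systems framework of \cite[Section~4]{PQMod}.
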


\subsection{Generalized peculiar modules}\label{subsec:background:GeneralizedPQM}

In~\cite[Section~2]{PQMod}, the author also defined a tangle invariant $\CFTminus(T)$, which generalizes $\CFTd(T)$. Like $\CFTd(T)$, the invariant $\CFTminus(T)$ is a curved type D structure, but it is defined over a larger algebra $\Aminus_n$, where $n$ is equal to the number of closed components of the tangle $T$. For any $n>0$, there is an epimorphism $\Aminus_n\rightarrow\Aminus_0\eqqcolon\Aminus$, so we can always regard $\CFTminus(T)$ as a curved type D structure over $\Aminus$. This is what we will do throughout this paper, as it will be sufficient for  our purposes. 

\begin{definition}[{\cite[Definitions~2.10 and~2.11]{PQMod}}]
	Let $\Rpre$ be the free polynomial ring generated by the variables $p_i$ and $q_i$, where $i=1,2,3,4$. Let $\Apre$ be the $\Id$-$\Id$-algebra whose underlying $\Id$-$\Id$-bimodule structure is given by  $\iota_{s'}\Apre.\iota_{s}\coloneqq \Rpre$ for pairs $(s,s')$ of sites
	and whose algebra multiplication is defined by the unique $\Id$-$\Id$-bimodule homomorphism $\Apre\otimes_{\Id}\Apre\rightarrow \Apre$ which, for all triples $(s,s',s'')$ of sites, restricts to the multiplication map in $\Rpre$:
	\[\underbrace{\iota_{s''}.\Apre.\iota_{s'}}_{\Rpre}\otimes_{\Id}\underbrace{\iota_{s'}.\Apre.\iota_{s}}_{\Rpre}\rightarrow \underbrace{\iota_{s''}.\Apre.\iota_{s}}_{\Rpre}.\]
	Let $\R$ be the free polynomial ring in the variables $U_i$ for $i=1,2,3,4$. Via the inclusion
	$$ \R\hookrightarrow \Rpre,\quad U_i\mapsto p_iq_i,$$
	we can regard $\Apre$ as an $\R$-algebra. Let $\Aminus$ be the subalgebra of $\Apre$ generated as an $\R$-algebra by the idempotents in $\Id$ and 
	\[p_i\coloneqq \iota_{i-1}.p_i.\iota_i,\quad\text{ and }\quad q_i\coloneqq \iota_{i}.q_i.\iota_{i-1},\]
	where we take the indices $i=1,2,3,4$ modulo 4 with an offset of 1. Note that $p_iq_i=\iota_{i-1}.U_i$ and $q_ip_i=\iota_i.U_i$ as elements in $\Aminus$. Thus, any element in $\Aminus$ can be written uniquely as a sum of elements of the form 
	\[\iota_i.r, \quad p_ip_{i+1}\dots p_{k-1}p_{k}.r\quad\text{ and }\quad q_iq_{i-1}\dots q_{k+1}q_{k}.r,\]
	where $r\in \R$ is a monomial. This is the standard basis on $\Aminus$ as a vector space over $\field$. We call $\Aminus$ the \textbf{generalised peculiar algebra}. By setting $U_i=0$ for all $i=1,2,3,4$, we obtain the algebra $\Ad$. Note that we can use the corresponding 
	epimorphism
	$$\Aminus\longrightarrow\Ad$$
	to extend the $\delta$- and Alexander gradings of $\Ad$ to $\Aminus$ in a unique way.
\end{definition}

\begin{definition}[{\cite[Definition~2.16]{PQMod}}]\label{def:CFTminus}
	Given an oriented 4-ended tangle \(T\), $\CFTminus(T)$ is a curved type D structure over $\Aminus$ with curvature
	$$p^4 + q^4 + U_{i_1} U_{o_1} + U_{i_2} U_{o_2},$$
	where $\{\{i_1,o_1\},\{i_2,o_2\}\}$ is the ordered matching induced by $T$. 
	Like $\CFTd(T)$, $\CFTminus(T)$ is defined as a multi-pointed Heegaard Floer theory; we simply count more holomorphic curves in its differential. The epimorphism $\Aminus\rightarrow\Ad$ gives rise to a functor $\mathcal{E}$ between the categories of curved type D structures and $\CFTd(T)$ is the image of $\CFTminus(T)$ under this functor. 
\end{definition}

By~\cite[Theorem~2.17]{PQMod}, the relatively bigraded chain homotopy type of \(\CFTminus(T)\) is a tangle invariant. However, for the purpose of this paper, we only need to know that $\CFTd(T)$ lies in the image of $\mathcal{E}$. Of course, such a functor preserves chain homotopy types. However, it is not clear if, given two chain homotopic complexes over $\Ad$, the fact that one complex lies in the image of $\mathcal{E}$ implies that the same is true for the other complex. Nonetheless, we have the following result. 

\begin{proposition}\label{prop:arrow-pushing-for-CFTminus}
	Let \((C^-,\partial^-)\) be a bigraded curved complex over \(\Aminus\). Then \((C^-,\partial^-)\) is bigraded chain homotopic to a complex whose image under \(\mathcal{E}\) lies in the image of \(\Pi\). In particular, for any oriented 4-ended tangle \(T\), \(\Pi(\HFT(T))\) lies in the image of~\(\mathcal{E}\).
\end{proposition}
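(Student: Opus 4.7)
The plan is to transport the reduction procedure underlying Theorem~\ref{thm:classificationPecMod} from the ground algebra \(\Ad\) to \(\Aminus\), exploiting the fact that the functor \(\mathcal{E}\) is induced by a ring epimorphism which is the identity on idempotents and on the unit of the algebra. The basic principle is that ``being in the image of \(\Pi\)'' is detected entirely after applying \(\mathcal{E}\), so it suffices to perform the HRW-style simplification upstairs and argue that its image downstairs is the canonical loop model.

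First I would deduce the ``in particular'' clause from the main assertion. Given a tangle \(T\), apply the main statement to \((C^-,\partial^-) \coloneqq \CFTminus(T)\) to obtain a chain homotopic curved complex \((C'^-,\partial'^-)\) with \(\mathcal{E}(C'^-,\partial'^-) = \Pi(L)\) for some multicurve \(L\). Since \(\mathcal{E}\) preserves chain homotopies and \(\mathcal{E}(\CFTminus(T)) = \CFTd(T)\), the peculiar module \(\Pi(L)\) is chain homotopic to \(\CFTd(T) \simeq \Pi(\HFT(T))\). The uniqueness clause of Theorem~\ref{thm:classificationPecMod} then forces \(L = \HFT(T)\), so \(\Pi(\HFT(T))\) is literally the image of \((C'^-,\partial'^-)\) under \(\mathcal{E}\).

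For the main assertion, set \((C,\partial) \coloneqq \mathcal{E}(C^-,\partial^-)\) and apply Theorem~\ref{thm:classificationPecMod} to obtain a multicurve \(L\) with \((C,\partial) \simeq \Pi(L)\). I would recall that this chain homotopy is realised as a finite sequence of \emph{basic chain homotopies} of the sort alluded to in Subsection~\ref{subsec:background:GeneralizedPQM}: cancellations of pairs of generators joined by an identity arrow, possibly preceded by \(\Id\)-linear changes of basis. The crucial observation is that each such move lifts to \((C^-,\partial^-)\) over \(\Aminus\): because \(\mathcal{E}\) is the identity on generators and sends \(1 \in \Aminus\) to \(1 \in \Ad\), every identity arrow downstairs is the image of an identity arrow upstairs, and every \(\Id\)-linear change of basis is tautologically lifted. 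Carrying out the lifted sequence of moves yields an \(\Aminus\)-complex \((C'^-,\partial'^-)\) chain homotopic to \((C^-,\partial^-)\). Since \(\mathcal{E}\) is a ring homomorphism commuting with all of these operations, \(\mathcal{E}(C'^-,\partial'^-)\) coincides with the result of performing the same moves on \((C,\partial)\), namely \(\Pi(L)\).

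The subtle point, which I expect to be the main obstacle, is that cancellations performed over \(\Aminus\) may introduce new arrows labelled by monomials containing the variables \(U_i\): when post-composing a path through a cancelled pair, one may encounter products \(p_i q_i\) or \(q_i p_i\), which vanish in \(\Ad\) but equal \(U_i\) in \(\Aminus\). One must verify that only finitely many such new arrows appear at each step, so that \((C'^-,\partial'^-)\) remains a well-defined curved complex; this follows from the finite-dimensionality of \(C^-\) as an \(\Id\)-module together with the usual boundedness conventions for curved type D structures over \(\Aminus\). These extra \(U_i\)-arrows are harmless for the conclusion because they vanish under \(\mathcal{E}\) and therefore do not disturb the equality \(\mathcal{E}(C'^-,\partial'^-) = \Pi(L)\).
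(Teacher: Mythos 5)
Your reduction of the ``in particular'' clause to the main clause and your treatment of the cancellation step are essentially fine (though note that the paper justifies the lifting of identity components not by ``$\mathcal{E}$ sends $1$ to $1$'' but by homogeneity: the only homogeneous elements of $\delta$-degree $0$ in $\Aminus$ are the idempotents, so a component of $\partial^-$ mapping to an identity arrow must itself be an identity arrow). However, the main assertion has a genuine gap: you mischaracterise the simplification algorithm behind Theorem~\ref{thm:classificationPecMod}. It does not consist of cancellations ``possibly preceded by $\Id$-linear changes of basis''; besides the Cancellation Lemma it crucially uses the Clean-Up Lemma, i.e.\ changes of basis of the form $\id+h$ where $h$ is a morphism valued in $C\otimes_{\Id}\Ad$ with labels such as $p_i$, $q_i$ (this is the arrow-pushing that splits the complex into loop-shaped pieces). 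Such an $h$ is \emph{not} tautologically liftable as a legitimate move over $\Aminus$: the Clean-Up Lemma requires $h^2=0$ and $h\partial h=0$, and over $\Ad$ the latter is guaranteed by $\delta$-grading constraints together with the relation $p_iq_i=0=q_ip_i$, which fails in $\Aminus$ (these products equal $U_i$). Verifying that the lifted homotopies still satisfy these hypotheses is the entire content of the paper's proof: after reducing, one checks that a backwards differential opposite to $h$ would force $\delta(h)\leq\tfrac12$, hence $h\in\{p_i,q_i,\iota_i\}$, and each case is excluded by comparing Alexander gradings (e.g.\ $h=\iota_i$ would force the opposite arrow to be $U_j.\iota_i$, contradicting $A(h)+A(\partial)=0$). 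Your proposal never engages with this, so the key step ``carrying out the lifted sequence of moves yields an $\Aminus$-complex chain homotopic to $(C^-,\partial^-)$'' is unjustified.

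By contrast, the issue you single out as the main obstacle --- that cancellations over $\Aminus$ might create infinitely many new $U_i$-labelled arrows --- is not a real one: $C^-$ is a finitely generated $\Id$-module and each component of the differential is simply an element of $\Aminus$, so the cancelled complex is automatically well defined (the paper does not even comment on this). The new $U_i$-arrows are indeed harmless for the equality $\mathcal{E}(C'^-,\partial'^-)=\Pi(L)$, as you say, but the place where the relation $p_iq_i=U_i\neq 0$ genuinely threatens the argument is in the hypotheses of the Clean-Up Lemma, not in the output of the Cancellation Lemma.
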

\begin{proof}
	The strategy for the proof is to lift every chain homotopy for \((C,\partial)=\mathcal{E}(C^-,\partial^-)\) in the simplification algorithm from~\cite[Section~4]{PQMod} to a chain homotopy for \((C^-,\partial^-)\). This algorithm uses two tools for changing complexes while preserving their chain homotopy types, namely the Cancellation Lemma and the Clean-Up Lemma~\cite[Lemmas~1.22 and~1.24]{PQMod}, which we always apply to single components of the differentials. 
	
	Let us consider the Cancellation Lemma first. Like in $\Ad$, the only homogeneous elements in $\Aminus$ of $\delta$-grading 0 are the idempotents. So if $\partial$ contains an identity component, then the corresponding component of $\partial^-$ is also an identity component. Therefore, the complex obtained from \((C,\partial)\) by any cancellation along an identity component of $\partial$ is equal to the image of the complex obtained from $(C^-,\partial^-)$ by the cancellation of the corresponding component of $\partial^-$. By induction on the size of the complex, we may therefore assume that   \((C,\partial)\), and hence also \((C^-,\partial^-)\), is reduced. 
	
	For any application of the Clean-Up Lemma, we need to verify that the morphism $h$ satisfies two conditions, namely $h^2=0$ and $h\partial h=0$. (Note that $D(h)=h\partial+\partial h$, so assuming $h^2=0$, the conditions $D(h)h=0$ and $hD(h)=0$ are equivalent to $h\partial h=0$.) In the algorithm, the first condition is always satisfied, because we always choose $h$ such that it connects two different generators. The second condition $h\partial h=0$ is satisfied for peculiar modules because of $\delta$-grading constraints and the fact that $p_iq_i=0$. The latter is no longer true in $\Aminus$. Instead, we can argue with the $\delta$- and the Alexander grading. Suppose $h$ goes between two generators $x$ and $y$ and there is a non-zero component of the differential $\partial$ going in the opposite direction:
	$$
	\begin{tikzcd}%[row sep=0.5cm, column sep=0.5cm]
	x
	\arrow[bend left=10,dashed]{r}{h}
	&
	y
	\arrow[bend left=10]{l}{\partial}
	\end{tikzcd}
	$$
	Let us abuse notation and use the letters $h$ and $\partial$ also for the algebra elements in $\Aminus$ labelling these two arrows. Then 
	$$
	\delta(y)-\delta(x)+\delta(h)=0
	\quad\text{ and }\quad
	\delta(x)-\delta(y)+\delta(\partial)=1,
	\quad\text{ so }\quad
	\delta(h)+\delta(\partial)=1.
	$$ 
	Similarly, we see that 
	\begin{equation}\label{eqn:arrow-pushing-Alex}
	A(h)+A(\partial)=0.
	\end{equation}
	Since we are assuming that \((C,\partial)\) is reduced, ie $\delta(\partial)\geq\frac{1}{2}$, we have $\delta(h)\leq\frac{1}{2}$. In other words, $h\in\{p_i,q_i,\iota_i\mid i=1,2,3,4\}$. If $h=p_i$, then $\delta(\partial)=\frac{1}{2}$. This implies $\partial=q_i$, since the left and right idempotents of $\partial$ are the right and left idempotents of $h$, respectively. But $q_i$ and $p_i$ have the same (non-zero) Alexander grading, contradicting \eqref{eqn:arrow-pushing-Alex}. Similarly, we can argue in the case $h=q_i$. Finally, $h=\iota_i$ implies that $\delta(\partial)=1$. Then the left and right idempotents of $\partial$ agree, so $\partial=U_j.\iota_i$ for some $j=1,2,3,4$ and, again, we obtain a contradiction to \eqref{eqn:arrow-pushing-Alex}. 
\end{proof}

% MCG.tex

\section{The action of the mapping class group}\label{sec:ActionMCG}

In our definition of a 4-ended tangle $T$ (Definition~\ref{def:tangle}), we fixed a parametrization of the boundary of the tangle. 
The mapping class group of the 4-punctured sphere 
$$\Mod(\FourPuncturedSphere)=\pi_0(\operatorname{Homeo^+}(\FourPuncturedSphere))$$
acts on this parametrization and hence on the tangles. It also acts on the underlying curves of $\HFT(T)$. The main result from this section is that these actions commute with $\HFT$, so the invariant $\HFT(T)$ is independent of the parametrization: 

\begin{theorem}\label{thm:MCGaction}
	Let \(T\) be a 4-ended tangle in a \(\mathbb{Z}\)-homology 3-ball and \(\rho\in\Mod(\FourPuncturedSphere)\). Then 
	$$\HFT(\rho T)=\rho(\HFT(T)).$$ 
\end{theorem}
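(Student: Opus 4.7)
The plan is to reduce to a finite generating set of $\Mod(\FourPuncturedSphere)$ and then carry out an explicit bimodule computation for each generator. Recall that the mapping class group of the 4-punctured sphere is generated by the half twists $\sigma_1,\sigma_2,\sigma_3$ which permute adjacent pairs of punctures; hence it suffices to prove the identity $\HFT(\sigma_i T)=\sigma_i(\HFT(T))$ for these generators, and the general statement follows by writing $\rho$ as a word in the $\sigma_i$ and inducting on word length.

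Geometrically, passing from the parametrization of $T$ to the parametrization of $\sigma_i T$ amounts to gluing a standard mapping cylinder $\FourPuncturedSphere\times I$, equipped with a bordered sutured structure coming from the arc system parametrizing $\partial M$, onto $T$ along its boundary — with one boundary identified to $\partial M$ via the original parametrization and the other via the $\sigma_i$-twisted one. Under the dictionary between peculiar modules and bordered sutured type D modules (Subsection~\ref{subsec:background:GeneralizedPQM} together with the identifications the author develops in Section~\ref{sec:HalfId}), this gluing manifests as the box tensor product of $\CFTd(T)$ with a type AD bimodule $\PQTwisting(\sigma_i)$ over the peculiar algebra $\Ad$. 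The formula
\[
\CFTd(\sigma_i T)\;\simeq\; \PQTwisting(\sigma_i)\boxtimes \CFTd(T)
\]
is then a consequence of the pairing theorem in the bordered sutured setting. Everything in the theorem amounts to showing, on the one hand, that this bimodule $\PQTwisting(\sigma_i)$ can actually be written down, and on the other, that tensoring with it induces the expected action on multicurves.

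The main computational step is therefore to choose a convenient bordered sutured Heegaard diagram for the mapping cylinder of $\sigma_i$. Because $\FourPuncturedSphere$ has genus zero and only four punctures, such a diagram can be made small and the bimodule $\PQTwisting(\sigma_i)$ can be written out by hand — a finite check. Once $\PQTwisting(\sigma_i)$ is in hand, I would verify compatibility with the classification of Theorem~\ref{thm:classificationPecMod} by showing that for every loop $\gamma_X$ on $\FourPuncturedSphere$ one has
\[
\PQTwisting(\sigma_i)\boxtimes \Pi(\gamma_X)\;\simeq\;\Pi(\sigma_i(\gamma_X)).
\]
By the classification, it is enough to check this on a spanning set of curves: one can, for example, verify it on the four arcs connecting adjacent punctures, from which the case of an arbitrary loop follows by decomposing $\gamma$ into elementary segments and applying the pairing theorem locally.

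The hardest part I anticipate is the bimodule computation itself together with matching its higher $\delta_k$ operations to the combinatorial picture of half twists of immersed curves: higher structure maps of $\PQTwisting(\sigma_i)$ correspond to curve segments that wrap many times around a puncture after twisting, and one must keep careful track of the $\delta$- and Alexander gradings to confirm that everything matches on the nose. A secondary subtlety is verifying that this action descends correctly to the relative bigrading on $\HFT$ — in particular, that $\sigma_i$ permutes the Alexander gradings of rational/irrational components in the same way on both sides of the equation, which is ultimately dictated by how $\sigma_i$ acts on the arcs $a,b,c,d$ in the parametrization.
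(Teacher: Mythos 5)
Your overall strategy coincides with the paper's: reduce to half-twist generators of $\Mod(\FourPuncturedSphere)$, realize the change of parametrization by gluing a thickened $\FourPuncturedSphere$ onto the tangle complement, compute the resulting type AD bimodule from a small bordered sutured Heegaard diagram, and check that tensoring with it implements the geometric twist on curves. The genuine gap is in the algebraic setting you propose. You want a bimodule $\PQTwisting(\sigma_i)$ over the peculiar algebra $\Ad$ and the formula $\CFTd(\sigma_i T)\simeq\PQTwisting(\sigma_i)\boxtimes\CFTd(T)$ ``as a consequence of the pairing theorem''. This is not available as stated: $\CFTd(T)$ is a \emph{curved} type D structure over $\Ad$ (curvature $p^4+q^4$), so the box tensor product and the pairing theorem do not apply directly, and $\Ad$ is not the strands algebra of the arc diagram on $\partial M_T$ -- Zarev's theorem produces bimodules over the bordered sutured algebras, and the passage to $\Ad$ goes through inclusion and quotient functors. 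The paper circumvents this by first applying the quotient functors $\mathcal{F}_{43}$ and $\mathcal{F}_{34}$ (setting one $p$- and one $q$-variable to zero, which kills the curvature), computing the twist bimodule $\mathcal{D}(\tau)$ as an $\Ad_{43}$-$\Ad_{34}$-bimodule, and deriving the analogue of your formula via the Pairing Adjunction in Lemma~\ref{lem:AddingASingleCrossing} -- including an extra relabelling $\rr_{34}$ of the generalized Alexander grading, forced by the fact that the twist interchanges two tangle ends, which your grading discussion does not pin down. The second missing step is how to return from the quotient: an identification obtained only after applying $\mathcal{F}_{34}$ does not by itself identify the peculiar modules; one needs Theorem~\ref{thm:BasepointZero} (the quotient functors detect homotopy type) together with the classification Theorem~\ref{thm:classificationPecMod} to conclude $\HFT(\rho T)=\rho(\HFT(T))$. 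Without these two ingredients your argument does not close.

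A smaller point: reducing the curve-level verification to ``a spanning set'' of four test curves is not justified by the classification theorem, which does not propagate the action from finitely many curves to all of them. What does work -- and is what the paper does, and what your alternative suggestion amounts to -- is to check the action of the bimodule on each of the finitely many types of elementary curve segments of an arbitrary loop, verify that the resulting pieces reassemble (after cancelling identity components) into the segments of the twisted loop, and then handle nontrivial local systems separately.
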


$\Mod(\FourPuncturedSphere)$ is generated by the half-twists along the four parametrizing arcs, so it suffices to prove the theorem for those four generators. Because of the inherent symmetry of the parametrization, it is actually sufficient to understand just one of them. Let us focus on the half-twist $\tau$ whose action on tangles and the parametrizing arcs is shown in Figures~\ref{fig:AddingASingleCrossing} and~\ref{fig:DehnTwistArcs}, respectively. 

\begin{wrapfigure}{r}{0.3\textwidth}
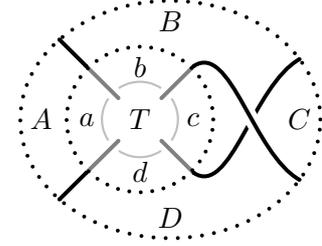

	\centering
	\vspace{-5pt}
	\AddingASingleCrossing
	\vspace{-5pt}
	\caption{The tangle $\tau T$}\label{fig:AddingASingleCrossing}
\end{wrapfigure}
Consider the two parametrizations $\{\textcolor{blue}{a}, \textcolor{blue}{b}, \textcolor{blue}{c}, \textcolor{blue}{d}\}$ and $\{\textcolor{red}{A}, \textcolor{red}{B}, \textcolor{red}{C}, \textcolor{red}{D}\}$ in Figure~\ref{fig:DehnTwistArcs}. They give rise to two distinct maps $\Pi\coloneqq\Pi_{\{\textcolor{blue}{a}, \textcolor{blue}{b}, \textcolor{blue}{c}, \textcolor{blue}{d}\}}$ and $\Pi'\coloneqq\Pi_{\{\textcolor{red}{A}, \textcolor{red}{B}, \textcolor{red}{C}, \textcolor{red}{D}\}}$ which establish the correspondence from Definition~\ref{def:loopsTOpqMod} between multicurves and peculiar modules. The reason why the labels $\textcolor{blue}{p_4}$ and $\textcolor{blue}{q_3}$ of the first parametrization and $\textcolor{red}{p_3}$ and $\textcolor{red}{q_4}$ of the second are crossed out is because it turns out to be useful to set these variables equal to zero in the corresponding algebras, ie to work with type D structures over the algebras $\Ad_{43}$ and $\Ad_{34}$.

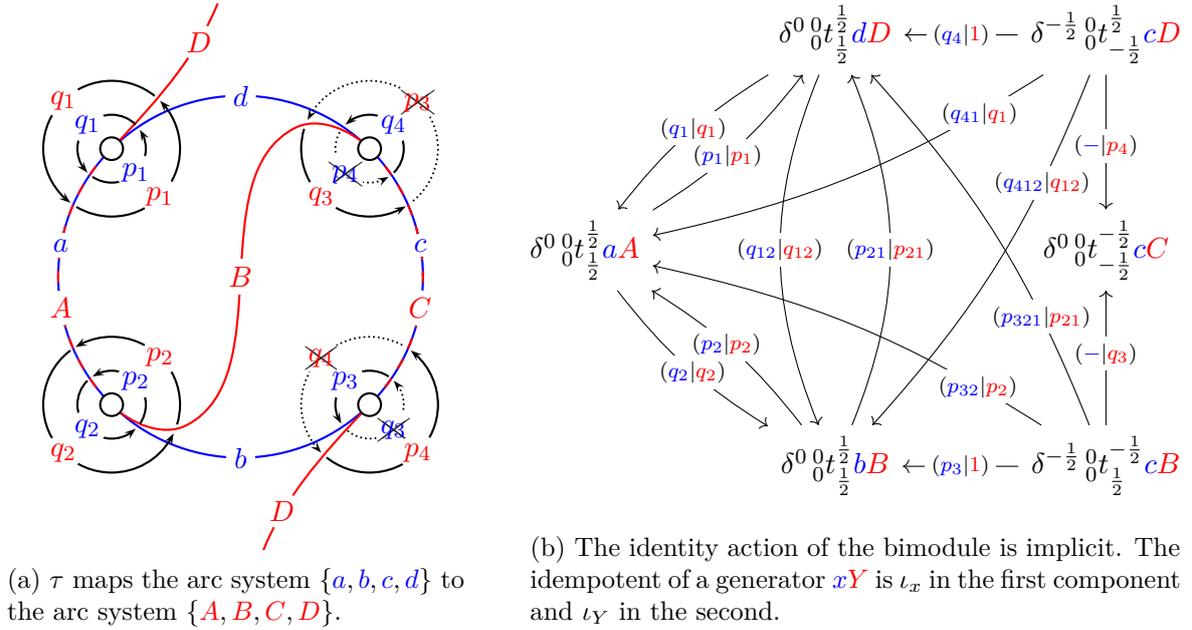
\begin{figure}[t]
	\centering
\begin{subfigure}[b]{0.4\textwidth}\centering
	\DehnTwistArcs
	\caption{$\tau$ maps the arc system $\{\textcolor{blue}{a}, \textcolor{blue}{b}, \textcolor{blue}{c}, \textcolor{blue}{d}\}$ to the arc system $\{\textcolor{red}{A}, \textcolor{red}{B}, \textcolor{red}{C}, \textcolor{red}{D}\}$.}\label{fig:DehnTwistArcs}
\end{subfigure}
\quad
\begin{subfigure}[b]{0.56\textwidth}
	$$
	\begin{tikzcd}[row sep=1.8cm, column sep=1.5cm]
	% 1st row
	&
	% dD
	\delta^0\Alex{0}{0}{\frac{1}{2}}{\frac{1}{2}}\textcolor{blue}{d}\textcolor{red}{D}
	\arrow[bend right=10,pos=0.45]{ld}[description]{\qq{1}{1}}
	\arrow[bend right=22]{dd}[description]{\qq{12}{12}}
	&
	% cD 
	\delta^{-\frac{1}{2}}\Alex{0}{0}{-\frac{1}{2}}{\frac{1}{2}}\textcolor{blue}{c}\textcolor{red}{D}
	\arrow{l}[description]{\qn{4}}
	\arrow[bend left=10,pos=0.16]{lld}[description]{\qq{41}{1}}
	\arrow[bend left=10,pos=0.27]{ldd}[description]{\qq{412}{12}}
	\arrow[pos=0.53]{d}[description]{\np{4}}
	\\
	% aA
	\delta^{0}\Alex{0}{0}{\frac{1}{2}}{\frac{1}{2}}\textcolor{blue}{ a}\textcolor{red}{A}
	\arrow[bend right=10,pos=0.45]{ur}[description]{\pp{1}{1}}
	\arrow[bend right=10,pos=0.55]{rd}[description]{\qq{2}{2}}
	&&
	% cC
	\delta^{0}\Alex{0}{0}{-\frac{1}{2}}{-\frac{1}{2}}\textcolor{blue}{ c}\textcolor{red}{C}
	\\
	&
	%bB
	\delta^{0}\Alex{0}{0}{\frac{1}{2}}{\frac{1}{2}}\textcolor{blue}{ b}\textcolor{red}{B}
	\arrow[bend right=10,pos=0.55]{lu}[description]{\pp{2}{2}}
	\arrow[bend right=22]{uu}[description]{\pp{21}{21}}
	&
	%cB
	\delta^{-\frac{1}{2}}\Alex{0}{0}{\frac{1}{2}}{-\frac{1}{2}}\textcolor{blue}{c}\textcolor{red}{B}
	\arrow[pos=0.53]{u}[description]{\nq{3}}
	\arrow[bend right=10,pos=0.16]{llu}[description]{\pp{32}{2}}
	\arrow[bend right=10,pos=0.27]{luu}[description]{\pp{321}{21}}
	\arrow{l}[description]{\pn{3}}
	\end{tikzcd}$$
\caption{The identity action of the bimodule is implicit. The idempotent of a generator $\textcolor{blue}{ x}\textcolor{red}{ Y}$ is $\iota_x$ in the first component and $\iota_Y$ in the second.}\label{fig:DehnTwistBimodule}
\end{subfigure}
\caption{The action of the half-twist $\tau$ on the parametrization of the 4-punctured sphere (a) and the type AD bimodule $\mathcal{D}(\tau)$ (b)}\label{fig:DehnTwist}
\end{figure}

\begin{definition}
	Let $\mathcal{D}(\tau)$ be the strictly unital type~AD $\Ad_{43}$-$\Ad_{34}$-bimodule defined in Figure~\ref{fig:DehnTwistBimodule}. Furthermore, let $\rr_{34}$ be the operation on type D structures which interchanges the third and the fourth entry in the generalized Alexander grading:  $\Alex{a}{b}{c}{d}\mapsto\Alex{a}{b}{d}{c}$.
\end{definition}

\begin{Remark}
	Note that the action of $\rr_{34}(\cdot)\boxtimes\mathcal{D}(\tau)$ on generators corresponds exactly to the action of $\tau$ on Kauffman states of $T$ from~\cite{HDsForTangles}.
\end{Remark}

\begin{lemma}\label{lem:half DehnTwistBimodule}
	For any loop $\gamma_X$, we have the following bigraded homotopy equivalence:
	$$
	\mathcal{F}_{34}(\Pi'(\gamma_X))
	\cong
	\rr_{34}(\mathcal{F}_{43}(\Pi(\gamma_X)))\boxtimes\,\mathcal{D}(\tau).
	$$ 
\end{lemma}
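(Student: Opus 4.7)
The plan is to establish the asserted bigraded homotopy equivalence by a direct identification of both sides as reduced peculiar modules over $\Ad_{34}$. I would begin by reducing to the case of the trivial rank-one local system: the definition of $\Pi$ in Definition~\ref{def:loopsTOpqMod} inserts the matrix $X$ (respectively $X^{-1}$) into each elementary-segment contribution, and the same is true for $\Pi'$; since the box product $(-)\boxtimes\mathcal{D}(\tau)$ operates vertex-wise on the underlying $\Id$-modules, it commutes with this matrix insertion. So it suffices to treat the case where $X$ is an identity matrix, and then further to the rank-one case.

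Next I would identify generators. A generator of $\mathcal{F}_{43}(\Pi(\gamma))\boxtimes\mathcal{D}(\tau)$ is a pair $(x,xY)$ where $x$ is an intersection of $\gamma$ with a blue arc from $\{a,b,c,d\}$ and $xY\in\{aA,bB,cC,dD,cD,cB\}$ has matching blue idempotent. Thus every blue-$a$, blue-$b$, or blue-$d$ intersection contributes one generator, while every blue-$c$ intersection contributes three. I would verify geometrically from Figure~\ref{fig:DehnTwistArcs} that this matches the count of intersections of $\gamma$ with the red arcs $A,B,C,D$ after the quotient $\mathcal{F}_{34}$: away from the twisted region the red arcs are isotopic to the blue ones, while each passage of $\gamma$ across the blue-$c$ arc is transformed by $\tau$ into a passage across exactly one of $B$, $C$, or $D$, and the three $\mathcal{D}(\tau)$-generators $cB,cC,cD$ record which one. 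I would simultaneously check that the $\delta$- and $\AlexGr$-gradings on the $\mathcal{D}(\tau)$-generators displayed in Figure~\ref{fig:DehnTwistBimodule} exactly compensate the shift between the two parametrizations, with the swap $\rr_{34}$ absorbing the asymmetry between $\Ad_{43}$ and $\Ad_{34}$.

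The core of the argument is the matching of differentials. The differential on $\Pi'(\gamma)$ is read off from Definition~\ref{def:loopsTOpqMod}: each elementary red segment $x\rightarrow y$ produces the term $y\otimes a(x\rightarrow y)$, where $a(x\rightarrow y)\in\Ad_{34}$ is the boundary-path word. The differential on $\mathcal{F}_{43}(\Pi(\gamma))\boxtimes\mathcal{D}(\tau)$ is obtained by composing the differential on $\mathcal{F}_{43}(\Pi(\gamma))$ with the structure maps of $\mathcal{D}(\tau)$. I would take each elementary red segment, homotope the corresponding boundary path across the $\tau$-twist into a concatenation of elementary blue segments, and verify that the composite of the corresponding blue-differential arrows with the appropriate bimodule arrow in Figure~\ref{fig:DehnTwistBimodule} yields precisely $y\otimes a(x\rightarrow y)$. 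The key observation is that the algebra element output on the blue side of each $\mathcal{D}(\tau)$-arrow spells out the blue segment traversed, while the red input spells out the red segment, so the match is essentially built into the design of the bimodule.

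The main obstacle is the bookkeeping in this last step. There are finitely many local configurations—one for each type of red elementary segment at each puncture—and each requires matching a red boundary-path word in $\Ad_{34}$ against a bimodule-composed word in $\Ad_{43}$, with particular care around the punctures involved in the twist, where the relations $p_3=q_4=0$ on one side and $p_4=q_3=0$ on the other cause potential extra contributions to vanish; this is precisely why passing through $\mathcal{F}_{34}$ and $\mathcal{F}_{43}$ is essential. Once these local models are verified, the claim for general loops follows by concatenation, and the curvature relation $\partial^2=p^4+q^4$ is then automatic on both sides.
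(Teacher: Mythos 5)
Your overall strategy---computing $\rr_{34}(\mathcal{F}_{43}(\Pi(\gamma)))\boxtimes\mathcal{D}(\tau)$ elementary segment by elementary segment and reassembling---is the same as the paper's, but there is a genuine gap in the step where you identify generators. You claim that the box product can be matched with $\mathcal{F}_{34}(\Pi'(\gamma))$ by a direct bijection of generators (and then arrow-by-arrow on differentials), arguing that the three bimodule generators $cB,cC,cD$ per blue-$c$ intersection ``record'' which new arc the curve crosses. This is false in general, and in fact your two assertions contradict each other: the box product always has \emph{three} generators for every intersection of $\gamma$ with the blue arc $c$, whereas a minimal representative of $\gamma$ with respect to the new arcs may cross $B\cup C\cup D$ three times near that point (e.g.\ for a segment of type $a\xleftarrow{p_{32}}c$, which becomes a path meeting $B$, $C$ and $D$) or fewer. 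Concretely, if $\gamma$ contains a segment of type $b\xleftarrow{p_3}c$ or $d\xleftarrow{q_4}c$ (rows (a) and (d) of Figure~\ref{fig:MCGActionOnSegments}), the box product acquires an \emph{identity} component of the differential, so it is not reduced and has strictly more generators than $\mathcal{F}_{34}(\Pi'(\gamma))$; a curve hugging the puncture $3$ on the front face already exhibits this. So the bijective matching of generators and differentials you propose cannot be carried out as stated.

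What is missing is precisely the final step of the paper's argument: after reassembling the puzzle pieces one must apply the Cancellation Lemma to the identity components produced by the type (a) and (d) segments, and observe that each such cancellation corresponds geometrically to homotoping the resulting curve off the arcs $B$ and $D$. Only after this reduction does the box product agree with $\mathcal{F}_{34}(\Pi'(\gamma))$---the lemma asserts a homotopy equivalence, not an isomorphism, and the right-hand side is genuinely non-reduced in general. Your reduction to trivial local systems is fine in spirit (the paper also treats the trivial case first and leaves the general case to the reader), but note that once cancellations are involved one should also check, as the paper implicitly does, that the local system can be placed on a segment not affected by them. With the cancellation step added and the geometric ``exactly one of $B$, $C$, $D$'' claim removed, your argument becomes essentially the paper's proof.
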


\begin{lemma}\label{lem:AddingASingleCrossing}
	Let \(T\) be a 4-ended tangle and \(\tau T\) the tangle obtained by adding a single crossing to it as shown in Figure~\ref{fig:AddingASingleCrossing}. Then,
	$$\mathcal{F}_{34}(\CFTd(\tau T))\cong\rr_{34}(\mathcal{F}_{43}(\CFTd(T)))\boxtimes\,\mathcal{D}(\tau).$$ 
\end{lemma}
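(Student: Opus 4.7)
The plan is to reduce this ``tangle'' statement to its ``loop'' analogue Lemma~\ref{lem:half DehnTwistBimodule}, using the classification of peculiar modules in terms of multicurves. By Theorem~\ref{thm:classificationPecMod}, we may replace $\CFTd(T)$ by $\Pi(\HFT(T))$ up to bigraded chain homotopy equivalence, since $\HFT(T)$ is by definition the multicurve associated with $\CFTd(T)$. Applying Lemma~\ref{lem:half DehnTwistBimodule} componentwise to the loops comprising $\HFT(T)$ then gives
\[
\mathcal{F}_{34}(\Pi'(\HFT(T))) \;\simeq\; \rr_{34}\bigl(\mathcal{F}_{43}(\Pi(\HFT(T)))\bigr) \boxtimes \mathcal{D}(\tau) \;\simeq\; \rr_{34}\bigl(\mathcal{F}_{43}(\CFTd(T))\bigr) \boxtimes \mathcal{D}(\tau).
\]
The lemma is therefore equivalent to the identification $\mathcal{F}_{34}(\CFTd(\tau T)) \simeq \mathcal{F}_{34}(\Pi'(\HFT(T)))$, which by Theorem~\ref{thm:BasepointZero} reduces in turn to showing $\CFTd(\tau T) \simeq \Pi'(\HFT(T))$ as bigraded peculiar modules.

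The heart of the proof is to establish this last identification geometrically. The guiding observation is that the extra crossing of $\tau T$ near the boundary can be isotoped into the boundary sphere and absorbed into the parametrization: as a parametrized tangle, $\tau T$ equipped with the standard arcs $\{\textcolor{blue}{a},\textcolor{blue}{b},\textcolor{blue}{c},\textcolor{blue}{d}\}$ agrees with $T$ equipped with the $\tau$-image arcs $\{\textcolor{red}{A},\textcolor{red}{B},\textcolor{red}{C},\textcolor{red}{D}\}$. Thus a Heegaard diagram for $\tau T$ may be taken to coincide with a Heegaard diagram for $T$ whose parametrizing $\alpha$-circle is now subdivided by $\{\textcolor{red}{A},\textcolor{red}{B},\textcolor{red}{C},\textcolor{red}{D}\}$ rather than $\{\textcolor{blue}{a},\textcolor{blue}{b},\textcolor{blue}{c},\textcolor{blue}{d}\}$. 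Since the classification of peculiar modules from~\cite[Section~4]{PQMod} works uniformly for marked surfaces with arbitrary arc systems, re-running the argument for this twisted arc system yields exactly $\Pi'(\HFT(T))$, giving the desired equivalence with $\CFTd(\tau T)$.

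The main obstacle is to make this last step fully rigorous: the isotopy absorbing the crossing into the parametrization permutes tangle endpoints and shifts both idempotents and the bigrading, so one must verify that these shifts match precisely the definition of $\Pi'$ extracted from Definition~\ref{def:loopsTOpqMod}. An alternative route, bypassing $\HFT$ altogether, would work directly with bordered sutured Heegaard diagrams: via Zarev's pairing theorem one expresses $\CFTd(\tau T)$ as $\CFTd(T)$ tensored with a local bimodule arising from a small Heegaard diagram encoding the half-twist near the parametrizing circle, which one identifies with $\mathcal{D}(\tau)$ by explicit inspection of the generators and arrows in Figure~\ref{fig:DehnTwistBimodule}, after applying $\mathcal{F}_{43}$, $\mathcal{F}_{34}$ and the Alexander-grading twist $\rr_{34}$.
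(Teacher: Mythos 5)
Your reduction to Lemma~\ref{lem:half DehnTwistBimodule} plus the claim $\CFTd(\tau T)\simeq\Pi'(\HFT(T))$ does not prove the lemma: that claim \emph{is} the lemma, and indeed is equivalent to Theorem~\ref{thm:MCGaction} itself. Unwinding the definitions used in the proof of that theorem, $\CFTd(\tau T)\simeq\Pi(\HFT(\tau T))$ and $\Pi'(\HFT(T))=\Pi(\tau\HFT(T))$, so your key step amounts to asserting $\HFT(\tau T)=\tau(\HFT(T))$ --- exactly the compatibility of twisting tangles with twisting curves that Lemmas~\ref{lem:half DehnTwistBimodule} and~\ref{lem:AddingASingleCrossing} exist to establish. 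The justification you offer, ``absorb the crossing into the parametrization and re-run the classification for the twisted arc system,'' is a non sequitur: the classification of peculiar modules by multicurves (Theorem~\ref{thm:classificationPecMod}) is a purely algebraic statement about the category $\pqMod$ for a fixed arc algebra; it attaches no canonical curve on $\partial M$ to $T$ and says nothing about how the holomorphic-curve counts change when the parametrizing $\alpha$-circle of the Heegaard diagram is twisted. After pushing the crossing into the boundary you do get a Heegaard diagram for $\tau T$, but it is a genuinely different diagram (the $\alpha$-circle meets the $\beta$-curves differently), and relating its differential to that of the original diagram is precisely the content being proven. So the main route is circular, and the ``obstacle'' you flag is not a bookkeeping issue about grading shifts but the entire mathematical content.

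Your closing alternative sketch is the direction the paper actually takes, but as stated it omits everything that makes it work. One cannot directly write $\CFTd(\tau T)$ as a box tensor product, since peculiar modules are curved and are not themselves bordered sutured invariants: the paper glues a thickened twisted 4-punctured sphere $X$ onto the tangle complement, computes $\BSAD(X)$ from an explicit provincially admissible Heegaard diagram (Figure~\ref{fig:DehnTwistHDs}), and then uses Zarev's pairing theorem~\cite{ZarevThesis} together with the Pairing Adjunction and the quotient functors onto $\Ad_{43}$ and $\Ad_{34}$ to land in the images of $\mathcal{F}_{43}$ and $\mathcal{F}_{34}$. Identifying the resulting bimodule with $\mathcal{D}(\tau)$ is not ``explicit inspection'' of Figure~\ref{fig:DehnTwistBimodule}: it requires enumerating the connecting domains, computing the group of periodic domains to rule out others, using the $A_\infty$-relations to tie the contributions of longer domains to the length-one ones, and an indirect argument (no arrows labelled $q_1$ would ever appear in $\mathcal{F}_{34}(\CFTd(\tau T))$ otherwise) to show the domains $\{q_1/Q_1\}$ and $\{p_2/P_2\}$ contribute; the Alexander twist $\rr_{34}$ then comes from the relabelling of the meridional sutures at the tangle ends $\texttt{3}$ and $\texttt{4}$. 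Without this computation, or some substitute for it, the proposal has no proof of the lemma.
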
	

\begin{proof}[Proof of Theorem~\ref{thm:MCGaction}]
	We have
	\begin{align*}
	\mathcal{F}_{34}(\Pi(\tau(\HFT(T))))
	&
	=
	\mathcal{F}_{34}(\Pi'(\HFT(T)))
	&&
	\text{(by the definition of $\tau$)}
	\\
	&
	\cong \rr_{34}(\mathcal{F}_{43}(\Pi(\HFT(T))))\boxtimes\mathcal{D}(\tau)
	&&
	\text{(by Lemma~\ref{lem:half DehnTwistBimodule})}
	\\
	&
	\cong \rr_{34}(\mathcal{F}_{43}(\CFTd(T)))\boxtimes\mathcal{D}(\tau)
	&&
	\text{(by the definition of $\HFT(T)$)}
	\\
	&
	\cong 
	\mathcal{F}_{34}(\CFTd(\tau T))
	&&
	\text{(by Lemma~\ref{lem:AddingASingleCrossing})}
	\\
	&
	\cong\mathcal{F}_{34}(\Pi(\HFT(\tau T)))
	&&
	\text{(by the definition of $\HFT(\tau T)$).}
	\end{align*}
	By Theorem~\ref{thm:BasepointZero}, this implies that
	$$
	\Pi(\tau(\HFT(T)))=\Pi(\HFT(\tau T))
	$$
	and by Theorem~\ref{thm:classificationPecMod}, this in turn implies
	$$
	\tau(\HFT(T))=\HFT(\tau T).
	$$
	Using cyclic permutations of the arcs, we see that the Lemmas~\ref{lem:half DehnTwistBimodule} and~\ref{lem:AddingASingleCrossing} also hold for the corresponding half-twists along the other three arcs. 
\end{proof}

\begin{figure}[t]
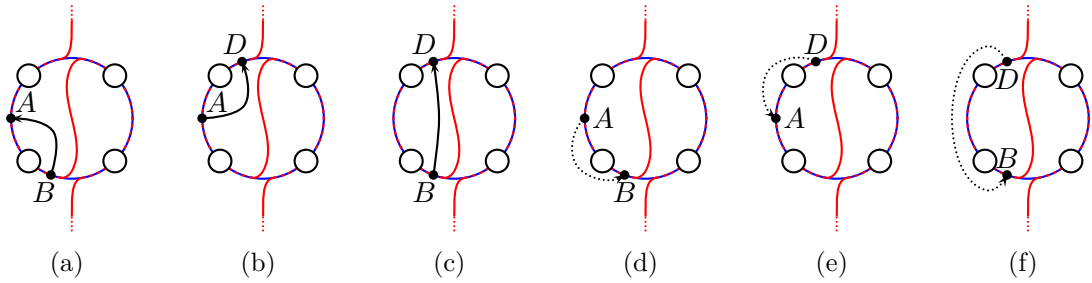

	\centering
	\begin{subfigure}[b]{0.15\textwidth}\centering
		\MCGActionOnBoringSegmentsA
		\caption{}\label{fig:MCGActionOnBoringSegmentsA}
	\end{subfigure}
	\begin{subfigure}[b]{0.15\textwidth}\centering
		\MCGActionOnBoringSegmentsB
		\caption{}\label{fig:MCGActionOnBoringSegmentsB}
	\end{subfigure}
	\begin{subfigure}[b]{0.15\textwidth}\centering
		\MCGActionOnBoringSegmentsC	
		\caption{}\label{fig:MCGActionOnBoringSegmentsC}
	\end{subfigure}
	\begin{subfigure}[b]{0.15\textwidth}\centering
		\MCGActionOnBoringSegmentsD		
		\caption{}\label{fig:MCGActionOnBoringSegmentsD}
	\end{subfigure}
	\begin{subfigure}[b]{0.15\textwidth}\centering
		\MCGActionOnBoringSegmentsE
		\caption{}\label{fig:MCGActionOnBoringSegmentsE}
	\end{subfigure}
	\begin{subfigure}[b]{0.15\textwidth}\centering
		\MCGActionOnBoringSegmentsF
		\caption{}\label{fig:MCGActionOnBoringSegmentsF}
	\end{subfigure}
	\caption{Elementary curve segments which stay away from the arc $c$. The action of the bimodule $\mathcal{D}(\tau)$ on these is trivial.}\label{fig:MCGActionOnBoringSegments}
\end{figure}

\begin{figure}[t]
	\begin{tabular}{ccc}
		&
		curve segment $S$ 
		&
		$S\boxtimes\mathcal{D}(\tau)$
		\\
		\hline
		%%%%%%%%%%%%% p1
		(a)&
		$
		\begin{tikzcd}
		b
		&
		c
		\arrow[swap]{l}{p_{3}}
		\end{tikzcd}
		$
		&
		$
		\begin{tikzcd}
		B
		&
		B
		\arrow[swap,dashed]{l}{1}
		\arrow[dotted]{r}{q_3}
		&
		C
		&
		D
		\arrow[swap]{l}{p_4}
		\end{tikzcd}
		$
		\\
		%%%%%%%%%%%%% p2
		(b)&
		$
		\begin{tikzcd}
		a
		&
		c
		\arrow[swap]{l}{p_{32}}
		\end{tikzcd}
		$
		&
		$
		\begin{tikzcd}
		A
		&
		B
		\arrow[swap]{l}{p_2}
		\arrow[dotted]{r}{q_3}
		&
		C
		&
		D
		\arrow[swap]{l}{p_4}
		\end{tikzcd}
		$
		\\
		%%%%%%%%%%%%% p3
		(c)&
		$
		\begin{tikzcd}
		d
		&
		c
		\arrow[swap]{l}{p_{321}}
		\end{tikzcd}
		$
		&
		$
		\begin{tikzcd}
		D
		&
		B
		\arrow[swap]{l}{p_{21}}
		\arrow[dotted]{r}{q_3}
		&
		C
		&
		D
		\arrow[swap]{l}{p_4}
		\end{tikzcd}
		$
		\\
		%%%%%%%%%%%%% q1
		(d)&
		$
		\begin{tikzcd}
		d
		&
		c
		\arrow[swap,dotted]{l}{q_{4}}
		\end{tikzcd}
		$
		&
		$
		\begin{tikzcd}
		D
		&
		D
		\arrow[swap,dashed]{l}{1}
		\arrow{r}{p_4}
		&
		C
		&
		B
		\arrow[swap,dotted]{l}{q_3}
		\end{tikzcd}
		$
		\\
		%%%%%%%%%%%%% q2
		(e)&
		$
		\begin{tikzcd}
		a
		&
		c
		\arrow[swap,dotted]{l}{q_{41}}
		\end{tikzcd}
		$
		&
		$
		\begin{tikzcd}
		A
		&
		D
		\arrow[swap,dotted]{l}{q_1}
		\arrow{r}{p_4}
		&
		C
		&
		B
		\arrow[swap,dotted]{l}{q_3}
		\end{tikzcd}
		$
		\\
		%%%%%%%%%%%%% q3
		(f)&
		$
		\begin{tikzcd}
		b
		&
		c
		\arrow[swap,dotted]{l}{q_{412}}
		\end{tikzcd}
		$
		&
		$
		\begin{tikzcd}
		B
		&
		D
		\arrow[swap,dotted]{l}{q_{12}}
		\arrow{r}{p_4}
		&
		C
		&
		B
		\arrow[swap,dotted]{l}{q_3}
		\end{tikzcd}
		$
	\end{tabular}
	\caption{The action of the bimodule $\mathcal{D}(\tau)$ on each elementary curve segment meeting the arc $c$}\label{fig:MCGActionOnSegments}
\end{figure}
\begin{figure}[t]
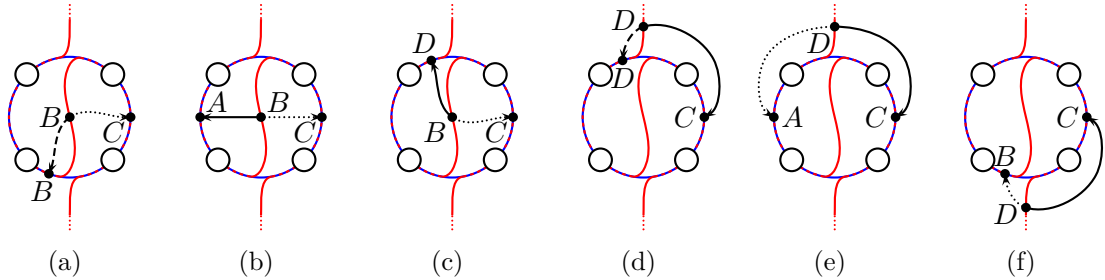

	\centering
	\begin{subfigure}[b]{0.15\textwidth}\centering
		\MCGActionOnInterestingSegmentsA
		\caption{}\label{fig:MCGActionOnInterestingSegmentsA}
	\end{subfigure}
	\begin{subfigure}[b]{0.15\textwidth}\centering
		\MCGActionOnInterestingSegmentsB
		\caption{}\label{fig:MCGActionOnInterestingSegmentsB}
	\end{subfigure}
	\begin{subfigure}[b]{0.15\textwidth}\centering
		\MCGActionOnInterestingSegmentsC
		\caption{}\label{fig:MCGActionOnInterestingSegmentsC}
	\end{subfigure}
	\begin{subfigure}[b]{0.15\textwidth}\centering
		\MCGActionOnInterestingSegmentsD
		\caption{}\label{fig:MCGActionOnInterestingSegmentsD}
	\end{subfigure}
	\begin{subfigure}[b]{0.15\textwidth}\centering
		\MCGActionOnInterestingSegmentsE
		\caption{}\label{fig:MCGActionOnInterestingSegmentsE}
	\end{subfigure}
	\begin{subfigure}[b]{0.15\textwidth}\centering
		\MCGActionOnInterestingSegmentsF
		\caption{}\label{fig:MCGActionOnInterestingSegmentsF}
	\end{subfigure}
	\caption{Graphical interpretation of the computation in Figure~\ref{fig:MCGActionOnSegments}. The curve segments are drawn onto separate copies of Figure~\ref{fig:DehnTwistArcs}.}\label{fig:MCGActionOnInterestingSegments}
\end{figure}

\begin{proof}[Proof of Lemma~\ref{lem:half DehnTwistBimodule}]
	Let us first assume for simplicity that the local system $X$ is trivial. 
	On elementary curve segments of $\gamma$ with respect to the first arc system, $\mathcal{D}(\tau)$ acts like the identity if the segment in question does not end on the arc $c$; for an illustration, see Figure~\ref{fig:MCGActionOnBoringSegments}.
	In Figure~\ref{fig:MCGActionOnSegments}, we compute the result of pairing the bimodule $\mathcal{D}(\tau)$ with the remaining elementary curve segments. Note that generators in idempotents $a$, $b$ and $d$ contribute one new generator in the $\boxtimes$-product, so for these generators, it is easy to put the curve segments back together. Generators in idempotent $c$ contribute three generators in the $\boxtimes$-product. Nonetheless, we can easily reassemble the complexes along the new generators in idempotent $C$, since an arrow leaving a generator in idempotent $c$ contributes an arrow from the new generator $B$ (respectively $D$) iff it is labelled by a power of $p$ (respectively $q$). In other words, we may safely ignore each of the final arrows in the last column of Figure~\ref{fig:MCGActionOnSegments} and reassemble the complexes like puzzle pieces which are joined along the generators/vertices $aA$, $bB$, $cC$ and $dD$. Figure~\ref{fig:MCGActionOnInterestingSegments} shows those pieces on the 4-punctured sphere with the two arc systems. As we can see, they agree with the elementary curve segments that we started with. However, there could still be some identity components, namely the dashed arrows in Subfigures (a) and (d). Those can be cancelled.  One can easily see that such a cancellation just homotopes the curve away from the arcs $B$ and $D$, which proves the claim.
	
	One can easily adapt this argument to curves $\gamma_X$ with non-trivial local systems $X$; we leave the details to the reader. 
\end{proof}

\begin{figure}[hp!]
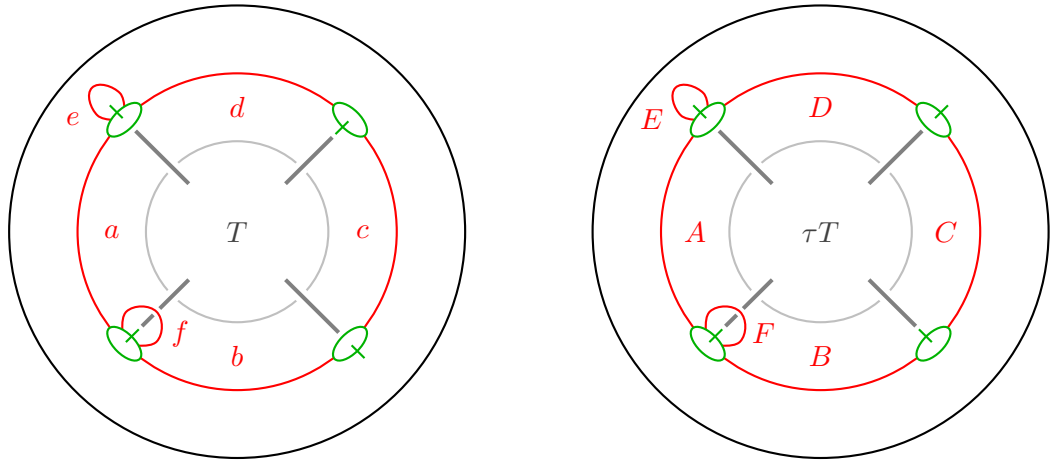
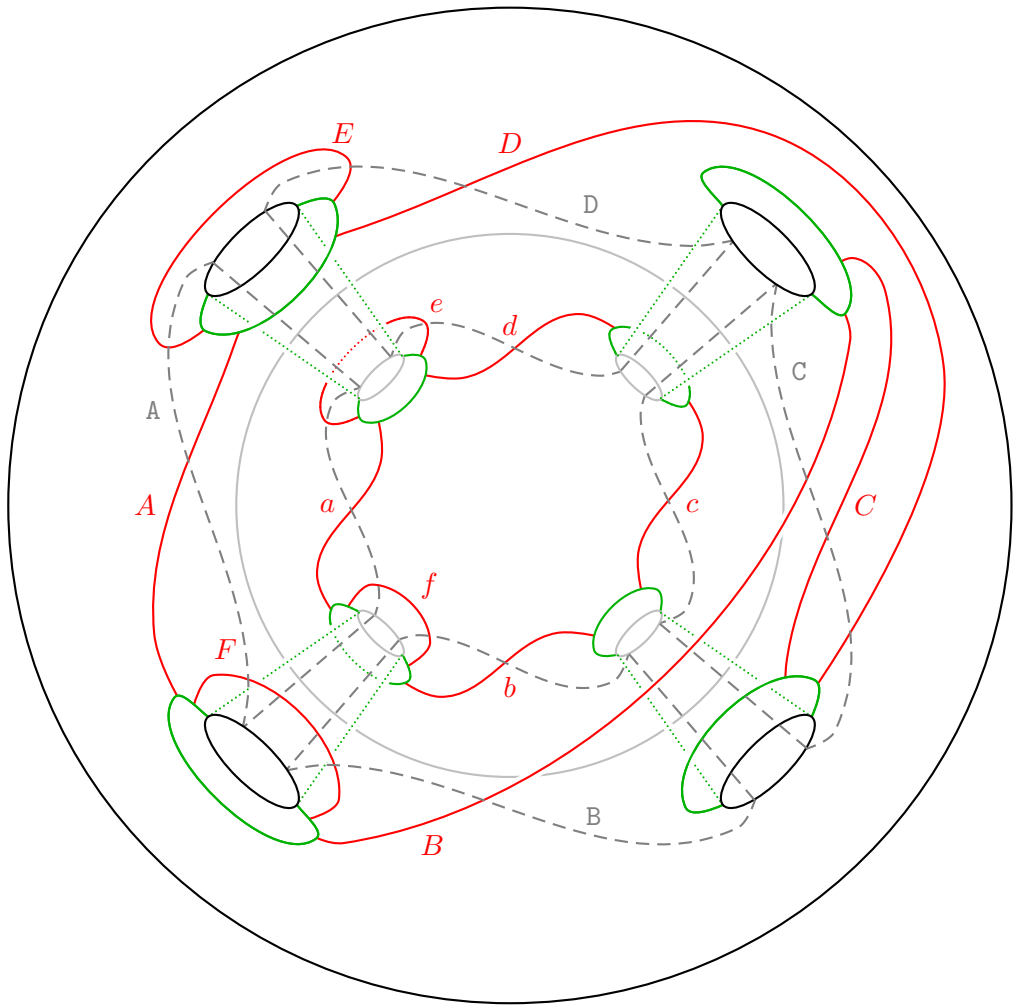

	\centering
	\bigskip
	\bigskip
	\begin{subfigure}[b]{0.45\textwidth}\centering
		\GlueingTangleT
		\caption{The parametrization on $\partial M_T$}\label{fig:GlueingTangleT}
	\end{subfigure}
	\quad
	\begin{subfigure}[b]{0.45\textwidth}\centering
		\GlueingTangleTP
		\caption{The parametrization on $\partial M_{\tau T}$}\label{fig:GlueingTangleTP}
	\end{subfigure}
	\\
	\bigskip
	\bigskip
	\begin{subfigure}[b]{0.9\textwidth}\centering
		\DehnTwistAD
		\caption{The parametrization on the boundary of the thickened 4-punctured sphere $X$. If we cut the boundary along the (dashed) simple closed curves \textcolor{gray}{\texttt{A}}, \textcolor{gray}{\texttt{B}}, \textcolor{gray}{\texttt{C}} and \textcolor{gray}{\texttt{D}}, we obtain the Heegaard surface from Figure~\ref{fig:DehnTwistHDs}.}\label{fig:DehnTwistAD}
	\end{subfigure}
	\caption{A decomposition of $M_{\tau T}$ from Figure~\ref{fig:AddingASingleCrossing}}\label{fig:DehnTwistADMflds}
\end{figure}

\begin{figure}[t]
	\centering
	\begin{subfigure}[b]{\textwidth}
		\centering
		\DehnTwistHDs
		\caption{A Heegaard diagram for $X$, consisting of two punctured spheres, drawn as discs whose boundaries correspond to the points at $\infty$. The two spheres are connected to each other by tubes along the grey rectangles, which correspond to the simple closed curves \textcolor{gray}{\texttt{A}}, \textcolor{gray}{\texttt{B}}, \textcolor{gray}{\texttt{C}} and \textcolor{gray}{\texttt{D}} from Figure~\ref{fig:DehnTwistADMflds}. The orientation on the surface is such that the normal vector (determined by the right-hand rule) points into the projection plane. 
		}\label{fig:DehnTwistHDs}
	\end{subfigure}
	\begin{subfigure}[b]{\textwidth}\centering
		$$
		\begin{tikzcd}[row sep=2.5cm, column sep=2.5cm]
		% 1st row
		&
		% dD
		\textcolor{blue}{ d}\textcolor{red}{D}:acD_1bef
		\arrow[bend right=10,pos=0.45]{ld}[description]{\{\textcolor{blue}{ q_1}/\textcolor{red}{ Q_1}\}}
		\arrow[bend right=30]{dd}[description]{\{\textcolor{blue}{ q_{1}}/\textcolor{red}{ Q_{1}},\textcolor{blue}{ q_{2}}/\textcolor{red}{ Q_{2}}\}}
		&
		% cD 
		\textcolor{blue}{ c}\textcolor{red}{D}:aD_2dbef
		\arrow{l}[description]{\{\textcolor{blue}{ q_4}\}}
		\arrow[bend left=10,pos=0.2]{lld}[description]{\{\textcolor{blue}{ q_{4}},\textcolor{blue}{ q_{1}}/\textcolor{red}{ Q_{1}}\}}
		\arrow[bend left=8,pos=0.29]{ldd}[description]{\{\textcolor{blue}{ q_{4}},\textcolor{blue}{ q_{1}}/\textcolor{red}{ Q_{1}},\textcolor{blue}{ q_{2}}/\textcolor{red}{ Q_{2}}\}}
		\arrow[pos=0.53]{d}[description]{\{\textcolor{red}{ P_4}\}}
		\\
		% aA
		\textcolor{blue}{ a}\textcolor{red}{A}:Acdbef
		\arrow[bend right=10,pos=0.45]{ur}[description]{\{\textcolor{blue}{ p_1}/\textcolor{red}{ P_1}\}}
		\arrow[bend right=10,pos=0.55]{rd}[description]{\{\textcolor{blue}{ q_2}/\textcolor{red}{ Q_2}\}}
		&&
		% cC
		\textcolor{blue}{c}\textcolor{red}{C}:aCdbef
		\\
		&
		%bB
		\textcolor{blue}{b}\textcolor{red}{B}:acdB_1ef
		\arrow[bend right=10,pos=0.55]{lu}[description]{\{\textcolor{blue}{ p_2}/\textcolor{red}{P_2}\}}
		\arrow[bend right=30]{uu}[description]{\{\textcolor{blue}{p_{1}}/\textcolor{red}{P_{1}},\textcolor{blue}{p_{2}}/\textcolor{red}{P_{2}}\}}
		&
		%cB
		\textcolor{blue}{c}\textcolor{red}{B}:aB_2dbef
		\arrow[pos=0.53]{u}[description]{\textcolor{red}{Q_3}\}}
		\arrow[bend right=10,pos=0.2]{llu}[description]{\{\textcolor{blue}{p_{3}},\textcolor{blue}{p_{2}}/\textcolor{red}{P_{2}}\}}
		\arrow[bend right=8,pos=0.29]{luu}[description]{\{\textcolor{blue}{p_{3}},\textcolor{blue}{p_{2}}/\textcolor{red}{P_{2}},\textcolor{blue}{ p_{1}}/\textcolor{red}{P_{1}}\}}
		\arrow{l}[description]{\{\textcolor{blue}{p_3}\}}
		\end{tikzcd}$$
		\caption{The domains that contribute to the type~AD structure $\mathcal{D}(\tau)$}\label{fig:DehnTwistDomains}
	\end{subfigure}
	\caption{A Heegaard diagram for the bordered sutured manifold~$X$ from Figure~\ref{fig:DehnTwistADMflds} and some computations of generators and domains}\label{fig:DehnTwistADresult}
\end{figure}
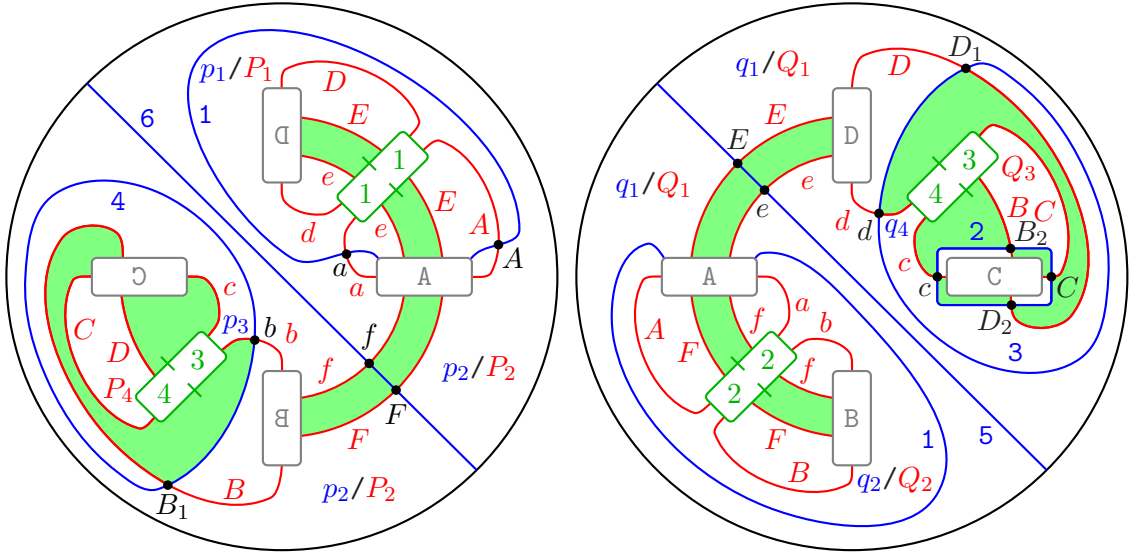
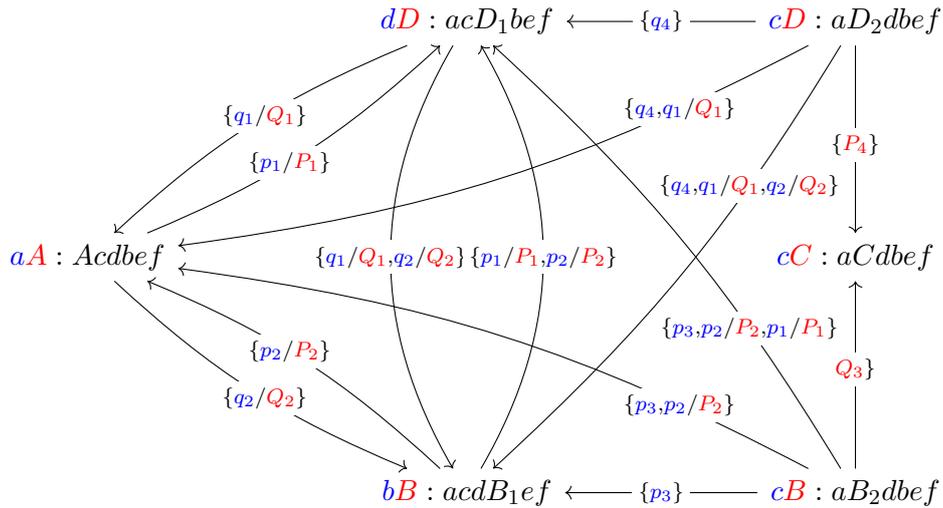

\begin{proof}[Proof of Lemma~\ref{lem:AddingASingleCrossing}]
	The strategy of this proof is very similar to that of the Glueing Theorem~\cite[Theorem~3.7]{PQMod}: we glue a thickened 4-punctured sphere $X=I\times (\FourPuncturedSphere)$ onto the tangle complement $M_T$. This allows us to change the parametrization on the boundary of the tangle complement as specified by the arc diagrams in Figure~\ref{fig:DehnTwistADMflds}. 
	
	Before we begin with the actual proof, let us introduce some notation. 
	Let $\mathcal{B}_{43}$ be the bordered sutured algebra with five moving strands corresponding to the arc diagram on $\partial M_T$ and $\mathcal{I}_{43}$ the corresponding ring of idempotents. Let $\mathcal{I}'_{43}$ be the subring of idempotents occupying the $\alpha$-arcs $\textcolor{red}{ e}$ and $\textcolor{red}{ f}$ and $\iota_{43}\co\mathcal{B}'_{43}\hookrightarrow\mathcal{B}_{43}$ the subalgebra $\mathcal{I}'_{43}.\mathcal{B}_{43}.\mathcal{I}'_{43}$ of~$\mathcal{B}_{43}$. Similarly define $\mathcal{I}_{34}$, $\mathcal{I}'_{34}$ and $\iota_{2}\co\mathcal{B}'_{34}\hookrightarrow\mathcal{B}_{34}$ for $M_{\tau T}$. Note that we can regard $\Ad_{43}$ and $\Ad_{34}$ as quotients of $\mathcal{B}'_{43}$ and $\mathcal{B}'_{34}$, respectively. Let $\pi'_{43}\co\mathcal{B}'_{43}\rightarrow\Ad_{43}$ and $\pi'_{34}\co\mathcal{B}'_{34}\rightarrow\Ad_{34}$ be the corresponding quotient maps.
	
	The lemma follows from a computation of the type AD bimodule of the bordered sutured manifold $X$. Figure~\ref{fig:DehnTwistHDs} shows a  Heegaard diagram which represents $X$. This can be seen by finding a sequence of handleslides of $\beta$-curves such that four of the $\beta$-curves are parallel to the simple closed curves $\textcolor{gray}{\texttt{A}}$, $\textcolor{gray}{\texttt{B}}$, $\textcolor{gray}{\texttt{C}}$ and $\textcolor{gray}{\texttt{D}}$, respectively, and the other two are parallel to, say, the two boundary components labelled $\textcolor{darkgreen}{\texttt{3}}/\textcolor{darkgreen}{\texttt{4}}$ and $\textcolor{darkgreen}{\texttt{4}}/\textcolor{darkgreen}{\texttt{3}}$, respectively. This particular Heegaard diagram is chosen such that it minimizes the number of generators. Let us explain this diagram in more detail. The regions adjacent to basepoints are shaded light green (\textcolor{lightgreen}{$\blacksquare$}). Intersection points are labelled by black Roman letters. The $\alpha$-arcs are labelled by $\textcolor{red}{ a}$, $\textcolor{red}{ b}$, $\textcolor{red}{ c}$, $\textcolor{red}{ d}$, $\textcolor{red}{ e}$, $\textcolor{red}{ f}$ and $\textcolor{red}{ A}$, $\textcolor{red}{ B}$, $\textcolor{red}{ C}$, $\textcolor{red}{ D}$, $\textcolor{red}{ E}$, $\textcolor{red}{ F}$, respectively, as in Figure~\ref{fig:DehnTwistADMflds}. Furthermore, the $\beta$-curves are numbered from $\textcolor{blue}{\texttt{1}}$ to $\textcolor{blue}{\texttt{6}}$. Finally, all regions in the Heegaard diagram are labelled by $\textcolor{red}{ Q_i}$, $\textcolor{red}{ P_i}$, $\textcolor{blue}{ p_i}$ and $\textcolor{blue}{ q_i}$ or combinations thereof. Note that the diagram is provincially admissible, since all regions meet the boundary. Therefore, we can use it to compute $\mathcal{I}'_{43}.\BSAD(X)$. Its generators are shown as the vertices of the graph in Figure~\ref{fig:DehnTwistDomains}; for each generator, the first two letters indicate its corresponding idempotents; the letters after the colon indicate the corresponding intersection points, ordered by the indices of the $\beta$-curves they occupy. Since the generators are uniquely determined by their idempotents, we will use them in the following as the names of the generators. 
	
	In the following, let us write domains $D$ as formal differences $D_+-D_-$ of unordered sets of regions $D_+$ and $D_-$ with $D_+\cap D_-=\emptyset$ such that 
	$$D=\sum_{r\in D_+}r-\sum_{r\in D_-}r.$$
	Let us calculate some connecting domains between the generators. First of all, here are some bigons and squares with a single boundary puncture:
	\begin{center}
		\begin{tabular}{cccc}
			$\{\textcolor{blue}{ q_4}\}: \textcolor{blue}{ c}\textcolor{red}{ D}\rightarrow\textcolor{blue}{ d}\textcolor{red}{ D},$ &
			$\{\textcolor{red}{Q_3}\}: \textcolor{blue}{ c}\textcolor{red}{ B}\rightarrow\textcolor{blue}{ c}\textcolor{red}{ C},$&
			$\{\textcolor{blue}{ p_3}\}: \textcolor{blue}{ c}\textcolor{red}{ B}\rightarrow\textcolor{blue}{ b}\textcolor{red}{ B},$ &
			$\{\textcolor{red}{ P_4}\}: \textcolor{blue}{ c}\textcolor{red}{ D}\rightarrow\textcolor{blue}{ c}\textcolor{red}{ C}.$
		\end{tabular}
	\end{center}
	The following domains are squares with two boundary punctures, one on the type A and one on the type D side:
	\begin{center}
		\begin{tabular}{cccc}
			$\{\textcolor{blue}{ p_1},\textcolor{red}{ P_1}\}: \textcolor{blue}{ a}\textcolor{red}{ A} \rightarrow\textcolor{blue}{ d}\textcolor{red}{ D},$ &
			$\{\textcolor{blue}{ q_2},\textcolor{red}{ Q_2}\}: \textcolor{blue}{ a}\textcolor{red}{ A} \rightarrow\textcolor{blue}{ b}\textcolor{red}{ B}.$
		\end{tabular}
	\end{center}
	All those domains above contribute to the type~AD structure. Next, let us compute the group of periodic domains of our Heegaard diagram. It is easy to see that it is freely generated by the following three domains:
	$$
	\{\textcolor{blue}{ p_1}/\textcolor{red}{ P_1},\textcolor{blue}{ q_1}/\textcolor{red}{ Q_1}\},\quad
	\{\textcolor{blue}{ p_2}/\textcolor{red}{ P_2},\textcolor{blue}{ q_2}/\textcolor{red}{ Q_2}\},\quad
	\{\textcolor{blue}{ p_3},\textcolor{blue}{ p_1}/\textcolor{red}{ P_1},\textcolor{red}{ P_4}\}-\{\textcolor{blue}{ q_4},\textcolor{blue}{ q_2}/\textcolor{red}{ Q_2},\textcolor{red}{ Q_3}\}.
	$$
	It is now elementary to check that indeed, Figure~\ref{fig:DehnTwistDomains} shows all connecting domains with non-negative multiplicities whose non-zero multiplicity regions do not include \textit{both} those labelled by $\textcolor{red}{ P_i}$ and $\textcolor{red}{Q_j}$. We do not know (yet) whether they all contribute to the type AD structure, but it turns out, this is all we need to compute from the Heegaard diagram. To explain this, let us step back for a moment. Zarev's Pairing Theorem \cite[Theorem~4]{ZarevThesis} tells us that
	\begin{equation}\label{eqn:FirstGlueingProofStep1}
	\BSD(M_{\tau T})^{\mathcal{B}_{34}}\cong \BSD(M_T)^{\mathcal{B}_{43}}\boxtimes\typeA{\mathcal{B}_{43}}{\BSAD(X)}^{\mathcal{B}_{34}}.
	\end{equation}
	As in the proof of the Glueing Theorem for $\CFTd$, observe that we can choose a Heegaard diagram for $M_T$ where the two $\alpha$-arcs that have ends on the same suture do not intersect any $\beta$-curve. Thus, generators of its type~D structure belong to the idempotents that occupy one of the four arcs $\textcolor{red}{a}$, $\textcolor{red}{ b}$, $\textcolor{red}{ c}$ or $\textcolor{red}{ d}$. Moreover, the labels of the type~D structure for $M_T$ are contained in $\mathcal{B}'_{43}$, ie $\BSD(M_T)$ lies in the image of the functor of type D structures $\mathcal{F}^D_{\iota_{43}}$ induced by the inclusion $\iota_{43}$ (see \cite[Definition~1.15]{PQMod}). Thus, by the Pairing Adjunction \cite[Theorem~1.21]{PQMod}, the right hand side of (\ref{eqn:FirstGlueingProofStep1}) is equal to	
	\begin{equation}\label{eqn:FirstGlueingProofStep2}
	\BSD(M_T)^{\mathcal{B}'_{43}}\boxtimes\typeA{\mathcal{B}'_{43}}{\mathcal{F}^{AD}_{\iota_{43},\id}\left(\BSAD(X)\right)}^{\mathcal{B}_{34}}
	\end{equation}
	where $\mathcal{F}^{AD}_{\iota_{43},\id}$ is the functor induced by the inclusion $\iota_{43}$ and the identity on $\mathcal{B}_{34}$.
	Similarly, we may ensure that the two $\alpha$-arcs of $M_{\tau T}$ that have ends on the same suture do not intersect any $\beta$-curve in a Heegaard diagram of $M_{\tau T}$. Moreover, since generators of $\BSAD(X)$ which occupy both $\textcolor{red}{e}$ and $\textcolor{red}{f}$ cannot occupy $\textcolor{red}{E}$ nor $\textcolor{red}{F}$, we see that $\mathcal{F}^{AD}_{\iota_{43},\id}\left(\BSAD(X)\right)$ lies in the image of the functor $\mathcal{F}^{AD}_{\id,\iota_{34}}$, say 
	$$
	\mathcal{F}^{AD}_{\id,\iota_{34}}(\mathcal{D}')=\mathcal{F}^{AD}_{\iota_{43},\id}\left(\BSAD(X)\right).
	$$
	So, 
	$$
	\BSD(M_{\tau T})^{\mathcal{B}'_{34}}\cong\BSD(M_T)^{\mathcal{B}'_{43}}\boxtimes \typeA{\mathcal{B}'_{43}}{\left(\mathcal{D}'\right)}^{\mathcal{B}'_{34}}.
	$$
	Next, we can pass from $\mathcal{B}'_{34}$ to $\Ad_{34}$ via the quotient map $\pi'_{34}$, which induces the functors $\mathcal{F}^D_{\pi'_{34}}$ and $\mathcal{F}^{AD}_{\id,\pi'_{34}}$. As in the proof of the Glueing Theorem~\cite[Theorem~3.7]{PQMod}, we may identify $\mathcal{F}^D_{\pi'_{34}}(\BSD(M_{\tau T}))$ with $\mathcal{F}_{34}(\CFTd(\tau T))$, so we obtain
	$$\mathcal{F}_{34}(\CFTd(\tau T))^{\Ad_{34}}\cong \BSD(M_T)^{\mathcal{B}'_{43}}\boxtimes \typeA{\mathcal{B}'_{43}}{\mathcal{F}^{AD}_{\id,\pi'_{34}}(\mathcal{D}')}^{\Ad_{34}}.$$
	Now, the only domains that can contribute to $\mathcal{F}^{AD}_{\id,\pi'_{34}}(\mathcal{D}')$ are those computed above in Figure~\ref{fig:DehnTwistDomains}. Moreover, from the $A_\infty$-relations, we deduce that the domain $\{\textcolor{blue}{ q_1}/\textcolor{red}{ Q_1}\}$ contributes iff the domains $\{\textcolor{blue}{ q_4},\textcolor{blue}{ q_1}/\textcolor{red}{ Q_1}\}$ and $\{\textcolor{blue}{ q_4},\textcolor{blue}{ q_1}/\textcolor{red}{ Q_1},\textcolor{blue}{ q_2}/\textcolor{red}{ Q_2}\}$ contribute; similarly, 
	the domain $\{\textcolor{blue}{ p_2}/\textcolor{red}{ P_2}\}$ contributes iff the domains $\{\textcolor{blue}{ p_3},\textcolor{blue}{ p_2}/\textcolor{red}{ P_2}\}$ and $\{\textcolor{blue}{ p_3},\textcolor{blue}{ p_2}/\textcolor{red}{ P_2},\textcolor{blue}{ p_1}/\textcolor{red}{ P_1}\}$ contribute. In any case, the type AD structure $\mathcal{F}^{AD}_{\id,\pi'_{34}}(\mathcal{D}')$ lies in the image of the functor $\mathcal{F}^{AD}_{\pi'_{43},\id}$, say 
	$$\mathcal{F}^{AD}_{\id,\pi'_{34}}(\mathcal{D}')=\mathcal{F}^{AD}_{\pi'_{43},\id}(\mathcal{D}'').$$
	Then, using the Pairing Adjunction and the fact that we can identify $\mathcal{F}^D_{\pi'_{43}}(\BSD(M_T))$ with $\mathcal{F}_{43}(\CFTd(T))$, we obtain 
	$$\mathcal{F}_{34}(\CFTd(\tau T))^{\Ad_{34}}=\mathcal{F}_{43}(\CFTd(T))^{\Ad_{43}}\boxtimes\typeA{\Ad_{43}}{\left(\mathcal{D}''\right)}^{\Ad_{34}}.$$
	To identify $\mathcal{D}''$ with $\mathcal{D}(\tau)$, it remains to show that the two domains $\{\textcolor{blue}{ q_1}/\textcolor{red}{ Q_1}\}$ and $\{\textcolor{blue}{ p_2}/\textcolor{red}{ P_2}\}$ contribute to the differential. Instead of analysing their moduli spaces, we argue indirectly. If the first domain does not contribute, $\mathcal{F}_{34}(\CFTd(\tau T))$ does not contain any arrows labelled $\textcolor{red}{q_1}$ for any tangle $T$. This is clearly false, so the domain must contribute. We can argue similarly for the other domain. 
	
	Finally, the graded statement follows from the additivity of gradings as in the proof of the Glueing Theorem for $\CFTd$, together with the fact that the meridional suture around the tangle end $\texttt{3}$ of $T$ becomes the meridional suture around the tangle end $\texttt{4}$ of $\tau T$ and vice versa. 
\end{proof}

% Linearity.tex
\section{Linearity}\label{sec:Linearity}

\begin{definition}
	Consider the covering space $\FourPuncturedTorus\rightarrow \FourPuncturedSphere$ corresponding to the kernel of the homomorphism 
	$\pi_1(\FourPuncturedSphere)\rightarrow \mathbb{Z}/2$
	which sends the circle around each puncture to 1. We can also think of this covering space as the restriction of the double-branched cover $T^2\rightarrow S^2$ with four branched points. Let $$\eta\co\PuncturedPlane\longrightarrow\FourPuncturedTorus\longrightarrow\FourPuncturedSphere$$
	be the composition of this map with the restriction of the universal cover of the torus $T^2=\mathbb{R}^2/(2\mathbb{Z})^2$. Our usual parametrization of $\FourPuncturedSphere$ lifts to a parametrization of $\PuncturedPlane$ under $\eta$. This is illustrated in Figure~\ref{fig:coveringmapINTRO} on page~\pageref{fig:coveringmapINTRO}, where the front face of $\FourPuncturedSphere$ and its preimage under $\eta$ are shaded gray. 
\end{definition}

\begin{figure}[t]
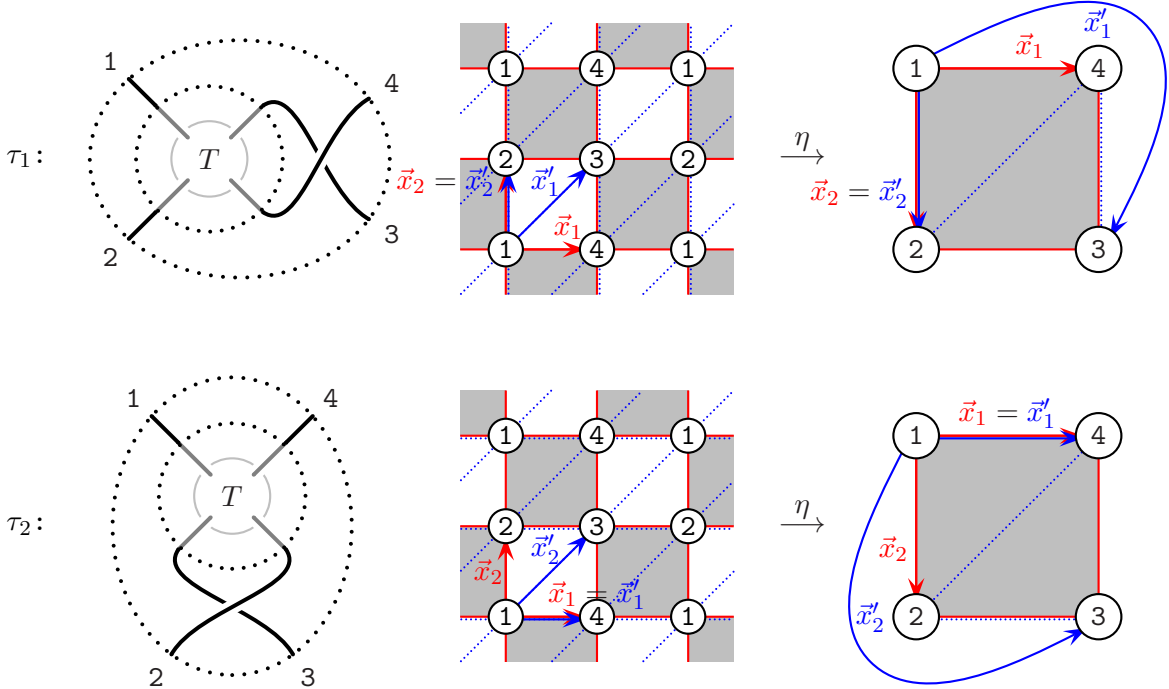

	\centering
	\TwistPSLRight
	\\
	\TwistPSLBottom
	\caption{Illustration of Observation~\ref{obs:TwistPSL}. The twists $\tau_1$ and $\tau_2$ correspond to the generating linear transformations of 
	$\SL(2,\mathbb{Z})$, which, in each case, send the basis
	$\{\textcolor{red}{\vec{x}_1},\textcolor{red}{\vec{x}_2}\}$ of $\mathbb{R}^2$ to $\{\textcolor{blue}{\vec{x}'_1},\textcolor{blue}{\vec{x}'_2}\}$.
	}\label{fig:TwistPSL}
\end{figure}

\begin{observation}\label{obs:TwistPSL}
$\PuncturedPlane$ comes with a natural action by $\SL(2,\mathbb{Z})$. The two generators 
$$
\begin{bmatrix}
1 & 0 \\
1 & 1
\end{bmatrix}
\quad\text{ and }\quad
\begin{bmatrix}
1 & 1 \\
0 & 1
\end{bmatrix}
$$
of $\SL(2,\mathbb{Z})$ induce two half-twists $\tau_1$ and $\tau_2$ on $\FourPuncturedSphere$, respectively, as illustrated in Figure~\ref{fig:TwistPSL}. Therefore, the action of $\SL(2,\mathbb{Z})$ on $\PuncturedPlane$ induces a well-defined action on $\FourPuncturedSphere$. Moreover, $-\id\in\SL(2,\mathbb{Z})$ acts trivially on $\FourPuncturedSphere$, so we obtain an action of
$\PSL(2,\mathbb{Z})
%=\SL(2,\mathbb{Z})/\mathbb{Z}^\times
$ on $\FourPuncturedSphere$.

Alternatively, we can regard the half-twists $\tau_1$ and $\tau_2$ as the actions of the two generators of the 3-stranded braid group $\Braidgroup$ fixing one of the four punctures. (In Figure~\ref{fig:TwistPSL}, this is the puncture corresponding to the first tangle end.) So under this identification of generators, the action of $\PSL(2,\mathbb{Z})$ on $\FourPuncturedSphere$ agrees with the action of $\Braidgroup$. In fact, $\PSL(2,\mathbb{Z})$ is isomorphic to the braid group $\Braidgroup$ modulo its centre, which is generated by a full twist of all three strands. % (and which acts trivially on $\FourPuncturedSphere$).
In short: if we fix one of the four punctures of $\FourPuncturedSphere$, a reparametrization of $\FourPuncturedSphere$ up to homotopy corresponds to an element in $\PSL(2,\mathbb{Z})$. 
\end{observation}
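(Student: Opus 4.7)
The plan is to verify that the linear $\SL(2,\mathbb{Z})$-action on $\mathbb{R}^2$ descends through the covering map $\eta$ to a well-defined action on $\FourPuncturedSphere$, and then to identify the two generating matrices with the half-twists $\tau_1$ and $\tau_2$ depicted in Figure~\ref{fig:TwistPSL} by direct inspection.

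First I would set up the descent. Any $A \in \SL(2,\mathbb{Z})$ preserves $\mathbb{Z}^2$ and hence the sublattice $(2\mathbb{Z})^2$, so the action passes from $\mathbb{R}^2$ to the torus $T^2 = \mathbb{R}^2/(2\mathbb{Z})^2$. Linearity moreover implies compatibility with the hyperelliptic involution $v \mapsto -v$, which is the non-trivial deck transformation of the double branched cover $T^2 \to S^2$; so the action descends further to $S^2$. Since $A$ preserves $\mathbb{Z}^2$, the four branch points are permuted among themselves, which yields an action on $\FourPuncturedSphere$. The same chain of observations shows that $-\id \in \SL(2,\mathbb{Z})$ descends to the identity on $S^2$, so the action of $\SL(2,\mathbb{Z})$ on $\FourPuncturedSphere$ factors through $\PSL(2,\mathbb{Z})$.

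Next I would identify the images of the two generators. Each of $\bigl[\begin{smallmatrix}1&0\\1&1\end{smallmatrix}\bigr]$ and $\bigl[\begin{smallmatrix}1&1\\0&1\end{smallmatrix}\bigr]$ is a shear that fixes a coordinate axis pointwise and acts on $T^2$ as a Dehn twist along the corresponding simple closed curve. Under the double branched cover $T^2 \to S^2$, each such Dehn twist along a curve passing through two branch points descends to a half-twist interchanging the corresponding two punctures of $S^2$. Tracing the standard parametrization through the lifts shown in Figure~\ref{fig:coveringmapINTRO} identifies these two half-twists as precisely $\tau_1$ and $\tau_2$ from Figure~\ref{fig:TwistPSL}. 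The main obstacle here is keeping track of which pair of punctures is swapped for each generator and matching conventions between the linear action, the covering map, and the pictures; the figures do most of this bookkeeping.

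For the final remark, the identification with $\Braidgroup$ is immediate once one observes that both generating matrices fix the origin, so the corresponding half-twists fix a common puncture; their restrictions to the remaining three punctures then realize the two standard generators of the $3$-strand braid group. That the resulting homomorphism $\Braidgroup \to \Mod(\FourPuncturedSphere)$ factors through $\Braidgroup/Z(\Braidgroup) \cong \PSL(2,\mathbb{Z})$ reduces to the classical presentation of $\PSL(2,\mathbb{Z})$, together with the observation that the full twist $(\sigma_1\sigma_2)^3$ generating $Z(\Braidgroup)$ corresponds to a Dehn twist along a circle bounding a disk containing only the fixed puncture, which is trivial in $\Mod(\FourPuncturedSphere)$.
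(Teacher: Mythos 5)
Your argument is correct and follows essentially the same route as the paper, which records this as an observation justified pictorially: the linear $\SL(2,\mathbb{Z})$-action descends through the branched covering to $\FourPuncturedSphere$, the two shears become the half-twists $\tau_1$ and $\tau_2$ of Figure~\ref{fig:TwistPSL}, and the identification with $\Braidgroup$ modulo its centre is the classical fact $\PSL(2,\mathbb{Z})\cong \Braidgroup/Z(\Braidgroup)$. Your write-up merely makes explicit the standard details the paper leaves to the figures (equivariance of linear maps with the elliptic involution, Dehn twists descending to half-twists, and triviality of the twist about a curve bounding a once-punctured disk), all of which are accurate.
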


\begin{definition}
	As in~\cite[Section~7.1]{HRW}, it is useful to introduce a ``normal form'' for curves in $\PuncturedPlane$. For this, let us consider the standard metric on $\PuncturedPlane$, which induces a metric on $\FourPuncturedTorus$. Let us also fix some $\varepsilon$ with $\frac{1}{2}>\varepsilon>0$. We then define a \textbf{peg-board representative} of a closed curve $\gamma$ in $\FourPuncturedTorus$ as a representative of the homotopy class of $\gamma$ which has minimal length among all representatives of distance $\varepsilon$ to all four punctures in $\FourPuncturedTorus$. The intuition behind this is to think of the four punctures of $\FourPuncturedTorus$ as discs of radii $\varepsilon$ (or, if we think of the punctures of $\FourPuncturedTorus$ as marked points, pegs centred at those marked points) and then we imagine pulling the curve $\gamma$ ``tight'', like a rubber band. 
	It is easy to see that a peg-board representative of the lift of some curve in $\FourPuncturedSphere$ is the lift of another curve in $\FourPuncturedSphere$ which is homotopic to the original one.
	We also call lifts of such curves to $\PuncturedPlane$ peg-board representatives, and in fact, this is how we usually think of them. In this case, the pegs of radii $\varepsilon$ sit at the lattice points. If we take the limit $\varepsilon\rightarrow0$, a peg-board representative in $\PuncturedPlane$ becomes a piecewise linear curve, which we call a limit peg-board representative. A curve \(\gamma\) is called \textbf{linear}, if its peg-board representative does not wrap around a lattice point and if its limit peg-board representative is an infinite straight line. If its limit peg-board representative can be chosen not to intersect the lattice $\Lattice$, we call $\gamma$ \textbf{loose} and otherwise \textbf{rigid}. 
	We call a curve with local system  $(\gamma,X)$ linear, loose or rigid if the lift of $\gamma$ to $\PuncturedPlane$ is. 
\end{definition}

\begin{theorem}[Linearity]\label{thm:linearCurves}
	The underlying curve of every component of \(\HFT(T)\) for a 4-ended tangle \(T\) in a \(\mathbb{Z}\)-homology 3-ball is linear. 
\end{theorem}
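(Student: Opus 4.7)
The plan is to argue by contradiction using the extendibility of the peculiar module to the larger algebra $\Aminus$, which is guaranteed by Proposition~\ref{prop:arrow-pushing-for-CFTminus}. Suppose $\gamma_X$ is a component of $\HFT(T)$ whose underlying curve $\gamma$ is not linear. By definition, the peg-board representative of $\gamma$ in $\PuncturedPlane$ then contains a segment that wraps non-trivially around some lattice point $v$. Because $\PSL(2,\mathbb{Z})$ acts on $\PuncturedPlane$ by linear isomorphisms (Observation~\ref{obs:TwistPSL}) and the induced action on $\FourPuncturedSphere$ commutes with $\HFT$ (Theorem~\ref{thm:MCGaction}), I may apply a suitable element of $\PSL(2,\mathbb{Z})$ to normalize so that $v$ projects to the puncture labelled \texttt{1} and so that the wrap has a simple local form.

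The key computational step is to analyse, via Definition~\ref{def:loopsTOpqMod}, the arrows in $\Pi(\gamma_X)$ coming from the elementary curve segments near the wrap, and to lift the picture to $\Aminus$. A wrap around the puncture \texttt{1} involves consecutive elementary segments on opposite faces of $\FourPuncturedSphere$, producing an arrow labelled by a power of $p_1$ immediately followed by an arrow labelled by a power of $q_1$ (or vice versa). Since $p_1q_1 = U_1 = q_1p_1$ in $\Aminus$, this composition contributes a non-trivial monomial of the form $U_1^k \cdot(\text{idempotent})$ to the square of any lift $\partial^-$ of the differential, with $k\geq 1$, localised at the wrapped puncture.

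By Proposition~\ref{prop:arrow-pushing-for-CFTminus}, however, $(\partial^-)^2$ must equal the prescribed curvature
\[
p^4+q^4+U_{i_1}U_{o_1}+U_{i_2}U_{o_2},
\]
in which the only permitted monomials in the $U$-variables are the matched products $U_{i_k}U_{o_k}$. The hardest part of the proof will be to argue that this unwanted local contribution $U_1^k$ cannot be absorbed into the prescribed curvature, nor cancelled against contributions from other parts of the curve, by any allowed bigraded chain homotopy. Here the idempotent and $(\delta,A)$-grading rigidity exploited in the proof of Proposition~\ref{prop:arrow-pushing-for-CFTminus} should do the work: homotopies are constrained to preserve the local structure at $v$, so a pure power $U_1^k$ at a single idempotent cannot be converted into a properly matched product $U_{i_k}U_{o_k}$. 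This contradicts the existence of a lift, forcing $\gamma$ to be linear.

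Finally, the case of a non-trivial local system reduces immediately to the trivial one: replacing scalars by blocks $X$ or $X^{-1}$ in the arrow labels multiplies each offending monomial by an invertible element of $\GL_n(\field)$, so the obstruction persists, and the same contradiction is obtained.
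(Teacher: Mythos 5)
There is a genuine gap: you have conflated ``not linear'' with ``wraps around a lattice point''. By the paper's definition, a curve is linear only if (i) its peg-board representative does not wrap around a lattice point \emph{and} (ii) its limit peg-board representative is an infinite straight line. So a non-linear component need not wrap at all: its lift may simply bend, i.e.\ its limit peg-board representative may be a piecewise linear curve passing through three non-collinear lattice points. In that case there is no consecutive $p_i$--$q_i$ pair in $\Pi(\HFT(T))$ for the \emph{given} parametrization, no $U_i$-term appears in $(\partial^-)^2$, and your contradiction never materialises. This bending case is exactly where the bulk of the work lies in the paper: one picks the shortest lattice vector $\vec{v}$ in the direction $\vec{y}-\vec{z}$, completes it to a basis via Euclid's algorithm to get an element of $\PSL(2,\mathbb{Z})$, and uses Theorem~\ref{thm:MCGaction} to reparametrize so that one parametrizing arc lifts to a straight line through the corner point; only in the \emph{new} parametrization does the bend force a forbidden configuration, which is then ruled out by the no-wrapping statement (Lemma~\ref{lem:nowrapping}). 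Your appeal to $\PSL(2,\mathbb{Z})$ is only used to normalize an assumed wrap, so this reduction is missing entirely; as it stands your argument proves at most the no-wrapping lemma, not linearity.

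Two smaller points on the part you do treat (the wrapping case, which is essentially the paper's Lemma~\ref{lem:nowrapping}). First, the case where the two outer generators of the $p_i$--$q_i$ configuration coincide must be excluded separately (the paper uses Observation~\ref{obs:AlexGradingOfCFTdLoops}). Second, the claim that the unwanted $U_i$-term ``cannot be cancelled'' is not settled by grading rigidity alone: the actual argument is that, since the complex is reduced, the only factorization of $U_i.\iota$ into non-idempotent elements is $q_i\cdot p_i$, so a cancelling pair of arrows would have to be labelled exactly $p_i$ and $q_i$, hence already visible in $\Pi(\HFT(T))$, contradicting the fact (arranged by placing the local system on the other segment) that there is only one arrow labelled $p_i$ leaving the relevant generator. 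You would need to supply this argument rather than defer it to the grading constraints of Proposition~\ref{prop:arrow-pushing-for-CFTminus}, which serve a different purpose there.
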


\begin{lemma}\label{lem:nowrapping}
	For any 4-ended tangle \(T\), \(\Pi(\HFT(T))\) does not contain two consecutive arrows labelled $p_i$ and $q_i$, ie neither
	$$
	\begin{tikzcd}%[row sep=0.5cm, column sep=0.5cm]
	\bullet
	\arrow{r}{p_i}
	&
	\bullet
	\arrow{r}{q_i}
	&
	\bullet
	\end{tikzcd}
	\quad\text{nor}\quad 
	\begin{tikzcd}%[row sep=0.5cm, column sep=0.5cm]
	\bullet
	\arrow{r}{q_i}
	&
	\bullet
	\arrow{r}{p_i}
	&
	\bullet
	\end{tikzcd}.
	$$
\end{lemma}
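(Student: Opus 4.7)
The plan is to argue by contradiction, using Proposition~\ref{prop:arrow-pushing-for-CFTminus} to lift the hypothetical configuration from $\Ad$ to the generalised peculiar algebra $\Aminus$, and then extract a contradiction from the curvature equation. Assume that $\Pi(\HFT(T))$ contains a composition $x \xrightarrow{p_i} y \xrightarrow{q_i} z$; the case with the two labels in the opposite order is handled symmetrically using $p_iq_i=\iota_{i-1}U_i$ in place of $q_ip_i=\iota_iU_i$. By Proposition~\ref{prop:arrow-pushing-for-CFTminus} we obtain a curved complex $(C^-,\partial^-)$ over $\Aminus$ with $\mathcal{E}(C^-,\partial^-)=\Pi(\HFT(T))$. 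The only homogeneous elements of $\Aminus$ of $\delta$-grading $\tfrac12$ are the generators $p_j$ and $q_j$ themselves, since every non-unit monomial in the $U_i$'s carries integer $\delta$-grading at least~$1$. Consequently the two arrows lift to arrows with \emph{exactly} the same labels $p_i$ and $q_i$ in $\Aminus$, and their composition contributes $z\otimes q_ip_i=z\otimes \iota_iU_i$ to $(\partial^-)^2(x)$; in $\Ad$ this term vanishes, but in $\Aminus$ it does not.

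The curvature $p^4+q^4+U_{i_1}U_{o_1}+U_{i_2}U_{o_2}$ has all Alexander gradings equal to zero, whereas $\iota_iU_i$ has nonzero Alexander grading in the pair containing the puncture~$i$. So the newly created term cannot match the curvature and must cancel against other length-$2$ paths $x\to y''\to z$ in $(\partial^-)^2(x)$. A $\delta$-grading count rules out any such path whose labels contain $U$-factors, and a short case analysis of products of the $\delta=\tfrac12$ elements $p_j,q_j$ shows that the only factorisation of $\iota_iU_i$ of this kind is $q_i\cdot p_i$, so every cancelling path must carry the same label pattern $(p_i,q_i)$.

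To conclude I appeal to the geometric structure of $\Pi(\gamma_X)$ from Definition~\ref{def:loopsTOpqMod}: at the intersection corresponding to $x$ there is a unique elementary curve segment carrying label $p_i$, so $y''$ is forced to be the same intersection as $y$ and the only remaining freedom is the local system $X\in\GL_n(\field)$ on the component of $\HFT(T)$ through~$x$. Since both consecutive arrows are traversed in the direction of $\gamma$, each contributes a coefficient of $X$, and the total coefficient of $z\otimes\iota_iU_i$ in $(\partial^-)^2(x)$ is $X^2$. The curvature equation therefore forces $X^2=0$, contradicting invertibility of $X$. The main obstacle, and the reason Proposition~\ref{prop:arrow-pushing-for-CFTminus} is essential, is the passage across the relation $p_iq_i=0$ in $\Ad$: within $\Ad$ itself consecutive $p_i,q_i$ arrows are a priori compatible with the curved-complex relation, and it is only after lifting to $\Aminus$---where $p_iq_i$ and $q_ip_i$ become genuinely nonzero $U$-terms---that the curvature equation yields a contradiction.
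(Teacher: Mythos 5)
Your proposal follows essentially the same route as the paper: lift to \(\Aminus\) via Proposition~\ref{prop:arrow-pushing-for-CFTminus}, observe that the composition contributes a term \(\iota_iU_i=q_ip_i\) (or \(p_iq_i\)) to \((\partial^-)^2\) that cannot match the curvature, note that the only factorisation of this element into two non-idempotent homogeneous pieces is \(q_i\cdot p_i\), and then use the fact that each intersection point has exactly one front and one back segment to see that no cancelling path exists. One detail is off: by Definition~\ref{def:loopsTOpqMod} the local system \(X\) sits on a \emph{single} elementary segment of the component, so the coefficient of the offending term is \(X\), \(X^{-1}\), or the identity, never \(X^2\); the paper sidesteps this by normalising the local system onto the second segment so that the \(p_i\)-arrow out of the left generator is unique. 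This does not affect your contradiction, since any of these matrices is invertible and hence cannot vanish. You might also note explicitly that the lifted differential has no identity components (its image under \(\mathcal{E}\) is reduced and kernel elements have \(\delta\)-grading at least~1), which your \(\delta\)-grading count tacitly uses; and whereas the paper dismisses the case where the outer generators coincide separately via Observation~\ref{obs:AlexGradingOfCFTdLoops}, your uniqueness argument covers that case directly.
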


\begin{wrapfigure}{r}{0.2\textwidth}
	\centering
	\wrappingi
	\caption{}\label{fig:wrapping}\vspace{15pt}
\end{wrapfigure}
\myfixwrapfig

\begin{proof}[Proof of Lemma~\ref{lem:nowrapping}]
	The case in which the left and right generators  coincide can be ruled out using Observation~\ref{obs:AlexGradingOfCFTdLoops}.
	So let us assume that these two generators are distinct, which is illustrated in Figure~\ref{fig:wrapping}. Without loss of generality, we may assume that the local system of $\Pi(\HFT(T))$ sits on the second elementary curve segment, the one corresponding to the arrow labelled $q_i$ in the first case and $p_i$ in the second. In particular, this means that the arrow leaving the left generator is the only one labelled $p_i$ in the first case and $q_i$ in the second. By Proposition~\ref{prop:arrow-pushing-for-CFTminus}, we can extend the curved type D structure $\Pi(\HFT(T))$ over $\Ad$ to a curved type D structure $\CFTminus(T)$ over $\Aminus$ by adding extra arrows labelled by elements in $\Aminus$ which are mapped to zero in $\Ad$. In $\CFTminus(T)$, the two consecutive arrows contribute a term $q_ip_i$ or $p_iq_i$ to the $d^2$-relations, which needs to be cancelled. Since there are no identity components in the differential, there must be another pair of consecutive arrows between the two generators, identical to the first, but via a fourth generator. However, this arrow pair would already have to be present in $\Pi(\HFT(T))$. In particular, there would have to be another arrow labelled $p_i$ in the first case and $q_i$ in the second leaving the left generator. Contradiction.
\end{proof}

\begin{figure}[t]
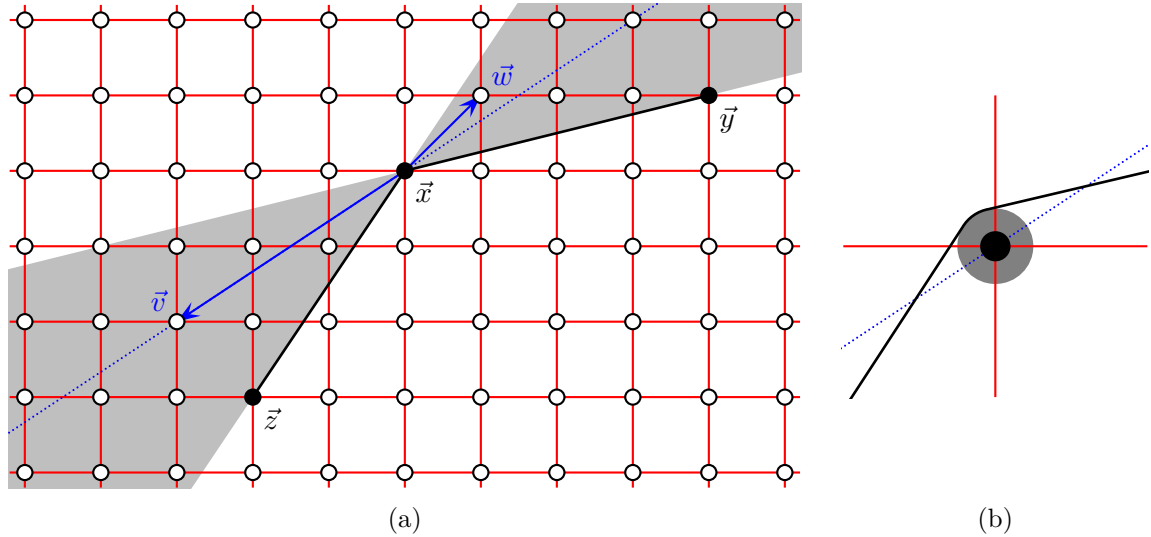

	\centering
	\begin{subfigure}[b]{0.7\textwidth}
		\centering
		\LinearityOverview
		\caption{}\label{fig:proof_of_linearity:overview}
	\end{subfigure}
	\begin{subfigure}[b]{0.26\textwidth}
		\centering
		\LinearityCloseup
		\\
		\vspace*{1.2cm}
		\caption{}\label{fig:proof_of_linearity:closeup}
	\end{subfigure}
	\caption{An illustration of how we choose the new basis $\{\textcolor{blue}{\vec{v}},\textcolor{blue}{\vec{w}}\}$ for the reparametrization in the proof of Lemma~\ref{lem:nowrapping} (a) and what this new parametrization and the curve $\gamma$ look like in a neighbourhood of the lattice point $\vec{x}$ (b)}\label{fig:proof_of_linearity}
\end{figure} 

\begin{proof}[Proof of Theorem~\ref{thm:linearCurves}]
	Suppose the lift $\gamma$ of the underlying curve of a component of $\HFT(T)$ is not linear. The limit peg-board representative of $\gamma$ does not wrap around a lattice point, because this would immediately contradict Lemma~\ref{lem:nowrapping}. Hence, there exist three non-collinear lattice points $\vec{x}$, $\vec{y}$ and $\vec{z}$ on the limit peg-board representative of $\gamma$. The main idea is to choose a reparametrization of $\FourPuncturedSphere$ such that the lift of one of the parametrizing arcs is a straight line through $\vec{x}$ which lies in the shaded region in Figure~\ref{fig:proof_of_linearity:overview}. The blue dotted line is an example of such a straight line. Then, any peg-board representative of $\gamma$ intersects this straight line near $\vec{x}$ as shown in Figure~\ref{fig:proof_of_linearity:closeup}. However, this configuration is ruled out by Lemma~\ref{lem:nowrapping}.  
	
	To find a suitable reparametrization, let us choose $\vec{x}$ as the origin of $\Plane$. In other words, the reparametrization should fix the puncture of $\FourPuncturedSphere$ corresponding to $\vec{x}$. Let $\vec{v}=(v_1,v_2)^t$ be the vector in $\Lattice\subset\Plane$ which is shortest among all such vectors satisfying $n\cdot \vec{v}=\vec{y}-\vec{z}$ for some positive integer $n$. By minimality, we can use Euclid's algorithm to find integers $w_1$ and $w_2$ such that $v_1w_2-v_2w_1=1$. This gives us an element
	$$
	\pm
	\begin{bmatrix}
	v_1 & w_1 \\
	v_2 & w_2
	\end{bmatrix}
	\in\PSL(2,\mathbb{Z})
	$$
	which defines a reparametrization of $\FourPuncturedSphere$ with the desired property. 
\end{proof}

\begin{figure}[t]
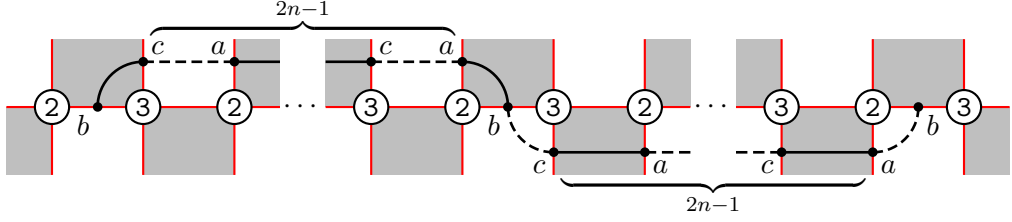

	\centering
	\rigidcurve
	\caption{The lift of the rigid curve $\mathfrak{b}_n$ from Theorem~\ref{thm:rigid_curves}}\label{fig:rigid_curves}
\end{figure}

\begin{figure}[t]
	\centering
	\begin{subfigure}[b]{\textwidth}
		\centering
		$$
		\begin{tikzcd}%[row sep=0.5cm, column sep=0.5cm]
		&
		a
		\arrow[dashed,bend right=20,swap]{r}{q_{32}}
		\arrow[dashed,bend left=20,leftarrow]{r}{q_{14}}
		&
		c
		\arrow[bend right=20,swap]{r}{p_{23}}
		\arrow[bend left=20,leftarrow]{r}{p_{41}}
		&
		a
		\arrow[phantom,pos=0.05]{rr}{\cdots}
		\arrow[phantom,pos=0.95]{rr}{\cdots}
		&
		&
		a
		\arrow[dashed,bend right=20,swap]{r}{q_{32}}
		\arrow[dashed,bend left=20,leftarrow]{r}{q_{14}}
		&
		c
		\arrow[bend right=20,swap]{r}{p_{3}}
		\arrow[bend left=20,leftarrow]{r}{p_{412}}
		&
		b
		\arrow[phantom,pos=0.9]{r}{\cdots}
		&
		c
		\arrow[bend right=20,swap]{r}{p_{23}}
		\arrow[bend left=20,leftarrow]{r}{p_{41}}
		&
		a
		\arrow[dashed,bend right=20,swap]{r}{q_{2}}
		\arrow[dashed,bend left=20,leftarrow]{r}{q_{143}}
		&
		b
		\\
		b
		\arrow[dashed,bend right=15,swap]{dr}{q_{3}}
		\arrow[dashed,bend left=15,leftarrow]{dr}{q_{214}}
		\arrow[bend right=15,swap]{ur}{p_{2}}
		\arrow[bend left=15,leftarrow]{ur}{p_{341}}
		\\
		&
		c
		\arrow[bend right=20,swap]{r}{p_{23}}
		\arrow[bend left=20,leftarrow]{r}{p_{41}}
		\arrow[dotted,pos=0.55,swap]{uur}{U_2.\iota_c}
		&
		a
		\arrow[dashed,bend right=20,swap]{r}{q_{32}}
		\arrow[dashed,bend left=20,leftarrow]{r}{q_{14}}
		\arrow[dotted,pos=0.55,swap]{uur}{U_2.\iota_a}
		&
		c
		\arrow[phantom,pos=0.05]{rr}{\cdots}
		\arrow[phantom,pos=0.95]{rr}{\cdots}
		&&
		c
		\arrow[bend right=20,swap]{r}{p_{23}}
		\arrow[bend left=20,leftarrow]{r}{p_{41}}
		\arrow[dotted,pos=0.55,swap]{uur}{U_2.\iota_c}
		&
		a
		\arrow[dotted,pos=0.55,swap]{uur}{q_2}
		&
		\phantom{b}
		\arrow[phantom,pos=0.9]{r}{\cdots}
		&
		a
		\arrow[dashed,bend right=20,swap]{r}{q_{32}}
		\arrow[dashed,bend left=20,leftarrow]{r}{q_{14}}
		\arrow[dotted,pos=0.55,swap]{uur}{U_2.\iota_a}
		&
		c
		\end{tikzcd}
		$$
		\caption{}\label{fig:rigid_curves:curveB:Graph}
	\end{subfigure}
	\bigskip\\
	\begin{subfigure}[b]{\textwidth}
		\centering
		\begin{tabular}{ccccc}
			$q_{32}p_2$
			&
			$p_{23}U_{2}$
			&
			$q_{32}U_{2}$
			&
			$p_{3}U_{2}$
			&
			$q_{2}U_{2}$
			\\
			\midrule
			$
			\begin{tikzcd}[row sep=0.2cm,column sep=0.5cm,ampersand replacement = \&]
			b
			\arrow{r}{p_{2}}
			\&
			a
			\arrow[dashed]{r}{q_{32}}
			\&
			c
			\\
			b
			\arrow[dotted]{r}{U_2.\iota_b}
			\&
			b
			\arrow[dashed]{r}{q_{3}}
			\&
			c
			\\
			b
			\arrow[dashed]{r}{q_{3}}
			\&
			c
			\arrow[dotted]{r}{U_2.\iota_c}
			\&
			c
			\end{tikzcd}
			$
			&
			$$
			\begin{tikzcd}[row sep=0.2cm,column sep=0.5cm,ampersand replacement = \&]
			c
			\arrow[dotted]{r}{U_2.\iota_c}
			\&
			c
			\arrow{r}{p_{23}}
			\&
			a
			\\
			c
			\arrow[dotted]{r}{U_2p_3}
			\&
			b
			\arrow{r}{p_{2}}
			\&
			a
			\\
			c
			\arrow{r}{p_3}
			\&
			b
			\arrow[dotted]{r}{U_2p_{2}}
			\&
			a
			\\
			c
			\arrow{r}{p_{23}}
			\&
			a
			\arrow[dotted]{r}{U_2.\iota_a}
			\&
			a
			\end{tikzcd}
			$$
			&
			$$
			\begin{tikzcd}[row sep=0.2cm,column sep=0.5cm,ampersand replacement = \&]
			a
			\arrow[dotted]{r}{U_2.\iota_a}
			\&
			a
			\arrow[dashed]{r}{q_{32}}
			\&
			c
			\\
			a
			\arrow[dotted]{r}{U_2q_2}
			\&
			b
			\arrow[dashed]{r}{q_{3}}
			\&
			c
			\\
			a
			\arrow[dashed]{r}{q_2}
			\&
			b
			\arrow[dotted]{r}{U_2q_{3}}
			\&
			c
			\\
			a
			\arrow[dashed]{r}{q_{23}}
			\&
			c
			\arrow[dotted]{r}{U_2.\iota_c}
			\&
			c
			\end{tikzcd}
			$$
			&
			$$
			\begin{tikzcd}[row sep=0.2cm,column sep=0.5cm,ampersand replacement = \&]
			c
			\arrow[dotted]{r}{U_2.\iota_c}
			\&
			c
			\arrow{r}{p_{3}}
			\&
			b
			\\
			c
			\arrow{r}{p_3}
			\&
			b
			\arrow[dotted]{r}{U_2.\iota_b}
			\&
			b
			\\
			c
			\arrow{r}{p_{23}}
			\&
			a
			\arrow[dashed]{r}{q_{2}}
			\&
			b
			\end{tikzcd}
			$$
			&
			$$
			\begin{tikzcd}[row sep=0.2cm,column sep=0.5cm,ampersand replacement = \&]
			a
			\arrow[dotted]{r}{U_2.\iota_a}
			\&
			a
			\arrow[dashed]{r}{q_{2}}
			\&
			b
			\\
			a
			\arrow[dashed]{r}{q_2}
			\&
			b
			\arrow[dotted]{r}{U_2.\iota_b}
			\&
			b
			\end{tikzcd}
			$$
		\end{tabular}
		\caption{}\label{fig:rigid_curves:curveB:Splittings}
	\end{subfigure}
	\caption{The (generalized) peculiar module considered in the proof of Theorem~\ref{thm:rigid_curves} (a) and splittings needed to determine the existence of the dotted arrows (b). Each column in (b) shows all possible ways to write the algebra element at the top as a product of two non-identity elements of $\Aminus$. In each column, the first splitting is the one which appears in~(a).}\label{fig:rigid_curves:curveB}
\end{figure}

\begin{theorem}\label{thm:rigid_curves}
	Up to reparametrization, the underlying curve of any rigid component of \(\HFT(T)\) for a 4-ended tangle \(T\) is equal to \(\mathfrak{b}_n\coloneqq\Irr_n(\frac{0}{1};2,3)\) for some \(n\). Moreover, the local system on any such component is equal to the identity matrix. 
\end{theorem}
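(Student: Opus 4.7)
The plan is to combine three ingredients already in hand: linearity of components (Theorem~\ref{thm:linearCurves}), the MCG-equivariance of $\HFT$ (Theorem~\ref{thm:MCGaction}), and the extendibility of $\CFTd(T)$ to a curved type D structure $\CFTminus(T)$ over $\Aminus$ (Proposition~\ref{prop:arrow-pushing-for-CFTminus}).

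First I would reduce to the case that the slope of the underlying curve $\gamma$ is $\tfrac{0}{1}$. By Theorem~\ref{thm:linearCurves}, the lift of $\gamma$ to $\PuncturedPlane$ is linear, so its limit peg-board representative is a straight line of some slope $\tfrac{p}{q}\in\QPI$. Observation~\ref{obs:TwistPSL} shows that $\PSL(2,\mathbb{Z})$ acts on $\FourPuncturedSphere$ through $\Mod(\FourPuncturedSphere)$ and acts transitively on slopes; combined with Theorem~\ref{thm:MCGaction}, I may reparametrize so that the slope of $\gamma$ is $\tfrac{0}{1}$. Since $\gamma$ is rigid, its limit peg-board representative is a horizontal line through a row of lattice points. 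Lemma~\ref{lem:nowrapping} forbids wrapping around these lattice points, so $\gamma$ is forced to be one of the push-offs $\Irr_n(\tfrac{0}{1};4,1)$ or $\Irr_n(\tfrac{0}{1};2,3)$ for some $n\geq 1$. A vertical unit translation of $\PuncturedPlane$ interchanges these two families; one checks that it commutes with the hyperelliptic involution and so descends under $\eta$ (through $T^2=\mathbb{R}^2/(2\mathbb{Z})^2$) to an orientation-preserving self-homeomorphism of $\FourPuncturedSphere$ that permutes the four punctures by a double transposition, hence to an element of $\Mod(\FourPuncturedSphere)$. After this additional reparametrization, $\gamma=\mathfrak{b}_n$.

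For the statement about local systems, suppose $(\mathfrak{b}_n,X)$ with $X\in\GL_k(\field)$ is a component of $\HFT(T)$. Then $\Pi(\mathfrak{b}_n,X)$ is a direct summand of $\CFTd(T)$, which by Proposition~\ref{prop:arrow-pushing-for-CFTminus} lifts, up to bigraded chain homotopy, to a summand $C^-$ of $\CFTminus(T)$ with curvature $p^4+q^4+U_{i_1}U_{o_1}+U_{i_2}U_{o_2}$. The solid arrows of Figure~\ref{fig:rigid_curves:curveB:Graph} account for exactly the $p^4+q^4$ part of this curvature, so the remaining $U_i U_j$ terms must be produced by additional arrows in $C^-$ whose labels are divisible by some $U_\ell$. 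The $\delta$- and Alexander-grading argument from the proof of Proposition~\ref{prop:arrow-pushing-for-CFTminus} pins down these labels to be precisely the dotted arrows $U_\ell\cdot\iota_s$ depicted in Figure~\ref{fig:rigid_curves:curveB:Graph}, whose possible splittings against solid arrows are enumerated column by column in Figure~\ref{fig:rigid_curves:curveB:Splittings}. With the skeleton of $C^-$ fixed, the local system $X$ enters as a twist applied once per period of $\mathfrak{b}_n$: crossing the distinguished segment carrying $X$ multiplies the arrow label by $X$ in one orientation and by $X^{-1}$ in the other. Since the curvature monomials $p^4$, $q^4$ and $U_iU_j$ are $X$-free, equating the contributions produced by $d^2$ with the prescribed curvature on every generator forces the accumulated $X$-transport around one period to be trivial, i.e.\ $X=\id$.

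I expect the main obstacle to be the grading-driven enumeration in the second step: one must show that the list of admissible $U_\ell$-labeled arrows in $C^-$ is exactly the one in Figure~\ref{fig:rigid_curves:curveB}, so that no additional arrows can silently absorb a non-trivial local system $X$. Once that enumeration is in place, the $X$-balance argument around one period of $\mathfrak{b}_n$ is a short calculation of the same flavour as the proof of Lemma~\ref{lem:nowrapping}.
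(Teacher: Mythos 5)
Your reduction to slope $\slopeZero$ via Theorem~\ref{thm:MCGaction} and Observation~\ref{obs:TwistPSL} matches the paper, and the translation argument identifying $\Irr_n(\slopeZero;4,1)$ with $\Irr_n(\slopeZero;2,3)$ up to reparametrization is fine. The genuine gap is the sentence claiming that Lemma~\ref{lem:nowrapping} ``forces $\gamma$ to be one of the push-offs $\Irr_n(\slopeZero;4,1)$ or $\Irr_n(\slopeZero;2,3)$.'' That is not true as a topological statement. A rigid linear curve of slope $\slopeZero$ is, by definition, any periodic push-off of the horizontal line through a row of lattice points which weaves past the pegs without wrapping; its crossings of the line can occur in an arbitrary periodic pattern of intervals. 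Only the very special pattern in which consecutive crossings are spaced an equal, \emph{even} number $2n$ of intervals apart gives $\Irr_n(\slopeZero;i_1,i_2)$; weavings with unequal or odd spacings (say spacings $1,3,1,3,\dots$) are also rigid, linear, non-wrapping, primitive curves, and they are not related to any $\mathfrak{b}_n$ by a reparametrization. Ruling these out is precisely the main content of the paper's proof: after normalising the curve, one extends $\Pi(\HFT(T))$ to $\CFTminus(T)$ via Proposition~\ref{prop:arrow-pushing-for-CFTminus}, shows that the dotted $U_2$-arrows of Figure~\ref{fig:rigid_curves:curveB:Graph} are forced (using the splittings of Figure~\ref{fig:rigid_curves:curveB:Splittings}), and then distinguishes the parity of the number of horizontal segments: the odd case forces an extra $q_2$-arrow which makes the curve close up as $\mathfrak{b}_n$, while the even case produces an uncancellable term in $\partial^2$, a contradiction. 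Your proposal invokes the $\CFTminus$-machinery only for the local system, so the identification of the underlying curve is simply asserted where the real work lies.

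The local-system step is also weaker than it looks. The heuristic ``the curvature monomials are $X$-free, so the accumulated $X$-transport around one period must be trivial'' cannot be the whole story: exactly the same reasoning would apply to rational (loose) components, yet the paper explicitly remarks that bigraded extensions over $\Aminus$ of rational curves with non-trivial local systems do exist, and Question~\ref{que:local_systems_for_rationals} is open. In the paper, triviality of the local system is not a balance-of-curvature statement but a by-product of the same forced-arrow analysis: with $X$ placed on one segment, the forced dotted arrows acquire labels $U_2.\iota_c\otimes X$ and $U_2.\iota_a\otimes X$, the even case is again contradictory, and in the odd case the forced arrow $q_2\otimes X$ closes the curve up with monodromy $X\cdot X^{-1}=\id$. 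So even granting your enumeration of admissible $U_\ell$-labelled arrows, you would still need this finer mechanism rather than an $X$-free curvature count.
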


\begin{proof}
	By reparametrizing the tangle boundary, we may assume without loss of generality that the limit peg-board representative of the curve is equal to the horizontal line which agrees with the lift of the site $b$, as for the curve in Figure~\ref{fig:rigid_curves}. As we follow a peg-board representative of the curve from left to right, it intersects the site $b$ in at least two places, namely once as it goes from below the horizontal line to above and once as it goes in the opposite direction. Moreover, from the perspective of the first of these two points, the curve turns right in both adjacent faces, whereas from the perspective of the second intersection point, the curve turns left in both directions. From this, we see that one of these intersection points is connected to the site $a$ via the front face. We may assume without loss of generality that there are at most as many horizontal line segments starting at this intersection point with the site $a$ as there are on the other side of $b$, since a single half-twist along the site $b$ interchanges these two cases. 
	
	Let us assume for a moment that the curve carries the 1-dimensional (trivial) local system. The diagram on the left-hand side of Figure~\ref{fig:rigid_curves:curveB:Graph} (without the dotted arrows) shows the corresponding peculiar module. It contains two consecutive arrows 
	\begin{equation*}%\label{eqn:rigid_curves}
	\begin{tikzcd}
	b
	\arrow{r}{p_{2}}
	&
	a
	\arrow[dashed]{r}{q_{32}}
	&
	c,
	\end{tikzcd}
	\end{equation*}
	which compose to zero in $\Pi(\HFT(T))$. However, by Proposition~\ref{prop:arrow-pushing-for-CFTminus}, we can extend $\Pi(\HFT(T))$ to $\CFTminus(T)$ by adding components to the differential which lie in the kernel of $\Aminus\rightarrow\Ad$. In $\CFTminus(T)$, the composition of the two arrows above is non-zero. We claim that its contribution to the $\partial^2$-relation can only be cancelled by a differential 
	\begin{equation*}%\label{eqn:rigid_curves}
	\begin{tikzcd}
	c
	\arrow[dotted]{r}{U_2.\iota_c}
	&
	c.
	\end{tikzcd}
	\end{equation*}
	To see this, we may argue as follows: since $\Pi(\HFT(T))$ is reduced, it suffices to consider all possible ways to write $q_{32}p_2$ as a product of two non-idempotent elements. These are shown in the first column of Figure~\ref{fig:rigid_curves:curveB:Splittings}. For the first such splitting, both arrows must already be present in $\Pi(\HFT(T))$. But there is only one such arrow pair and it is exactly the one whose contribution to the $\partial^2$-relation we are trying to cancel. In the second case, there has to be an arrow labelled $q_3$ ending at the generator $c$ which would have to appear in $\Pi(\HFT(T))$, but it does not. So only the third case remains, and indeed, there is just a single arrow labelled $q_3$ leaving the generator $b$. 
	
	We now distinguish two cases: either the number of horizontal curve segments starting at the generator $a$ adjacent to the generator $b$ is odd or even. These two cases are illustrated in the middle and the right of Figure~\ref{fig:rigid_curves:curveB:Graph}, respectively. In both cases, all the dotted arrows labelled $U_2.\iota_c$ and $U_2.\iota_a$ have to be present in $\CFTminus(T)$. For this, we can repeat essentially the same argument as above, using the splittings in the second and third columns of Figure~\ref{fig:rigid_curves:curveB:Splittings}. Then, in the first case, there is a pair of consecutive arrows 
	\begin{equation*}%\label{eqn:rigid_curves}
	\begin{tikzcd}
	c
	\arrow[dotted]{r}{U_2.\iota_c}
	&
	c
	\arrow{r}{p_3}
	&
	b.
	\end{tikzcd}
	\end{equation*} 
	By considering the corresponding splittings in the fourth column of Figure~\ref{fig:rigid_curves:curveB:Splittings}, we see that there has to be a component 
	\begin{equation*}%\label{eqn:rigid_curves}
	\begin{tikzcd}
	a
	\arrow[dashed]{r}{q_2}
	&
	b
	\end{tikzcd}
	\end{equation*}
	in $\CFTminus(T)$, which, of course, already has to be there in $\Pi(\HFT(T))$. But this means that the curve is equal to $\mathfrak{b}_n$, where $n$ is determined by the number of horizontal curve segments. In the second case, there is a pair of consecutive arrows 
	\begin{equation*}%\label{eqn:rigid_curves}
	\begin{tikzcd}
	a
	\arrow[dotted]{r}{U_2.\iota_a}
	&
	a
	\arrow[dashed]{r}{q_2}
	&
	b.
	\end{tikzcd}
	\end{equation*} 
	As we can see from the corresponding splittings in the last column of Figure~\ref{fig:rigid_curves:curveB:Splittings}, this contribution to the $\partial^2$-relation for the differential of $\CFTminus(T)$ cannot be cancelled, so we arrive at a contradiction. 
	
	Let us now consider local systems. We can assume without loss of generality that the local system $X$ of the curve sits on the curve segment that connects the generator $b$ to $a$. Algebraically, this corresponds to tensoring the peculiar modules from Figure~\ref{fig:rigid_curves:curveB:Graph} by $\field^m$ for $m=\dim X$ and changing the resulting differentials 
	\begin{equation*}%\label{eqn:rigid_curves}
	\begin{tikzcd}[column sep=2cm]
	b\otimes \field^m
	\arrow{r}{p_2\otimes\id_{\field^m}}
	&
	a\otimes \field^m
	\end{tikzcd}
	\quad\text{ and }\quad
	\begin{tikzcd}[column sep=2cm]
	a\otimes 
	\field^m
	\arrow{r}{p_{341}\otimes\id_{\field^m}}
	&
	b\otimes \field^m
	\end{tikzcd}
	\end{equation*} 
	to
	\begin{equation*}%\label{eqn:rigid_curves}
	\begin{tikzcd}[column sep=2cm]
	b\otimes \field^m
	\arrow{r}{p_2\otimes X}
	&
	a\otimes \field^m
	\end{tikzcd}
	\quad\text{ and }\quad
	\begin{tikzcd}[column sep=2cm]
	a\otimes 
	\field^m
	\arrow{r}{p_{341}\otimes X^{-1}}
	&
	b\otimes \field^m,
	\end{tikzcd}
	\end{equation*} 
	respectively. Then, by the same argument as above, the dotted arrows from Figure~\ref{fig:rigid_curves:curveB:Graph} still have to be present in $\CFTminus(T)$, but this time they need to be labelled by $U_2.\iota_c\otimes X$ and $U_2.\iota_a\otimes X$. In the case of an even number of straight line segments, we also arrive at a contradiction. In the case of an odd number of straight line segments, the dotted arrow labelled by $q_2\otimes X$ has to be present in $\CFTminus(T)$. The local system on the resulting curve is $X\cdot X^{-1}=\id$.
\end{proof}

\begin{Remark}
	It is relatively easy to construct bigraded extensions of peculiar modules corresponding to rational curves with non-trivial local systems. (In fact, for some fixed local systems, one can construct multiple such extensions which are non-homotopic.) This shows that the arguments from the proof of Theorem~\ref{thm:rigid_curves} cannot work for rational components. On the other hand, the following question remains open.
\end{Remark}	

\begin{question}\label{que:local_systems_for_rationals}
	Is there a 4-ended tangle \(T\) such that a rational component of \(\HFT(T)\) contains a non-trivial local system?
\end{question}

% StabilizationBimodule.tex

\section{Stabilization}\label{sec:Stabilization}

Let us recall from~\cite[Definition~2.4]{PQMod} the definition of a peculiar Heegaard diagram $\mathcal{H}_T$ for an oriented 4-ended tangle $T$ in a $\mathbb{Z}$-homology 3-sphere $M$. We start with a sutured Heegaard for the complement $M_T$ of a tubular neighbourhood of $T$ in $M$ equipped with a meridional suture at each of the four tangle ends and a pair of oppositely oriented meridional sutures on each closed component of $T$. We then collapse each suture to a single point and label those collapsed sutures that correspond to closed tangle components by $z_i$ and $w_i$. Furthermore, we connect the collapsed sutures around the tangle ends by an additional $\alpha$-circle which corresponds to the parametrization $\textcolor{red}{S^1}$ on $\partial M$. Finally, at each tangle end, we add marked points on either side of this $\alpha$-circle which we label by $p_i$ and $q_i$, see~\cite[Figures~10 and~11]{PQMod}.
For the definition of the differential of $\CFTd(T)\coloneqq\CFTd(\mathcal{H}_T)$, we only count holomorphic discs avoiding the basepoints $z_i$ and $w_i$, so these are treated exactly like sutures in sutured Floer homology.

We now give a mild generalization of this construction: 

\begin{definition}\label{def:stabilized_tangle}
	A \textbf{stabilized tangle} \(T(n_1,\dots,n_{|T|})\) in a \(\mathbb{Z}\)-homology 3-sphere \(M\) is an oriented 4-ended tangle \(T\) in \(M\) together with a list $(n_1,\dots,n_{|T|})$ of non-negative integers. Given such a stabilized tangle $T$, let $\mathcal{H}_T(n_1,\dots,n_{|T|})$ be the peculiar Heegaard diagram obtained through the same procedure as for an ordinary 4-ended tangle, but starting with a sutured Heegaard diagram for the complement $M_T$ equipped with $n_i$ \emph{additional} pairs of meridional sutures on the $i^\text{th}$ tangle component for each $i=1,\dots, |T|$. Thus, stabilized tangles $T(0,\dots,0)$ correspond to ordinary tangles $T$ in the sense that $\mathcal{H}_T(0,\dots,0)=\mathcal{H}_T$.  
	We define $\CFTd(\mathcal{H}_T(n_1,\dots,n_{|T|}))$ like $\CFTd(\mathcal{H}_T)$, treating the additional collapsed sutures in the same way as the basepoints $z_i$ and~$w_i$. Its relatively bigraded chain homotopy type is independent of the chosen Heegaard diagram, so we write instead \(\CFTd(T(n_1,\dots,n_{|T|}))\). 
\end{definition}

\begin{theorem}\label{thm:stabilization}
	Let \(T(n_1,\dots,n_{|T|})\) be a stabilized tangle. Then,
	$$
	\CFTd(T(n_1,\dots,n_{|T|}))
	\cong
	\bigotimes_{k=1}^{|T|} V_{t_k}^{\otimes n_k}\otimes\CFTd(T)
	$$
	where $V_t$ is a 2-dimensional vector space supported in a single relative $\delta$-grading and two consecutive relative Alexander gradings $t^{+1}$ and $t^{-1}$. 
\end{theorem}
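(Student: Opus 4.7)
The plan is to reduce the theorem to a local Heegaard-diagram statement, namely: adding a single extra pair of meridional sutures on the $k$-th tangle component tensors $\CFTd(T)$ by $V_{t_k}$. Iterating this over the $n_1+\cdots+n_{|T|}$ added pairs then yields the full formula. This is the same idea that, in the context of sutured Floer homology, is obtained from Juh\'asz's surface decomposition formula, but adapted to the peculiar Heegaard diagram setup from Definition~\ref{def:stabilized_tangle}.

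I would start from a peculiar Heegaard diagram $\mathcal{H}_T$ for $T$ and pick a small embedded disc $D$ on the Heegaard surface lying over a chosen meridional arc on the $k$-th tangle component, away from all existing $\alpha$- and $\beta$-curves and all existing basepoints. A sutured Heegaard diagram with one extra pair of oppositely oriented meridional sutures on that component is then obtained by replacing $D$ by an annulus carrying a new $\alpha$-arc and a new $\beta$-arc, each connecting the two new boundary sutures and meeting each other transversely in exactly two points $x_+$ and $x_-$. After collapsing the new pair of sutures to basepoints $z$ and $w$ as in Definition~\ref{def:stabilized_tangle}, this produces a peculiar Heegaard diagram for $T(0,\dots,1,\dots,0)$ that differs from $\mathcal{H}_T$ only inside a small local region containing an $\alpha$-circle and a $\beta$-circle intersecting in $\{x_+,x_-\}$, separated by $z$ and $w$.

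Next, I would establish the tensor-product splitting. Generators of $\CFTd$ of the stabilized diagram are in bijection with pairs $(\x,x_\pm)$, giving as an $\Id$-module an identification with $\CFTd(T)\otimes\langle x_+,x_-\rangle$; the two local generators sit in the same idempotent, since the new $\alpha$-arc lies in the interior of a region and does not touch the boundary parametrization. For the differential, a standard argument shows that any holomorphic curve with positive multiplicity in the new local region must cover $z$ or $w$, and is therefore forbidden; the remaining curves have zero multiplicity in the local region and are in bijection, with the same algebra weights, with holomorphic curves for $\mathcal{H}_T$. Hence the differential factors as $\partial\otimes\id$, giving $\CFTd(T(0,\dots,1,\dots,0))\cong\CFTd(T)\otimes W$ with $W=\langle x_+,x_-\rangle$ an ungraded 2-dimensional $\Id$-module in a single idempotent.

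The main obstacle, and the step requiring the most care, is the grading computation: checking that $W$ is indeed $V_{t_k}$, i.e.\ that $x_+$ and $x_-$ lie in the same relative $\delta$-grading and in relative Alexander gradings $t_k^{+1}$ and $t_k^{-1}$ respectively (and with trivial Alexander grading in all other tangle variables). This follows by inspecting the bigon region bounded by the new $\alpha$- and $\beta$-arcs: one of the two basepoints $z,w$ lies on each side, so the relative Alexander grading of $x_+$ against $x_-$ picks up $\pm 1$ only in the $t_k$-entry, and the homological grading shift from this bigon cancels the contribution of $\tfrac12\overline{A}$ so that $\delta(x_+)=\delta(x_-)$. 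Once this local grading matches the definition of $V_{t_k}$, iteration over all added suture pairs yields the stated formula.
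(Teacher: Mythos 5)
There is a genuine gap, and it sits exactly where your proposal is vaguest: the claim that you can realize the extra pair of meridional sutures by a \emph{purely local} modification inside a small disc ``lying over a chosen meridional arc on the $k$-th tangle component, away from all existing $\alpha$- and $\beta$-curves and all existing basepoints''. In a (peculiar) Heegaard diagram the tangle strand is not embedded in the Heegaard surface; its position is only encoded by its basepoints and by the elementary periodic domain corresponding to the boundary annulus of its tubular neighbourhood. If you insert a new $\alpha$-circle, $\beta$-circle and two collapsed sutures in a disc disjoint from all of that data, the two new sutures do not link the strand at all, so the resulting diagram does \emph{not} represent $M_T$ with a pair of \emph{meridional} sutures on the $k$-th component -- which is precisely what must be verified for the left-hand side of the theorem to be $\CFTd(T(0,\dots,1,\dots,0))$. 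This is the step the paper's proof is built around: for an open component (the only genuinely new case, since once a pair of meridional sutures is present the usual surface-decomposition/stabilization fact applies), one chooses a nice diagram, uses the elementary periodic domain of the strand to construct a path $\gamma$ from a bad region at one tangle end to the basepoints $p_{i'},q_{i'}$ at the other end, and then performs a finger move of the new $\beta$-circle along $\gamma$ \emph{across} $p_{i'}$ and $q_{i'}$. That global move is what makes the new sutures meridional about $t_1$, and niceness of the diagram is what controls the differential afterwards; your ``standard local argument'' that curves entering the new region must cover $z$ or $w$ does not substitute for either point.

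Your grading computation inherits the same problem and is moreover inconsistent with the statement. You want the two local generators to differ by $\pm 1$ in the $t_k$-Alexander grading, coming from the new basepoints $z,w$; but $V_{t_k}$ is supported in gradings $t_k^{+1}$ and $t_k^{-1}$, i.e.\ the two generators must differ by $t_k^{\pm 2}$, and in the actual construction this difference does not come from the new (unlabelled) collapsed sutures at all: it comes from the unique bigon between the two new intersection points, which -- because of the finger move -- covers both labelled basepoints $p_{i'}$ and $q_{i'}$ (each of Alexander weight $t_1$), contributes $\delta$-grading $0$, and is killed in the differential precisely because it covers a $p$ and a $q$. So the approach can be repaired, but only by replacing the local insertion with a construction that demonstrably produces meridional sutures on the chosen component (e.g.\ the finger move along the strand), and by redoing the bigrading count from that model rather than from the new basepoints.
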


Naturally, we can apply the classification result from Theorem~\ref{thm:classificationPecMod} to $\CFTd(T(n_1,\dots,n_{|T|}))$ and define $\HFT(T(n_1,\dots,n_{|T|}))$ as the corresponding multicurve. Then, the theorem above corresponds to Theorem~\ref{thm:intro:stabilization} from the introduction. 

\begin{figure}[t]
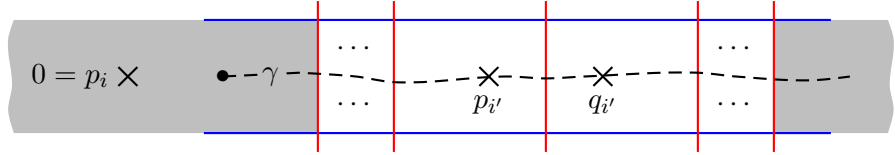
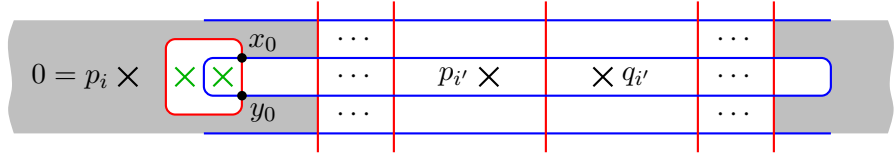

	\centering
	\begin{subfigure}[t]{0.9\textwidth}
		\centering
		\stabilizationfingerBefore
		\caption{A schematic picture of the regions that the path $\gamma$ passes through. Note that the labels $p_{i'}$ and $q_{i'}$ might be swapped and $\gamma$ might not intersect any $\alpha$-curve before or after it passes through $p_{i'}$ or $q_{i'}$. The shaded region on the left is the bad region~$\varphi$; the shaded region on the right is either a bigon or a bad region.}\label{fig:stabilizationfingerBefore}
	\end{subfigure}
	\\
	\begin{subfigure}[t]{0.9\textwidth}
		\centering
		\stabilizationfinger
		\caption{This picture illustrates how we change the Heegaard diagram~$\mathcal{H}_T$ in a neighbourhood of the path $\gamma$ from (a). The shaded region on the right is either a rectangle or a bad region.}\label{fig:stabilizationfingerAfter}
	\end{subfigure}
	\caption{Illustration of the proof of Theorem~\ref{thm:stabilization}}\label{fig:stabilizationfinger}
\end{figure}

\begin{proof}
	In the presence of two meridional sutures in the homology class $t$, adding another such pair corresponds to tensoring with the 2-dimensional vector space $V_t$. This well-known fact can be proven either directly using Heegaard diagrams (similar to the ones we are using below) or alternatively using Juhasz's surface decomposition formula~\cite[Proposition~8.6]{SurfaceDecomposition}. So it suffices to show the above theorem for the cases $0\leq n_1,n_2\leq1$ and $n_k=0$ for $k>2$. 
	
	By~\cite[Theorem~5.6]{PQMod}, we can always choose a peculiar Heegaard diagram $\mathcal{H}_T$ for $T$ which is nice with respect to any choice of special basepoints $p_i$ and $q_j$. Let us pick $p_i$ and $q_j$ such that the ends $i$ and $j$ belong to different components of the tangle $T$, so that each open component ``has a bad region''. 
	
	Let us consider the open tangle component $t_1$ and let us assume without loss of generality that it connects the tangle end $i$ with $i'\neq j$. Let $\varphi$ be the bad region containing the special basepoint $p_i$. The annulus which is the boundary of the removed tubular neighbourhood of the tangle component $t_1$ corresponds to an elementary periodic domain $\psi$ of $\mathcal{H}_T$, see~\cite[Definition~5.1 and Lemma~5.2]{HDsForTangles}. On said annulus, we can choose a path with connects $i$ to $i'$. By pushing it off the 2-handles attached along the $\beta$-circles, this path gives rise to a path $\gamma$ from the bad region $\varphi$ to  $p_{i'}$ which avoids all $\beta$-curves and also stays away from the other bad region (the one marked $q_j$). Let us choose a basepoint $\bullet\neq p_i$ in $\varphi$ as the starting point of $\gamma$. Let us extend the path $\gamma$ to the basepoint $q_{i'}$. We may assume without loss of generality that the path $\gamma$ does not ``back-track'' in the sense that $\gamma$ does not leave a region through the same part of the $\alpha$-curve through which it just entered the region. This can be achieved by ``pulling $\gamma$ tight'' and potentially changing the order in which $\gamma$ meets the basepoints $p_{i'}$ and $q_{i'}$. This is illustrated on the left-hand side of Figure~\ref{fig:stabilizationfingerBefore}.
	
	Note that the basepoints $p_k$ for $k\in\{1,2,3,4\}$ lie in the elementary periodic domain corresponding to the front component of $\partial M\smallsetminus\textcolor{red}{S^1}$. Similarly, the basepoints $q_{k'}$ for $k'\in\{1,2,3,4\}$ lie in the elementary periodic domain corresponding to the back component of $\partial M \smallsetminus\textcolor{red}{S^1}$. So in particular, $p_i$ and $q_{i'}$ do not lie in the same region. So we conclude that $\gamma$ leaves the bad region $\varphi$ at least once. Moreover, whenever the path returns to $\varphi$, we may replace the first segment of $\gamma$ up to this point by a direct path contained in $\varphi$ starting at $\bullet$. This strictly shortens the number of times the path $\gamma$ crosses an $\alpha$-curve, so after repeating this step finitely many times, we may assume that $\gamma$ does not return to~$\varphi$. Then $\gamma$ ends at a bigon or a rectangle region (the one marked $q_{i'}$ or $p_{i'}$) and all regions that it passes through are rectangles, each of which is cut into two rectangles by $\gamma$. If $\gamma$ ends in a  rectangle, let us extend $\gamma$ (by a path which still avoids $\beta$-curves and does not back-track) until it arrives at a region which is not a rectangle, ie a bigon or a bad region. Note that this process does terminate, since the path never crosses a rectangle twice and there are only finitely many regions. 
	
	So we have now constructed a path $\gamma$ which starts at $\bullet$ and ends either in a bad region or in a bigon. All other regions that $\gamma$ passes through are rectangles, each of which is cut into two rectangles by $\gamma$. We now modify the Heegaard diagram $\mathcal{H}_T$ in a neighbourhood of $\gamma$ as follows: we first replace $\bullet$ by an unlabelled basepoint $\textcolor{darkgreen}{\hm{\times}}$ which we enclose by a new $\beta$-circle $\beta_0$, which in turn is enclosed by a new $\alpha$-circle $\alpha_0$. We also add a second unlabelled basepoint $\textcolor{darkgreen}{\hm{\times}}$ in the region between $\beta_0$ and $\alpha_0$. Finally, we do a finger move of the $\beta$-curve $\beta_0$ along the path $\gamma$, moving across the basepoints $p_{i'}$ and $q_{i'}$. The result is shown in Figure~\ref{fig:stabilizationfingerAfter}. 
	
	We now observe that the new Heegaard diagram represents the tangle complement $M_T$ with an extra pair of meridional sutures around the tangle component $t_1$, ie it is a peculiar Heegaard diagram for $T(1,0,\dots,0)$. Moreover, it is a nice Heegaard diagram. Let us consider it more closely. The new $\alpha$-circle $\alpha_0$ must be occupied by any generator of the new diagram, so the new $\beta$-curve $\beta_0$ is always occupied by one of the two intersection points labelled $x_0$ or $y_0$ in Figure~\ref{fig:stabilizationfingerAfter}. Hence, there is a 2:1-correspondence between generators of the new diagram and the old, given by dropping the intersection points $x_0$ and $y_0$. The only bigon or rectangle domain that connects a generator containing $x_0$ to one containing $y_0$ (and avoiding the unlabelled basepoints $\textcolor{darkgreen}{\hm{\times}}$ and the bad region $\varphi$) is the bigon connecting $x_0$ to $y_0$. However, this bigon contains both $p_{i'}$ and $q_{i'}$, so it does not contribute to the differential. This bigon has $\delta$-grading 0 and Alexander grading $t_1^{\pm2}$. So the new Heegaard diagram indeed computes the once stabilized version of the complex $\CFTd(T)$. This finishes the case $(n_1,n_2)=(1,0)$. The case $(n_1,n_2)=(0,1)$ can be shown similarly. The third and last case $(n_1,n_2)=(1,1)$ also follows from the arguments above, since we can modify the Heegaard diagram $\mathcal{H}_T$ around both open tangle components independently. 
\end{proof}

% HalfIdentityBimodule.tex

\section{The action of the half-identity bimodule}\label{sec:HalfId}

In this section, we describe an alternative definition of peculiar modules, which was first proposed to the author by Andy Manion. It uses an arc diagram which is dual to the original one. The definition requires a choice of two distinguished tangle ends, so actually, we will obtain multiple, \textit{a prior} different algebraic objects. The advantage of these definitions is that they are simply instances of bordered sutured Heegaard Floer theory, so both invariance and the existence of a pairing theorem are immediate from Zarev's bordered sutured theory~\cite{ZarevThesis}. A potential disadvantage is that it is not immediately clear if the invariants thus defined are classified by \textit{compact} curves on the 4-punctured sphere, nor whether they are independent of the choice of distinguished tangle ends. However, we show that the alternative definitions are indeed equivalent to the original one, so we can answer both questions positively. 

The central ingredient in the identification is the bordered sutured Heegaard diagram in Figure~\ref{fig:HalfId1HD}, which is obtained by cutting the standard Heegaard diagram of the identity in half (see for example \cite[Figure~19]{LOTBimodules} and \cite[Figure~12]{LOTMor}). Hence, its corresponding bimodule is sometimes called the half-identity bimodule. 
In our case, this bimodule actually acts like the identity.

%Another drawback of this definition is that it makes the identification of the decategorified invariants less clear. (Really? Just the computation for an elementary diagram/crossing becomes more complicated, since there are cancellations; otherwise, this is fine.)

\begin{wrapfigure}{r}{0.3333\textwidth}
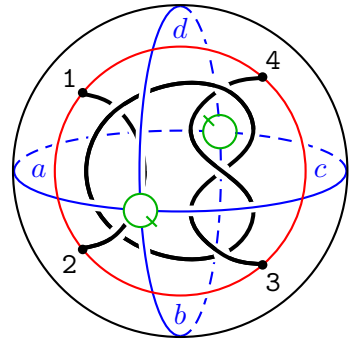

	\centering
	\pretzeltangleDUAL
	\caption{The bordered sutured manifold $M_T^\beta(p_3,q_1)$ for $T$ the $(2,-3)$-pretzel tangle. The parametrization given by the $\beta$-arcs (blue) is dual to the one from Figure~\ref{fig:2m3pt}.}\label{fig:2m3ptDUAL}
	\bigskip
\end{wrapfigure}
\myfixwrapfig

\begin{definition}\label{def:TangleComplementDualBSstructure}
	Given a 4-ended tangle \(T\) in a $\mathbb{Z}$-homology 3-ball $M$, choose an embedded disc on the front and one on the back of the sphere $\partial M$. Connect the boundaries of these two discs by four arcs which partition the complement of the two discs in $\partial M$ into four contractible components, each of which contains one tangle end. Let us call these arcs $\beta$-arcs.  We can arrange these in such a way that each of them intersects the circle $\textcolor{red}{S^1}$ on $\partial M$ exactly once. So we can label each $\beta$-arc by the same letter $a$, $b$, $c$ or $d$ as the arc in $\textcolor{red}{S^1}\smallsetminus \im(T)$ that the $\beta$-arc intersects. By removing a tubular neighbourhood of the tangle (chosen sufficiently small such that it avoids the two embedded discs and four $\beta$-arcs) and regarding the boundary of the two discs as sutures, we obtain a $\beta$-bordered sutured 3-manifold. To make the bordered sutured structure well-defined, we need to add a basepoint to each of the two sutures. Let us call the basepoint of the suture on the front \(p_i\) if it is adjacent to the tangle end $i$ in the sense that one can draw a path from the tangle end $i$ to the basepoint without crossing a $\beta$-arc or a suture. Similarly, call the basepoint of the suture on the back \(q_j\) if it is adjacent to the tangle end $j$. Moreover, if the two basepoints lie adjacent to the ends of the same open tangle strand, we also need to add a pair of meridional sutures to the other open tangle strand. Finally, to each closed tangle component, we add two meridional sutures as usual. Let us call the tangle complement $M_T$ equipped with this bordered sutured structure $M_T^{\beta}(p_i,q_j)$. Figure~\ref{fig:2m3ptDUAL} illustrates this definition for the tangle from Figure~\ref{fig:2m3pt}.
\end{definition}

\begin{Remark}
	The bordered sutured structure of $M_T^{\beta}(p_i,q_j)$ is degenerate, but it satisfies homological linear independence. Hence, Zarev's bordered sutured invariants are well-defined. We can define $\delta$-, homological and Alexander gradings on $\BSD(M_T^{\beta}(p_i,q_j))$ in just the same way as on $\CFTd(T)$. The type D structure $\BSD(M_T^{\beta}(p_i,q_j))$ is defined over a certain moving strands algebra. Since all $\beta$-arcs but one are occupied by a generator in a Heegaard diagram for $M_T^{\beta}(p_i,q_j)$, the moving strands algebra can be canonically identified with $\Ad_{ij}$ such that an elementary segment of the suture on the front (respectively back) corresponds to the variable $p_k$ (respectively $q_k$) if it is adjacent to the $k^\text{th}$ tangle end. 
\end{Remark}
	
\begin{theorem}\label{thm:HalfIdBimodAction}
	Let \(T\) be a 4-ended tangle and \((i,j)\) an ordered 2-element subset of \(\{1,2,3,4\}\). Then, if the ends $i$ and $j$ of \(T\) belong to different open components,
	\[\mathcal{F}_{ij}(\CFTd(T))\cong\BSD(M_T^{\beta}(p_i,q_j))\]
	as type D structures, up to bigrading preserving chain homotopy. If the ends $i$ and $j$ belong to the same open component and $t_2$ is the colour of the other, then
	\[\mathcal{F}_{ij}(\CFTd(T))\otimes V_{t_2}\cong\BSD(M_T^{\beta}(p_i,q_j)),\]
	where $V_{t_2}$ is as in Theorem~\ref{thm:stabilization}.
\end{theorem}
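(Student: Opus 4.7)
The plan is to mirror the strategy of Lemma~\ref{lem:AddingASingleCrossing}: construct a bordered sutured cobordism $Y_{ij}$ with one $\alpha$-parametrized boundary (matching the boundary of $M_T$ with its peculiar-module arc structure) and one $\beta$-parametrized boundary (matching the arc structure of $M_T^\beta(p_i,q_j)$), arranged so that $M_T \cup_{\partial} Y_{ij}$ is diffeomorphic, as a bordered sutured manifold, to $M_T^\beta(p_i,q_j)$ (together with the extra pair of meridional sutures when $i$ and $j$ lie on the same open component). A natural candidate for $Y_{ij}$ is the thickened 4-punctured sphere $I \times (\FourPuncturedSphere)$ with the standard $\alpha$-arc system on $\{0\} \times (\FourPuncturedSphere)$ and the dual $\beta$-arc system on $\{1\} \times (\FourPuncturedSphere)$, with the sutures on $I \times \partial(\FourPuncturedSphere)$ chosen such that, after identification, the basepoints $p_i$ and $q_j$ are the only sutures on the $\beta$-side.

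Once $Y_{ij}$ is identified, Zarev's Pairing Theorem gives
$$\BSD(M_T^\beta(p_i,q_j)) \cong \BSD(M_T) \boxtimes \BSAD(Y_{ij}).$$
As in the proof of Lemma~\ref{lem:AddingASingleCrossing}, one exploits the fact that Heegaard diagrams for $M_T$ can be chosen whose type D labels already lie in a subalgebra $\mathcal{B}'$ corresponding to the two $\alpha$-arcs that are never occupied; combined with the Pairing Adjunction~\cite[Theorem~1.21]{PQMod}, this lets us replace $\BSAD(Y_{ij})$ by its image under the relevant restriction/quotient functor and identify $\mathcal{F}_{ij}(\CFTd(T))$ with the appropriate pushforward of $\BSD(M_T)$. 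Thus the theorem reduces to the assertion that the restricted type AD bimodule $H_{ij}\coloneqq \mathcal{F}^{AD}_{\pi, \mathrm{id}}(\BSAD(Y_{ij}))$ is homotopy equivalent, over $\Ad_{ij}$-$\Ad_{ij}$, to the identity bimodule (respectively to the identity bimodule tensored with $V_{t_2}$ when $i,j$ lie on the same open component, which accounts for the unavoidable extra pair of meridional sutures and is consistent with Theorem~\ref{thm:stabilization}).

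To compute $H_{ij}$, use the Heegaard diagram obtained by cutting the standard identity Heegaard diagram for the 4-punctured sphere (analogous to \cite[Figure~19]{LOTBimodules}, \cite[Figure~12]{LOTMor}) in half. Each generator must occupy the three unique intersection points on the non-$p_i$, non-$q_j$ arcs, and there is a canonical bijection between generators and pairs of matched idempotents $(\iota_s, \iota_s)$ for $s \in \{a,b,c,d\} \smallsetminus \{i,j\}$ together with a small number of generators coming from the sutures near $p_i, q_j$. A direct domain count shows the only connecting domains with non-negative multiplicities not covering simultaneously the $p_i$- and $q_j$-regions are bigons whose labels are exactly the elementary algebra elements of $\Ad_{ij}$ acting diagonally. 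As in Lemma~\ref{lem:AddingASingleCrossing}, we verify that the relevant domains contribute by an indirect argument: if any of them failed to contribute, the pairing identity would contradict the known labels appearing in $\CFTd(T)$ for some tangle.

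The main obstacle is the book-keeping in the case where $i$ and $j$ lie on the same open tangle component. Here $M_T^\beta(p_i,q_j)$ comes, by Definition~\ref{def:TangleComplementDualBSstructure}, equipped with an extra pair of meridional sutures on the other open component, which introduces an additional pair of generators in the Heegaard diagram for $Y_{ij}$ and produces the factor $V_{t_2}$. One must check that these extra generators contribute precisely a tensor product with $V_{t_2}$ and do not interact nontrivially with the remaining differential; this follows from the same local analysis as in the proof of Theorem~\ref{thm:stabilization}, where a nearby finger move realises the extra sutures by a bigon in $\delta$-grading $0$ and Alexander bigradings $t_2^{\pm 1}$. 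Bigrading additivity, together with the identification of the meridional sutures on the two sides of the gluing, then yields the graded statement.
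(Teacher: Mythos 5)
Your overall strategy coincides with the paper's: glue a half-identity cobordism (a thickened 4-punctured sphere carrying the $\alpha$-arc system on the glued side and the dual $\beta$-arc system on the free side) onto the $\alpha$-parametrized tangle complement used in the Glueing Theorem, apply Zarev's Pairing Theorem together with the Pairing Adjunction to pass to the small algebras, and reduce the statement to the claim that the restricted half-identity type~AD bimodule over $\Ad_{ij}$ is the identity.

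Two steps are glossed over, however. First, the natural candidate cobordism does \emph{not} glue up to $M_T^\beta(p_i,q_j)$ on the nose: the interface has to carry the full six-arc $\alpha$-diagram of the bordered sutured tangle complement, so the glued manifold has two extra sutures (each met by a single $\beta$-arc) beyond the bordered sutured structure of $M_T^\beta(p_i,q_j)$. In the paper this is repaired by a separate geometric argument: choose a Heegaard diagram in which two $\alpha$-circles are push-offs of these distinguished sutures and perform handleslides corresponding to pushing the sutures along the open strands to the opposite tangle ends (or, in the same-component case, onto the extra meridional pair), after which the restricted $\BSD$ of the glued manifold is identified with $\BSD(M_T^\beta(p_i,q_j))$. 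Your phrase ``after identification'' is hiding exactly this step, and without it the pairing computation does not yet prove the theorem. Second, the bookkeeping of $V_{t_2}$ is organized differently: in the paper the extra pair of meridional sutures is placed on the $\alpha$-parametrized complement $M_T$ rather than in the cobordism, so the half-identity bimodule is the honest identity in every case, and $V_{t_2}$ enters through the comparison of $\mathcal{F}_{ij}(\CFTd(T))$ with the pushforward of $\BSD$ of the stabilized complement via Theorem~\ref{thm:stabilization}; your variant (extra sutures inside $Y_{ij}$, bimodule equal to identity tensored with $V_{t_2}$) is plausible but would require a genuinely new diagram computation, not just the local analysis from the stabilization proof. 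Finally, a small correction: the contributing domains are not only bigons labelled by elementary algebra elements acting diagonally---the identity bimodule also has components $(a\vert a)$ for longer products such as $(p_{214}\vert p_{214})$, arising from larger domains, and no indirect ``contradiction with $\CFTd$'' argument is needed in this computation.
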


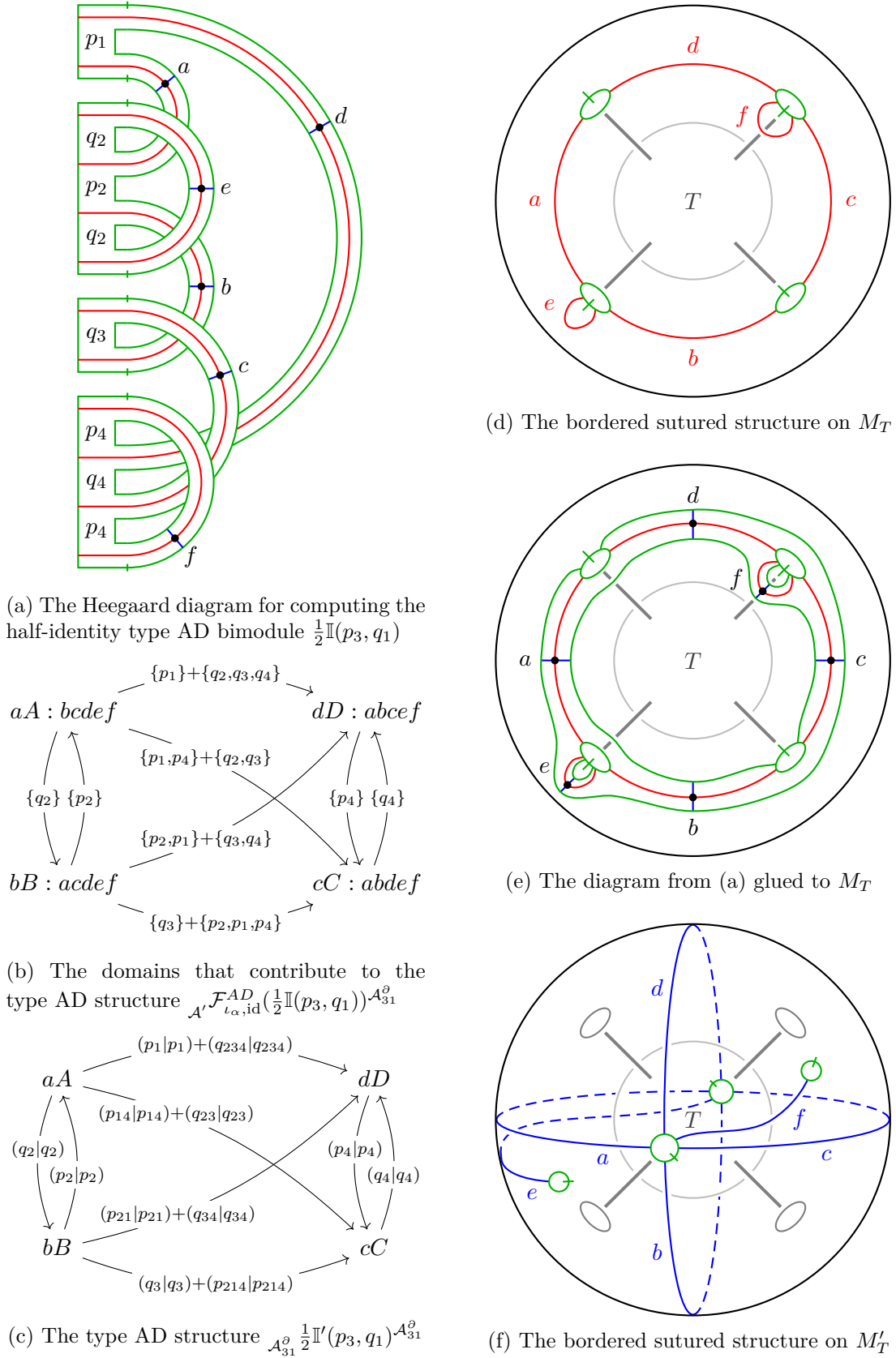
\begin{figure}[p]
	\centering
	\begin{minipage}[b]{0.45\textwidth}
	\begin{subfigure}[b]{\textwidth}\centering
		\HalfIdHD
		\caption{The Heegaard diagram for computing the half-identity type AD bimodule $\tfrac{1}{2}\mathbb{I}(p_3,q_1)$}\label{fig:HalfId1HD}
	\end{subfigure}
	\\
	\begin{subfigure}[b]{\textwidth}\centering
		$$
		\begin{tikzcd}[row sep=2.2cm, column sep=2.8cm]
		aA: bcdef
		\arrow[bend right=20]{d}[description]{\{q_2\}}
		\arrow[bend left=15,pos=0.3]{dr}[description]{\{p_1,p_4\}+\{q_2,q_3\}}
		\arrow[bend left=20]{r}[description]{\{p_1\}+\{q_2,q_3,q_4\}}
		&
		dD: abcef
		\arrow[bend right=20]{d}[description]{\{p_4\}}
		\\
		bB: acdef
		\arrow[bend right=20]{u}[description]{\{p_2\}}
		\arrow[bend right=15,pos=0.3]{ur}[description]{\{p_2,p_1\}+\{q_3,q_4\}}
		\arrow[bend right=20]{r}[description]{\{q_3\}+\{p_2,p_1,p_4\}}
		&
		cC: abdef
		\arrow[bend right=20]{u}[description]{\{q_4\}}
		\end{tikzcd}$$
		\caption{The domains that contribute to the type~AD structure $\typeA{\mathcal{A}'}{\mathcal{F}^{AD}_{\iota_\alpha,\id}(\tfrac{1}{2}\mathbb{I}(p_3,q_1))}^{\Ad_{31}}$}\label{fig:HalfId1Domains}
	\end{subfigure}
	\\
	\begin{subfigure}[b]{\textwidth}\centering
		$$
		\begin{tikzcd}[row sep=2.2cm, column sep=4.3cm]
		aA
		\arrow[bend right=20,pos=0.4]{d}[description]{(q_2\vert q_2)}
		\arrow[bend left=15,pos=0.3]{dr}[description]{(p_{14} \vert p_{14})+(q_{23} \vert q_{23})}
		\arrow[bend left=20]{r}[description]{(p_{1} \vert p_{1})+(q_{234} \vert q_{234})}
		&
		dD
		\arrow[bend right=20,pos=0.4]{d}[description]{(p_{4} \vert p_{4})}
		\\
		bB
		\arrow[bend right=20,pos=0.4]{u}[description]{(p_{2} \vert p_{2})}
		\arrow[bend right=15,pos=0.3]{ur}[description]{(p_{21} \vert p_{21})+(q_{34} \vert q_{34})}
		\arrow[bend right=20]{r}[description]{(q_{3} \vert q_{3})+(p_{214} \vert p_{214})}
		&
		cC
		\arrow[bend right=20,pos=0.4]{u}[description]{(q_{4} \vert q_{4})}
		\end{tikzcd}$$
		\caption{The type~AD structure $\typeA{\Ad_{31}}{\tfrac{1}{2}\mathbb{I}'(p_3,q_1)}^{\Ad_{31}}$}\label{fig:HalfId1AD}
	\end{subfigure}
	\end{minipage}
	\quad 
	\begin{minipage}[b]{0.45\textwidth}
		\begin{subfigure}[b]{\textwidth}\centering
			\HalfIdbefore
			\caption{The bordered sutured structure on $M_T$}\label{fig:HalfId1before}
		\end{subfigure}
		\smallskip\\
		\begin{subfigure}[b]{\textwidth}\centering
			\HalfIdWhile
			\caption{The diagram from (a) glued to $M_T$}\label{fig:HalfId1While}
		\end{subfigure}
		\smallskip\\
		\begin{subfigure}[b]{\textwidth}\centering
			\HalfIdAfter
			\caption{The bordered sutured structure on $M'_T$}\label{fig:HalfId1After}
		\end{subfigure}
	\end{minipage}
	\caption{Illustrations for the proof of Theorem~\ref{thm:HalfIdBimodAction} for the case $(i,j)=(3,1)$}\label{fig:HalfId1}
\end{figure}

\begin{proof}
	It suffices to consider four different cases for $(i,j)$, namely $(i,j)=(3,1)$, $(1,2)$, $(2,1)$ and $(1,1)$, since all other are obtained by cyclic permutation. Of those four cases, we will only consider the case $(i,j)=(3,1)$, since this is the one that we will work with in Section~\ref{sec:ConjugationBimodule}. The other three cases follow from the same arguments, only the computations in Figure~\ref{fig:HalfId1} need to be adapted to each case, which we leave to the reader.
	
	Let $M_T$ be the bordered sutured manifold whose underlying sutured manifold is the tangle complement with meridional sutures at each tangle end and a pair of meridional sutures on each closed tangle component and whose bordered sutured structure is given by the red arcs in Figure~\ref{fig:HalfId1before}. If the ends $i$ and $j$ belong to the same open component, we add an extra pair of meridional sutures to the other component. The result of glueing the bordered sutured diagram from Figure~\ref{fig:HalfId1HD} to $M_T$ is shown in Figure~\ref{fig:HalfId1While}. After omitting the $\alpha$-curves from the picture, we recognize it as the tangle complement with the bordered sutured structure from Figure~\ref{fig:HalfId1After}. Let us call this bordered sutured manifold $M'_T$. 
	
	Its corresponding bordered sutured algebra has six idempotents, since only one of the $\beta$-arcs is unoccupied. However, there are two sutures on $M'_T$ which are distinguished from the other two by the property that only a single $\beta$-arc ends at them, namely $\textcolor{blue}{e}$ and $\textcolor{blue}{f}$, respectively. We may choose a Heegaard digram for $M'_T$ such that for each of the two distinguished sutures, there is an $\alpha$-curve which is a push-off of the corresponding suture (=boundary of the Heegaard diagram). We can isotope these two $\alpha$-curves such that they have a unique intersection point with the $\beta$-curve ending at the corresponding distinguished suture and do not intersect any other $\beta$-curve. So we may assume that $\BSD(M'_T)$ only has generators which occupy both arcs $\textcolor{blue}{e}$ and $\textcolor{blue}{f}$. In other words, we may regard $\BSD(M'_T)$ as a type D structure over $\Ad_{31}$; we will add the superscript $\BSD(M'_T)^{\Ad_{31}}$ when we take this point of view. Note that $\BSD(M'_T)^{\Ad_{31}}$ is still well-defined up to homotopy, since any chain homotopy over the full bordered sutured algebra factors through $\Ad_{31}$.
		
	The bordered sutured manifold $M'_T$ looks already very similar to $M_T^\beta(p_3,q_1)$. In fact, 
	\[\BSD(M'_T)^{\Ad_{31}}\cong\BSD(M_T^\beta(p_3,q_1)).\] 
	We can see this as follows: let us go back to the Heegaard diagram that we chose above for $M'_T$. If the ends $i$ and $j$ belong to different components, we can do a sequence of handleslides which corresponds to pushing the two distinguished sutures all the way along the open tangle strands to the other tangle ends, to make sure that both sides of the two distinguished $\alpha$-circles are regions adjacent to basepoints. Otherwise, we can push them to the extra pair of meridional sutures with the same effect. By removing the two sutures (ie contracting these two boundary components of the Heegaard diagram) as well as the two corresponding $\alpha$-circles and $\beta$-arcs, we change neither the set of generators nor the domains nor the holomorphic curve count, so the claim follows. 
	
	Before we proceed, let us recall some notation from~\cite[proof of Theorem~3.7]{PQMod}. Let $\mathcal{A}$ be the bordered sutured algebra corresponding to the arc diagram on $M_T$ consisting of $\alpha$-arcs and $\mathcal{I}_\alpha$ the corresponding ring of idempotents. Let $\mathcal{I}'_\alpha$ be the subring of idempotents occupying the $\alpha$-arcs $\textcolor{red}{e}$ and $\textcolor{red}{f}$ and $\iota_{\alpha}:\mathcal{A}'\hookrightarrow\mathcal{A}$ the subalgebra $\mathcal{I}'_\alpha.\mathcal{A}.\mathcal{I}'_\alpha$ of~$\mathcal{A}$. There is also a quotient homomorphism $\pi_\alpha\co \mathcal{A}'\rightarrow\Ad_{31}$, which induces a functor $\mathcal{F}^D_{\pi_\alpha}$ between the corresponding categories of type D structures. As in the proof of the Glueing Theorem~\cite[Theorem~3.7]{PQMod}, 
	\[\mathcal{F}_{31}(\CFTd(T))\cong \mathcal{F}^D_{\pi_\alpha}(\BSD(M_T))\]
	unless there is an additional pair of sutures on $M_T$. In that case, the left-hand side needs to be stabilized once by Theorem~\ref{thm:stabilization}.
	In any case, it remains to show that 	
	\[\mathcal{F}^D_{\pi_\alpha}(\BSD(M_T))\cong \BSD(M'_T)^{\Ad_{31}}.\]
	Zarev's Pairing Theorem tells us that
	\[\BSD(M_T)\boxtimes\tfrac{1}{2}\mathbb{I}(p_3,q_1)\cong \BSD(M'_T),\]
	where $\tfrac{1}{2}\mathbb{I}(p_3,q_1)$ is the type AD structure computed from the Heegaard diagram in Figure~\ref{fig:HalfId1HD}. 
	By the Pairing Adjunction~\cite[Theorem~1.21]{PQMod}, the left hand side is equal to 
	\[\BSD(M_T)^{\mathcal{A}'}\boxtimes \typeA{\mathcal{A}'}{\mathcal{F}^{AD}_{\iota_\alpha,\id}(\tfrac{1}{2}\mathbb{I}(p_3,q_1))},\]
	where $\mathcal{F}^{AD}_{\iota_\alpha,\id}$ is the functor induced by the map $\iota_\alpha$ on the type A side and the identity on the type D side. From the Heegaard diagram in Figure~\ref{fig:HalfId1HD}, it is clear that we may regard $\mathcal{F}^{AD}_{\iota_\alpha,\id}(\tfrac{1}{2}\mathbb{I}(p_3,q_1))$ as a type AD structure over $\Ad_{31}$ on the type D side. So we obtain:
	\[\BSD(M_T)^{\mathcal{A}'}\boxtimes\typeA{\mathcal{A}'}{\mathcal{F}^{AD}_{\iota_\alpha,\id}(\tfrac{1}{2}\mathbb{I}(p_3,q_1))}^{\Ad_{31}}\cong \BSD(M'_T)^{\Ad_{31}}.\]
	Let us compute $\typeA{\mathcal{A}'}{\mathcal{F}^{AD}_{\iota_\alpha,\id}(\tfrac{1}{2}\mathbb{I}(p_3,q_1))}^{\Ad_{31}}$. All contributing domains are shown in Figure~\ref{fig:HalfId1Domains}. Let $\mathcal{F}^{AD}_{\pi_\alpha,\id}$ be the functor induced by the map $\pi_\alpha$ on the type A side and the identity on the type D side. We observe that $\typeA{\mathcal{A}'}{\mathcal{F}^{AD}_{\iota_\alpha,\id}(\tfrac{1}{2}\mathbb{I}(p_3,q_1))}^{\Ad_{31}}$ is in the image of this functor and equal to $\mathcal{F}^{AD}_{\pi_\alpha,\id}(\tfrac{1}{2}\mathbb{I}'(p_3,q_1)))$, where $\typeA{\Ad_{31}}{\tfrac{1}{2}\mathbb{I}'(p_3,q_1)}^{\Ad_{31}}$ is the type AD structure given in Figure~\ref{fig:HalfId1AD}. Then, another application of the Pairing Adjunction gives
	\[\mathcal{F}^D_{\pi_\alpha}(\BSD(M_T)^{\mathcal{A}'})^{\Ad_{31}}\boxtimes\typeA{\Ad_{31}}{\tfrac{1}{2}\mathbb{I}'(p_3,q_1)}^{\Ad_{31}}\cong \BSD(M'_T)^{\Ad_{31}}.\]
	Now observe that $\typeA{\Ad_{31}}{\tfrac{1}{2}\mathbb{I}'(p_3,q_1)}^{\Ad_{31}}$ is actually the identity type AD bimodule, so the claim follows. 
\end{proof}

% ConjugationBimodule.tex
\section{Conjugation}\label{sec:ConjugationBimodule}

In this section, we prove the conjugation symmetry results from the introduction and, as a corollary, mutation invariance of $\delta$-graded link Floer homology. Let us start by stating a slightly more precise version of  Theorem~\ref{thm:Conjugation:Horizontals:INTRO}.

\begin{theorem}[Bigraded conjugation symmetry for horizontal components]\label{thm:Conjugation:Horizontals}
	Given an oriented 4-ended tangle \(T\), let us fix an absolute lift of the \(\delta\)-grading of \(\HFT(T)\). For \(n>0\) and any local system \(X\), fix the Alexander grading on \(\mathfrak{b}_n\coloneqq \mathfrak{i}_n(\frac{0}{1};2,3)\), \(\mathfrak{d}_n\coloneqq \mathfrak{i}_n(\frac{0}{1};4,1)\) and \(\Rat_X\coloneqq \Rat_X(\frac{0}{1})\) as in Figure~\ref{fig:ConjugationBimodule:ImmersedCurve}, where $m=\dim X$. Let us assume that \(\HFT(T)\) contains a component of slope $\slopeZero$. Then there exists a unique lift of the Alexander grading on \(\HFT(T)\), independent of \(n\) and \(X\), such that
	$$\rr\Big(\Rat_X(T)\Big)= \Rat_{X^{-1}}(T)$$
	and:
	$$\rr\Big(\mathfrak{b}_n(T)\Big) = \mathfrak{d}_n(T)$$
	if the tangle ends \(\texttt{2}\) and \(\texttt{3}\) belong to different open tangle components and
	$$\rr\Big(\mathfrak{b}_n(T)\Big)\cdot(t_b+t_b^{-1})= \mathfrak{d}_n(T)\cdot(t_d+t_d^{-1})$$
	otherwise, where \(t_b\) and \(t_d\) are the colours of the tangle components at those tangle ends adjacent to the sites \(b\) and \(d\), respectively. 
\end{theorem}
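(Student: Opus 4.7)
The plan is to use the conjugation bordered sutured manifold constructed earlier in this section. Its type~AD bimodule $\mathcal{C}$, whose 32 generators are recorded in Figure~\ref{fig:ConjugationBimodule}, is designed so that glueing it onto the complement of~$T$ interchanges the $\alpha$- and $\beta$-parametrizations of the tangle boundary at the expense of two additional pairs of meridional sutures. Combined with the alternative $\beta$-parametrized description of $\CFTd(T)$ from Theorem~\ref{thm:HalfIdBimodAction}, Zarev's pairing theorem, and the stabilization formula (Theorem~\ref{thm:stabilization}) to absorb the extra pairs of sutures, pairing with $\mathcal{C}$ implements the usual Heegaard Floer conjugation symmetry---reversal of the Alexander grading---at the level of peculiar modules.

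Given this abstract symmetry, I would reduce the bigraded statement to an explicit computation on representative loops. By Theorems~\ref{thm:linearCurves} and~\ref{thm:rigid_curves}, every slope-$\frac{0}{1}$ component of $\HFT(T)$ must be of the form $\Rat_X$, $\mathfrak{b}_n$ or $\mathfrak{d}_n$, with trivial local system in the latter two cases. Since the box tensor product with $\mathcal{C}$ respects the direct-sum decomposition of a peculiar module into its components (up to chain homotopy), it suffices to compute $\Pi(\gamma)\boxtimes\mathcal{C}$ for these three families of loops. A direct computation, carried out in the style of Lemma~\ref{lem:half DehnTwistBimodule} but with the considerably larger bimodule $\mathcal{C}$, should yield
\[\Pi(\Rat_X)\boxtimes\mathcal{C}\simeq\Pi(\Rat_{X^{-1}})\]
with reversed Alexander grading, up to the stabilization factors imposed by the two extra pairs of meridional sutures; and analogously $\Pi(\mathfrak{b}_n)\boxtimes\mathcal{C}\simeq\Pi(\mathfrak{d}_n)$ up to stabilization, with the correct $t_b+t_b^{-1}$ versus $t_d+t_d^{-1}$ factors appearing precisely when the tangle ends $\texttt{2}$ and $\texttt{3}$ lie on the same open component.

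Uniqueness of the absolute Alexander lift is then formal: shifting the Alexander grading by a constant $c$ multiplies both sides of each symmetry identity by $t^c$, so that any putative symmetry would become $\rr\bigl(\Rat_X(T)\bigr)=t^{2c}\Rat_{X^{-1}}(T)$, forcing $c=0$ as soon as $\Rat_X(T)$ or some $\mathfrak{b}_n(T)$ is non-trivial, which is precisely the assumption that $\HFT(T)$ carries a horizontal component. The main obstacle I expect is Step~3, the explicit computation of $\Pi(\gamma)\boxtimes\mathcal{C}$: with 32 generators the bimodule is unwieldy; local systems $X$ interact non-trivially with the bimodule actions, requiring the identification $X\mapsto X^{-1}$ to emerge only after several rounds of cancellation and clean-up; and the bookkeeping of stabilization factors must be carried out carefully in each of the two open-component configurations so as to match $t_b+t_b^{-1}$ or $t_d+t_d^{-1}$ on the nose.
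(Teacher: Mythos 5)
Your overall architecture matches the paper's: the conjugation bimodule $\ConjBimod$ (computed from the $\alpha$/$\beta$-swapping bordered sutured manifold, via Theorem~\ref{thm:HalfIdBimodAction}, Zarev pairing and Theorem~\ref{thm:stabilization}) gives the global relatively bigraded identity $\rr\big(\mathcal{F}_{31}(\CFTd(T))\big)\boxtimes\ConjBimod \cong V_{t_1}\otimes V_{t_2}\otimes\mathcal{F}_{13}(\CFTd(T))$ (Lemma~\ref{lem:ConjBimod}), and an explicit computation shows that $\ConjBimod$ takes $\mathfrak{b}_n\mapsto\mathfrak{d}_n$, $\mathfrak{d}_n\mapsto\mathfrak{b}_n$ and $\Rat_X\mapsto\Rat_{X^{-1}}$ up to the predicted stabilization factors (Lemma~\ref{lem:ConjBimod:ActionOnHorizontalComponents}). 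So far this is the paper's proof.

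The gap is in your reduction ``since $\boxtimes\,\ConjBimod$ respects direct sums, it suffices to compute $\Pi(\gamma)\boxtimes\ConjBimod$ for the three horizontal families.'' That is not sufficient, for two reasons. First, $\HFT(T)$ generally contains components of other slopes, and you have no control over what $\ConjBimod$ does to those: a priori they could contribute extra horizontal components to $\rr\big(\mathcal{F}_{31}(\CFTd(T))\big)\boxtimes\ConjBimod$, so the horizontal Poincar\'e polynomials of the two sides need not match component-by-component. The paper deliberately avoids computing the action on non-horizontal curves (it only \emph{expects} the analogous statement for arbitrary slopes) and instead closes the argument with a two-sided counting trick: comparing coefficients of Laurent polynomials, the horizontal computation gives an inequality $\mathfrak{d}_n$-count $\geq$ (shifted) $\mathfrak{b}_n$-count, the same argument with $\mathfrak{b}_n$ and $\mathfrak{d}_n$ (and $X$, $X^{-1}$) exchanged gives the reverse inequality, and combining them forces all inequalities to be equalities. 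Second, the global identity from Lemma~\ref{lem:ConjBimod} is only \emph{relatively} bigraded, while the component computation is absolutely bigraded; the same double-inequality argument is what pins down the unknown overall shift $\delta^m t^A$ (showing $m=0$) and thereby produces the absolute Alexander lift whose existence the theorem asserts. Your uniqueness remark (a shift by $t^c$ would force $c=0$) is fine, but existence is exactly the part your outline does not establish; without the symmetric inequality argument or a proof that non-horizontal components produce no horizontal output, the proof does not go through.
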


\begin{figure}[b]
	\centering
	\begin{subfigure}[b]{\textwidth}
		\centering
		$$
		\begin{tikzcd}[row sep=1cm]%[row sep=0.5cm, column sep=0.5cm]
		\delta^{0}\Alex{0}{n-2}{n-1}{0}c
		&
		\delta^{0}\Alex{0}{n-1}{n}{0}a
		\arrow[dashed,swap]{l}{q_{32}}
		&
		\delta^{-\frac{1}{2}}\Alex{0}{n}{n}{0}b
		\arrow[dashed]{r}{q_{3}}
		\arrow[swap]{l}{p_{2}}
		&
		\delta^{0}\Alex{0}{n}{n-1}{0}c
		&
		\delta^{0}\Alex{0}{n-1}{n-2}{0}a
		\arrow[swap]{l}{p_{41}}
		\\
		\delta^{0}\Alex{0}{1-n}{2-n}{0}a
		\arrow[dashed]{r}{q_{32}}
		\arrow[phantom,pos=0.6]{u}{\vdots}
		&
		\delta^{0}\Alex{0}{-n}{1-n}{0}c
		&
		\delta^{\frac{1}{2}}\Alex{0}{-n}{-n}{0}b
		\arrow[swap]{l}{p_{412}}
		&
		\delta^{0}\Alex{0}{1-n}{-n}{0}a
		\arrow[dashed,swap]{l}{q_{2}}
		\arrow{r}{p_{41}}
		&
		\delta^{0}\Alex{0}{2-n}{1-n}{0}c
		\arrow[phantom,pos=0.6]{u}{\vdots}
		\end{tikzcd}
		$$
		\caption{$\mathcal{F}_{31}(\Pi(\mathfrak{b}_n))$}\label{fig:ConjugationBimodule:ImmersedCurve:Input}
	\end{subfigure}
	\begin{subfigure}[b]{\textwidth}
		\centering
		$$
		\begin{tikzcd}[row sep=1cm]%[row sep=0.5cm, column sep=0.5cm]
		\delta^{0}\Alex{2-n}{0}{0}{1-n}c
		\arrow[dashed]{r}{q_{14}}
		&
		\delta^{0}\Alex{1-n}{0}{0}{-n}a
		&
		\delta^{\frac{1}{2}}\Alex{-n}{0}{0}{-n}d
		\arrow[swap]{l}{p_{234}}
		&
		\delta^{0}\Alex{-n}{0}{0}{1-n}c
		\arrow[swap,dashed]{l}{q_{4}}
		\arrow{r}{p_{23}}
		&
		\delta^{0}\Alex{1-n}{0}{0}{2-n}a
		\\
		\delta^{0}\Alex{n-1}{0}{0}{n-2}a
		\arrow[phantom,pos=0.6]{u}{\vdots}
		&
		\delta^{0}\Alex{n}{0}{0}{n-1}c
		\arrow[dashed,swap]{l}{q_{14}}
		&
		\delta^{-\frac{1}{2}}\Alex{n}{0}{0}{n}d
		\arrow[dashed]{r}{q_{1}}
		\arrow[swap]{l}{p_{4}}
		&
		\delta^{0}\Alex{n-1}{0}{0}{n}a		
		&
		\delta^{0}\Alex{n-2}{0}{0}{n-1}c
		\arrow[swap]{l}{p_{23}}
		\arrow[phantom,pos=0.6]{u}{\vdots}
		\end{tikzcd}
		$$
		\caption{$\mathcal{F}_{13}(\Pi(\mathfrak{d}_n))$}\label{fig:ConjugationBimodule:ImmersedCurve:Output}
	\end{subfigure}
	\begin{subfigure}[b]{\textwidth}
		\centering
		$$
		\begin{tikzcd}[column sep=1.5cm]
		\delta^{0}\Alex{0}{0}{0}{0}a\otimes\field^m
		\arrow[bend left=10]{r}{p_{41}\otimes X}
		\arrow[bend right=10,swap,dashed]{r}{q_{32}\otimes\id_{\field^m}}
		&
		\delta^{0}\Alex{0}{0}{0}{0}c\otimes\field^m
		\end{tikzcd}
		$$
		\caption{$\mathcal{F}_{31}(\Pi(\Rat_X))$}\label{fig:ConjugationBimodule:RationalCurve:Output}
	\end{subfigure}
	\caption{Bigraded type D structures for irrational and rational curves of slope $\slopeZero$ }\label{fig:ConjugationBimodule:ImmersedCurve}
\end{figure}

\begin{definition}\label{def:ConjBimod}
	Let $\ConjBimod$ be the bigraded type AD $\Ad_{31}$-$\Ad_{13}$-bimodule from Figure~\ref{fig:ConjugationBimodule} on page~\pageref{fig:ConjugationBimodule}. Each vertex in this figure corresponds to two of the in total 32 generators of this type AD structure. Each vertex is labelled by a $\delta$- and generalized Alexander grading as well as an expression $\frac{x}{y}z$ with $x,y,z\in\{a,b,c,d\}$. Here, $x$ and $y$ specify the idempotents of the two generators on the type A side, respectively, and $z$ specifies the idempotent on the type D side. The vertices are connected by two rings of bold arrows carrying labels whose first component is empty. Each of these arrows corresponds to two components of the differential connecting the generators with the same first idempotents. The labels on all other arrows are decorated with super- and subscripts indicating the left idempotent of the source and target generators, respectively. Finally, there are also components of the differential which are not drawn in this graph and which connect each pair of generators corresponding to the same vertex, namely
	$$
	\delta^{0}\Alex{0}{0}{0}{0}bx
	\xrightarrow{(q_3|1)}
	\delta^{\frac{1}{2}}\Alex{0}{0}{-1}{0}cx
	\quad\text{ and }\quad
	\delta^{0}\Alex{0}{0}{0}{0}ax
	\xrightarrow{(p_1|1)}
	\delta^{\frac{1}{2}}\Alex{-1}{0}{0}{0}dx
	$$
	for any $x\in\{a,b,c,d\}$. 
	The bigrading indicated by the vertex labelling is for those generators whose first idempotent is $a$ or $b$. To obtain the correct bigrading of the generators with first idempotents $c$ or $d$, we need to add an additional shift by $\delta^{\frac{1}{2}}\Alex{0}{0}{-1}{0}$ and $\delta^{\frac{1}{2}}\Alex{-1}{0}{0}{0}$, respectively. Finally, the generalized Alexander grading on the algebra $\Ad_{31}$ on the type A side of $\ConjBimod$ is the opposite of the Alexander grading on the algebra $\Ad_{13}$ on the type D side of $\ConjBimod$ which agrees with the standard one from Definition~\ref{def:generalizedAlexgrading}; so for example $\AlexGr(p_1\in\Ad_{31})=\Alex{-1}{0}{0}{0}=-\AlexGr(p_1\in\Ad_{13})$.
\end{definition}

\begin{wrapfigure}{r}{0.3333\textwidth}
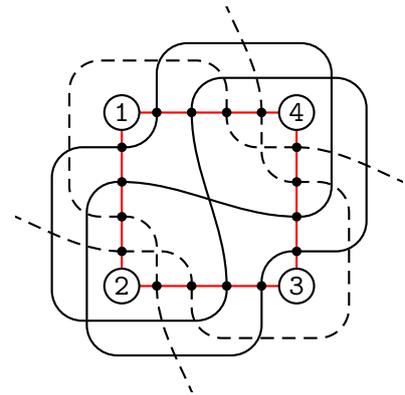

	\centering
	\bigskip\bigskip
	\twobasiccurves
	\caption{The two loops $L_{ad}$ (dashed) and $L_{bc}$ (solid)}\label{fig:twobasiccurvesINTRO}
	\bigskip\bigskip\bigskip
\end{wrapfigure}
\myfixwrapfig

\begin{Remark}
	The action of $\ConjBimod$ on a single generator $x$ can be easily described. 
	Depending on whether $x$ lives in one of the idempotent $a$ and $d$ or $b$ and $c$, the type D structure $x\boxtimes\ConjBimod$ corresponds to the immersed curve
	$L_{ad}$ or $L_{bc}$ from Figure~\ref{fig:twobasiccurvesINTRO}:
	$$
	a,d\mapsto\mathcal{F}_{13}(\Pi(L_{ad})),\qquad
	b,c\mapsto\mathcal{F}_{13}(\Pi(L_{bc})).\hspace{0.3333\textwidth}
	$$
	In Figure~\ref{fig:ConjugationBimodule}, these correspond to the two (or more precisely: four) rings of generators. 
	The action on morphisms is more complicated. We just note that the action of $\ConjBimod$ vanishes on any sequence of morphisms of length greater than 2.
\end{Remark}

\begin{lemma}\label{lem:ConjBimod}
	For any oriented 4-ended tangle \(T\), 
	$$
	\rr
	\Big(\mathcal{F}_{31}(\CFTd(T))\Big)\boxtimes\ConjBimod
	\cong
	V_{t_1}\otimes V_{t_2}\otimes\mathcal{F}_{13}(\CFTd(T))
	\hspace{0.3333\textwidth}
	$$
	as relatively bigraded type D structures over $\Ad_{13}$, where $V_t$ is as in Theorem~\ref{thm:stabilization}.
\end{lemma}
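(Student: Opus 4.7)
The plan is to identify $\ConjBimod$ with the type AD bimodule of an explicit bordered sutured cobordism and then invoke Zarev's Pairing Theorem, much as in the proofs of Lemma~\ref{lem:AddingASingleCrossing} and Theorem~\ref{thm:HalfIdBimodAction}. The conceptual picture is the one sketched in the introduction: while orientation-reversal of the Heegaard surface realizes ordinary conjugation symmetry on a closed Heegaard Floer invariant, for a tangle it also swaps the $\alpha$- and $\beta$-parametrizations of the boundary sphere, so one must glue on an auxiliary bordered sutured manifold that swaps these back --- at the cost of two extra pairs of meridional sutures, one on each open strand, which account for the factors $V_{t_1}\otimes V_{t_2}$.

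First I would use Theorem~\ref{thm:HalfIdBimodAction} to replace $\mathcal{F}_{31}(\CFTd(T))$ by the bordered sutured type D invariant $\BSD(M_T^\beta(p_3,q_1))$ (stabilizing once by $V$ if the ends $\texttt{1}$ and $\texttt{3}$ lie on the same open component). The operation $\rr$ on the Alexander grading corresponds to reversing the orientation of the Heegaard surface, which on the level of boundary parametrization exchanges $\alpha$- and $\beta$-arcs and switches the roles of the basepoints labelled $p_i$ and $q_i$. Next I would construct an auxiliary bordered sutured manifold $Y$ with two boundary components: one carrying the dual $\beta$-arc diagram of Definition~\ref{def:TangleComplementDualBSstructure} with distinguished basepoints $p_3,q_1$, the other carrying the $\alpha$-arc diagram from Section~\ref{sec:background} with distinguished basepoints $p_1,q_3$. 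The underlying 3-manifold of $Y$ is a thickened 4-punctured sphere equipped with two extra pairs of meridional sutures, one along each open strand --- exactly the additional sutures predicted by the discussion in the introduction.

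With this $Y$ in hand, Zarev's Pairing Theorem together with the Pairing Adjunction (applied, as in the proofs of Lemma~\ref{lem:AddingASingleCrossing} and Theorem~\ref{thm:HalfIdBimodAction}, by passing through the relevant inclusions $\iota$ of sub-idempotent-rings and the quotient homomorphisms $\pi_\alpha$ and $\pi_\beta$ onto $\Ad_{31}$ and $\Ad_{13}$) gives
\[\BSD(M_T^\beta(p_3,q_1))\,\boxtimes\,\BSAD(Y)\;\cong\;\BSD\bigl(M_T\text{ with }\alpha\text{-parametrization and two extra pairs of sutures}\bigr).\]
By Theorems~\ref{thm:HalfIdBimodAction} and~\ref{thm:stabilization}, the right-hand side is exactly $V_{t_1}\otimes V_{t_2}\otimes\mathcal{F}_{13}(\CFTd(T))$, and the bookkeeping of the stabilization factor in the same-component case (present in both $\mathcal{F}_{31}(\CFTd(T))$ on the left and in the target on the right) balances out. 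It therefore remains to compute $\BSAD(Y)$ from an explicit bordered sutured Heegaard diagram for $Y$ and show that, after passing through the quotient functors, it agrees with $\ConjBimod$. This is where the two rings of $16$ generators each, split according to the two basic curves $L_{ad}$ and $L_{bc}$ on the 4-punctured sphere as in the remark following Definition~\ref{def:ConjBimod}, will emerge from the geometry.

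The main obstacle will be the explicit computation of the type AD bimodule from a Heegaard diagram for $Y$ and matching it, generator by generator and arrow by arrow, to Figure~\ref{fig:ConjugationBimodule}. In particular, the half-integer $\delta$-shifts and the generalized Alexander shifts $\Alex{0}{0}{-1}{0}$ and $\Alex{-1}{0}{0}{0}$ between the two generators living at a common vertex must come out correctly, as must the sign flip between the generalized Alexander gradings on the type A and type D sides of $\ConjBimod$ noted in Definition~\ref{def:ConjBimod} --- this last feature is precisely the algebraic shadow of the orientation reversal and is what encodes the operation $\rr$ in the statement of the lemma. As in previous proofs in this paper, the contributions of some domains will have to be pinned down indirectly by testing against suitable input tangles rather than by a direct moduli space analysis.
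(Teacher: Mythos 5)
Your overall strategy is exactly the paper's: use Theorem~\ref{thm:HalfIdBimodAction} to replace $\mathcal{F}_{31}(\CFTd(T))$ by $\BSD(M_T^\beta(p_3,q_1))$, observe that $\rr$ is realized geometrically by reversing the orientation of the Heegaard surface, so that $\rr\bigl(\BSD(M_T^\beta(p_3,q_1))\bigr)=\BSD(M_T^\alpha(p_3,q_1))$, then glue on a thickened 4-punctured sphere which converts the arcs back at the cost of one extra pair of meridional sutures on each open strand, and finish with Zarev's Pairing Theorem, the Pairing Adjunction and Theorem~\ref{thm:stabilization}, the extra sutures producing the factor $V_{t_1}\otimes V_{t_2}$. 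Two small bookkeeping remarks: in the paper the free boundary of the auxiliary cobordism carries the dual $\beta$-parametrization of Definition~\ref{def:TangleComplementDualBSstructure} for $(p_1,q_3)$, which is why Theorem~\ref{thm:HalfIdBimodAction} applies directly to the glued manifold; if, as you write, your $Y$ instead carries the $\alpha$-arc parametrization used in the Glueing Theorem, then the final identification should be made via $\mathcal{F}_{13}(\CFTd(T))\cong\mathcal{F}^D_{\pi_\alpha}(\BSD(M_T))$ and the quotient functors, not via Theorem~\ref{thm:HalfIdBimodAction}, so pick one convention and stick with it. Also, orientation reversal does not relabel $p_i\leftrightarrow q_i$ in the paper's set-up; it reverses the sutures and turns $\beta$-arcs into $\alpha$-arcs.

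The genuine gap is that the step you defer is the substance of the lemma. The statement is about the \emph{specific} bimodule $\ConjBimod$ of Definition~\ref{def:ConjBimod}, and essentially the entire proof in the paper consists of computing $\BSAD(C)$ from the Heegaard diagram of Figure~\ref{fig:ConjugationHD}: enumerating its $48$ generators, cancelling the eight identity components (Figures~\ref{fig:ConjugationBimodule:CancellationOnly} and~\ref{fig:ConjugationBimodule:CancellationContribution}), assembling the induced higher arrows, and verifying that the result, including the $\delta$- and generalized Alexander gradings with their idempotent-dependent shifts, is exactly Figure~\ref{fig:ConjugationBimodule}. Until that verification is carried out, nothing ties the reduction you describe to $\ConjBimod$, so the lemma is not yet proved. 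Note also that, for the diagram the paper chooses, no indirect ``test-tangle'' argument of the kind used in Lemma~\ref{lem:AddingASingleCrossing} is needed: all contributing domains are pinned down combinatorially (by hand or by the package), with the only delicate point being the simultaneous cancellation and the resulting composite labels.
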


\begin{proof}
	By Theorem~\ref{thm:HalfIdBimodAction}, the bordered sutured type D structure of $M_T^\beta(p_3,q_1)$ is equal to the image of the peculiar module of $T$ under the functor $\mathcal{F}_{31}$ induced by the quotient map $\pi_{31}\co\Ad\rightarrow\Ad_{31}$, up to stabilization. Similarly, $M_T^\beta(p_1,q_3)$ is equal to $\mathcal{F}_{13}(\CFTd(T))$, up to stabilization. So it suffices to show that
	\begin{equation}\label{eqn:ConjBimod:proof}
	\rr
	\Big(\BSD(M_T^\beta(p_3,q_1))\Big)\boxtimes\ConjBimod
	\cong
	V_{t_1}\otimes V_{t_2}\otimes\BSD(M_T^\beta(p_1,q_3)).
	\end{equation}
	Let $M^\alpha_T(p_3,q_1)$ be the bordered sutured manifold whose underlying sutured manifold agrees with that of $M_T^\beta(p_3,q_1)$ up to an orientation reversal of the sutures and whose bordered sutured structure is obtained from that of $M_T^\beta(p_3,q_1)$ by turning the $\beta$-arcs into $\alpha$-arcs. In terms of Heegaard diagrams, this corresponds exactly to the usual procedure for conjugation symmetry, namely orientation reversal of the Heegaard surface and switching the roles of $\alpha$- and $\beta$-curves. For the algebraic invariants, this corresponds to reversing the Alexander grading:
	$$
	\rr
	\Big(\BSD(M_T^\beta(p_3,q_1))\Big)
	=
	\BSD(M_T^\alpha(p_3,q_1)).
	$$ 
	We now glue a thickened 4-punctured sphere onto $M_T^\alpha(p_3,q_1)$, which turns $\alpha$-arcs back into $\beta$-arcs. The Heegaard diagram of one such bordered sutured manifold $C$ is shown in Figure~\ref{fig:ConjugationHD}. Note that after glueing, we obtain the manifold $M_T^\beta(p_1,q_3)$ with one extra pair of oppositely oriented meridional sutures on each open tangle strand. So its bordered sutured structure agrees with the right hand side of \eqref{eqn:ConjBimod:proof} by Theorem~\ref{thm:stabilization}. 	 Then, using Zarev's Pairing Theorem for bordered sutured manifolds, it suffices to show that $\BSAD(C)=\ConjBimod$. 
	 
	The author originally did this computation using his Mathematica package~\cite{BSFH.m}. The reader might find the corresponding notebook \cite{ConjugationBimodule.nb} helpful. However, the computation is simple enough to do by hand. We sketch it below, but we leave the justification of some of the statements that we make to the reader. 
	
	\begin{figure}[t]
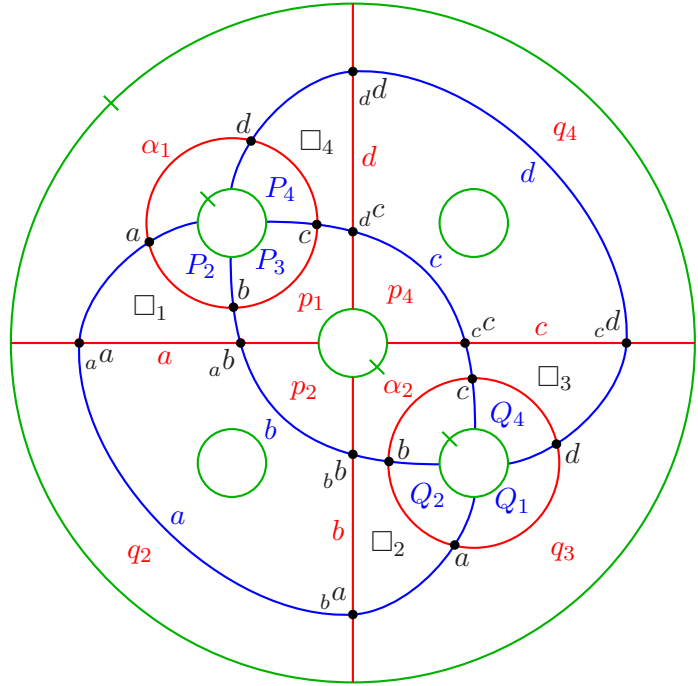

		\centering
		\ConjugationHD
		\caption{The Heegaard diagram of the $\alpha$-$\beta$-bordered sutured manifold $C$ whose corresponding type AD bimodule is $\ConjBimod$. The surface is oriented such that the normal vector points out of the plane of projection.}\label{fig:ConjugationHD}
	\end{figure}
	
	We name the generators in the Heegaard diagram from Figure~\ref{fig:ConjugationHD} by words $\prescript{}{w}{xyz}$, where $\prescript{}{w}{x}$ is the label of the intersection point on the $\alpha$-arcs, $y$ is the label of the intersection point on the curve $\alpha_1$ and $z$ is the label of the intersection point on the curve $\alpha_2$. Note that $w$ indicates the idempotent on the $\alpha$-bordered side and the complement of $\{x,y,z\}$ in $\{a,b,c,d\}$ is the idempotent on the $\beta$-bordered side. In particular, any triple of intersection points such that $x$, $y$ and $z$ are pairwise different gives rise to a generator. It is now easy to see that there are $4\cdot 3\cdot 4=48$ generators in total.
	There are in total eight identity components of the differential, namely the vertical arrows in Figure~\ref{fig:ConjugationBimodule:CancellationOnly}. 
%	\begin{align*}
%	\{\square_1\}\co &
%	\prescript{}{a}{b}az_1
%	\rightarrow
%	\prescript{}{a}{a}bz_1, 
%	&
%	\{\square_4\}\co &
%	\prescript{}{d}{d}cz_4
%	\rightarrow
%	\prescript{}{a}{a}bz_4
%	\\
%	\{\square_2\}\co &
%	\prescript{}{b}{a}y_2b
%	\rightarrow
%	\prescript{}{b}{b}y_2a\text{ and }
%	&
%	\{\square_3\}\co &
%	\prescript{}{c}{c}y_3d
%	\rightarrow
%	\prescript{}{c}{d}y_3c,
%	\end{align*}
%	where $z_1,y_2\in\{c,d\}$ and $y_3,z_4\in\{a,b\}$.
	The remaining arrows of this figure show those domains between these 16 generators which contribute to the differential. Note that the only arrows from the first row to the second are said identity components, so we may do all cancellations simultaneously. For this, we need to find all contributing domains which end at the end or start at the start of the identity arrows. This is done in Figure~\ref{fig:ConjugationBimodule:CancellationContribution} for four of the eight identity arrows. The others can be obtained by switching the idempotents $a\leftrightarrow c$ and $b\leftrightarrow d$ on both sides, switching the last two letters of the generators, switching $p\leftrightarrow q$, $P\leftrightarrow Q$ and indices $1\leftrightarrow 3$ and $2\leftrightarrow 4$ on domains and reversing the orientation of all arrows. This symmetry can be seen in the Heegaard diagram from Figure~\ref{fig:ConjugationHD} in terms of a rotation by $\pi$ around the axis going through the two honest sutures followed by a reflection along the diagonal line from the top left to the bottom right connecting the other four sutures. 
	
	Next, let us compute the contributing domains between the remaining generators that survive the cancellation. We leave this computation to the reader; the result is shown in Figure~\ref{fig:ConjugationBimodule:partial}. Again, the above symmetry can be used to simplify the computation. In Figure~\ref{fig:ConjugationBimodule:partial}, this symmetry corresponds to a reflection along the vertical symmetry axis followed by switching the inner two rings with the other two rings of generators. We now add the contribution from the cancellation to this picture. These correspond to the $3\cdot 8=24$ labels with length 2 input sequences on the type A side, the four additional arrows
	$$
	\begin{tikzcd}[row sep=1cm, column sep=1.2cm]
	\prescript{}{c}{cba}
	\arrow{d}[description]{(\textcolor{red}{q_4}|\textcolor{blue}{Q_1})}
	&
	\prescript{}{d}{dba}
	\arrow{d}[description]{(\textcolor{red}{p_4}|\textcolor{blue}{P_3})}
	&
	\prescript{}{c}{cab}
	\arrow{d}[description]{(\textcolor{red}{q_4}|\textcolor{blue}{Q_2Q_1})}
	&
	\prescript{}{d}{dab}
	\arrow{d}[description]{(\textcolor{red}{p_4}|\textcolor{blue}{P_2P_3})}
	\\
	\prescript{}{d}{dbc}
	&
	\prescript{}{c}{cda}
	&
	\prescript{}{d}{dac}
	&
	\prescript{}{c}{cdb}
	\end{tikzcd}
	$$
	coming from the cancellation of the four identity arrows of Figure~\ref{fig:ConjugationBimodule:CancellationContribution} as well as another four arrows which are their symmetric counterparts. 
	
	\begin{figure}[t]
		\centering
		$$
		\begin{tikzcd}[row sep=1cm, column sep=1.2cm]
		\prescript{}{d}{cdb}
		\arrow{d}[description]{\{\square_4\}}
		\arrow[swap]{r}{\{\textcolor{blue}{Q_2}\}}
		&
		\prescript{}{d}{cda}
		\arrow{d}[description]{\{\square_4\}}
		&
		\prescript{}{b}{bca}
		\arrow{d}[description]{\{\square_2\}}
		\arrow[swap]{r}{\{\textcolor{blue}{P_4}\}}
		&
		\prescript{}{b}{bda}
		\arrow{d}[description]{\{\square_2\}}
		\arrow[bend right=20]{ll}[description]{\{\textcolor{red}{p_1},\textcolor{red}{p_2},\textcolor{blue}{P_3}\}}
		\\
		\prescript{}{d}{dcb}
		\arrow{r}{\{\textcolor{blue}{Q_2}\}}
		&
		\prescript{}{d}{dca}
		&
		\prescript{}{b}{acb}
		\arrow{r}{\{\textcolor{blue}{P_4}\}}
		\arrow[bend left=20]{ll}[description]{\{\textcolor{red}{q_3},\textcolor{red}{q_4},\textcolor{blue}{Q_1}\}}
		&
		\prescript{}{b}{adb}
		\end{tikzcd}
		\quad
		\begin{tikzcd}[row sep=1cm, column sep=1.2cm]
		\prescript{}{c}{dac}
		\arrow{d}[description]{\{\square_3\}}
		\arrow[swap]{r}{\{\textcolor{blue}{P_2}\}}
		&
		\prescript{}{c}{dbc}
		\arrow{d}[description]{\{\square_3\}}
		&
		\prescript{}{a}{abd}
		\arrow{d}[description]{\{\square_1\}}
		\arrow[swap]{r}{\{\textcolor{blue}{Q_4}\}}
		&
		\prescript{}{a}{abc}
		\arrow{d}[description]{\{\square_1\}}
		\arrow[bend right=20]{ll}[description]{\{\textcolor{red}{q_3},\textcolor{red}{q_2},\textcolor{blue}{Q_1}\}}
		\\
		\prescript{}{c}{cad}
		\arrow{r}{\{\textcolor{blue}{P_2}\}}
		&
		\prescript{}{c}{cbd}
		&
		\prescript{}{a}{bad}
		\arrow{r}{\{\textcolor{blue}{Q_4}\}}
		\arrow[bend left=20]{ll}[description]{\{\textcolor{red}{p_1},\textcolor{red}{p_4},\textcolor{blue}{P_3}\}}
		&
		\prescript{}{a}{bac}
		\end{tikzcd}
		$$
		\caption{All contributing domains connecting those generators of the Heegaard diagram from Figure~\ref{fig:ConjugationHD} that can be cancelled}\label{fig:ConjugationBimodule:CancellationOnly}
	\end{figure}
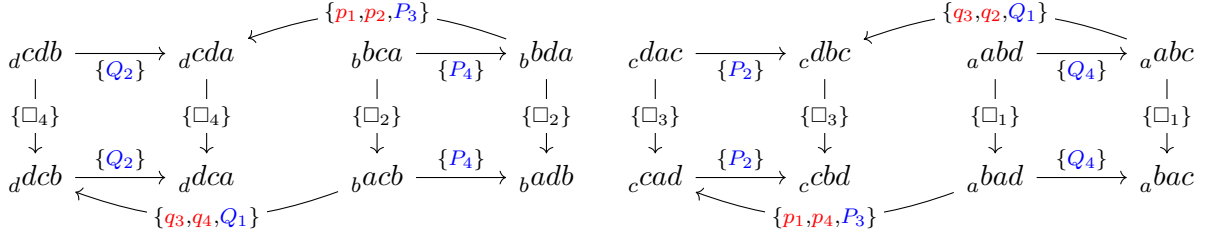
	
	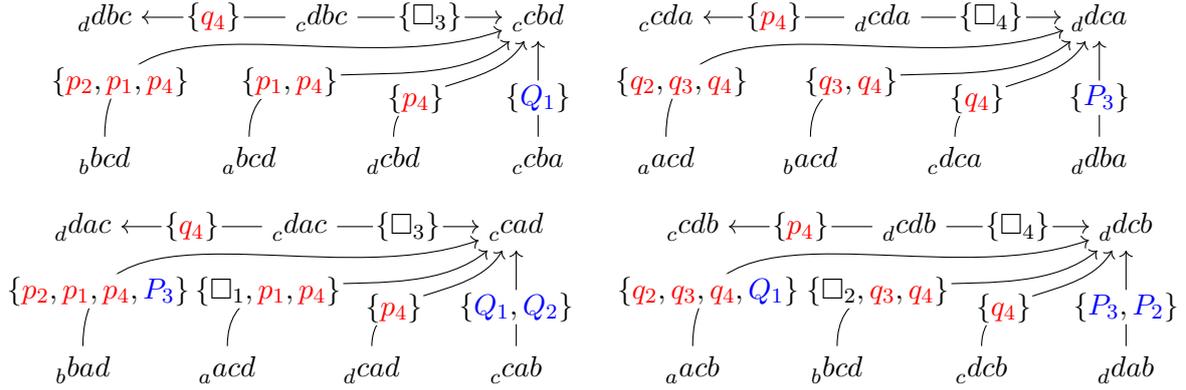
\begin{figure}[t]
		\centering
		
		\tikzstyle{tight}=[fill=white,inner sep=1pt,rounded corners]
		\begin{tikzpicture}[scale=1.9]
		\node(Ddbc) at (0,1)   {$\prescript{}{d}{dbc}$};
		\node(Cdbc) at (1.5,1) {$\prescript{}{c}{dbc}$};
		\node(Ccbd) at (3,1)   {$\prescript{}{c}{cbd}$};
		\node(Ccba) at (3,0)   {$\prescript{}{c}{cba}$};
		\node(Dcbd) at (2,0)   {$\prescript{}{d}{cbd}$};
		\node(Abcd) at (1,0)   {$\prescript{}{a}{bcd}$};
		\node(Bbcd) at (0,0)   {$\prescript{}{b}{bcd}$};
		
		\draw[->] (Cdbc) to node[tight]{$\{\square_3\}$} (Ccbd);
		\draw[->] (Cdbc) to node[tight]{$\{\textcolor{red}{q_4}\}$} (Ddbc);
		
		\draw[->,in=-90,out=90] (Ccba) to node[tight,pos=0.4]{$\{\textcolor{blue}{Q_1}\}$} (Ccbd);
		\draw[->,in=-120,out=90] (Dcbd) to node[tight,pos=0.3]{$\{\textcolor{red}{p_4}\}$} (Ccbd);
		\draw[->,in=-140,out=90] (Abcd) to node[tight,pos=0.3]{$\{\textcolor{red}{p_1},\textcolor{red}{p_4}\}$} (Ccbd);
		\draw[->,in=-160, out=90] (Bbcd) to node[tight,pos=0.15]{$\{\textcolor{red}{p_2},\textcolor{red}{p_1},\textcolor{red}{p_4}\}$} (Ccbd);
		\end{tikzpicture}
		\quad
		\begin{tikzpicture}[scale=1.9]
		\node(Ccda) at (0,1)   {$\prescript{}{c}{cda}$};
		\node(Dcda) at (1.5,1) {$\prescript{}{d}{cda}$};
		\node(Ddca) at (3,1)   {$\prescript{}{d}{dca}$};
		\node(Ddba) at (3,0)   {$\prescript{}{d}{dba}$};
		\node(Cdca) at (2,0)   {$\prescript{}{c}{dca}$};
		\node(Bacd) at (1,0)   {$\prescript{}{b}{acd}$};
		\node(Aacd) at (0,0)   {$\prescript{}{a}{acd}$};
		
		\draw[->] (Dcda) to node[tight]{$\{\square_4\}$} (Ddca);
		\draw[->] (Dcda) to node[tight]{$\{\textcolor{red}{p_4}\}$} (Ccda);
		
		\draw[->,in=-90,out=90] (Ddba) to node[tight,pos=0.4]{$\{\textcolor{blue}{P_3}\}$} (Ddca);
		\draw[->,in=-120,out=90] (Cdca) to node[tight,pos=0.3]{$\{\textcolor{red}{q_4}\}$} (Ddca);
		\draw[->,in=-140,out=90] (Bacd) to node[tight,pos=0.3]{$\{\textcolor{red}{q_3},\textcolor{red}{q_4}\}$} (Ccbd);
		\draw[->,in=-160, out=90] (Aacd) to node[tight,pos=0.15]{$\{\textcolor{red}{q_2},\textcolor{red}{q_3},\textcolor{red}{q_4}\}$} (Ccbd);
		\end{tikzpicture}
		\\
		\begin{tikzpicture}[scale=1.9]
		\node(Ddac) at (0,1)   {$\prescript{}{d}{dac}$};
		\node(Cdac) at (1.5,1) {$\prescript{}{c}{dac}$};
		\node(Ccad) at (3,1)   {$\prescript{}{c}{cad}$};
		\node(Ccab) at (3,0)   {$\prescript{}{c}{cab}$};
		\node(Dcad) at (2,0)   {$\prescript{}{d}{cad}$};
		\node(Aacd) at (1,0)   {$\prescript{}{a}{acd}$};
		\node(Bbad) at (0,0)   {$\prescript{}{b}{bad}$};
		
		\draw[->] (Cdac) to node[tight]{$\{\square_3\}$} (Ccad);
		\draw[->] (Cdac) to node[tight]{$\{\textcolor{red}{q_4}\}$} (Ddac);
		
		\draw[->,in=-90,out=90] (Ccab) to node[tight,pos=0.4]{$\{\textcolor{blue}{Q_1},\textcolor{blue}{Q_2}\}$} (Ccad);
		\draw[->,in=-120,out=90] (Dcad) to node[tight,pos=0.3]{$\{\textcolor{red}{p_4}\}$} (Ccad);
		\draw[->,in=-140,out=90] (Aacd) to node[tight,pos=0.3]{$\{\square_1,\textcolor{red}{p_1},\textcolor{red}{p_4}\}$} (Ccad);
		\draw[->,in=-160, out=90] (Bbad) to node[tight,pos=0.15]{$\{\textcolor{red}{p_2},\textcolor{red}{p_1},\textcolor{red}{p_4},\textcolor{blue}{P_3}\}$} (Ccad);
		\end{tikzpicture}
		\quad
		\begin{tikzpicture}[scale=1.9]
		\node(Ccdb) at (0,1)   {$\prescript{}{c}{cdb}$};
		\node(Dcdb) at (1.5,1) {$\prescript{}{d}{cdb}$};
		\node(Ddcb) at (3,1)   {$\prescript{}{d}{dcb}$};
		\node(Ddab) at (3,0)   {$\prescript{}{d}{dab}$};
		\node(Cdcb) at (2,0)   {$\prescript{}{c}{dcb}$};
		\node(Bbcd) at (1,0)   {$\prescript{}{b}{bcd}$};
		\node(Aacb) at (0,0)   {$\prescript{}{a}{acb}$};
		
		\draw[->] (Dcdb) to node[tight]{$\{\square_4\}$} (Ddcb);
		\draw[->] (Dcdb) to node[tight]{$\{\textcolor{red}{p_4}\}$} (Ccdb);
		
		\draw[->,in=-90,out=90] (Ddab) to node[tight,pos=0.4]{$\{\textcolor{blue}{P_3},\textcolor{blue}{P_2}\}$} (Ddcb);
		\draw[->,in=-120,out=90] (Cdcb) to node[tight,pos=0.3]{$\{\textcolor{red}{q_4}\}$} (Ddcb);
		\draw[->,in=-140,out=90] (Bbcd) to node[tight,pos=0.3]{$\{\square_2,\textcolor{red}{q_3},\textcolor{red}{q_4}\}$} (Ddcb);
		\draw[->,in=-160, out=90] (Aacb) to node[tight,pos=0.15]{$\{\textcolor{red}{q_2},\textcolor{red}{q_3},\textcolor{red}{q_4},\textcolor{blue}{Q_1}\}$} (Ddcb);
		\end{tikzpicture}
		\caption{All contributing domains which end at the end or start at the start of four identity components from Figure~\ref{fig:ConjugationBimodule:CancellationOnly} and whose other generator survives cancellation }\label{fig:ConjugationBimodule:CancellationContribution}
	\end{figure}
	It now remains to discuss the bigrading. Since both $\delta$-grading and Alexander grading are relative gradings and the bimodule is connected (ie not a direct sum of two non-trivial bimodules), it suffices to fix a bigrading for one generator and work out the bigrading of all other generators from the differentials. Again, we leave the verification that the bigrading from Figure~\ref{fig:ConjugationBimodule} is well-defined to the reader.  
\end{proof}

\begin{lemma}\label{lem:ConjBimod:ActionOnHorizontalComponents}
	For any $n>0$,
	$$
	\rr
	\Big(\mathcal{F}_{31}(\Pi(\mathfrak{b}_n))\Big)\boxtimes\ConjBimod\cong
	V_{t_b}\otimes V_{t'_b}\otimes\mathcal{F}_{13}(\Pi(\mathfrak{d}_n))
	$$
	as absolutely bigraded type D structures over $\Ad_{13}$, where $t_b$ and $t'_b$ are the Alexander gradings of the tangle ends adjacent to the site $b$. The same is true after switching $\mathfrak{b}_n$ and $\mathfrak{d}_n$ as well as $b$ and $d$. Moreover, for any local system $X\in\GL(n,\field)$,
	$$
	\rr
	\Big(
	\mathcal{F}_{31}(\Pi(\Rat_X))\Big)\boxtimes\ConjBimod
	\cong
	V_{t_1}\otimes V_{t_2}
	\otimes
	\mathcal{F}_{13}(\Pi(\Rat_{X^{-1}}))
	$$
	as absolutely bigraded type D structures over $\Ad_{13}$,
	where $t_1$ and $t_2$ are the Alexander gradings of the two open strands.  
\end{lemma}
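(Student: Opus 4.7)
The proof is a direct box tensor product computation, which is finite and mechanical once one unpacks the relevant diagrams. The explicit presentations of $\ConjBimod$ (Figure~\ref{fig:ConjugationBimodule}) and of $\mathcal{F}_{31}(\Pi(\mathfrak{b}_n))$, $\mathcal{F}_{13}(\Pi(\mathfrak{d}_n))$ and $\mathcal{F}_{31}(\Pi(\Rat_X))$ (Figure~\ref{fig:ConjugationBimodule:ImmersedCurve}) suffice, together with the fact (noted in the remark preceding the lemma) that the action of $\ConjBimod$ vanishes on any sequence of morphisms of length greater than two. Therefore only finitely many contributions need to be enumerated per input generator.

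For the case of $\mathfrak{b}_n$, my plan is as follows. First, I apply $\rr$ to negate the generalized Alexander grading on $\mathcal{F}_{31}(\Pi(\mathfrak{b}_n))$. Second, I pair each of its generators (all in idempotents $a$, $b$ or $c$) with those generators of $\ConjBimod$ whose type-A idempotent matches. By the remark preceding the lemma, pairing a single input generator in idempotent $a$ produces, up to a bigrading shift readable from the vertex labels of Figure~\ref{fig:ConjugationBimodule}, a copy of $\mathcal{F}_{13}(\Pi(L_{ad}))$, while pairing one in idempotent $b$ or $c$ produces a copy of $\mathcal{F}_{13}(\Pi(L_{bc}))$. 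Third, I translate each $p_i$- or $q_i$-labelled arrow of $\mathcal{F}_{31}(\Pi(\mathfrak{b}_n))$ into the corresponding action of $\ConjBimod$, using the length-one multiplications on the perimeter of Figure~\ref{fig:ConjugationBimodule} together with the length-two multiplications through its interior for consecutive pairs of arrows. Finally, I cancel all identity components that appear; the resulting complex should consist of two parallel copies of $\mathcal{F}_{13}(\Pi(\mathfrak{d}_n))$ displaced in Alexander grading by $t_b^{\pm 1}$ and $(t'_b)^{\pm 1}$, which is exactly $V_{t_b}\otimes V_{t'_b}\otimes\mathcal{F}_{13}(\Pi(\mathfrak{d}_n))$. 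The swapped case (with $\mathfrak{b}_n$ and $\mathfrak{d}_n$ interchanged) follows from the same computation after relabelling sites.

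The rational case is shorter. The input $\mathcal{F}_{31}(\Pi(\Rat_X))$ has only two generators, each tensored with $\field^m$, connected by the two arrows of Figure~\ref{fig:ConjugationBimodule:RationalCurve:Output}. Pairing them with $\ConjBimod$ and cancelling identity components yields a two-generator complex; the essential observation is that each elementary curve segment is traversed in both directions by the bimodule action (once picking up $X$ and once $X^{-1}$, in the sense of Definition~\ref{def:loopsTOpqMod}), so the surviving arrows in the output carry $X^{-1}$ rather than $X$, yielding $\mathcal{F}_{13}(\Pi(\Rat_{X^{-1}}))$. The factor $V_{t_1}\otimes V_{t_2}$ appears for the same reason as in Lemma~\ref{lem:ConjBimod}, namely the two extra pairs of meridional sutures introduced by the underlying bordered sutured construction.

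The main obstacle is bookkeeping rather than conceptual depth: the intermediate complex contains many generators, and the absolute bigrading must be tracked carefully through the $\tfrac{1}{2}$-shifts at vertices in idempotents $c$ and $d$ of $\ConjBimod$, as well as through the negation of the generalized Alexander grading on the type-A side. The author's Mathematica package~\cite{BSFH.m} renders the verification routine, but the underlying logic is transparent from the geometric picture provided by Figure~\ref{fig:twobasiccurvesINTRO} and the remark preceding the lemma.
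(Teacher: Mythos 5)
Your overall strategy---computing the box tensor product directly from the explicit presentation of $\ConjBimod$, cancelling identity components, and reading off the answer---is the same route the paper takes (the paper merely organizes the computation segment by segment and then reassembles). But as stated, your predicted outcome is wrong, and the prediction is the whole content of the lemma. The tensor factor $V_{t_b}\otimes V_{t'_b}$ is \emph{four}-dimensional, so the output must contain four parallel copies of $\mathcal{F}_{13}(\Pi(\mathfrak{d}_n))$, with Alexander shifts $t_b^{\pm1}(t'_b)^{\pm1}$ in all four sign combinations; ``two parallel copies'' cannot equal the right-hand side. Likewise in the rational case the result is four copies of $\Rat_{X^{-1}}$ (each with $2m$ generators), not ``a two-generator complex'', and the factor $V_{t_1}\otimes V_{t_2}$ has to fall out of this count. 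You cannot import that factor from Lemma~\ref{lem:ConjBimod}: that lemma is about tangle complements and its meridional-suture rationale is unavailable (and circular relative to how the two lemmas are combined) in a proof of this purely algebraic statement about pairing a fixed curve with a fixed bimodule.

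There are two further steps your plan glosses over that are not mere bookkeeping. First, cancelling identity arrows alone does not immediately produce copies of $\mathfrak{d}_n$: in the actual computation one of the surviving components (Figure~\ref{fig:ConjugationBimodule:ArrowPushing}) only splits into two copies of $\mathfrak{d}_n$ after removing additional arrows by isotopies, i.e.\ further changes of basis, and this has to be argued. Second, the swapped case is not a naive relabelling of sites; $\ConjBimod$ is only invariant under a specific symmetry (interchanging $a\leftrightarrow c$, $b\leftrightarrow d$, indices $1\leftrightarrow3$ and $2\leftrightarrow4$, $p\leftrightarrow q$, reversing the bigrading and all arrows), and it is this symmetry one must invoke to deduce the $\mathfrak{d}_n$ statement without redoing the computation. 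Finally, your explanation of why the local system inverts is heuristic; to conclude $\Rat_{X^{-1}}$ rather than $\Rat_X$ one must fix the orientation convention of Definition~\ref{def:loopsTOpqMod} and read the local system off the cancelled complexes, as in Figure~\ref{fig:ConjugationBimodule:EmbeddedCurve}, rather than appeal to ``traversal in both directions''.
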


\begin{Remark}
	The proof of this lemma is perhaps the most technical one in this paper. The action of the bimodule $\ConjBimod$ on any given curve can also be computed using the Mathematica notebook~\cite{ConjugationAction.nb} and the author's Mathematica package~\cite{PQM.m}. Based on such computations, we expect this lemma to be actually true for all curves $\Rat_X(\slope)$ and $\mathfrak{i}_n(\slope;i,j)$ equipped with a suitable absolute bigrading. 
\end{Remark}

\begin{figure}[t]
		\centering
		
		\tikzstyle{nodeTLI}=[draw,ellipse,inner sep=0pt,dotted,minimum width = 20pt, minimum height=12pt]
		\tikzstyle{nodeBRO}=[draw,rectangle,rounded corners,dotted,minimum width = 20pt, minimum height=17pt]
		\begin{tikzpicture}[scale=1.9]
		
		%left
		\draw (0,3)  node[nodeBRO](odl){$\delta^{-\frac{1}{2}}\Alex{0}{1-n}{1-n}{0}d$};
		\draw (0,0)  node[nodeBRO](ocl){$c$};
		\node(opl1) at (1,3){};
		\node(opl2) at (1,0){};
		
		\draw (0.9,2)  node[nodeTLI](idl){$\delta^{-\frac{1}{2}}\Alex{0}{-n-1}{-n-1}{0}d$};
		\draw (0.9,1)  node[nodeTLI](icl1){$c$};
		\draw (2.3,2)  node[nodeTLI](ial1){$a$};
		\draw (2.3,1)  node[nodeTLI](ial2){$a$};
		\draw (3,2)  node[nodeTLI](icl2){$c$};
		\node(ipl1) at (3,1){};
		\node(ipl2) at (3.7,2){};
		
		%right
		\node(opr1) at (3,3){};
		\node(opr2) at (3,0){};
		\draw (4,3)  node[nodeBRO](oar1){$a$};
		\draw (4,0)  node[nodeBRO](oar2){$a$};
		\draw (5,3)  node[nodeBRO](ocr1){$c$};
		\draw (5,0)  node[nodeBRO](ocr2){$c$};
		\draw (6,3)  node[nodeBRO](oar3){$a$};
		\draw (6,0)  node[nodeBRO](oar4){$a$};
		\draw (7,3)  node[nodeBRO](ocr3){$c$};
		\draw (7,0)  node[nodeBRO](odr){$\delta^{\frac{1}{2}}\Alex{0}{n+1}{n+1}{0}d$};
		
		\node(ipr1) at (3.5,1){};
		\node(ipr2) at (4.2,2){};
		\draw (4.2,1)  node[nodeTLI](icr1){$c$};
		\draw (4.9,2)  node[nodeTLI](iar1){$a$};
		\draw (4.9,1)  node[nodeTLI](iar2){$a$};
		\draw (6.3,2)  node[nodeTLI](icr2){$c$};
		\draw (6.3,1)  node[nodeTLI](idr){$\delta^{\frac{1}{2}}\Alex{0}{n-1}{n-1}{0}d$};
		
		% outside
		\draw[->] (odl) to [bend right=10] node[left]{$p_4$} (ocl);
		\draw[->] (ocr3) to [bend left=10] node[right]{$q_4$} (odr);
		
		\draw[->] (ocr3) to node[above]{$p_{23}$} (oar3);
		\draw[->] (odr) to node[above]{$p_{234}$} (oar4);
		
		\draw[->] (ocr1) to node[above]{$q_{14}$} (oar3);
		\draw[->] (ocr2) to node[above]{$q_{14}$} (oar4);
		
		\draw[->] (ocr1) to node[above]{$p_{23}$} (oar1);
		\draw[->] (ocr2) to node[above]{$p_{23}$} (oar2);
		
		% phantom arrows
		\draw[->,dashed] (odl) to node[above]{$q_{1}$} (opl1);
		\draw[->,dashed] (ocl) to node[above]{$q_{14}$} (opl2);
		\draw[->,dashed] (opr1) to node[above]{$q_{1}$} (oar1);
		\draw[->,dashed] (opr2) to node[above]{$q_{14}$} (oar2);
		
		%inside
		\draw[->] (idl) to [bend right=10] node[left]{$p_4$} (icl1);
		\draw[->] (icr2) to [bend left=10] node[right]{$q_4$} (idr);
		
		\draw[->] (icr2) to node[above]{$p_{23}$} (iar1);
		\draw[->] (idr) to node[above]{$p_{234}$} (iar2);
		
		\draw[->,dashed] (ipr2) to node[above]{$q_{14}$} (iar1);
		\draw[->] (icr1) to node[above]{$q_{14}$} (iar2);
		\draw[->,dashed] (icr1) to node[above]{$p_{23}$} (ipr1);
		
		\draw[->] (idl) to node[above]{$q_{1}$} (ial1);
		\draw[->] (icl1) to node[above]{$q_{14}$} (ial2);
		
		\draw[->] (icl2) to node[above]{$p_{23}$} (ial1);
		\draw[->,dashed] (ipl1) to node[above]{$p_{23}$} (ial2);
		\draw[->,dashed] (icl2) to node[above]{$q_{14}$} (ipl2);
		
		%crossover arrows
		\draw[->,dotted] (ocr1) .. controls  +(0,-0.4) and +(-1,0) .. +(1,-0.5) node[above]{$q_{4}$} .. controls  +(1,0) and +(0,1) .. (idr.north east);
		\draw[->,dotted] (odl) .. controls  +(1,-0.4) and +(-1,0) .. +(2.3,-0.5) node[above]{$q_{1}$} .. controls  +(1,0) and +(0,0.5) .. (ipl2);
		\draw[->,dotted,in=-135,out=45] (ocl) to node[above]{$p_{23}$} (ial2);
		\draw[->,dotted,in=-90,out=90] (ocr2) to node[left]{$p_{23}$} (iar2);
		\draw[->,dotted] (odl) .. controls  +(-0.16,-1.4) and +(-2,0) .. +(1.5,-1.3) node[below]{$p_{4}$} .. controls  +(1,0) and +(0,0.5) .. (ipl1);
		
		\end{tikzpicture}
	\caption{The third component of $\rr(\mathfrak{b}_n)\boxtimes\ConjBimod$ after cancellation}\label{fig:ConjugationBimodule:ArrowPushing}
\end{figure}

\begin{proof}%[Proof of Lemma~\ref{lem:ConjBimod:ActionOnHorizontalComponents}]
	Let us first consider $\mathfrak{b}_n$ with $n>1$. We can divide the computation into smaller pieces by computing the action of $\rr(\cdot)\boxtimes\ConjBimod$ first on the following four complexes:
	$$
	\begin{tikzcd}[row sep=0.2cm, column sep=0.6cm]
	\cdots
	\arrow[dashed]{r}{p_{41}}
	&
	\delta^{0}\Alex{0}{-2}{-1}{0}c
	&
	\delta^{0}\Alex{0}{-1}{0}{0}a
	\arrow[swap]{l}{q_{32}}
	&
	\delta^{-\frac{1}{2}}\Alex{0}{0}{0}{0}b
	\arrow{r}{q_{3}}
	\arrow[swap]{l}{p_{2}}
	&
	\delta^{0}\Alex{0}{0}{-1}{0}c
	&
	\delta^{0}\Alex{0}{-1}{-2}{0}a
	\arrow[swap]{l}{p_{41}}
	\arrow[dashed]{r}{q_{32}}
	&
	\cdots
	\\
	\cdots
	&
	\delta^{0}\Alex{0}{1}{2}{0}a
	\arrow{r}{q_{32}}
	\arrow[dashed,swap]{l}{p_{41}}
	&
	\delta^{0}\Alex{0}{0}{1}{0}c
	&
	\delta^{\frac{1}{2}}\Alex{0}{0}{0}{0}b
	\arrow[swap]{l}{p_{412}}
	&
	\delta^{0}\Alex{0}{1}{0}{0}a
	\arrow[swap]{l}{q_{2}}
	\arrow{r}{p_{41}}
	&
	\delta^{0}\Alex{0}{2}{1}{0}c
	&
	\cdots
	\arrow[dashed,swap]{l}{q_{32}}
	\end{tikzcd}
	$$
	$$
	\begin{tikzcd}[row sep=0.5cm, column sep=0.7cm]
	\cdots
	&
	\delta^{0}\Alex{0}{1}{1}{0}a
	\arrow{r}{q_{32}}
	\arrow[dashed,swap]{l}{p_{41}}
	&
	\delta^{0}\Alex{0}{0}{0}{0}c
	&
	\cdots
	\arrow[swap,dashed]{l}{p_{41}}
	&
	\cdots
	&
	\delta^{0}\Alex{0}{0}{0}{0}a
	\arrow[dashed,swap]{l}{q_{32}}
	\arrow{r}{p_{41}}
	&
	\delta^{0}\Alex{0}{1}{1}{0}c
	&
	\cdots
	\arrow[dashed,swap]{l}{q_{32}}
	\end{tikzcd}
	$$
	These computations are done in the
	Figures~\ref{fig:ConjugationBimodule:down:raw}, \ref{fig:ConjugationBimodule:up:raw} and~\ref{fig:ConjugationBimodule:ac:raw}. In those diagrams, certain pairs of generators which are connected by identity arrows are shaded grey. If we apply the Cancellation Lemma to each of these, we obtain the complexes shown in the Figures~\ref{fig:ConjugationBimodule:down:cancelled}, \ref{fig:ConjugationBimodule:up:cancelled} and~\ref{fig:ConjugationBimodule:ac:cancelled}, respectively.
	
	Let us now put these complexes back together. In order to obtain the same Alexander grading as the corresponding curve segment of $\mathfrak{b}_n$, the first of the four segments above needs to be multiplied by $\Alex{0}{n}{n}{0}$; so its contribution to $\rr(\mathfrak{b}_n)\boxtimes\ConjBimod$ is the complex from Figure~\ref{fig:ConjugationBimodule:down:cancelled} multiplied by $\Alex{0}{-n}{-n}{0}$. Similarly, the second complex above needs to be multiplied by $\Alex{0}{-n}{-n}{0}$, so the complex from Figure~\ref{fig:ConjugationBimodule:up:cancelled} needs to be multiplied by $\Alex{0}{n}{n}{0}$. The third complex and the fourth also need to be shifted in Alexander grading, but the grading shifts depend on where the complexes sit in $\mathfrak{b}_n$; it turns out to be sufficient to compute the gradings for the first two complexes.
	
	In the Figures~\ref{fig:ConjugationBimodule:down:cancelled}, \ref{fig:ConjugationBimodule:up:cancelled} and~\ref{fig:ConjugationBimodule:ac:cancelled}, those generators which can be cancelled in $\rr(\mathfrak{b}_n)\boxtimes\ConjBimod$ are drawn without a frame. The remaining ones are partitioned into four groups of generators for each figure, indicated by the frame style (dotted/solid, ellipse/rectangle). We claim that the generators in each of these four groups and the differentials between them form a copy of $\mathcal{F}_{13}(\Pi(\mathfrak{d}_n))$, with a suitable shift in bigrading. This can be seen as follows: clearly, each group of generators in the first two figures forms a complex which looks like $\mathcal{F}_{13}(\Pi(\mathfrak{d}_n))$ around the generators in the idempotent $d$, but the lengths of those complexes vary. The third figure shows that alternating sequences of generators in idempotents $a$ and $c$ result in four such sequences, each of which sits either at the right, top, left or bottom of the digram. In $\rr(\mathfrak{b}_n)\boxtimes\ConjBimod$, these sequences connect each group of generators in Figure~\ref{fig:ConjugationBimodule:down:cancelled} with the corresponding group in Figure~\ref{fig:ConjugationBimodule:up:cancelled}. Putting these pieces together, we see that $\rr(\mathfrak{b}_n)\boxtimes\ConjBimod$ splits into three components, two of which are equal to $\delta^0\Alex{0}{-1}{1}{0}\mathfrak{d}_n$ (solid-ellipse) and $\delta^0\Alex{0}{1}{-1}{0}\mathfrak{d}_n$ (solid-rectangle). The third component is drawn in Figure~\ref{fig:ConjugationBimodule:ArrowPushing}. Note that the dotted arrows can be removed without changing the remaining complex by two isotopies. So we also get $\delta^0\Alex{0}{-1}{-1}{0}\mathfrak{d}_n$ (dotted-ellipse) and $\delta^0\Alex{0}{1}{1}{0}\mathfrak{d}_n$ (dotted-rectangle). This proves the statement for $\mathfrak{b}_n$ and $n>1$. For $n=1$, the computation is very similar; we leave the details to the reader. 
	
	Instead of doing the same computation again for $\mathfrak{d}_n$, we use the symmetries of the bimodule $\ConjBimod$. Namely, the bimodule is invariant under simultaneously interchanging $a\leftrightarrow c$, $b\leftrightarrow d$, algebra indices and Alexander gradings $2\leftrightarrow 4$ and $1\leftrightarrow 3$, $p\leftrightarrow q$, reversing bigrading and reversing the orientation of all arrows. This is the same symmetry that we already used for the computation of $\ConjBimod$ in the proof of Lemma~\ref{lem:ConjBimod}. It is also the symmetry between $\mathfrak{b}_n$ and $\mathfrak{d}_n$.
	
	Finally, let us consider the curve $\Rat_X$ from Figure~\ref{fig:ConjugationBimodule:RationalCurve:Output}. For the computation of $\rr(\Rat_X)\boxtimes\ConjBimod$, we can reuse the computation from Figures~\ref{fig:ConjugationBimodule:ac:raw} and~\ref{fig:ConjugationBimodule:ac:cancelled} to see that the complex splits into four components, whose underlying curves are equal to $\Rat$. It remains to determine their gradings and local systems. The component at the bottom of these figures is equal to
	\begin{eqnarray}\label{eqn:rat_curve}
	\begin{tikzcd}[column sep=1.5cm]
	\delta^{0}\Alex{0}{0}{-1}{-1}c\otimes\field^m
	\arrow[bend left=10]{r}{p_{23}\otimes X}
	\arrow[bend right=10,swap,dashed]{r}{q_{14}\otimes\id_{\field^m}}
	&
	\delta^{0}\Alex{0}{0}{-1}{-1}a\otimes\field^m
	\end{tikzcd}
	\end{eqnarray}
	For the calculation of the Alexander grading, note that the tangle ends $\texttt{1}$ and $\texttt{4}$ as well as $\texttt{2}$ and $\texttt{3}$ belong to the same tangle strands, so we need to take the quotient of the generalized Alexander grading $\AlexGr$ from Definition~\ref{def:generalizedAlexgrading} by the additional relation $\Alex{1}{0}{0}{1}\sim\Alex{0}{0}{0}{0}$.
	Now, when we write $\mathcal{F}_{31}(\Pi(\Rat_X))$ as in Figure~\ref{fig:ConjugationBimodule:RationalCurve:Output}, we are assuming that the underlying curve of $\Rat_X$ is oriented from $a$ to $c$ via the front face, ie like
	$$
	\begin{tikzpicture}[scale=1.9]
	\draw (1,0)  node(c){$c$};
	\draw (0,0)  node(a){$a$};
	\draw[bend left,->] (a) to node[draw, fill=white,rectangle]{$X$} (c);
	\draw[bend left,->,dashed] (c) to (a);
	\end{tikzpicture}
	$$
	Using the same orientation on $\Rat$, the complex \eqref{eqn:rat_curve} corresponds to $\Rat_{X^{-1}}$, ie
	$$
	\begin{tikzpicture}[scale=1.9]
	\draw (1,0)  node(c){$c$};
	\draw (0,0)  node(a){$a$};
	\draw[bend left,->] (a) to node[draw, fill=white,rectangle]{$X^{-1}$} (c);
	\draw[bend left,->,dashed] (c) to (a);
	\end{tikzpicture}
	$$
	We obtain the same result, albeit with different grading shifts, for the other three components. This computation is done in Figure~\ref{fig:ConjugationBimodule:EmbeddedCurve}.
\end{proof}

\begin{figure}[t]
	\centering
	\begin{subfigure}[b]{0.32\textwidth}
		\centering
		$$
		\begin{tikzcd}[column sep=0.4cm,row sep=1cm]
		\Alex{0}{0}{1}{-1}c\otimes\field^m
		\arrow[swap,dashed]{d}{q_{14}\otimes\id}
		\arrow[out=0,in=90,pos=0.4]{ddr}{p_{3}\otimes X}
		\\
		\Alex{0}{0}{1}{-1}a\otimes\field^m
		\\
		b\otimes\field^m
		\arrow{u}{p_{2}\otimes\id}
		\arrow{r}{1\otimes\id}
		&
		b\otimes\field^m
		\\
		\Alex{0}{0}{1}{-1}c\otimes\field^m
		\arrow[bend left=10]{r}{p_{23}\otimes X}
		\arrow[bend right=10,swap,dashed]{r}{q_{14}\otimes\id}
		\arrow[phantom,bend left=60]{r}{\cong}%
		&
		\Alex{0}{0}{1}{-1}a\otimes\field^m
		\end{tikzcd}
		$$
		\caption{left}\label{fig:ConjugationBimodule:EmbeddedCurve:Left}
	\end{subfigure}
	\begin{subfigure}[b]{0.32\textwidth}
		\centering
		$$
		\begin{tikzcd}[column sep=0.4cm,row sep=1cm]
		b\otimes\field^m
		\arrow[swap]{d}{p_{2}\otimes\id}
		\arrow{r}{1\otimes\id}
		&
		b\otimes\field^m
		\\
		\Alex{0}{0}{1}{1}a\otimes\field^m
		&
		\Alex{0}{0}{1}{1}c\otimes\field^m
		\arrow[swap]{u}{p_{3}\otimes\id}
		\arrow[dashed]{d}{q_{4}\otimes\id}
		\\
		d\otimes\field^m
		\arrow[dashed]{u}{q_{1}\otimes\id}
		\arrow{r}{1\otimes X}
		&
		d\otimes\field^m
		\\
		\Alex{0}{0}{1}{1}a\otimes\field^m
		\arrow[phantom,bend left=60]{r}{\cong}%
		&
		\Alex{0}{0}{1}{1}c\otimes\field^m
		\arrow[bend left=10]{l}{p_{23}\otimes \id}
		\arrow[bend right=10,swap,dashed]{l}{q_{14}\otimes X^{-1}}
		\end{tikzcd}
		$$
		\caption{top}\label{fig:ConjugationBimodule:EmbeddedCurve:Top}
	\end{subfigure}
	\begin{subfigure}[b]{0.32\textwidth}
		\centering
		$$
		\begin{tikzcd}[column sep=0.4cm,row sep=1cm]
		&
		\Alex{0}{0}{-1}{1}a\otimes\field^m
		\\
		&
		\Alex{0}{0}{-1}{1}c\otimes\field^m
		\arrow[swap]{u}{p_{23}\otimes\id}
		\arrow[dashed]{d}{q_{4}\otimes\id}
		\\
		d\otimes\field^m
		\arrow[out=90,in=180,dashed,pos=0.6]{uur}{q_{1}\otimes\id}
		\arrow{r}{1\otimes X}
		&
		d\otimes\field^m
		\\
		\Alex{0}{0}{-1}{1}a\otimes\field^m
		\arrow[phantom,bend left=60]{r}{\cong}%
		&
		\Alex{0}{0}{-1}{1}c\otimes\field^m
		\arrow[bend left=10]{l}{p_{23}\otimes \id}
		\arrow[bend right=10,swap,dashed]{l}{q_{14}\otimes X^{-1}}
		\end{tikzcd}
		$$
		\caption{right}\label{fig:ConjugationBimodule:EmbeddedCurve:Right}
	\end{subfigure}
	\caption{The calculation of the local systems on the rational curve $\Rat$ in the proof of Lemma~\ref{lem:ConjBimod:ActionOnHorizontalComponents}. The $\delta$-grading of all generators in idempotents $a$ and $c$ is 0.}\label{fig:ConjugationBimodule:EmbeddedCurve}
\end{figure}

\begin{proof}[Proof of Theorem~\ref{thm:Conjugation:Horizontals}]
Let us fix an absolute lift of the bigrading of $\HFT(T)$. Let us also introduce the following notation: given two Laurent polynomials $P$ and $Q$ in finitely many variables, we write $P\leq Q$ if for every monomial, the coefficient of that monomial in $P$ is smaller than or equal to the corresponding coefficient in $Q$. Then
\allowdisplaybreaks
\begin{align*}
&\phantom{=}(t_1+t_1^{-1})(t_2+t_2^{-1})\cdot \mathfrak{d}_n(T)
\\
&=
\mathfrak{d}_n(V_{t_1}\otimes V_{t_2}\otimes\CFTd(T))
\\
&=
\delta^mt^A \mathfrak{d}_n\left(\rr(\CFTd(T))\boxtimes\ConjBimod\right)
&&
\text{(by Lemma~\ref{lem:ConjBimod})}
\\
&\geq 
\delta^mt^A \mathfrak{d}_n\left(\rr(\mathfrak{b}_n(T)\cdot\mathfrak{b}_n)\boxtimes\ConjBimod\right)
&&
\text{($\ConjBimod$ commutes with direct sums)}
\\
&=
\delta^mt^A\rr(\mathfrak{b}_n(T))\cdot \mathfrak{d}_n(\rr(\mathfrak{b}_n)\boxtimes\ConjBimod)
\\
&=
\delta^mt^A\rr(\mathfrak{b}_n(T))\cdot (t_b+t_b^{-1})(t_b'+t_b^{\prime-1})
&&
\text{(by Lemma~\ref{lem:ConjBimod:ActionOnHorizontalComponents}).}
\end{align*}
Similarly, we see that
$$
(t_1+t_1^{-1})(t_2+t_2^{-1})\cdot \mathfrak{b}_n(T)
\geq
\delta^mt^A\rr(\mathfrak{d}_n(T))\cdot (t_d+t_d^{-1})(t_d'+t_d^{\prime-1})
$$
for the same bigrading $\delta^mt^A$, 
or equivalently, 
$$
(t_1+t_1^{-1})(t_2+t_2^{-1})\cdot \rr(\mathfrak{b}_n(T))
\geq
\delta^mt^{-A}\mathfrak{d}_n(T)\cdot (t_d+t_d^{-1})(t_d'+t_d^{\prime-1}).
$$
If we combine these two inequalities, we obtain
\begin{align*}
&\phantom{=}(t_1+t_1^{-1})^2(t_2+t_2^{-1})^2\cdot \mathfrak{d}_n(T)
\\
&\geq
\delta^mt^A(t_1+t_1^{-1})(t_2+t_2^{-1}) (t_b+t_b^{-1})(t_b'+t_b^{\prime-1})\cdot \rr(\mathfrak{b}_n(T)),
\\
&\geq
\delta^{2m}(t_d+t_d^{-1})(t'_d+t_d^{\prime-1}) (t_b+t_b^{-1})(t_b'+t_b^{\prime-1})\cdot \mathfrak{d}_n(T),
\end{align*}
Note that
$$(t_d+t_d^{-1})(t'_d+t_d^{\prime-1}) (t_b+t_b^{-1})(t_b'+t_b^{\prime-1})=(t_1+t_1^{-1})^2(t_2+t_2^{-1})^2,$$
so $m=0$ and all inequalities above are actually equalities. In particular, 
$$
(t_1+t_1^{-1})(t_2+t_2^{-1})\cdot t^{-\frac{1}{2}A}\mathfrak{d}_n(T)
=
\rr(t^{-\frac{1}{2}A}\mathfrak{b}_n(T))\cdot (t_b+t_b^{-1})(t_b'+t_b^{\prime-1}). 
$$
Similarly, we can show that
$$
(t_1+t_1^{-1})(t_2+t_2^{-1})\cdot \Rat_{X^{-1}}(T)
\geq
\delta^mt^A\rr(\Rat_{X}(T))\cdot (t_1+t_1^{-1})(t_2+t_2^{-1}),
$$
ie 
$$
\Rat_{X^{-1}}(T)
\geq
\delta^mt^A\rr(\Rat_{X}(T))
$$
for any $X\in\GL(n,\field)$ and the same bigrading $\delta^mt^A$ as above. So we also have
$$
\Rat_{X}(T)
\geq
\delta^mt^A\rr(\Rat_{X^{-1}}(T)),
$$
which, combined with the first inequality, shows that the inequalities are actually equalities and $m=0$. In particular, 
$$
t^{-\frac{1}{2}A}\Rat_{X^{-1}}(T)
=
\rr(t^{-\frac{1}{2}A}\Rat_{X}(T)).
$$
So if we shift the Alexander grading of $\HFT(T)$ by $t^{\frac{1}{2}A}$, we obtain the absolute lift of the Alexander grading with the desired properties. 
\end{proof}

\begin{definition}
	Let $\gamma$ be an absolutely $\delta$-graded linear curve of slope $\slope\in\QPI$, possibly with indecomposable local system. 
	It is easy to see that unless $\slope=\frac{1}{0}$, all intersection points of $\gamma$ with the vertical sites $a$ and $c$ have the same $\delta$-grading. We denote this $\delta$-grading by $\delV(\gamma)$. Similarly, if $\slope\neq\frac{0}{1}$, all intersection points of $\gamma$ with the horizontal sites $b$ and $d$ have the same $\delta$-grading, which we denote by $\delH(\gamma)$. Given two absolutely $\delta$-graded linear curves $\gamma$ and $\gamma'$ of the same slope $\slope\in\QPI$, we define
	$$
	\delta(\gamma,\gamma')
	\coloneqq
	\begin{cases*}
	\delV(\gamma')-\delV(\gamma) & if $\slope\neq\frac{1}{0}$
	\\
	\delH(\gamma')-\delH(\gamma) & if $\slope\neq\frac{0}{1}$.
	\end{cases*}
	$$
	This is well-defined, since 
	$$
	\delV(\gamma)=
	\begin{cases*}
	\delH(\gamma) -\tfrac{1}{2} & if $\frac{0}{1}<\slope<\frac{1}{0}$
	\\
	\delH(\gamma) +\tfrac{1}{2} & if $\frac{1}{0}<\slope<\frac{0}{1}$.
	\end{cases*}
	$$
	We say two absolutely $\delta$-graded curves $\gamma$ and $\gamma'$ of the same slope are in the same $\delta$-grading if $\delta(\gamma,\gamma')=0$.
\end{definition}

\begin{lemma}\label{lem:MCG_and_delta}
	For any two curves $\gamma$ and $\gamma'$ of the same slope $\slope\in\QPI$ and a single half-twist $\tau$ as in Section~\ref{sec:ActionMCG}, $\delta(\tau(\gamma),\tau(\gamma'))=\delta(\gamma,\gamma')$.
\end{lemma}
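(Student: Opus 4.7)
The plan is to deduce the identity from Lemma~\ref{lem:half DehnTwistBimodule}, which realises the $\tau$-action on peculiar modules as a $\boxtimes$-product with the fixed bigraded bimodule $\mathcal{D}(\tau)$ of Figure~\ref{fig:DehnTwistBimodule}. By the cyclic symmetry of the parametrization of $\FourPuncturedSphere$, it is enough to treat the specific half-twist from Section~\ref{sec:ActionMCG}; the three other generators of $\Mod(\FourPuncturedSphere)$ follow from cyclic relabelling of the sites.

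The definition of $\delta(\cdot,\cdot)$ permits using either the vertical sites or the horizontal sites, and for every possible pair $(\slope,\tau_*\slope)$ at least one choice is available on both sides of the claimed identity. I would assume without loss of generality that $\delV$ is defined for all of $\gamma,\gamma',\tau\gamma,\tau\gamma'$; the remaining cases are identical. The statement then reduces to
\[
\delV(\tau\gamma)-\delV(\gamma)\;=\;\delV(\tau\gamma')-\delV(\gamma'),
\]
so it suffices to show that the common quantity $\delV(\tau\gamma)-\delV(\gamma)$ depends only on the slope $\slope$, not on the specific curve~$\gamma$.

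To verify this, I would trace the $\delta$-grading through the isomorphism in Lemma~\ref{lem:half DehnTwistBimodule} and its translation to $\Pi(\tau\gamma)$. The bigrading of $\mathcal{D}(\tau)$ recorded in Figure~\ref{fig:DehnTwistBimodule} is fixed and depends only on the idempotents of its generators, so the $\delta$-shift induced on any generator of $\Pi(\gamma)$ by pairing with $\mathcal{D}(\tau)$ is a function purely of the site at which that generator lives. Two linear curves of the same slope share the same combinatorial pattern of generators across the four sites, so the net $\delta$-shift at a vertical site of $\tau\gamma$ relative to one of $\gamma$ is a function only of~$\slope$. Applying this observation to both $\gamma$ and $\gamma'$ then yields the required equality of shifts.

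The main subtlety I expect to work out is the behaviour at site~$c$: a single generator of $\Pi(\gamma)$ there produces three generators of the twisted module at idempotents $C,B,D$ in two different $\delta$-gradings (visible in Figure~\ref{fig:DehnTwistBimodule}). This is handled by the cancellation of identity components performed in the proof of Lemma~\ref{lem:half DehnTwistBimodule}: after the reassembly described there, the generators of $\Pi(\tau\gamma)$ surviving at any vertical site admit a uniform description in terms of those of $\Pi(\gamma)$ with a grading shift depending only on~$\slope$, and the same prescription applies verbatim to~$\gamma'$. Once this uniformity is confirmed, the lemma follows.
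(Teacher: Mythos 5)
Your proposal is correct and follows essentially the same route as the paper: the paper's proof is just the geometric statement of the fact you extract from the bimodule \(\mathcal{D}(\tau)\) of Lemma~\ref{lem:half DehnTwistBimodule}, namely that a half-twist sends generators at the sites it does not touch to generators with a \(\delta\)-shift depending only on the site (hence the same for all curves), so that \(\delV\) (resp.\ \(\delH\)) differences are preserved. The only place to be slightly careful is your ``the remaining cases are identical'': for the twist along the arc \(c\) the excluded slope is \(\tfrac{1}{0}\), where one must use \(\delH\) and where site-\(c\) generators feed into the sites \(B\) and \(D\) with a different shift, so this case is not verbatim the same; it is handled, as in the paper, by noting that such a twist does not affect curves of slope \(\tfrac{1}{0}\) beyond an overall grading shift.
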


\begin{proof}
	A single twist at the right or left preserves the generators in the vertical sites $a$ and $c$ and also their $\delta$-gradings up to an overall shift which is the same for all curves. So, for curves $\gamma$ and $\gamma'$ of slope $\slope\neq\frac{1}{0}$,
	$$
	\delta(\tau(\gamma),\tau(\gamma'))
	=
	\delV(\tau(\gamma'))-\delV(\tau(\gamma))
	=
	\delV(\gamma')-\delV(\gamma)
	=
	\delta(\gamma,\gamma').
	$$
	Moreover, such a twist does not affect curves of slope $\frac{1}{0}$ up to an overall grading shift. We can argue in the same way for twists in the other direction.	
\end{proof}

\begin{definition}\label{def:choice_of_delta_grading}
	For \(n>0\) and \(\slope\in\QPI\) and any local system \(X\), fix a \(\delta\)-grading on the curves \(\Irr^\delta_n(\slope;i_1,i_2)\), \(\Irr^\delta_n(\slope;i_3,i_4)\) and \(\Rat^\delta_X(\slope)\) such that any two of them of the same slope are in the same \(\delta\)-grading.
\end{definition}

\begin{proof}[Proof of Theorem~\ref{thm:Conjugation:delta} (\(\delta\)-graded conjugation symmetry)]
	For $\slope=\frac{0}{1}$, this follows immediately from Theorem~\ref{thm:Conjugation:Horizontals}, noting that the curves $\mathfrak{b}_n$, $\mathfrak{d}_n$ and $\Rat_X$ have the same $\delta$-grading. For general $\slope\in\QPI$, let $T'$ be a tangle obtained from $T$ by applying an element $\rho$ of the mapping class group which sends linear curves of slope $\slope$ to curves of slope $\tfrac{0}{1}$. By Theorem~\ref{thm:MCGaction}, $\rho(\HFTdelta(T))=\delta^m\HFTdelta(T')$ for some $m\in\tfrac{1}{2}\mathbb{Z}$. By Lemma~\ref{lem:MCG_and_delta}, the difference between the $\delta$-gradings of two curves of the same slope is preserved under twisting. So in particular, the image of the reference curves \(\Irr_n(\slope;i_1,i_2)\), \(\Irr_n(\slope;i_3,i_4)\) and \(\Rat_X(\slope)\) have the same $\delta$-grading. Since we already know the result for horizontal curves, we can apply it to $\HFT(T')$ and see that
	$$
	\Rat^\delta_X(\slope)(T)
	=
	\delta^m\Rat^\delta_X(\tfrac{0}{1})(T')
	=
	\delta^m\Rat^\delta_{X^{-1}}(\tfrac{0}{1})(T')
	=
	\Rat^\delta_{X^{-1}}(\slope)(T)
	$$
	and also 
	\begin{align*}
	&\Irr^\delta_n(\slope;i_1,i_2)(T)
	=
	\delta^m \Irr^\delta_n(\tfrac{0}{1};2,3)(T')
	=
	\delta^m \Irr^\delta_n(\tfrac{0}{1};1,4)(T')
	=
	\Irr^\delta_n(\slope;i_3,i_4)(T).
	\qedhere
	\end{align*}
\end{proof}

\input{sections/ConjBimod_bimod_part.tex}
\begin{sidewaysfigure}[p]
	\vspace*{435pt}
	\centering
	\newlength{\RadAD}
	\setlength{\RadAD}{4.5cm}
	\newlength{\RadBC}
	\setlength{\RadBC}{7cm}
	
	\tikzstyle{tight}=[fill=white,inner sep=1pt,rounded corners]
	\begin{tikzpicture}
	
	%Nodes outside
	\draw (20:1.5\RadBC)   node(Bar){$\delta^{-\frac{1}{2}}\Alex{0}{0}{0}{1}\tfrac{b}{c}a$};
	\draw (60:\RadBC)      node(Bbt){$\delta^{0}\Alex{0}{-1}{0}{1}\tfrac{b}{c}b$};
	\draw (120:\RadBC)     node(Bct){$\delta^{-\frac{1}{2}}\Alex{0}{-1}{1}{1}\tfrac{b}{c}c$};
	\draw (160:1.5\RadBC)  node(Bdl){$\delta^{0}\Alex{0}{-1}{1}{0}\tfrac{b}{c}d$};
	
	\draw (-160:1.5\RadBC) node(Bbl){$\delta^{0}\Alex{0}{-1}{0}{-1}\tfrac{b}{c}b$};
	\draw (-120:\RadBC)    node(Bab){$\delta^{-\frac{1}{2}}\Alex{0}{0}{0}{-1}\tfrac{b}{c}a$};
	\draw (-60:\RadBC)     node(Bdb){$\delta^{0}\Alex{0}{1}{1}{0}\tfrac{b}{c}d$};
	\draw (-20:1.5\RadBC)  node(Bcr){$\delta^{-\frac{1}{2}}\Alex{0}{1}{1}{1}\tfrac{b}{c}c$};
	
	%Nodes inside
	\draw (20:1.5\RadAD)   node(Abr){$\delta^{-\frac{1}{2}}\Alex{0}{0}{0}{1}\tfrac{a}{d}b$};
	\draw (60:\RadAD)      node(Aat){$\delta^{0}\Alex{0}{-1}{0}{1}\tfrac{a}{d}a$};
	\draw (120:\RadAD)     node(Adt){$\delta^{-\frac{1}{2}}\Alex{1}{-1}{0}{1}\tfrac{a}{d}d$};
	\draw (160:1.5\RadAD)  node(Acl){$\delta^{0}\Alex{1}{-1}{0}{0}\tfrac{a}{d}c$};
	
	\draw (-160:1.5\RadAD)+(0,0.5) node(Aal){$\delta^{0}\Alex{0}{-1}{0}{-1}\tfrac{a}{d}a$};
	\draw (-120:\RadAD)+(0,0.5)    node(Abb){$\delta^{-\frac{1}{2}}\Alex{0}{0}{0}{-1}\tfrac{a}{d}b$};
	\draw (-60:\RadAD)+(0,0.5)     node(Acb){$\delta^{0}\Alex{1}{1}{0}{0}\tfrac{a}{d}c$};
	\draw (-20:1.5\RadAD)+(0,0.5)  node(Adr){$\delta^{-\frac{1}{2}}\Alex{1}{1}{0}{1}\tfrac{a}{d}d$};
	
	% X top right
	\draw[->] (Bar) to [bend right=10,pos=0.3] node[tight,align=center]{$(p_2|1)^b_a$\\$+(p_{12}|1)^b_d$}   (Aat);
	\draw[->] (Abr) to [bend right=10,pos=0.7] node[tight]             {$(q_2|1)^a_b+(q_{32}|1)^a_c$}       (Bbt);
	% X top left
	\draw[->] (Bct) to [bend right=10,pos=0.3] node[tight]             {$(q_{4}|1)^c_d+(q_{43}|1)^b_d$}     (Acl);
	\draw[->] (Adt) to [bend right=10,pos=0.7] node[tight,align=center]{$(p_{4}|1)^d_c$\\$+(p_{41}|1)^a_c$} (Bdl);
	% X right
	\draw[->] (Adr) to [bend right=10,pos=0.7] node[tight,align=center]{$(q_{2}|q_1)^a_b$\\$+(q_{32}|q_1)^a_c$} (Bar);
	\draw[->] (Bcr) to [bend right=10,pos=0.3] node[tight,align=center]{$(p_{2}|p_3)^b_a$\\$+(p_{12}|p_3)^b_d$} (Abr);
	% X left
	\draw[->] (Bdl) to [bend right=10,pos=0.3] node[tight,align=center]{$(q_{4}|q_1)^c_d$\\$+(q_{43}|q_1)^b_d$} (Aal);
	\draw[->] (Acl) to [bend right=10,pos=0.7] node[tight,align=center]{$(p_{4}|p_3)^d_c$\\$+(p_{41}|p_3)^a_c$} (Bbl);
	% X bottom right
	\draw[->] (Bcr) to [bend left=10,pos=0.3]  node[tight,align=center]{$(q_{4}|1)^c_d$\\$+(q_{43}|1)^b_d$} (Acb);
	\draw[->] (Adr) to [pos=0.75]              node[tight,align=center]{$(p_{4}|1)^d_c$\\$+(p_{41}|1)^a_c$} (Bdb);
	% X bottom left
	\draw[->] (Bab) to [pos=0.25]              node[tight,align=center]{$(p_{2}|1)^b_a$\\$+(p_{12}|1)^b_d$} (Aal);
	\draw[->] (Abb) to [bend left=10,pos=0.7]  node[tight,align=center]{$(q_{2}|1)^a_b$\\$+(q_{32}|1)^a_c$} (Bbl);
	% X bottom
	\draw[->] (Bdb) .. controls  +(-0.5,1) and +(3,0) ..  (-90:5.55cm) node[tight,align=center]%
	{$(p_{2}|p_{34})^b_a+(p_{12}|p_{34})^b_d$\\$+(q_{4}|q_{21})^c_d+(q_{43}|q_{21})^b_d$}%
	.. controls  +(-3,0) and +(-0.5,-1) .. (Abb);
	\draw[->] (Acb) .. controls  +(0.5,-1) and +(3,0) ..  (-90:4.45cm) node[tight,align=center]%
	{$(q_{2}|q_{14})^a_b+(q_{32}|q_{14})^a_c$\\$+(p_{4}|p_{23})^d_c+(p_{41}|p_{23})^a_c$}%
	.. controls  +(-3,0) and +(0.5,1) .. (Bab);
	
	% Diagonals Inner
	\draw[->] (Aat) to [bend right=20,pos=0.3] node[tight,align=center]{$(q_4,p_4|1)^d_d$\\$+(q_4,p_{41}|1)^a_d$}   (Aal);
	\draw[->] (Abr) to [bend left=10,pos=0.48]  node[tight,align=center]{$(q_4,p_4|1)^d_d$\\$+(q_4,p_{41}|1)^a_d$}    (Abb);
	\draw[->] (Acb) to [bend left=10,pos=0.52]  node[tight,align=center]{$(p_2,q_2|1)^a_a$\\$+(p_{12},q_2|1)^a_d$}    (Acl);
	\draw[->] (Adr) to [bend right=20,pos=0.7] node[tight,align=center]{$(p_2,q_2|1)^a_a$\\$+(p_{12},q_2|1)^a_d$}   (Adt);
	% Diagonals Outer
	\draw[<-] (Bab) .. controls  +(2,-2) and +(-1.5,-1) .. (-35:10.9cm) node[tight]%
	{$(p_4,q_4|1)^c_c+(p_{4},q_{43}|1)^b_c$}  .. controls  +(3,2) and +(2,-2) .. (Bar);
	\draw[<-] (Bbl) .. controls  +(-2,2) and +(-3,-2) .. (145:10.9cm) node[tight]%
	{$(p_4,q_4|1)^c_c+(p_{4},q_{43}|1)^b_c$}  .. controls  +(1.5,1) and +(-2,2) .. (Bbt);
	\draw[->] (Bcr) .. controls  +(2,2) and +(3,-2) .. (35:10.9cm) node[tight]%
	{$(q_{2},p_2|1)^b_b+(q_{32},p_2|1)^b_c$}  .. controls  +(-1.5,1) and +(2,2) .. (Bct);
	\draw[->] (Bdb) .. controls  +(-2,-2) and +(1.5,-1) .. (-145:10.9cm) node[tight]%
	{$(q_{2},p_2|1)^b_b+(q_{32},p_2|1)^b_c$}  .. controls  +(-3,2) and +(-2,-2) .. (Bdl);
	
	% extra diagonals
	\draw[->] (Adr) to [bend right=10,pos=0.92] node[tight]{$(p_{412},q_2|1)^a_c$}   (Bdl);
	\draw[<-] (Acl) to [bend right=7,pos=0.92] node[tight]{$(q_{432},p_2|1)^b_d$}   (Bcr);
	\draw[->] (Abr) to [bend left=7,pos=0.92]  node[tight]{$(p_4,q_{432}|1)^a_c$}   (Bbl);
	\draw[<-] (Aal) to [bend left=10,pos=0.92]  node[tight]{$(q_4,p_{412}|1)^b_d$}   (Bar);
	
	\draw[->] (Adr) .. controls  +(-0.5,-2) and +(0.2,1) .. (-49:8.4cm) node[tight]%
	{$(p_4,q_{432}|q_1)^a_c$} .. controls  +(-0.2,-1) and +(3,-1.5) ..   (Bab);
	\draw[->] (Bdb) .. controls  +(-3,-1.5) and +(0.2,-1) .. (-131:8.4cm) node[tight]%
	{$(q_{432},p_2|q_1)^b_d$} .. controls  +(-0.2,1) and +(0.5,-2) .. (Aal);
	
	\draw[->] (Acb) .. controls  +(-1.5,0.1) and +(2,-1) .. (-135:3.3cm) node[tight]%
	{$(p_{412},q_2|p_3)^a_c$} .. controls  +(-2,1) and +(4,4) .. (Bbl);
	\draw[->] (Bcr) .. controls  +(-4,4) and +(2,1) .. (-45:3.3cm) node[tight]%
	{$(q_{4},p_{412}|p_3)^b_d$} .. controls  +(-2,-1) and +(1.5,0.1) .. (Abb);
	
	%Lines outside
	\draw[->,very thick] (Bar) to [bend right=10] node[tight]{$(-|q_{2})$}   (Bbt);
	\draw[->,very thick] (Bct) to [bend left=10] node[tight]{$(-|p_{3})$}   (Bbt);
	\draw[->,very thick] (Bct) to [bend right=10] node[tight]{$(-|q_{4})$}   (Bdl);
	\draw[->,very thick] (Bdl) to [bend right=10] node[tight]{$(-|p_{34})$}  (Bbl);
	\draw[->,very thick] (Bab) to [bend left=10] node[tight]{$(-|q_{2})$}   (Bbl);
	\draw[->,very thick] (Bdb) to [bend left=10] node[tight]{$(-|p_{234})$} (Bab);
	\draw[->,very thick] (Bcr) to [bend left=10] node[tight]{$(-|q_{4})$}   (Bdb);
	\draw[->,very thick] (Bcr) to [bend right=10] node[tight]{$(-|p_{23})$}  (Bar);
	
	%Lines inside
	\draw[->,very thick] (Abr) to [bend right=10] node[tight]{$(-|p_{2})$}   (Aat);
	\draw[->,very thick] (Adt) to [bend left=10] node[tight]{$(-|q_{1})$}   (Aat);
	\draw[->,very thick] (Adt) to [bend right=10] node[tight]{$(-|p_{4})$}   (Acl);
	\draw[->,very thick] (Acl) to [bend right=10] node[tight]{$(-|q_{14})$}  (Aal);
	\draw[->,very thick] (Abb) to [bend left=10] node[tight]{$(-|p_{2})$}   (Aal);
	\draw[->,very thick] (Acb) to [bend left=10] node[tight]{$(-|q_{214})$} (Abb);
	\draw[->,very thick] (Adr) to [bend left=10] node[tight]{$(-|p_{4})$}   (Acb);
	\draw[->,very thick] (Adr) to [bend right=10] node[tight]{$(-|q_{21})$}  (Abr);
	
	\end{tikzpicture}
	\vspace*{-10pt}
	\caption{The bimodule~$\ConjBimod$ from Definition~\ref{def:ConjBimod}. There are also self-loops 
		$
		\delta^{0}\Alex{0}{0}{0}{0}bx
		\xrightarrow{(q_3|1)}
		\delta^{\frac{1}{2}}\Alex{0}{0}{-1}{0}cx
		$ 
		and 
		$
		\delta^{0}\Alex{0}{0}{0}{0}ax
		\xrightarrow{(p_1|1)}
		\delta^{\frac{1}{2}}\Alex{-1}{0}{0}{0}dx
		$
		for any $x\in\{a,b,c,d\}$.
	}\label{fig:ConjugationBimodule}
\end{sidewaysfigure}

\input{sections/ConjBimod_down_raw.tex}
\begin{sidewaysfigure}[p]
	\vspace*{435pt}
	\centering
	%	\newlength{\RadCl}
	\setlength{\RadCl}{3cm}
	%	\newlength{\RadAl}
	\setlength{\RadAl}{4.3cm}
	%	\newlength{\RadB}
	\setlength{\RadB}{5cm}
	%	\newlength{\RadCr}
	\setlength{\RadCr}{6cm}
	%	\newlength{\RadAr}
	\setlength{\RadAr}{7cm}
	
	\tikzstyle{nodeTLO}=[draw,ellipse,inner sep=1pt]
	\tikzstyle{nodeTLI}=[draw,ellipse,inner sep=1pt,dotted]
	\tikzstyle{nodeBRI}=[draw,rectangle,rounded corners]
	\tikzstyle{nodeBRO}=[draw,rectangle,rounded corners,dotted]
	
	\tikzstyle{tight}=[fill=white,inner sep=3pt,rounded corners]
	
	\begin{tikzpicture}
	
	%Nodes C left
	\draw (20:1.5\RadCl)   node[nodeBRI](Clar){$\delta^{0}\Alex{0}{2}{0}{1}a$};
%	\draw (60:\RadCl)      node(Clbt){$\delta^{\frac{1}{2}}\Alex{0}{1}{0}{1}b$};
	\draw (120:\RadCl)     node[nodeTLI](Clct){$\delta^{0}\Alex{0}{1}{1}{1}c$};
	\draw (160:1.5\RadCl)  node(Cldl){$\delta^{\frac{1}{2}}\Alex{0}{1}{1}{0}d$};
	
%	\draw (-160:1.5\RadCl) node(Clbl){$\delta^{\frac{1}{2}}\Alex{0}{1}{0}{-1}b$};
	\draw (-120:\RadCl)    node[nodeBRI](Clab){$\delta^{0}\Alex{0}{2}{0}{-1}a$};
	\draw (-60:\RadCl)     node(Cldb){$\delta^{\frac{1}{2}}\Alex{0}{3}{1}{0}d$};
	\draw (-20:1.5\RadCl)  node[nodeBRI](Clcr){$\delta^{0}\Alex{0}{3}{1}{1}c$};
	
	%Nodes A right
	\draw (20:1.5\RadAr)   node(Arbr){$\delta^{-\frac{1}{2}}\Alex{0}{1}{2}{1}b$};
	\draw (60:\RadAr)      node[nodeTLO](Arat){$\delta^{0}\Alex{0}{0}{2}{1}a$};
	\draw (120:\RadAr)     node[nodeTLO](Ardt){$\delta^{-\frac{1}{2}}\Alex{1}{0}{2}{1}d$};
	\draw (160:1.5\RadAr)  node[nodeTLO](Arcl){$\delta^{0}\Alex{1}{0}{2}{0}c$};
	
	\draw (-160:1.5\RadAr) node[nodeTLO](Aral){$\delta^{0}\Alex{0}{0}{2}{-1}a$};
	\draw (-120:\RadAr)    node(Arbb){$\delta^{-\frac{1}{2}}\Alex{0}{1}{2}{-1}b$};
	\draw (-60:\RadAr)     node[nodeBRO](Arcb){$\delta^{0}\Alex{1}{2}{2}{0}c$};
	\draw (-20:1.5\RadAr)  node[nodeBRO](Ardr){$\delta^{-\frac{1}{2}}\Alex{1}{2}{2}{1}d$};
	
	%Nodes A left
%	\draw (20:1.5\RadAl)   node(Albr){$\delta^{-\frac{1}{2}}\Alex{0}{1}{0}{1}b$};
	\draw (60:\RadAl)      node[nodeTLI](Alat){$\delta^{0}\Alex{0}{0}{0}{1}a$};
	\draw (120:\RadAl)     node[nodeTLI](Aldt){$\delta^{-\frac{1}{2}}\Alex{1}{0}{0}{1}d$};
	\draw (160:1.5\RadAl)  node[nodeTLI](Alcl){$\delta^{0}\Alex{1}{0}{0}{0}c$};
	
	\draw (-160:1.5\RadAl) node[nodeTLI](Alal){$\delta^{0}\Alex{0}{0}{0}{-1}a$};
%	\draw (-120:\RadAl)    node(Albb){$\delta^{-\frac{1}{2}}\Alex{0}{1}{0}{-1}b$};
	\draw (-60:\RadAl)     node[nodeBRI](Alcb){$\delta^{0}\Alex{1}{2}{0}{0}c$};
	\draw (-20:1.5\RadAl)  node[nodeBRI](Aldr){$\delta^{-\frac{1}{2}}\Alex{1}{2}{0}{1}d$};

	\draw[->] (Aldr) to [bend right=10] node[tight]{$q_1$}    (Clar);
	\draw[->] (Alcb) to [bend left=10] node[tight]{$q_{14}$} (Clab);
	
	% continuing arrows outside
	\draw[->,dashed] (Ardr) to [bend right=10] node[tight]{$q_1$} +(1,3);
	\draw[->,dashed] (Arbr) to [bend right=10] node[tight]{$1$} +(-3,2.5);
	\draw[->,dashed] (Arcb) to [bend left=10] node[tight]{$q_{14}$} +(-3,-1);
	\draw[->,dashed] (Arbb) to [bend left=10] node[tight]{$1$} +(-3,0);
	
	% continuing arrows inside
	\draw[<-,dashed] (Cldl) to [bend left=10] node[tight]{$1$} +(3,0);
	\draw[<-,dashed] (Clab) to [bend right=10] node[tight]{$p_{23}$} +(2,1);
	\draw[<-,dashed] (Cldb) to [bend right=10] node[tight]{$1$} +(1.5,1.5);

	%%%%%%%%%%%%
	%%% cancellation contributions
	%%%%%%%%%%%%
	%	\draw[->] (Ardr) to [bend left=10] node[tight]{$1$}      (Crdb);
	%	\draw[->] (Bdb) to [bend left=10] node[tight]{$p_{34}$} (Albb);
	%	\draw[->] (Bdb) to [in=-30,out=45] node[tight]{$1$} (Cldl);
	%%%%\draw[->] (Ardr) to [bend left=20] node[tight]{$p_{34}$} (Albb); % again cancelled
	\draw[->] (Ardr) to [bend right=10] node[tight]{$1$}      (Cldl);
	
	%	\draw[->] (Arcb) to [bend left=10] node[tight]{$p_{23}$} (Crab);
	%	\draw[->] (Bab) to [bend left=10] node[tight]{$1$}      (Alal);
	\draw[->] (Arcb) to [bend left=20] node[tight]{$p_{23}$} (Alal);
	
	%	\draw[->] (Albr) to [bend right=10] node[tight]{$1$}      (Clbt);
	%	\draw[->] (Clar) to [bend right=10] node[tight]{$q_{2}$}   (Clbt);
	%	\draw[->] (Clct) to [bend left=10] node[tight]{$p_{3}$}    (Clbt);
	%	\draw[->] (Albr) to [bend right=10] node[tight]{$p_{2}$}   (Alat);
	%	\draw[->] (Aldr) to [bend right=10] node[tight]{$q_{21}$}  (Albr);
	\draw[->] (Clct) to [bend right=10] node[tight]{$p_{23}$}    (Alat);

	%	\draw[->] (Albb) to [bend left=10] node[tight]{$1$}      (Clbl);
	%	\draw[<-,dashed] (Clbl) to [bend left=10] node[tight]{$p_3$} +(1,2);
	%	\draw[->] (Cldl) to [bend right=10] node[tight]{$p_{34}$}  (Clbl);
	%	\draw[->] (Clab) to [bend left=10] node[tight]{$q_{2}$}    (Clbl);
	%	\draw[->] (Ardr) to [bend left=20] node[tight]{$p_{34}$} (Albb); %
	%	\draw[->] (Albb) to [bend left=10] node[tight]{$p_{2}$}    (Alal);
	%	\draw[->] (Alcb) to [bend left=10] node[tight]{$q_{214}$}  (Albb);
	\draw[<-,dashed] (Alal) to [bend right=10] node[tight]{$p_{23}$} +(3,2);
	\draw[->] (Cldl) to [bend left=10] node[tight]{$p_{234}$}  (Alal);
	
	%spiderweb 
	%	\draw[->] (Bar) to [bend right=10] node[tight]{$q_{2}$}   (Bbt);
	%	\draw[->] (Bct) to [bend left=10] node[tight]{$p_{3}$}    (Bbt);
	%	\draw[->] (Bct) to [bend right=10] node[tight]{$q_{4}$}   (Bdl);
	%	\draw[->] (Bdl) to [bend right=10] node[tight]{$p_{34}$}  (Bbl);
	%	\draw[->] (Bab) to [bend left=10] node[tight]{$q_{2}$}    (Bbl);
	%	\draw[->] (Bdb) to [bend left=10] node[tight]{$p_{234}$}  (Bab);
	%	\draw[->] (Bcr) to [bend left=10] node[tight]{$q_{4}$}    (Bdb);
	%	\draw[->] (Bcr) to [bend right=10] node[tight]{$p_{23}$}  (Bar);
	%	
	%	\draw[->] (Crar) to [bend right=10] node[tight]{$q_{2}$}   (Crbt);
	%	\draw[->] (Crct) to [bend left=10] node[tight]{$p_{3}$}    (Crbt);
	%	\draw[->] (Crct) to [bend right=10] node[tight]{$q_{4}$}   (Crdl);
	%	\draw[->] (Crdl) to [bend right=10] node[tight]{$p_{34}$}  (Crbl);
	%	\draw[->] (Crab) to [bend left=10] node[tight]{$q_{2}$}    (Crbl);
	%	\draw[->] (Crdb) to [bend left=10] node[tight]{$p_{234}$}  (Crab);
	%	\draw[->] (Crcr) to [bend left=10] node[tight]{$q_{4}$}    (Crdb);
	%	\draw[->] (Crcr) to [bend right=10] node[tight]{$p_{23}$}  (Crar);

	\draw[->] (Clct) to [bend right=10] node[tight]{$q_{4}$}   (Cldl);
	\draw[->] (Cldb) to [bend left=10] node[tight]{$p_{234}$}  (Clab);
	\draw[->] (Clcr) to [bend left=10] node[tight]{$q_{4}$}    (Cldb);
	\draw[->] (Clcr) to [bend right=10] node[tight]{$p_{23}$}  (Clar);	
	\draw[->] (Arbr) to [bend right=10] node[tight]{$p_{2}$}   (Arat);
	\draw[->] (Ardt) to [bend left=10] node[tight]{$q_{1}$}    (Arat);
	\draw[->] (Ardt) to [bend right=10] node[tight]{$p_{4}$}   (Arcl);
	\draw[->] (Arcl) to [bend right=10] node[tight]{$q_{14}$}  (Aral);
	\draw[->] (Arbb) to [bend left=10] node[tight]{$p_{2}$}    (Aral);
	\draw[->] (Arcb) to [bend left=10] node[tight]{$q_{214}$}  (Arbb);
	\draw[->] (Ardr) to [bend left=10] node[tight]{$p_{4}$}    (Arcb);
	\draw[->] (Ardr) to [bend right=10] node[tight]{$q_{21}$}  (Arbr);

	\draw[->] (Aldt) to [bend left=10] node[tight]{$q_{1}$}    (Alat);
	\draw[->] (Aldt) to [bend right=10] node[tight]{$p_{4}$}   (Alcl);
	\draw[->] (Alcl) to [bend right=10] node[tight]{$q_{14}$}  (Alal);
	\draw[->] (Aldr) to [bend left=10] node[tight]{$p_{4}$}    (Alcb);

	\end{tikzpicture}
	\\
	\hrulefill
	$$
	\text{(inner)}\qquad
	\rr\Big(
	\begin{tikzcd}[row sep=0.5cm, column sep=0.6cm]
	\cdots
	\arrow[dashed]{r}{p_{41}}
	&
	\delta^{0}\Alex{0}{-2}{-1}{0}c
	&
	\delta^{0}\Alex{0}{-1}{0}{0}a
	\arrow[swap]{l}{q_{32}}
	&
	\delta^{-\frac{1}{2}}\Alex{0}{0}{0}{0}b
	\arrow{r}{q_{3}}
	\arrow[swap]{l}{p_{2}}
	&
	\delta^{0}\Alex{0}{0}{-1}{0}c
	&
	\delta^{0}\Alex{0}{-1}{-2}{0}a
	\arrow[swap]{l}{p_{41}}
	\arrow[dashed]{r}{q_{32}}
	&
	\cdots
	\end{tikzcd}
	\Big)\boxtimes\ConjBimod
	\qquad\text{(outer)}
	$$
	\caption{The result of cancelling all shaded identity arrows in Figure~\ref{fig:ConjugationBimodule:down:raw}}\label{fig:ConjugationBimodule:down:cancelled}
\end{sidewaysfigure}
\input{sections/ConjBimod_up_raw.tex}
\begin{sidewaysfigure}[p]
	\vspace*{435pt}
	\centering
	%	\newlength{\RadCl}
	\setlength{\RadCl}{7cm}
	%	\newlength{\RadAl}
	\setlength{\RadAl}{6cm}
	%	\newlength{\RadB}
	\setlength{\RadB}{4.75cm}
	%	\newlength{\RadCr}
	\setlength{\RadCr}{3.5cm}
	%	\newlength{\RadAr}
	\setlength{\RadAr}{2.5cm}
	
	\tikzstyle{nodeTLO}=[draw,ellipse,inner sep=1pt]
	\tikzstyle{nodeTLI}=[draw,ellipse,inner sep=1pt,dotted]
	\tikzstyle{nodeBRI}=[draw,rectangle,rounded corners]
	\tikzstyle{nodeBRO}=[draw,rectangle,rounded corners,dotted]
	
	\tikzstyle{tight}=[fill=white,inner sep=3pt,rounded corners]
	
	\begin{tikzpicture}
	
	%Nodes B
	\draw (20:1.5\RadB)   node[nodeBRO](Bar){$\delta^{0}\Alex{0}{0}{0}{1}a$};
%	\draw (60:\RadB)      node(Bbt){$\delta^{\frac{1}{2}}\Alex{0}{-1}{0}{1}b$};
	\draw (120:\RadB)     node[nodeTLO](Bct){$\delta^{0}\Alex{0}{-1}{1}{1}c$};
	\draw (160:1.5\RadB)  node[nodeTLO](Bdl){$\delta^{\frac{1}{2}}\Alex{0}{-1}{1}{0}d$};
	
%	\draw (-160:1.5\RadB) node(Bbl){$\delta^{\frac{1}{2}}\Alex{0}{-1}{0}{-1}b$};
	\draw (-120:\RadB)    node[nodeBRO](Bab){$\delta^{0}\Alex{0}{0}{0}{-1}a$};
	\draw (-60:\RadB)     node[nodeBRO](Bdb){$\delta^{\frac{1}{2}}\Alex{0}{1}{1}{0}d$};
	\draw (-20:1.5\RadB)  node[nodeBRO](Bcr){$\delta^{0}\Alex{0}{1}{1}{1}c$};
	
	%Nodes C right
	\draw (20:1.5\RadCr)   node[nodeBRI](Crar){$\delta^{0}\Alex{0}{0}{-2}{1}a$};
%	\draw (60:\RadCr)      node(Crbt){$\delta^{\frac{1}{2}}\Alex{0}{-1}{-2}{1}b$};
	\draw (120:\RadCr)     node[nodeTLI](Crct){$\delta^{0}\Alex{0}{-1}{-1}{1}c$};
	\draw (160:1.5\RadCr)  node[nodeTLI](Crdl){$\delta^{\frac{1}{2}}\Alex{0}{-1}{-1}{0}d$};
	
%	\draw (-160:1.5\RadCr) node(Crbl){$\delta^{\frac{1}{2}}\Alex{0}{-1}{-2}{-1}b$};
	\draw (-120:\RadCr)    node[nodeBRI](Crab){$\delta^{0}\Alex{0}{0}{-2}{-1}a$};
	\draw (-60:\RadCr)     node[nodeBRI](Crdb){$\delta^{\frac{1}{2}}\Alex{0}{1}{-1}{0}d$};
	\draw (-20:1.5\RadCr)  node[nodeBRI](Crcr){$\delta^{0}\Alex{0}{1}{-1}{1}c$};
	
	%Nodes C left
	\draw (20:1.5\RadCl)   node[nodeBRO](Clar){$\delta^{0}\Alex{0}{-2}{-2}{1}a$};
	\draw (60:\RadCl)      node(Clbt){$\delta^{\frac{1}{2}}\Alex{0}{-3}{-2}{1}b$};
	\draw (120:\RadCl)     node[nodeTLO](Clct){$\delta^{0}\Alex{0}{-3}{-1}{1}c$};
	%\draw (160:1.5\RadCl)  node(Cldl){$\delta^{\frac{1}{2}}\Alex{0}{-3}{-1}{0}d$};
	
	\draw (-160:1.5\RadCl) node(Clbl){$\delta^{\frac{1}{2}}\Alex{0}{-3}{-2}{-1}b$};
	\draw (-120:\RadCl)    node[nodeBRO](Clab){$\delta^{0}\Alex{0}{-2}{-2}{-1}a$};
%	\draw (-60:\RadCl)     node(Cldb){$\delta^{\frac{1}{2}}\Alex{0}{-1}{-1}{0}d$};
	\draw (-20:1.5\RadCl)  node[nodeBRO](Clcr){$\delta^{0}\Alex{0}{-1}{-1}{1}c$};
	
	%Nodes A right
%	\draw (20:1.5\RadAr)   node(Arbr){$\delta^{-\frac{1}{2}}\Alex{0}{-1}{-2}{1}b$};
	\draw (60:\RadAr)      node[nodeTLI](Arat){$\delta^{0}\Alex{0}{-2}{-2}{1}a$};
	\draw (120:\RadAr)     node(Ardt){$\delta^{-\frac{1}{2}}\Alex{1}{-2}{-2}{1}d$};
	\draw (160:1.5\RadAr)  node[nodeTLI](Arcl){$\delta^{0}\Alex{1}{-2}{-2}{0}c$};
	
	\draw (-160:1.5\RadAr) node[nodeTLI](Aral){$\delta^{0}\Alex{0}{-2}{-2}{-1}a$};
%	\draw (-120:\RadAr)    node(Arbb){$\delta^{-\frac{1}{2}}\Alex{0}{-1}{-2}{-1}b$};
	\draw (-60:\RadAr)     node[nodeBRI](Arcb){$\delta^{0}\Alex{1}{0}{-2}{0}c$};
	\draw (-20:1.5\RadAr)  node(Ardr){$\delta^{-\frac{1}{2}}\Alex{1}{0}{-2}{1}d$};
	
	%Nodes A left
%	\draw (20:1.5\RadAl)   node(Albr){$\delta^{-\frac{1}{2}}\Alex{0}{-1}{0}{1}b$};
	\draw (60:\RadAl)      node[nodeTLO](Alat){$\delta^{0}\Alex{0}{-2}{0}{1}a$};
%	\draw (120:\RadAl)     node(Aldt){$\delta^{-\frac{1}{2}}\Alex{1}{-2}{0}{1}d$};
	\draw (160:1.5\RadAl)  node[nodeTLO](Alcl){$\delta^{0}\Alex{1}{-2}{0}{0}c$};
	
	\draw (-160:1.5\RadAl) node[nodeTLO](Alal){$\delta^{0}\Alex{0}{-2}{0}{-1}a$};
%	\draw (-120:\RadAl)    node(Albb){$\delta^{-\frac{1}{2}}\Alex{0}{-1}{0}{-1}b$};
	\draw (-60:\RadAl)     node[nodeBRO](Alcb){$\delta^{0}\Alex{1}{0}{0}{0}c$};
	\draw[->] (Alcl) to [bend right=10] node[tight,align=center]{$p_3$}    (Clbl);
	\draw[->] (Alcb) to [bend left=10] node[tight,align=center]{$p_{23}$} (Clab);
	
	%%%%%%%%%%%%
	%%% q_2
	%%%%%%%%%%%%
	\draw[->] (Alcb) to [bend left=10]  node[tight]{$q_{14}$} (Bab);
	
	%%%%%%%%%%%%
	%%% p_412
	%%%%%%%%%%%%
	
	%%%%%%%%%%%%
	%%% p_412,q_2
	%%%%%%%%%%%%
	
	%%%%%%%%%%%%
	%%% q_32
	%%%%%%%%%%%%
	\draw[->] (Ardr) to [bend right=10] node[tight]{$q_1$}    (Crar);
	\draw[->] (Arcb) to [bend left=10]  node[tight]{$q_{14}$} (Crab);
	
	% continuing arrows outside
	\draw[->,dashed] (Ardr) to [bend left=10] node[tight]{$1$} +(-2.5,0.4);
	\draw[->,dashed] (Arcb) to [bend left=10] node[tight]{$p_{23}$} +(-2.5,1.2);
	\draw[->,dashed] (Ardt) to [bend right=10] node[tight]{$1$} +(-1,-1.5);
	\draw[->,dashed] (Arcl) to [bend right=10] node[tight]{$p_3$} +(1,-2);
	
	% continuing arrows inside
	\draw[<-,dashed] (Clbl) to [bend right=10] node[tight]{$1$} +(3,-2.5);
	\draw[<-,dashed] (Clab) to [bend right=10] node[tight]{$q_{14}$} +(3,-1);
	\draw[<-,dashed] (Clbt) to [bend left=10] node[tight]{$1$} +(4,0);
	\draw[<-,dashed] (Clar) to [bend left=10] node[tight]{$q_1$} +(1,-3);
	
	% contribution form cancellations:
	%	\draw[->] (Aldt) to [bend right=10] node[tight,align=center]{$1$}      (Cldl);
	%	\draw[->] (Clct) to [bend right=10] node[below]{~~$q_{4}$}   (Cldl);
	%	\draw[->] (Cldl) to [bend right=10] node[tight]{$p_{34}$}  (Clbl);
	%	\draw[->] (Aldt) to [bend left=10] node[tight]{$q_{1}$}    (Alat);
	%	\draw[->] (Aldt) to [bend right=10] node[tight]{$p_{4}$}   (Alcl);
	\draw[->] (Clct) to [bend left=10] node[tight]{$q_{14}$}    (Alat);
	
	%	\draw[->] (Albr) to [bend right=10] node[tight]{$1$}      (Bbt);
	%	\draw[->] (Albr) to [bend right=10] node[tight]{$p_{2}$}   (Alat);
	%	\draw[->] (Aldr) to [bend right=10] node[tight]{$q_{21}$}  (Albr);
	%	\draw[->] (Bar) to [bend right=10] node[tight]{$q_{2}$}   (Bbt);
	%	\draw[->] (Bct) to [bend left=10] node[tight]{$p_{3}$}    (Bbt);
	\draw[->] (Bct) to [bend left=10] node[tight]{$p_{23}$}    (Alat);
		
	%	\draw[->] (Arbr) to [bend right=10] node[tight]{$1$}      (Crbt);
	%	\draw[->] (Arbr) to [bend right=10] node[tight]{$p_{2}$}   (Arat);
	%	\draw[->] (Ardr) to [bend right=10] node[tight]{$q_{21}$}  (Arbr);
	%	\draw[->] (Crar) to [bend right=10] node[tight]{$q_{2}$}   (Crbt);
	%	\draw[->] (Crct) to [bend left=10] node[tight]{$p_{3}$}    (Crbt);
	\draw[->] (Crct) to [bend left=10] node[tight]{$p_{23}$}    (Arat);		
	
	%	\draw[->] (Albb) to [bend left=10] node[tight]{$p_{2}$}    (Alal);
	%	\draw[->] (Alcb) to [bend left=10] node[tight]{$q_{214}$}  (Albb);
	%	\draw[->] (Albb) to [bend left=10]  node[tight]{$1$}      (Bbl);
	%	\draw[->] (Bdl) to [bend right=10] node[tight]{$p_{34}$}  (Bbl);
	%	\draw[->] (Bab) to [bend left=10] node[tight]{$q_{2}$}    (Bbl);
	\draw[->] (Bdl) to [bend right=10] node[tight]{$p_{234}$}  (Alal);
	
	%	\draw[->] (Arbb) to [bend left=10]  node[tight]{$1$}      (Crbl);
	%	\draw[->] (Arbb) to [bend left=10] node[tight]{$p_{2}$}    (Aral);
	%	\draw[->] (Arcb) to [bend left=10] node[tight]{$q_{214}$}  (Arbb);
	%	\draw[->] (Alcb) to [in=-10,out=-120] node[tight]{$p_3$} (Crbl);
	%	\draw[->] (Crdl) to [bend right=10] node[tight]{$p_{34}$}  (Crbl);
	%	\draw[->] (Crab) to [bend left=10] node[tight]{$q_{2}$}    (Crbl);
	\draw[->] (Alcb) .. controls  +(-2,-0.2) and +(0,-1) .. (-135:6cm) node[tight]	{$p_{23}$}  .. controls  +(0,1) and +(0,-2) .. (Aral);
	\draw[->] (Crdl) to [bend right=10] node[tight]{$p_{234}$}    (Aral);
	
	%	\draw[->] (Aldr) to [bend left=10] node[tight,align=center]{$1$}      (Cldb);
	%	\draw[->] (Aldr) to [in=-50,out=135] node[tight]{$1$}   (Crdl);
	%	\draw[->] (Aldr) to [bend right=10] node[tight]{$q_1$}    (Bar);
	%	\draw[->] (Cldb) to [bend left=10]  node[above]{$p_{234}$}  (Clab);
	%	\draw[->] (Clcr) to [bend left=10]  node[above]{$q_{4}~$}    (Cldb);
	%	\draw[->] (Aldr) to [bend left=10] node[tight]{$p_{4}$}    (Alcb);
	\draw[->] (Clcr) to [bend right=10]  node[tight]{$q_{14}$}    (Bar);
	\draw[->] (Clcr) .. controls  +(-1,2) and +(4,0) .. (0,-0.5) node[tight]	{$q_4$}  .. controls  +(-4,0) and +(-0.2,-3) .. (Crdl);
		
	%spiderweb 
	
	\draw[->] (Bct) to [bend right=10] node[tight]{$q_{4}$}   (Bdl);
	\draw[->] (Bdb) to [bend left=10] node[tight]{$p_{234}$}  (Bab);
	\draw[->] (Bcr) to [bend left=10] node[tight]{$q_{4}$}    (Bdb);
	\draw[->] (Bcr) to [bend right=10] node[tight]{$p_{23}$}  (Bar);

	\draw[->] (Crct) to [bend right=10] node[tight]{$q_{4}$}   (Crdl);
	\draw[->] (Crdb) to [bend left=10] node[tight]{$p_{234}$}  (Crab);
	\draw[->] (Crcr) to [bend left=10] node[tight]{$q_{4}$}    (Crdb);
	\draw[->] (Crcr) to [bend right=10] node[tight]{$p_{23}$}  (Crar);
	
	\draw[->] (Clar) to [bend right=10] node[tight]{$q_{2}~$}   (Clbt);
	\draw[->] (Clct) to [bend left=10]  node[tight]{$p_{3}$}    (Clbt);
	\draw[->] (Clab) to [bend left=10]  node[tight]{$q_{2}$}    (Clbl);
	\draw[->] (Clcr) to [bend right=10] node[tight]{$p_{23}$}  (Clar);

	\draw[->] (Ardt) to [bend left=10] node[below]{$q_{1}$}    (Arat);
	\draw[->] (Ardt) to [bend right=10] node[below]{~~$p_{4}$}   (Arcl);
	\draw[->] (Arcl) to [bend right=10] node[tight]{$q_{14}$}  (Aral);
	\draw[->] (Ardr) to [bend left=10] node[above]{$p_{4}$}    (Arcb);

	\draw[->] (Alcl) to [bend right=10] node[tight]{$q_{14}$}  (Alal);
	
	\end{tikzpicture}
	\\
	\hrulefill
	$$
	\text{(inner)}\qquad
	\rr\Big(
	\begin{tikzcd}[row sep=0.5cm, column sep=0.6cm]
	\cdots
	&
	\delta^{0}\Alex{0}{1}{2}{0}a
	\arrow{r}{q_{32}}
	\arrow[dashed,swap]{l}{p_{41}}
	&
	\delta^{0}\Alex{0}{0}{1}{0}c
	&
	\delta^{\frac{1}{2}}\Alex{0}{0}{0}{0}b
	\arrow[swap]{l}{p_{412}}
	&
	\delta^{0}\Alex{0}{1}{0}{0}a
	\arrow[swap]{l}{q_{2}}
	\arrow{r}{p_{41}}
	&
	\delta^{0}\Alex{0}{2}{1}{0}c
	&
	\cdots
	\arrow[dashed,swap]{l}{q_{32}}
	\end{tikzcd}
	\Big)\boxtimes\ConjBimod
	\qquad\text{(outer)}
	$$
	\caption{The result of cancelling all shaded identity arrows in Figure~\ref{fig:ConjugationBimodule:up:raw}}\label{fig:ConjugationBimodule:up:cancelled}
\end{sidewaysfigure}
\begin{sidewaysfigure}[p]
	\vspace*{435pt}
	\centering
%	\newlength{\RadAl}
	\setlength{\RadAl}{3cm}
%	\newlength{\RadCl}
	\setlength{\RadCl}{4cm}
%	\newlength{\RadAr}
	\setlength{\RadAr}{6cm}
%	\newlength{\RadCr}
	\setlength{\RadCr}{7cm}
	
	\tikzstyle{tight}=[fill=white,inner sep=3pt,rounded corners]
	
	\begin{tikzpicture}
	% cancellations
%	\draw[->] (Ardt) to [bend right=10] node[fill=white]{$1$}      (Crdl);
	\node (Ardtnew) at (120:\RadAr){};
	\node (Crdlnew) at (160:1.5\RadCr){};
	\draw [line width=20pt,lightgray,line cap=round] (Ardtnew) to[bend right=10] (Crdlnew);
%	\draw[->] (Ardr) to [bend left=10] node[fill=white]{$1$}       (Crdb);
	\node (Ardrnew) at (-20:1.5\RadAr){};
	\node (Crdbnew) at (-60:\RadCr){};
	\draw [line width=20pt,lightgray,line cap=round] (Ardrnew) to[bend left=10] (Crdbnew);
%	\draw[->] (Albr) to [bend right=10] node[fill=white]{$1$}      (Clbt);
	\node (Albrnew) at (20:1.5\RadAl){};
	\node (Clbtnew) at (60:\RadCl){};
	\draw [line width=20pt,lightgray,line cap=round] (Albrnew) to[bend right=10] (Clbtnew);
%	\draw[->] (Albb) to [bend left=10]  node[fill=white]{$1$}      (Clbl);
	\node (Albbnew) at (-120:\RadAl){};
	\node (Clblnew) at (-160:1.5\RadCl){};
	\draw [line width=20pt,lightgray,line cap=round] (Albbnew) to[bend left=10] (Clblnew);
	
	%Nodes C right
	\draw (20:1.5\RadCr)   node(Crar){$\delta^{0}\Alex{0}{-1}{-2}{1}a$};
	\draw (60:\RadCr)      node(Crbt){$\delta^{\frac{1}{2}}\Alex{0}{-2}{-2}{1}b$};
	\draw (120:\RadCr)     node(Crct){$\delta^{0}\Alex{0}{-2}{-1}{1}c$};
	\draw (160:1.5\RadCr)  node[fill=lightgray,rounded corners](Crdl){$\delta^{\frac{1}{2}}\Alex{0}{-2}{-1}{0}d$};
	
	\draw (-160:1.5\RadCr) node(Crbl){$\delta^{\frac{1}{2}}\Alex{0}{-2}{-2}{-1}b$};
	\draw (-120:\RadCr)    node(Crab){$\delta^{0}\Alex{0}{-1}{-2}{-1}a$};
	\draw (-60:\RadCr)     node[fill=lightgray,rounded corners](Crdb){$\delta^{\frac{1}{2}}\Alex{0}{0}{-1}{0}d$};
	\draw (-20:1.5\RadCr)  node(Crcr){$\delta^{0}\Alex{0}{0}{-1}{1}c$};
	
	%Nodes C left
	\draw (20:1.5\RadCl)   node(Clar){$\delta^{0}\Alex{0}{0}{-1}{1}a$};
	\draw (60:\RadCl)      node[fill=lightgray,rounded corners](Clbt){$\delta^{\frac{1}{2}}\Alex{0}{-1}{-1}{1}b$};
	\draw (120:\RadCl)     node(Clct){$\delta^{0}\Alex{0}{-1}{0}{1}c$};
	\draw (160:1.5\RadCl)  node(Cldl){$\delta^{\frac{1}{2}}\Alex{0}{-1}{0}{0}d$};
	
	\draw (-160:1.5\RadCl) node[fill=lightgray,rounded corners](Clbl){$\delta^{\frac{1}{2}}\Alex{0}{-1}{-1}{-1}b$};
	\draw (-120:\RadCl)    node(Clab){$\delta^{0}\Alex{0}{0}{-1}{-1}a$};
	\draw (-60:\RadCl)     node(Cldb){$\delta^{\frac{1}{2}}\Alex{0}{1}{0}{0}d$};
	\draw (-20:1.5\RadCl)  node(Clcr){$\delta^{0}\Alex{0}{1}{0}{1}c$};
	
	%Nodes A right
	\draw (20:1.5\RadAr)   node(Arbr){$\delta^{-\frac{1}{2}}\Alex{0}{0}{0}{1}b$};
	\draw (60:\RadAr)      node(Arat){$\delta^{0}\Alex{0}{-1}{0}{1}a$};
	\draw (120:\RadAr)     node[fill=lightgray,rounded corners](Ardt){$\delta^{-\frac{1}{2}}\Alex{1}{-1}{0}{1}d$};
	\draw (160:1.5\RadAr)  node(Arcl){$\delta^{0}\Alex{1}{-1}{0}{0}c$};
	
	\draw (-160:1.5\RadAr) node(Aral){$\delta^{0}\Alex{0}{-1}{0}{-1}a$};
	\draw (-120:\RadAr)    node(Arbb){$\delta^{-\frac{1}{2}}\Alex{0}{0}{0}{-1}b$};
	\draw (-60:\RadAr)     node(Arcb){$\delta^{0}\Alex{1}{1}{0}{0}c$};
	\draw (-20:1.5\RadAr)  node[fill=lightgray,rounded corners](Ardr){$\delta^{-\frac{1}{2}}\Alex{1}{1}{0}{1}d$};
	
	%Nodes A left
	\draw (20:1.5\RadAl)   node[fill=lightgray,rounded corners](Albr){$\delta^{-\frac{1}{2}}\Alex{0}{-1}{-1}{1}b$};
	\draw (60:\RadAl)      node(Alat){$\delta^{0}\Alex{0}{-2}{-1}{1}a$};
	\draw (120:\RadAl)     node(Aldt){$\delta^{-\frac{1}{2}}\Alex{1}{-2}{-1}{1}d$};
	\draw (160:1.5\RadAl)  node(Alcl){$\delta^{0}\Alex{1}{-2}{-1}{0}c$};
	
	\draw (-160:1.5\RadAl) node(Alal){$\delta^{0}\Alex{0}{-2}{-1}{-1}a$};
	\draw (-120:\RadAl)    node[fill=lightgray,rounded corners](Albb){$\delta^{-\frac{1}{2}}\Alex{0}{-1}{-1}{-1}b$};
	\draw (-60:\RadAl)     node(Alcb){$\delta^{0}\Alex{1}{0}{-1}{0}c$};
	\draw (-20:1.5\RadAl)  node(Aldr){$\delta^{-\frac{1}{2}}\Alex{1}{0}{-1}{1}d$};

	\draw[->] (Ardt) to [bend right=10] node[fill=lightgray,rounded corners]{$1$}      (Crdl);
	\draw[->] (Arcl) to [bend right=10] node[tight]{$p_3$}    (Crbl);
	\draw[->] (Ardr) to [bend left=10] node[fill=lightgray,rounded corners]{$1$}       (Crdb);
	\draw[->] (Arcb) to [bend left=10] node[tight]{$p_{23}$}  (Crab);
	
	%%%%%%%%%%%%
	%%% q_32
	%%%%%%%%%%%%
	\draw[->] (Albr) to [bend right=10] node[fill=lightgray,rounded corners]{$1$}      (Clbt);
	\draw[->] (Aldr) to [bend right=10] node[tight]{$q_1$}    (Clar);
	\draw[->] (Albb) to [bend left=10]  node[fill=lightgray,rounded corners]{$1$}      (Clbl);
	\draw[->] (Alcb) to [bend left=10]  node[tight]{$q_{14}$} (Clab);

	% continuing arrows inside first
	\draw[->,dashed] (Aldt) to [bend right=10] node[tight]{$1$} +(-1.5,-2);
	\draw[->,dashed] (Alcl) to [bend right=10] node[tight]{$p_3$} +(1.25,-2);
	\draw[->,dashed] (Alcb) to [bend left=10] node[tight]{$p_{32}$} +(-3,1);
	\draw[->,dashed] (Aldr) to [bend left=10] node[tight]{$1$} +(-3,0);
	
	% continuing arrows outside first
	\draw[<-,dashed] (Cldl) to [bend left=10] node[tight]{$1$} +(1.5,2);
	\draw[<-,dashed] (Clbl) to [bend left=10] node[tight]{$p_3$} +(-1.25,2);
	\draw[<-,dashed] (Clab) to [bend right=10] node[tight]{$p_{32}$} +(3,-1);
	\draw[<-,dashed] (Cldb) to [bend right=10] node[tight]{$1$} +(3,0);
	
	% continuing arrows inside second
	\draw[->,dashed] (Arbb) to [bend left=10] node[tight]{$1$} +(-3,1.5);
	\draw[->,dashed] (Arcb) to [bend left=10] node[tight]{$q_{14}$} +(-5,0.5);
	\draw[->,dashed] (Ardr) to [bend right=10] node[tight]{$q_1$} +(-1,4);
	\draw[->,dashed] (Arbr) to [bend right=10] node[tight]{$1$} +(-4,1);
	
	% continuing arrows outside second
	\draw[<-,dashed] (Crbl) to [bend right=10] node[tight]{$1$} +(3,-2.5);
	\draw[<-,dashed] (Crab) to [bend right=10] node[tight]{$q_{14}$} +(5,-1);
	\draw[<-,dashed] (Crar) to [bend left=10] node[tight]{$q_1$} +(1,-4);
	\draw[<-,dashed] (Crbt) to [bend left=10] node[tight]{$1$} +(4,-0.5);

	% spiderweb
	\draw[->,very thick] (Crar) to [bend right=10] node[tight]{$q_{2}$}   (Crbt);
	\draw[->,very thick] (Crct) to [bend left=10] node[tight]{$p_{3}$}    (Crbt);
	\draw[->,very thick] (Crct) to [bend right=10] node[tight]{$q_{4}$}   (Crdl);
	\draw[->,very thick] (Crdl) to [bend right=10] node[tight]{$p_{34}$}  (Crbl);
	\draw[->,very thick] (Crab) to [bend left=10] node[tight]{$q_{2}$}    (Crbl);
	\draw[->,very thick] (Crdb) to [bend left=10] node[tight]{$p_{234}$}  (Crab);
	\draw[->,very thick] (Crcr) to [bend left=10] node[tight]{$q_{4}$}    (Crdb);
	\draw[->,very thick] (Crcr) to [bend right=10] node[tight]{$p_{23}$}  (Crar);
	
	\draw[->,very thick] (Clar) to [bend right=10] node[tight]{$q_{2}~$}   (Clbt);
	\draw[->,very thick] (Clct) to [bend left=10]  node[tight]{$p_{3}$}    (Clbt);
	\draw[->,very thick] (Clct) to [bend right=10] node[tight]{$q_{4}$}   (Cldl);
	\draw[->,very thick] (Cldl) to [bend right=10] node[tight]{$p_{34}$}  (Clbl);
	\draw[->,very thick] (Clab) to [bend left=10]  node[tight]{$q_{2}$}    (Clbl);
	\draw[->,very thick] (Cldb) to [bend left=10]  node[tight]{$p_{234}$}  (Clab);
	\draw[->,very thick] (Clcr) to [bend left=10]  node[tight]{$q_{4}~$}    (Cldb);
	\draw[->,very thick] (Clcr) to [bend right=10] node[tight]{$p_{23}$}  (Clar);
	
	\draw[->,very thick] (Arbr) to [bend right=10] node[tight]{$p_{2}$}   (Arat);
	\draw[->,very thick] (Ardt) to [bend left=10]  node[tight]{$q_{1}$}    (Arat);
	\draw[->,very thick] (Ardt) to [bend right=10] node[tight]{$p_{4}$}   (Arcl);
	\draw[->,very thick] (Arcl) to [bend right=10] node[tight]{$q_{14}$}  (Aral);
	\draw[->,very thick] (Arbb) to [bend left=10]  node[tight]{$p_{2}$}    (Aral);
	\draw[->,very thick] (Arcb) to [bend left=10]  node[tight]{$q_{214}$}  (Arbb);
	\draw[->,very thick] (Ardr) to [bend left=10]  node[tight]{$p_{4}$}    (Arcb);
	\draw[->,very thick] (Ardr) to [bend right=10] node[tight]{$q_{21}$}  (Arbr);
	
	\draw[->,very thick] (Albr) to [bend right=10] node[below]{$p_{2}$}   (Alat);
	\draw[->,very thick] (Aldt) to [bend left=10]  node[below]{$q_{1}$}    (Alat);
	\draw[->,very thick] (Aldt) to [bend right=10] node[below]{~~$p_{4}$}   (Alcl);
	\draw[->,very thick] (Alcl) to [bend right=10] node[right]{$q_{14}$}  (Alal);
	\draw[->,very thick] (Albb) to [bend left=10]  node[above]{~~$p_{2}$}    (Alal);
	\draw[->,very thick] (Alcb) to [bend left=10]  node[above]{$q_{214}$}  (Albb);
	\draw[->,very thick] (Aldr) to [bend left=10]  node[above]{$p_{4}$}    (Alcb);
	\draw[->,very thick] (Aldr) to [bend right=10] node[left]{$q_{21}$}  (Albr);
	
	\end{tikzpicture}
	\\
	\hrulefill
	$$
	\text{(inner)}\qquad
	\rr\Big(
	\begin{tikzcd}[row sep=0.5cm, column sep=0.6cm]
	\cdots
	&
	\delta^{0}\Alex{0}{1}{1}{0}a
	\arrow{r}{q_{32}}
	\arrow[dashed,swap]{l}{p_{41}}
	&
	\delta^{0}\Alex{0}{0}{0}{0}c
	&
	\cdots
	\arrow[swap,dashed]{l}{p_{41}}
	&
	\cdots
	&
	\delta^{0}\Alex{0}{0}{0}{0}a
	\arrow[dashed,swap]{l}{q_{32}}
	\arrow{r}{p_{41}}
	&
	\delta^{0}\Alex{0}{1}{1}{0}c
	&
	\cdots
	\arrow[dashed,swap]{l}{q_{32}}
	\end{tikzcd}
	\Big)\boxtimes\ConjBimod
	\qquad\text{(outer)}
	$$
	\caption{The computation of the action of the bimodule~$\ConjBimod$ on the last two curve segments in the proof of Lemma~\ref{lem:ConjBimod:ActionOnHorizontalComponents}}\label{fig:ConjugationBimodule:ac:raw}
\end{sidewaysfigure}
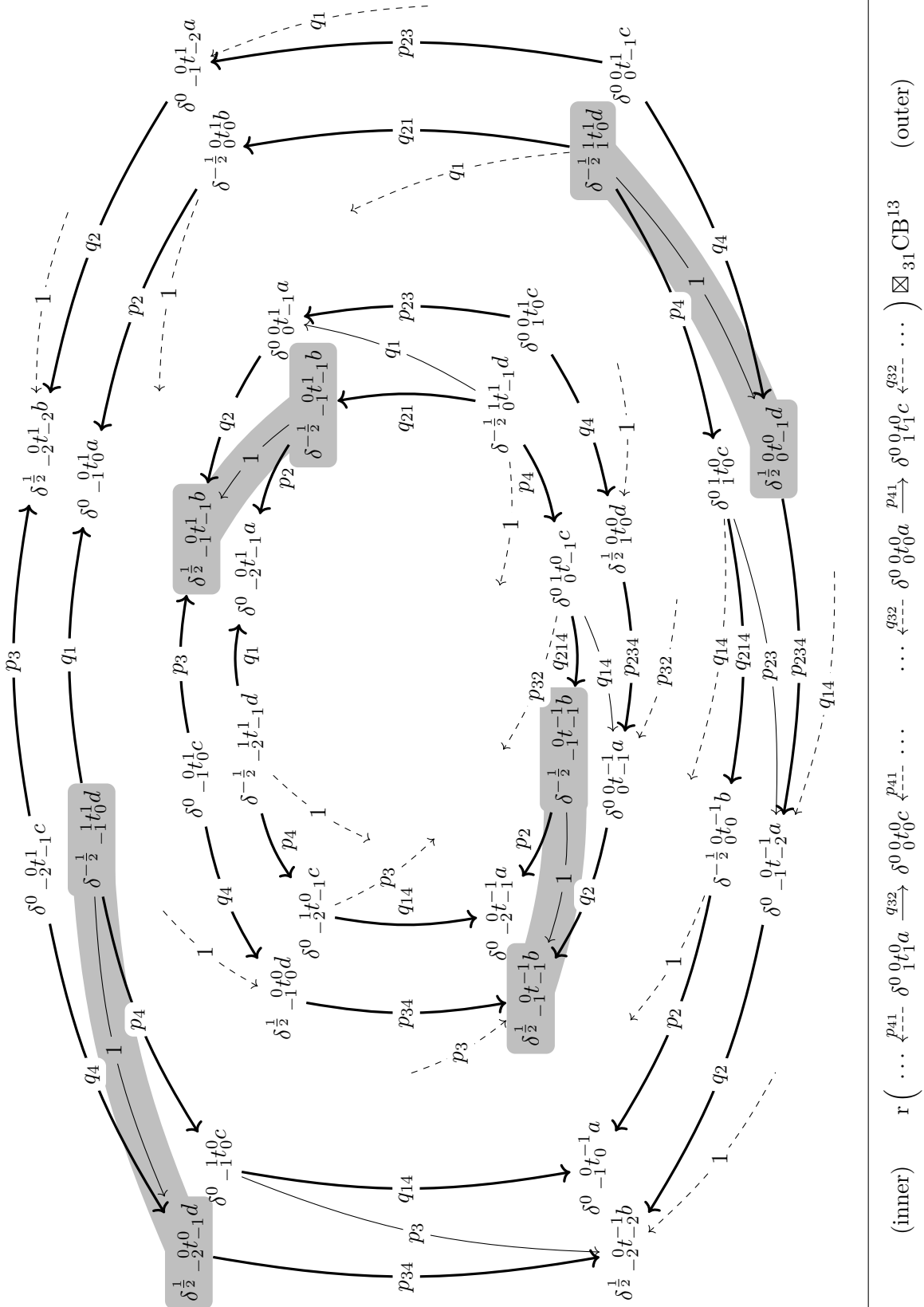
\begin{sidewaysfigure}[p]
	\vspace*{435pt}
	\centering
	%	\newlength{\RadAl}
	\setlength{\RadAl}{3cm}
	%	\newlength{\RadCl}
	\setlength{\RadCl}{4cm}
	%	\newlength{\RadAr}
	\setlength{\RadAr}{6cm}
	%	\newlength{\RadCr}
	\setlength{\RadCr}{7cm}
	
	 \tikzstyle{nodeRac}=[draw,rectangle,rounded corners,inner sep=4pt]
	\tikzstyle{nodeRacS}=[draw,rectangle,rounded corners,inner sep=3pt,fill=white,dotted]
	 \tikzstyle{nodeBac}=[draw,rectangle,rounded corners,inner sep=4pt,dotted]
	\tikzstyle{nodeBacS}=[draw,rectangle,rounded corners,inner sep=3pt,fill=white]
	
	 \tikzstyle{nodeTac}=[draw,ellipse,rounded corners,inner sep=4pt]
	\tikzstyle{nodeTacS}=[draw,ellipse,rounded corners,inner sep=3pt,fill=white,dotted]
	 \tikzstyle{nodeLac}=[draw,ellipse,rounded corners,inner sep=4pt,dotted]
	\tikzstyle{nodeLacS}=[draw,ellipse,rounded corners,inner sep=3pt,fill=white]
	
	\tikzstyle{tight}=[fill=white,inner sep=3pt,rounded corners]
	
	\begin{tikzpicture}
	
	%Nodes C right
	\draw (20:1.5\RadCr)   node[nodeRac](Crar){$\delta^{0}\Alex{0}{-1}{-2}{1}a$};
	\draw (20:1.5\RadCr)   node[nodeRacS](CrarS){$\delta^{0}\Alex{0}{-1}{-2}{1}a$};
	
	\draw (60:\RadCr)      node(Crbt){$\delta^{\frac{1}{2}}\Alex{0}{-2}{-2}{1}b$};
	
	\draw (120:\RadCr)     node[nodeTac](Crct){$\delta^{0}\Alex{0}{-2}{-1}{1}c$};
	\draw (120:\RadCr)     node[nodeTacS](CrctS){$\delta^{0}\Alex{0}{-2}{-1}{1}c$};
	
	\draw (-160:1.5\RadCr) node(Crbl){$\delta^{\frac{1}{2}}\Alex{0}{-2}{-2}{-1}b$};
	
	\draw (-120:\RadCr)    node[nodeBac](Crab){$\delta^{0}\Alex{0}{-1}{-2}{-1}a$};
	\draw (-120:\RadCr)    node[nodeBacS](CrabS){$\delta^{0}\Alex{0}{-1}{-2}{-1}a$};

	\draw (-20:1.5\RadCr)  node[nodeRac](Crcr){$\delta^{0}\Alex{0}{0}{-1}{1}c$};
	\draw (-20:1.5\RadCr)  node[nodeRacS](CrcrS){$\delta^{0}\Alex{0}{0}{-1}{1}c$};

	%Nodes C left
	\draw (20:1.5\RadCl)   node[nodeRac](Clar){$\delta^{0}\Alex{0}{0}{-1}{1}a$};
	\draw (20:1.5\RadCl)   node[nodeRacS](ClarS){$\delta^{0}\Alex{0}{0}{-1}{1}a$};
	
	\draw (120:\RadCl)     node[nodeTac](Clct){$\delta^{0}\Alex{0}{-1}{0}{1}c$};
	\draw (120:\RadCl)     node[nodeTacS](ClctS){$\delta^{0}\Alex{0}{-1}{0}{1}c$};
	
	\draw (160:1.5\RadCl)  node(Cldl){$\delta^{\frac{1}{2}}\Alex{0}{-1}{0}{0}d$};
	
	\draw (-120:\RadCl)    node[nodeBac](Clab){$\delta^{0}\Alex{0}{0}{-1}{-1}a$};
	\draw (-120:\RadCl)    node[nodeBacS](ClabS){$\delta^{0}\Alex{0}{0}{-1}{-1}a$};
	
	\draw (-60:\RadCl)     node(Cldb){$\delta^{\frac{1}{2}}\Alex{0}{1}{0}{0}d$};
	
	\draw (-20:1.5\RadCl)  node[nodeRac](Clcr){$\delta^{0}\Alex{0}{1}{0}{1}c$};
	\draw (-20:1.5\RadCl)  node[nodeRacS](ClcrS){$\delta^{0}\Alex{0}{1}{0}{1}c$};

	%Nodes A right
	\draw (20:1.5\RadAr)   node(Arbr){$\delta^{-\frac{1}{2}}\Alex{0}{0}{0}{1}b$};
	
	\draw (60:\RadAr)      node[nodeTac](Arat){$\delta^{0}\Alex{0}{-1}{0}{1}a$};
	\draw (60:\RadAr)      node[nodeTacS](AratS){$\delta^{0}\Alex{0}{-1}{0}{1}a$};
	
	\draw (160:1.5\RadAr)  node[nodeLac](Arcl){$\delta^{0}\Alex{1}{-1}{0}{0}c$};
	\draw (160:1.5\RadAr)  node[nodeLacS](ArclS){$\delta^{0}\Alex{1}{-1}{0}{0}c$};
		
	\draw (-160:1.5\RadAr) node[nodeLac](Aral){$\delta^{0}\Alex{0}{-1}{0}{-1}a$};
	\draw (-160:1.5\RadAr) node[nodeLacS](AralS){$\delta^{0}\Alex{0}{-1}{0}{-1}a$};
	
	\draw (-120:\RadAr)    node(Arbb){$\delta^{-\frac{1}{2}}\Alex{0}{0}{0}{-1}b$};
	
	\draw (-60:\RadAr)     node[nodeBac](Arcb){$\delta^{0}\Alex{1}{1}{0}{0}c$};
	\draw (-60:\RadAr)     node[nodeBacS](ArcbS){$\delta^{0}\Alex{1}{1}{0}{0}c$};

	%Nodes A left
	\draw (60:\RadAl)      node[nodeTac](Alat){$\delta^{0}\Alex{0}{-2}{-1}{1}a$};
	\draw (60:\RadAl)      node[nodeTacS](AlatS){$\delta^{0}\Alex{0}{-2}{-1}{1}a$};
	
	\draw (120:\RadAl)     node(Aldt){$\delta^{-\frac{1}{2}}\Alex{1}{-2}{-1}{1}d$};
	
	\draw (160:1.5\RadAl)  node[nodeLac](Alcl){$\delta^{0}\Alex{1}{-2}{-1}{0}c$};
	\draw (160:1.5\RadAl)  node[nodeLacS](AlclS){$\delta^{0}\Alex{1}{-2}{-1}{0}c$};
	
	\draw (-160:1.5\RadAl) node[nodeLac](Alal){$\delta^{0}\Alex{0}{-2}{-1}{-1}a$};
	\draw (-160:1.5\RadAl) node[nodeLacS](AlalS){$\delta^{0}\Alex{0}{-2}{-1}{-1}a$};
	
	\draw (-60:\RadAl)     node[nodeBac](Alcb){$\delta^{0}\Alex{1}{0}{-1}{0}c$};
	\draw (-60:\RadAl)     node[nodeBacS](AlcbS){$\delta^{0}\Alex{1}{0}{-1}{0}c$};
	\draw (-20:1.5\RadAl)  node(Aldr){$\delta^{-\frac{1}{2}}\Alex{1}{0}{-1}{1}d$};

	\draw[->] (Arcl) to [bend right=10] node[tight]{$p_3$}    (Crbl);
	\draw[->] (Arcb) to [bend left=10] node[tight]{$p_{23}$}  (Crab);
	
	%%%%%%%%%%%%
	%%% q_32
	%%%%%%%%%%%%
	\draw[->] (Aldr) to [bend right=10] node[tight]{$q_1$}    (Clar);
	\draw[->] (Alcb) to [bend left=10]  node[tight]{$q_{14}$} (Clab);

	% continuing arrows inside first
	\draw[->,dashed] (Aldt) to [bend right=10] node[tight]{$1$} +(-1.5,-2);
	\draw[->,dashed] (Alcl) to [bend right=10] node[tight]{$p_3$} +(1.25,-2);
	\draw[->,dashed] (Alcb) to [bend left=10] node[tight]{$p_{32}$} +(-3,1);
	\draw[->,dashed] (Aldr) to [bend left=10] node[tight]{$1$} +(-3,0);
	
	% continuing arrows outside first
	\draw[<-,dashed] (Cldl) to [bend left=10] node[tight]{$1$} +(1.5,2);
	\draw[<-,dashed] (Clab) to [bend right=10] node[tight]{$p_{32}$} +(3,-1);
	\draw[<-,dashed] (Cldb) to [bend right=10] node[tight]{$1$} +(3,0);
	
	% continuing arrows inside second
	\draw[->,dashed] (Arbb) to [bend left=10] node[tight]{$1$} +(-3,1.5);
	\draw[->,dashed] (Arcb) to [bend left=10] node[tight]{$q_{14}$} +(-5,0.5);
	\draw[->,dashed] (Arbr) to [bend right=10] node[tight]{$1$} +(-4,1);
	
	% continuing arrows outside second
	\draw[<-,dashed] (Crbl) to [bend right=10] node[tight]{$1$} +(3,-2.5);
	\draw[<-,dashed] (Crab) to [bend right=10] node[tight]{$q_{14}$} +(5,-1);
	\draw[<-,dashed] (Crar) to [bend left=10] node[tight]{$q_1$} +(1,-4);
	\draw[<-,dashed] (Crbt) to [bend left=10] node[tight]{$1$} +(4,-0.5);
	
	% contributions from cancellations
	%	\draw[->] (Albr) to [bend right=10] node[tight]{$1$}      (Clbt);
	%	\draw[->] (Albr) to [bend right=10] node[below]{$p_{2}$}   (Alat);
	%	\draw[->] (Aldr) to [bend right=10] node[left]{$q_{21}$}  (Albr);
	%	\draw[->] (Clar) to [bend right=10] node[tight]{$q_{2}~$}   (Clbt);
	%	\draw[->] (Clct) to [bend left=10]  node[tight]{$p_{3}$}    (Clbt);
	\draw[->] (Clct) to [bend left=10]  node[tight]{$p_{23}$}    (Alat);
		
	%	\draw[->] (Albb) to [bend left=10]  node[tight]{$1$}      (Clbl);
	%	\draw[->] (Albb) to [bend left=10]  node[above]{~~$p_{2}$}    (Alal);
	%	\draw[->] (Alcb) to [bend left=10]  node[above]{$q_{214}$}  (Albb);
	%	\draw[<-,dashed] (Clbl) to [bend left=10] node[tight]{$p_3$} +(-1.25,2);
	%	\draw[->] (Cldl) to [bend right=10] node[tight]{$p_{34}$}  (Clbl);
	%	\draw[->] (Clab) to [bend left=10]  node[tight]{$q_{2}$}    (Clbl);
	\draw[->] (Cldl) to [bend right=10]  node[tight]{$p_{234}$}    (Alal);
	\draw[<-,dashed] (Alal) to [bend left=10] node[tight]{$p_{23}$} +(-3.25,2);
		
	%	\draw[->] (Ardt) to [bend right=10] node[tight]{$1$}      (Crdl);
	%	\draw[->] (Crct) to [bend right=10] node[tight]{$q_{4}$}   (Crdl);
	%	\draw[->] (Crdl) to [bend right=10] node[tight]{$p_{34}$}  (Crbl);
	%	\draw[->] (Ardt) to [bend left=10]  node[tight]{$q_{1}$}    (Arat);
	%	\draw[->] (Ardt) to [bend right=10] node[tight]{$p_{4}$}   (Arcl);
	\draw[->] (Crct) to [bend right=10] node[tight]{$q_{14}$}   (Arat);
	
	%	\draw[->] (Ardr) to [bend left=10] node[tight]{$1$}       (Crdb);
	%	\draw[->] (Crdb) to [bend left=10] node[tight]{$p_{234}$}  (Crab);
	%	\draw[->] (Crcr) to [bend left=10] node[tight]{$q_{4}$}    (Crdb);
	%	\draw[->] (Ardr) to [bend left=10]  node[tight]{$p_{4}$}    (Arcb);
	%	\draw[->] (Ardr) to [bend right=10] node[tight]{$q_{21}$}  (Arbr);
	%	\draw[->,dashed] (Ardr) to [bend right=10] node[tight]{$q_1$} +(-1,4);
	
	\draw[->] (Crcr) to [bend left=10] node[tight]{$q_{214}$}    (Arbr);
	\draw[->,dashed] (Crcr) to [bend left=10] node[tight]{$q_{14}$} +(-2.5,4);
	
	% spiderweb
	\draw[->] (Crar) to [bend right=10] node[tight]{$q_{2}$}   (Crbt);
	\draw[->] (Crct) to [bend left=10] node[tight]{$p_{3}$}    (Crbt);
	\draw[->] (Crab) to [bend left=10] node[tight]{$q_{2}$}    (Crbl);
	
	\draw[->] (Crcr) to [bend right=10] node[tight]{$p_{23}$}  (Crar);
	\draw[->] (Clct) to [bend right=10] node[tight]{$q_{4}$}   (Cldl);
	\draw[->] (Cldb) to [bend left=10]  node[tight]{$p_{234}$}  (Clab);
	\draw[->] (Clcr) to [bend left=10]  node[tight]{$q_{4}~$}    (Cldb);
	\draw[->] (Clcr) to [bend right=10] node[tight]{$p_{23}$}  (Clar);
	
	\draw[->] (Arbr) to [bend right=10] node[tight]{$p_{2}$}   (Arat);
	\draw[->] (Arcl) to [bend right=10] node[tight]{$q_{14}$}  (Aral);
	\draw[->] (Arbb) to [bend left=10]  node[tight]{$p_{2}$}    (Aral);
	\draw[->] (Arcb) to [bend left=10]  node[tight]{$q_{214}$}  (Arbb);	

	\draw[->] (Aldt) to [bend left=10]  node[below]{$q_{1}$}    (Alat);
	\draw[->] (Aldt) to [bend right=10] node[below]{~~$p_{4}$}   (Alcl);
	\draw[->] (Alcl) to [bend right=10] node[right]{$q_{14}$}  (Alal);
	\draw[->] (Aldr) to [bend left=10]  node[above]{$p_{4}$}    (Alcb);
	
	\end{tikzpicture}
	\\
	\hrulefill
	$$
	\text{(inner)}\qquad
	\rr\Big(
	\begin{tikzcd}[row sep=0.5cm, column sep=0.6cm]
	\cdots
	&
	\delta^{0}\Alex{0}{1}{1}{0}a
	\arrow{r}{q_{32}}
	\arrow[dashed,swap]{l}{p_{41}}
	&
	\delta^{0}\Alex{0}{0}{0}{0}c
	&
	\cdots
	\arrow[swap,dashed]{l}{p_{41}}
	&
	\cdots
	&
	\delta^{0}\Alex{0}{0}{0}{0}a
	\arrow[dashed,swap]{l}{q_{32}}
	\arrow{r}{p_{41}}
	&
	\delta^{0}\Alex{0}{1}{1}{0}c
	&
	\cdots
	\arrow[dashed,swap]{l}{q_{32}}
	\end{tikzcd}
	\Big)\boxtimes\ConjBimod
	\qquad\text{(outer)}
	$$
	\caption{The result of cancelling all shaded identity arrows in Figure~\ref{fig:ConjugationBimodule:ac:raw}
	}\label{fig:ConjugationBimodule:ac:cancelled}
\end{sidewaysfigure}

\begin{proof}[Proof of Theorem~\ref{thm:MutInvHFT}]
	The slopes of the components of $\HFT$ are preserved under any mutation. This can be seen by observing that mutation preserves rational tangles and hence also the rational curves $\Rat(\slope)$ for all slopes $\slope\in\QPI$. The rational curve $\Rat(\slope)$ separates the irrational curves $\Irr_n(\slope;i_1,i_2)$ and $\Irr_n(\slope;i_3,i_4)$. So mutation either preserves an irrational curve $\Irr_n(\slope;i_1,i_2)$ or it sends it to its counterpart $\Irr_n(\slope;i_3,i_4)$. Now, by Theorem~\ref{thm:Conjugation:delta}, the number of irrational curves and their counterparts is the same in each $\delta$-grading. Also, the local systems of rational curves are either preserved or inverted, but again, by Theorem~\ref{thm:Conjugation:delta}, the number of each of these is the same in each grading. So $\HFTdelta(T)$ is preserved by mutation. 
	Alternatively, one may also study mutation about each mutation axis separately.
%	Relatively $\delta$-graded mutation invariance for $\HFL$ follows immediately from the above and the Glueing Theorem~\ref{thm:GlueingTheorem:HFT}. 
\end{proof}

\newcommand*{\arxiv}[1]{\href{http://arxiv.org/abs/#1}{ArXiv:\ #1}}
\newcommand*{\arxivPreprint}[1]{\href{http://arxiv.org/abs/#1}{ArXiv preprint #1}}
\newcommand*{\arxivANC}{available as an ancillary file to this preprint on \href{https://arxiv.org/src/1909.04267/anc}{arXiv:\ 1909.04267}}
\bibliographystyle{alpha}
\bibliography{HFTSymmetries}

\begin{thebibliography}{BSFH.m}

\bibitem[BG12]{BGcomps}
John~A. Baldwin and William~D. Gillam.
\newblock Computations of {H}eegaard-{F}loer knot homology.
\newblock {\em J. Knot Theory Ramifications}, 21(8):1250075, 65, 2012.
\newblock \arxiv{math/0610167}.

\bibitem[BL12]{BaldwinLevine}
John~A. Baldwin and Adam~Simon Levine.
\newblock A combinatorial spanning tree model for knot {F}loer homology.
\newblock {\em Adv. Math.}, 231(3-4):1886--1939, 2012.
\newblock \arxiv{1105.5199v3}.

\bibitem[Blo10]{Bloom}
Jonathan~M. Bloom.
\newblock Odd {K}hovanov homology is mutation invariant.
\newblock {\em Math. Res. Lett.}, 17(1):1--10, 2010.
\newblock \arxiv{0903.3746}.

\bibitem[Coo82]{Cooper}
Daryl Cooper.
\newblock {\em Signatures of surfaces in 3-manifolds and applications to knot
  and link cobordism}.
\newblock 1982.
\newblock Thesis (Ph.D.)--University of Warwick.
  \url{http://wrap.warwick.ac.uk/110239/}.

\bibitem[Gab86]{Gabai_genus_mutation}
David Gabai.
\newblock Genera of the arborescent links.
\newblock {\em Mem. Amer. Math. Soc.}, 59(339):i--viii and 1--98, 1986.

\bibitem[HRW16]{HRW}
Jonathan Hanselman, Jacob~A. Rasmussen, and Liam Watson.
\newblock Bordered {F}loer homology for manifolds with torus boundary via
  immersed curves, 2016.
\newblock \arxivPreprint{1604.03466v2}.

\bibitem[HRW18]{HRW2}
Jonathan Hanselman, Jacob~A. Rasmussen, and Liam Watson.
\newblock {H}eegaard {F}loer homology for manifolds with torus boundary:
  properties and examples, 2018.
\newblock \arxivPreprint{1810.10355}.

\bibitem[Juh06]{SFH}
Andr\'{a}s Juh\'{a}sz.
\newblock Holomorphic discs and sutured manifolds.
\newblock {\em Algebr. Geom. Topol.}, 6:1429--1457, 2006.
\newblock \arxiv{math/0601443}.

\bibitem[Juh08]{SurfaceDecomposition}
Andr\'{a}s Juh\'{a}sz.
\newblock Floer homology and surface decompositions.
\newblock {\em Geom. Topol.}, 12(1):299--350, 2008.
\newblock \arxiv{math/0609779}.

\bibitem[KWZ19]{KWZ}
Artem Kotelskiy, Liam Watson, and Claudius Zibrowius.
\newblock {I}mmersed curves in {K}hovanov homology, 2019.
\newblock \arxivPreprint{1910.14584}.

\bibitem[LC18]{LambertCole1}
Peter Lambert-Cole.
\newblock Twisting, mutation and knot {F}loer homology.
\newblock {\em Quantum Topol.}, 9(4):749--774, 2018.
\newblock \arxiv{1608.02011}.

\bibitem[LC19]{LambertCole2}
Peter Lambert-Cole.
\newblock On {C}onway mutation and link homology.
\newblock {\em Adv. Math.}, 354, 2019.
\newblock \arxiv{1701.00880}.

\bibitem[LM87]{mutation_HOMFLY}
W.~B.~R. Lickorish and Kenneth~C. Millett.
\newblock A polynomial invariant of oriented links.
\newblock {\em Topology}, 26(1):107--141, 1987.

\bibitem[LOT11]{LOTMor}
Robert Lipshitz, Peter~S. Ozsv\'{a}th, and Dylan~P. Thurston.
\newblock Heegaard {F}loer homology as morphism spaces.
\newblock {\em Quantum Topol.}, 2(4):381--449, 2011.
\newblock \arxiv{1005.1248}.

\bibitem[LOT15]{LOTBimodules}
Robert Lipshitz, Peter~S. Ozsv\'{a}th, and Dylan~P. Thurston.
\newblock Bimodules in bordered {H}eegaard {F}loer homology.
\newblock {\em Geom. Topol.}, 19(2):525--724, 2015.
\newblock \arxiv{1003.0598v4}.

\bibitem[OS04]{OSmutation}
Peter Ozsv\'{a}th and Zolt\'{a}n Szab\'{o}.
\newblock Knot {F}loer homology, genus bounds, and mutation.
\newblock {\em Topology Appl.}, 141(1-3):59--85, 2004.
\newblock \arxiv{math/0303225v2}.

\bibitem[OS08]{OSHFLThurston}
Peter Ozsv\'{a}th and Zolt\'{a}n Szab\'{o}.
\newblock Link {F}loer homology and the {T}hurston norm.
\newblock {\em J. Amer. Math. Soc.}, 21(3):671--709, 2008.
\newblock \arxiv{math/0601618v3}.

\bibitem[Rub87]{mutation_volume}
Daniel Ruberman.
\newblock Mutation and volumes of knots in {$S^3$}.
\newblock {\em Invent. Math.}, 90(1):189--215, 1987.

\bibitem[Weh10]{Wehrli}
Stephan~M. Wehrli.
\newblock Mutation invariance of {K}hovanov homology over {$\mathbb F_2$}.
\newblock {\em Quantum Topol.}, 1(2):111--128, 2010.
\newblock \arxiv{0904.3401}.

\bibitem[Zar11]{ZarevThesis}
Rumen Zarev.
\newblock {\em Bordered sutured {F}loer homology}.
\newblock ProQuest LLC, Ann Arbor, MI, 2011.
\newblock Thesis (Ph.D.)--Columbia University.
  (\href{https://doi.org/10.7916/D83R10V4}{doi:~10.7916/D83R10V4}).

\bibitem[BSFH.m]{BSFH.m}
Claudius Zibrowius.
\newblock {\texttt{BSFH.m}}.
\newblock Mathematica package for computing Zarev's bordered sutured {H}eegaard
  {F}loer invariants, \arxivANC.

\bibitem[nb1]{ConjugationAction.nb}
Claudius Zibrowius.
\newblock {\texttt{ConjugationAction.nb}}.
\newblock Mathematica notebook, computing the action of the conjugation
  bimodule on peculiar modules, \arxivANC.

\bibitem[nb2]{ConjugationBimodule.nb}
Claudius Zibrowius.
\newblock {\texttt{ConjugationBimodule.nb}}.
\newblock Mathematica notebook, computing the conjugation bimodule, \arxivANC.

\bibitem[PQM.m]{PQM.m}
Claudius Zibrowius.
\newblock {\texttt{PQM.m}}.
\newblock Mathematica package for computing peculiar modules of 4-ended
  tangles, \arxivANC.

\bibitem[Zib19]{HDsForTangles}
Claudius Zibrowius.
\newblock Kauffman states and {H}eegaard diagrams for tangles.
\newblock {\em Algebr. Geom. Topol.}, 19(5):2233--2282, 2019.
\newblock \arxiv{1601.04915}.

\bibitem[Zib20]{PQMod}
Claudius Zibrowius.
\newblock Peculiar modules for 4-ended tangles.
\newblock {\em J. Topol.}, 13(1):77--158, 2020.
\newblock \arxiv{1712.05050}.

\end{thebibliography}

\end{document}